\documentclass[11pt,a4paper]{amsart}
\usepackage[T1]{fontenc}
\usepackage[utf8]{inputenc}
\usepackage{newtxtext}
\usepackage{amsmath,amssymb,mathtools}
\usepackage{newtxmath}
\usepackage[margin=26mm,headheight=14pt]{geometry}
\usepackage{microtype}
\usepackage{enumitem}
\usepackage{needspace}
\usepackage[hidelinks,unicode]{hyperref}
\usepackage{bookmark}
\setlist[enumerate]{label=\textup{(\roman*)},leftmargin=2.3em,itemsep=3pt,topsep=5pt}
\numberwithin{equation}{section}
\newtheorem{theorem}{Theorem}[section]
\newtheorem{lemma}[theorem]{Lemma}
\newtheorem{proposition}[theorem]{Proposition}
\newtheorem{corollary}[theorem]{Corollary}
\newtheorem{result}[theorem]{Result}
\theoremstyle{definition}
\newtheorem{definition}[theorem]{Definition}
\theoremstyle{remark}
\newtheorem{remark}[theorem]{Remark}
\newcommand{\C}{\mathbb C}
\newcommand{\R}{\mathbb R}
\newcommand{\D}{\mathbb D}
\newcommand{\B}{\mathbb B}
\newcommand{\ddbar}{\bar\partial}
\newcommand{\eps}{\varepsilon}
\newcommand{\norm}[1]{\lVert#1\rVert}
\newcommand{\abs}[1]{\lvert#1\rvert}
\newcommand{\ip}[2]{\langle#1,#2\rangle}
\newcommand{\qone}{q_\Omega^{(1)}}
\newcommand{\qk}{q_\Omega^{(k)}}
\newcommand{\dk}{d_\Omega^{(k)}}
\newcommand{\Ht}{H_t^{(k)}}
\newcommand{\Hs}{H_s^{(k)}}
\DeclareMathOperator{\dist}{dist}
\DeclareMathOperator{\diam}{diam}

\DeclareMathOperator{\Vol}{Vol}
\DeclareMathOperator{\ord}{ord}
\DeclareMathOperator{\Gr}{Gr}
\DeclareMathOperator{\Ree}{Re}
\DeclareMathOperator{\Imm}{Im}
\DeclareMathOperator{\spanC}{span_{\C}}

\newcommand{\HH}{\mathbb H}
\newcommand{\dom}{\operatorname{Dom}}
\newcommand{\OO}{\mathcal O}
\newcommand{\RR}{\mathcal R}
\hypersetup{pdftitle={Gromov hyperbolicity, finite type, and subellipticity on C-convex domains},pdfauthor={Amar Deep Sarkar}}
\title[Gromov hyperbolicity, finite type and subellipticity]{Gromov hyperbolicity, finite type, and subellipticity\\ on $\C$-convex domains}
\author{Amar Deep Sarkar}
\address{School of Basic Sciences, Indian Institute of Technology Bhubaneswar, Odisha, India}
\email{amar@iitbbs.ac.in}
\date{}
\subjclass[2020]{Primary 32F45, 32W05; Secondary 32T25, 32F17, 32A25, 46E35, 47B10, 53C23}
\keywords{Kobayashi distance, Gromov hyperbolicity, $\C$-convex domain, finite type, subelliptic estimate, Bergman metric, Hardy inequality, visual boundary, bi-H\"older boundary comparison}
\begin{document}
\begin{abstract}
In this article, we study three properties of a bounded $\C$-convex domain $\Omega\subset\C^n$, $n\ge2$, with smooth boundary, namely Gromov hyperbolicity of its Kobayashi distance, finiteness of the D'Angelo type of $\partial\Omega$, and the existence of a subelliptic estimate for the $\ddbar$-Neumann problem on $(0,1)$-forms. We prove that these three properties are equivalent. The subelliptic estimates are obtained without Catlin's construction of plurisubharmonic weights. Our weights are explicit bounded transforms of the logarithm of the Bergman kernel. A single quantity controls all three properties, namely the radius of the largest disc through a point of $\Omega$ in a given complex direction. Gromov hyperbolicity forces a power bound for these radii. Finite type gives the power $1/M$, where $M$ is the maximal type, and this power cannot be improved. A subelliptic estimate gives a power bound through the canonical solution operator applied to normalized Bergman kernels.

The analytic estimates need no boundary regularity. On a bounded pseudoconvex domain, a bounded weight whose induced Hessian on $(0,q)$-forms is bounded below by a negative power of the boundary distance gives a Sobolev estimate for the extension by zero of $(0,q)$-forms, and also eigenvalue bounds for the $\ddbar$-Neumann operator. On smooth $\C$-convex domains of finite type, we show that the directional expansion exponent, the normal expansion exponent and the supremum of the subelliptic gains are all equal to $1/M$. Next, we prove that on a Lipschitz $\C$-convex domain whose Kobayashi distance is Gromov hyperbolic, the Euclidean boundary and the Gromov boundary are bi-H\"older equivalent. Finally, we give a Hardy-type criterion for Gromov hyperbolicity of collar metrics.
\end{abstract}
\maketitle
\tableofcontents

\section{Introduction}\label{sec:introduction}

Let $\Omega\subset\C^n$ be a bounded domain with smooth boundary. Three properties of $\Omega$ have been studied extensively. The first is the large-scale geometry of its Kobayashi distance. The second is the order of contact of complex curves with $\partial\Omega$. The third is the regularity of the $\ddbar$-Neumann problem on $\Omega$. For pseudoconvex domains, the last two properties are linked by the theorems of Catlin \cite{Cat83,Cat87}, which say that a subelliptic estimate holds if and only if $\partial\Omega$ is of finite type in the sense of D'Angelo. For convex domains, the first two properties are linked by the theorem of Zimmer \cite{Zim16}, which says that the Kobayashi distance is Gromov hyperbolic if and only if $\partial\Omega$ is of finite type. In this article, we show that the three properties are equivalent on $\C$-convex domains. We also show that one geometric quantity, the radius of the largest disc through a point in a given complex direction, controls all three of them.

Our starting point is a remark of Zimmer. The sufficiency half of Catlin's characterization rests on bounded plurisubharmonic functions with large complex Hessian. By Catlin's criterion \cite{Cat87}, which Straube \cite{Str97} extended to Lipschitz domains, a bounded plurisubharmonic function whose complex Hessian is at least $c\,\delta^{-2\varepsilon}$ near the boundary gives a subelliptic estimate of gain $\varepsilon$. Zimmer constructed such functions on bounded convex domains whose K\"ahler--Einstein distance is Gromov hyperbolic, and he remarked in \cite[Section~1]{Zim22} that Catlin's construction of these functions in the finite type case is very involved, so that an alternative way of constructing them would be desirable. On $\C$-convex domains we give such an alternative. Our weights are explicit. They have the form $-(B+\log K_\Omega)^{-a}$, where $K_\Omega$ is the Bergman kernel on the diagonal, $B$ is a constant and $a>0$. Their complex Hessians are controlled by the Bergman metric, and hence by the directional radii of $\Omega$. Finite type, and also Gromov hyperbolicity, enter the argument only through a power bound for these radii.

\subsection{The main theorem}
We first recall the notions that appear in the statement of our main theorem. A domain $\Omega\subset\C^n$ is \emph{$\C$-convex} if its intersection with every complex affine line is empty or connected and simply connected. Every convex domain is $\C$-convex, and the class of $\C$-convex domains is strictly larger. We refer the reader to \cite{APS04,Hor94}. We write $k_\Omega$ for the Kobayashi distance and $\kappa_\Omega$ for the Kobayashi--Royden metric, normalized by $\kappa_\D(0;v)=|v|$. Thus
\[
 \kappa_\Omega(z;v)=\inf\{|a|:f\in\OO(\D,\Omega),\ f(0)=z,\ af'(0)=v\},
\]
where $\D\subset\C$ is the unit disc and $\OO$ denotes holomorphic maps or functions. The distance $k_\Omega$ is the integrated distance of $\kappa_\Omega$ (see \cite{Roy71,Ven89}). A geodesic metric space is \emph{Gromov hyperbolic} if there is $\delta_0\ge0$ such that every side of every geodesic triangle lies in the closed $\delta_0$-neighborhood of the other two sides (see \cite{BH99,Gro87}). If $\Omega$ is a bounded $\C$-convex domain, then $(\Omega,k_\Omega)$ is a proper geodesic metric space (Lemma~\ref{lem:proper}).

A smooth defining function near $p\in\partial\Omega$ is a real $C^\infty$ function $\rho$ with $d\rho\ne0$ on the boundary and $\Omega=\{\rho<0\}$ locally. The \emph{D'Angelo one-type} of $\partial\Omega$ at $p$ is
\[
 \Delta_1(\partial\Omega,p)
 =\sup_\gamma\frac{\ord_0(\rho\circ\gamma)}{\ord_0(\gamma-p)},
\]
where $\gamma$ ranges over all nonconstant holomorphic curve germs through $p$, singular germs included, and where the order of $\rho\circ\gamma$ is taken in the real variables. This quotient does not depend on the choice of the defining function \cite{DAn82}. We say that $\partial\Omega$ is of \emph{finite type} if $\Delta_1(\partial\Omega,p)<\infty$ for every $p\in\partial\Omega$.

Let $D$ be a bounded domain. We let $\ddbar$ be the maximal distributional operator on square-integrable forms, we let $\ddbar^*$ be its Hilbert-space adjoint, and we put
\begin{equation}\label{eq:Vq}
 \mathcal V_q(D)=\dom\ddbar_q\cap\dom\ddbar^*_{q-1},\qquad
 Q_D(u)=\|\ddbar u\|_2^2+\|\ddbar^*u\|_2^2.
\end{equation}
The space $H^s(D)$ is the space of restrictions to $D$ of functions in $H^s(\R^{2n})$, with the quotient norm. For forms the norm is taken coefficientwise. A \emph{subelliptic estimate of gain $s>0$ in degree $q$} is an inequality
\begin{equation}\label{eq:subelliptic-estimate}
 \|u\|_{H^s(D)}^2\le C\bigl(Q_D(u)+\|u\|_2^2\bigr),
 \qquad u\in\mathcal V_q(D).
\end{equation}
None of these definitions uses any regularity of $\partial D$.

We now present our main result. It shows that the three properties described above are equivalent on smoothly bounded $\C$-convex domains, and it gives the range of subelliptic gains in every degree.

\Needspace{12\baselineskip}
\begin{theorem}[Gromov hyperbolicity, finite type, and subellipticity]\label{thm:main}
Let $\Omega\Subset\C^n$, $n\ge2$, be $\C$-convex with $C^\infty$ boundary. The following conditions are equivalent.
\begin{enumerate}
\item $(\Omega,k_\Omega)$ is Gromov hyperbolic.
\item $\partial\Omega$ is of finite type.
\item A subelliptic estimate \eqref{eq:subelliptic-estimate} of some gain $s>0$ holds in degree $q=1$.
\end{enumerate}
If these conditions hold, then for every $1\le q\le n$ a subelliptic estimate of every gain $0<s<1/M$ holds in degree $q$, where $M=\max_{p\in\partial\Omega}\Delta_1(\partial\Omega,p)$.
\end{theorem}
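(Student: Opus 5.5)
The plan is to route all three conditions through one quantity. For $z\in\Omega$ and a unit vector $v$, let $r_\Omega(z;v)$ be the radius of the largest disc centred at $z$ in the complex direction $v$ that lies in $\Omega$. On a $\C$-convex domain the Kobayashi metric is comparable to $|v|/r_\Omega(z;v)$, and the Bergman metric is comparable to it as well, by the standard estimates of Nikolov–Pflug–Thomas type. Write $\delta(z)=\dist(z,\partial\Omega)$. The pivot is a \emph{power bound}
\[
 r_\Omega(z;v)\le C\,\delta(z)^{\alpha}\qquad\text{for all }z\in\Omega,\ |v|=1,
\]
for some $\alpha>0$. I would prove (i)$\Rightarrow$(PB), (ii)$\Rightarrow$(PB) with $\alpha=1/M$, (PB)$\Rightarrow$(iii) for all $q$ and all gains $s<\alpha$, and finally (iii)$\Rightarrow$(ii) and (ii)$\Rightarrow$(i).

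\textbf{(ii)$\Rightarrow$(PB) with $\alpha=1/M$.} Take the nearest boundary point $p$ of $z$. Finite type $\le M$ means that every holomorphic disc through $p$ deviates from $T^{\C}_p$ in $\partial\Omega$ at least like $|\zeta|^{M}$. By $\C$-convexity, the disc of radius $r$ at $z$ in direction $v$ can be translated to $\partial\Omega$ using supporting complex hyperplanes. This yields $r^{M}\lesssim\delta(z)$, following McNeal's and Conrad's analysis of the $\tau(p,v,\delta)$-polydiscs on $\C$-convex finite-type domains. Sharpness of the exponent $1/M$ comes from the curve realizing the type.

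\textbf{(i)$\Rightarrow$(PB).} I would argue by contradiction with rescaling, as in Zimmer's convex argument. Suppose there are points $z_j$ and directions $v_j$ with $r(z_j;v_j)\ge j\,\delta(z_j)^{1/j}$. Affine rescaling, using the compactness of normalized $\C$-convex domains in the local Hausdorff topology, produces a limit $\C$-convex domain whose boundary contains a nontrivial complex affine disc. The Kobayashi distances converge, and Gromov hyperbolicity passes to the limit with a uniform constant. However, a limit domain containing a boundary disc is not Gromov hyperbolic: there are quasi-flat triangles near the disc. This gives the contradiction.

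\textbf{(PB)$\Rightarrow$(iii), all $q$, gains $s<\alpha$.} Set $\phi=-(B+\log K_\Omega)^{-a}$. Since $\log K_\Omega\approx\log\delta^{-1}$ is comparable to $\log$ of the directional radii, $\phi$ is bounded and plurisubharmonic. Its Hessian satisfies
\[
 i\partial\ddbar\phi\ge a(B+\log K)^{-a-1}\,i\partial\ddbar\log K .
\]
The Bergman metric $i\partial\ddbar\log K$ is bounded below by $r_\Omega(z;v)^{-2}\ge c\,\delta^{-2\alpha}$. The logarithmic loss is absorbed by any $s<\alpha$. Then Straube's Lipschitz version of Catlin's criterion, that is, the paper's weight-to-Sobolev estimate for extension by zero, gives gain $s$ in degree $q$. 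The forms estimate follows because the sum of the $q$ smallest eigenvalues dominates the smallest one.

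\textbf{(iii)$\Rightarrow$(ii) and (ii)$\Rightarrow$(i).} For (iii)$\Rightarrow$(ii), Catlin's necessity theorem can be invoked directly. Alternatively, within the paper's framework, apply the subelliptic estimate to the canonical solution of $\ddbar$ acting on normalized Bergman kernels, and deduce (PB); (PB) excludes complex discs in $\partial\Omega$, which on smooth $\C$-convex domains is equivalent to finite type, by the Yu/McNeal type equivalence. For (ii)$\Rightarrow$(i), use the finite-type Kobayashi estimates $k_\Omega\approx$ a Carnot–Carathéodory-type quasi-distance. A Gromov product estimate, in the style of Balogh–Bonk or Zimmer's convex proof adapted to $\C$-convex domains, then gives hyperbolicity.

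The main obstacle I expect is (i)$\Rightarrow$(PB). Rescaling $\C$-convex, rather than convex, domains and showing that the limit loses hyperbolicity both need care: the convergence of $k_\Omega$ under rescaling and the construction of fat triangles near a boundary disc. I would try first to obtain the contradiction directly from the lower bound on the Kobayashi metric via the directional radii, comparing geodesics through the flat direction.
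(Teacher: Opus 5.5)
Your proposal has two genuine gaps, in (ii)$\Rightarrow$(i) and in (i)$\Rightarrow$(PB), plus two smaller errors.

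\textbf{(ii)$\Rightarrow$(i).} Here you give no mechanism. Two-sided Balogh--Bonk-type estimates of $k_\Omega$ by a Carnot--Carath\'eodory quasi-distance are not available for finite-type $\C$-convex domains. The paper notes that this implication was not known for non-convex $\C$-convex domains with $n\ge3$, so ``adapting'' those estimates is the whole problem. A bare power bound does not yield hyperbolicity by itself either. The lens of Remark~\ref{rem:lens} (Lipschitz rather than smooth) satisfies $r\le C\delta^{1/2}$ and is not Gromov hyperbolic.

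What the paper uses is a \emph{relative}, frame-uniform version of (PB). A compactness and norm-equivalence argument on the Taylor polynomials of the boundary restricted to complex lines gives $\max_{\abs{\zeta}\le R}G_{p,E}\asymp J_{p,E}(R)\gtrsim R^{M}$ uniformly in $(p,E)$. This yields the two-depth contraction $R_\Omega(p_r;V)\le C(r/u)^{1/M}R_\Omega(p_u;V)$ of Proposition~\ref{prop:two-depth}. Equivalently, the gauge $\norm{v}/r_\Omega(z;v)$ expands like $e^{t/M}$ in logarithmic depth along every inward normal. Combined with the normal splitting of this gauge, that is exactly the hypothesis of the Wang--Zimmer expanding-collar theorem, which makes $d^{(1)}_\Omega$ hyperbolic. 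The comparison $\tfrac14\qone\le\kappa_\Omega\le\qone$ then transfers hyperbolicity to $k_\Omega$. Your (ii)$\Rightarrow$(PB) step, done uniformly in $(p,v)$, is one short step from this relative estimate. The missing idea is to feed it into an expanding-collar criterion.

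\textbf{(i)$\Rightarrow$(PB).} Your rescaling argument rests on the claim that a $\C$-convex limit domain with an affine disc in its boundary cannot be Gromov hyperbolic. That is false in this generality. Zimmer's projective cone (Remark~\ref{rem:projective-cone}) is a bounded $\C$-convex domain with Gromov hyperbolic Kobayashi distance whose boundary contains $\{0\}\times\B^{n-1}$. The contradiction has to use structure coming from boundary regularity.

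The paper works directly in a Lipschitz chart and does not rescale. Assuming the contraction fails at depth $r$, the Lipschitz bound thickens the centred discs to polydiscs $\Pi(D(u,c_0u)\times D(0,R_-))\subset\Omega$. The complex supports at $p$ and at the first contact point make the four sides of a rectangle at one fixed logarithmic scale uniform quasigeodesics. They also keep the midpoint of one side far from the other three sides. Thinness of quasigeodesic quadrilaterals then forces $R(\theta r)\le(1-\theta)R(r)$ (Proposition~\ref{prop:one-step}), and this iterates to the power law.

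Your choice $r(z_j;v_j)\ge j\,\delta(z_j)^{1/j}$ also does not by itself produce a boundary disc after rescaling. The radius may collapse just below depth $\delta(z_j)$. You would need to rescale at the top of long ranges of logarithmic depth on which the radius barely contracts.

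\textbf{Smaller errors.} Your alternative route for (iii)$\Rightarrow$(ii) is wrong: absence of discs in $\partial\Omega$ does not imply finite type. A smooth convex domain can be infinitely flat, but not flat, along a complex tangent line. The type-equals-line-type results do not say otherwise. Instead, (PB) bounds $\Delta_1\le1/\alpha$ directly. It confines Kobayashi balls of fixed radius to Euclidean size $\lesssim\delta_\Omega^\alpha$, and a curve germ of larger contact ratio pushed inward along the normal violates this (Lemma~\ref{lem:full-curve-type}). Citing Catlin for this step is fine.

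In (PB)$\Rightarrow$(iii), your Hessian inequality is not automatic. Since $\partial\ddbar(-L^{-a})=aL^{-a-1}\partial\ddbar L-a(a+1)L^{-a-2}\partial L\wedge\ddbar L$, you need the self-bounded gradient $\abs{\partial\log K_\Omega(z)(v)}^2\le A\beta_\Omega(z;v)^2$ (Lemma~\ref{lem:sbg}) and $B$ large to absorb the negative term. Without it, $\phi$ need not even be plurisubharmonic.
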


\begin{remark}\label{rem:intro-new}
For convex domains, the equivalence of (i) and (ii) is due to Zimmer \cite[Theorem~1.1]{Zim16}. A second proof of it was given by Wang and Zimmer \cite[Corollary~1.7]{WZ24}. For $n=2$, the implication (ii)$\Rightarrow$(i) holds on every smoothly bounded pseudoconvex domain by the work of Fiacchi \cite{Fia22}. We also refer the reader to Li--Pu--Wang \cite{LPW24}. For bounded $\C$-convex domains, Zimmer \cite[Theorem~1.4]{Zim17} showed that Gromov hyperbolicity excludes nonconstant holomorphic discs in a $C^1$ boundary, and Bracci--Gaussier--Nikolov--Thomas \cite[Proposition~4.2]{BGNT24} showed that it excludes nontrivial affine discs in a Lipschitz boundary. Neither of these two statements gives finite type. Before the present article, for $\C$-convex domains that are not convex, the implication (ii)$\Rightarrow$(i) was known for $n=2$ \cite{Fia22}, and both implications were known for domains biholomorphic, smoothly up to the boundary, to convex domains, since Zimmer's theorem transfers to such domains. The equivalence of (ii) and (iii) holds on every smoothly bounded pseudoconvex domain by Catlin's theorems \cite{Cat83,Cat87}. Our proofs of (ii)$\Rightarrow$(iii) and of (iii)$\Rightarrow$(ii) use neither Catlin's construction of weights nor his necessary conditions. We say more about this in Subsection~\ref{sec:intro-prior}.
\end{remark}

\begin{remark}
Condition (iii) cannot be replaced by an estimate in top degree. By Proposition~\ref{prop:top-degree}, a first-order estimate holds in degree $q=n$ on every bounded pseudoconvex domain. In particular, such an estimate holds on every smoothly bounded convex domain of infinite type.
\end{remark}

\subsection{The directional radius}\label{sec:intro-radius}
We now introduce the quantity through which the three conditions of Theorem~\ref{thm:main} are compared. For a domain $D\subset\C^n$, a point $z\in D$ and a vector $v\ne0$, put
\begin{equation}\label{eq:radius}
 r_D(z;v)=\sup\{r>0:z+\zeta v/|v|\in D\text{ for every }|\zeta|<r\}.
\end{equation}
Thus $r_D(z;v)$ is the Euclidean distance from $z$ to the boundary of the slice of $D$ by the complex line through $z$ in the direction $v$. We write $\delta_D(z)=\dist(z,\partial D)$ and $\RR_D(z)=\sup_{|v|=1}r_D(z;v)$. A bounded $\C$-convex domain $D$ is \emph{$m$-$\C$-convex}, for a real number $m>0$, if there are $C_0,t_0>0$ such that
\begin{equation}\label{eq:mCconvex}
 r_D(z;v)\le C_0\delta_D(z)^{1/m},\qquad \delta_D(z)<t_0,\quad v\ne0.
\end{equation}
On a bounded domain this estimate in a collar is equivalent to the same estimate on the whole domain, with a larger constant. For convex domains, \eqref{eq:mCconvex} is the $m$-convexity introduced by Mercer \cite{Mer93}, which plays a central role in the work of Zimmer \cite{Zim16,Zim22}. Every $m$-$\C$-convex domain in $\C^n$, $n\ge2$, has $m\ge2$ (Proposition~\ref{prop:m-at-least-2}).

We now describe how the proof of Theorem~\ref{thm:main} goes through the directional radius. Each of the four implications below is a statement about the directional radius in every direction.

The implication (i)$\Rightarrow$(ii) is proved in two steps. On Lipschitz $\C$-convex domains, Gromov hyperbolicity gives a power bound $r_\Omega(z;v)\le C\delta_\Omega(z)^\alpha$ (Theorem~\ref{thm:lipschitz}). On smooth $\C$-convex domains, such a power bound gives $\Delta_1\le1/\alpha$ (Lemma~\ref{lem:full-curve-type}).

For the implication (ii)$\Rightarrow$(i), we show that finite type gives a contraction of the directional radii between two normal depths with exponent $1/M_1(\Omega)$, where $M_1(\Omega)$ is the maximal order of contact of complex lines with $\partial\Omega$ (Proposition~\ref{prop:two-depth}). This exponent is sharp (Theorem~\ref{thm:planes}), and $M_1(\Omega)=M$ (Corollary~\ref{cor:finite-gains}). By the expanding-collar theorem of Wang and Zimmer \cite{WZ24}, the integrated distance of the gauge $|v|/r_\Omega(z;v)$ is then Gromov hyperbolic (Theorem~\ref{thm:planes}). On $\C$-convex domains this distance is bi-Lipschitz to $k_\Omega$ by the estimates of Nikolov, Pflug and Zwonek \cite{NPZ11}.

For the implication (ii)$\Rightarrow$(iii), the power bound $r_\Omega\le C\delta_\Omega^{1/M}$ (Corollary~\ref{cor:finite-gains}) and the minimal-basis estimates of \cite{NPZ11} bound the complex Hessian of $\log K_\Omega$, which is the Bergman metric, from below by $c\,\delta_\Omega^{-2/M}\,\mathrm{Id}$. Since $\log K_\Omega$ has self-bounded gradient \cite{Zim21}, a bounded transform of it has complex Hessian at least $c\,\delta_\Omega^{-2/M}\ell(\delta_\Omega)^{-1-a}$. Theorem~\ref{thm:zero-extension} below turns this weight into estimates of every gain below $1/M$.

For the implication (iii)$\Rightarrow$(ii), we apply the canonical solution operator to the form $f_z\,d\bar w_n$, where $f_z$ is the normalized Bergman kernel at $z$ and $w_n$ is the coordinate of the longest axis of a minimal basis at $z$. The regularity theorem of Kohn and Nirenberg and a fractional Hardy inequality bound the $L^2$ mass of the solution on a polydisc centered at $z$ from above by $C\delta_\Omega(z)^{2s}$, and the mean-value property bounds it from below. A comparison of the two bounds gives $r_\Omega\le C\delta_\Omega^s$ (Theorem~\ref{thm:analytic-necessity}), and Lemma~\ref{lem:full-curve-type} then gives $\Delta_1\le1/s$.

\subsection{Estimates without boundary regularity}\label{sec:intro-estimates}
The analytic implication (ii)$\Rightarrow$(iii) rests on an estimate that needs no regularity of the boundary at all. We first fix the notation for it. A domain is \emph{pseudoconvex} if it admits a continuous plurisubharmonic exhaustion. Write $\ell(t)=\log(e/t)$ for $0<t\le1$, and let $E_0u$ denote the coefficientwise extension of $u$ by zero. For $s>0$ and $b\ge0$ set
\begin{equation}\label{eq:Hsb}
 \|f\|_{H^{s,-b}(\R^d)}^2
 =\int_{\R^d}\frac{(1+|\xi|^2)^s}{[\log(e+|\xi|)]^{2b}}|\widehat f(\xi)|^2\,d\xi,
\end{equation}
with the unitary Fourier transform. Thus $H^{s,0}=H^s$. Since $(1+|\xi|^2)^s\le C(1+|\xi|^2)^{\alpha}[\log(e+|\xi|)]^{-2b}$ for $0<s<\alpha$, an estimate for $\|E_0u\|_{H^{\alpha,-b}}$ implies the corresponding estimate for $\|E_0u\|_{H^s}$ for every $0<s<\alpha$. Since also $\|u\|_{H^s(D)}\le\|E_0u\|_{H^s(\R^{2n})}$, it implies a subelliptic estimate of every gain $0<s<\alpha$. We shall use this observation throughout without further comment.

For a real $C^2$ function $\lambda$, the induced Hessian on $(0,q)$-forms is
\begin{equation}\label{eq:induced-hessian}
 \mathcal L_q^\lambda(u)
 =\sideset{}{'}\sum_{|K|=q-1}\sum_{j,k=1}^n
     \lambda_{j\bar k}u_{jK}\overline{u_{kK}},
\end{equation}
where the prime indicates summation over increasing multi-indices and where the coefficients are extended antisymmetrically. We write $H^{[q]}_\lambda$ for the associated Hermitian endomorphism. If the complex Hessian $(\lambda_{j\bar k})$ has eigenvalues $\mu_1\le\cdots\le\mu_n$, then the least eigenvalue of $H^{[q]}_\lambda$ is $\mu_1+\cdots+\mu_q$. For a bounded real $\lambda$, the inequality $H^{[q]}_\lambda\ge A$ in distributions means that, for every constant form $\eta$ and every nonnegative $\chi\in C_c^\infty(D)$, the distribution $\langle(H^{[q]}_\lambda-A)\eta,\eta\rangle$ has nonnegative pairing with $\chi$.

We can now state the estimate.

\begin{theorem}[Zero-extension estimates from bounded weights]\label{thm:zero-extension}
Let $D\Subset\C^n$ be pseudoconvex, $1\le q\le n$, $0<s<1$, $b\ge 0$, $c>0$ and $0<t_0<1$. Suppose that $\lambda\in L^\infty(D;\R)$ satisfies $H^{[q]}_\lambda\ge0$ in distributions on $D$ and
\begin{equation}\label{eq:weight-lower}
H^{[q]}_\lambda\ge c\,\delta_D^{-2s}\ell(\delta_D)^{-2b}\,\mathrm{Id}\quad\text{ in distributions on }\{\delta_D<t_0\}.
\end{equation}
Then for every $q\le q'\le n$ there is $C>0$, depending only on $n$, $s$, $b$, $c$, $t_0$, $\operatorname{osc}_D\lambda$ and $\diam D$, such that
\begin{equation}\label{eq:zero-extension}
\norm{E_0u}^2_{H^{s,-b}(\R^{2n})}\le CQ_D(u),\qquad u\in\mathcal V_{q'}(D).
\end{equation}
\end{theorem}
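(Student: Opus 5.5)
Because $D$ has no boundary regularity, the plan avoids Kohn–Nirenberg localization and works globally in two steps. First, a weighted $L^2$ estimate with a boundary-distance weight. Second, a passage from that weighted bound to a Sobolev bound on the zero extension $E_0u$.

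\emph{Step 1 (weighted Hardy-type bound).} The goal is
\begin{equation*}
\int_D \delta_D^{-2s}\ell(\delta_D)^{-2b}\abs{u}^2\le C\,Q_D(u)+C\norm{u}_2^2,\qquad u\in\mathcal V_{q'}(D),
\end{equation*}
and then to remove $\norm{u}_2^2$. Since $D$ is bounded pseudoconvex, Hörmander's estimate gives $\norm{u}_2^2\le C(\diam D)\,Q_D(u)$, so the $L^2$ term can be dropped.

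To obtain the weighted bound, I would use the Kohn–Morrey–Hörmander twisted identity with the bounded weight $\lambda$. The standard device (as in Berndtsson, or McNeal's twisted estimates) is to write $e^{\lambda}=\tau$ and use
\begin{equation*}
\int_D e^{\lambda}\mathcal L^{\lambda}_{q'}(u)\le C\bigl(\norm{\ddbar u}^2_{e^{\lambda}}+\norm{\ddbar^*u}^2_{e^{\lambda}}\bigr).
\end{equation*}
The uniform bound on $e^{\pm\lambda}$, controlled by $\operatorname{osc}_D\lambda$, lets one compare the weighted and unweighted $\ddbar^*$ norms. The term $\ddbar\lambda\lrcorner u$ that arises is absorbed by the same Hessian term; this is the usual bounded-weight trick. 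Concretely, replace $\lambda$ by $\psi=-e^{-\lambda+\sup\lambda}$. Then $i\partial\ddbar\psi\ge e^{\cdots}(i\partial\ddbar\lambda+i\partial\lambda\wedge\ddbar\lambda)$, and this dominates the gradient term.

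Several technical points need care here.
\begin{itemize}
\item The estimate holds on $\mathcal V_{q'}$ for $q'\ge q$, because the induced Hessian in degree $q'$ is a sum of the degree-$q$ one over sub-multi-indices.
\item The distributional hypothesis is handled by smoothing. I would exhaust $D$ by smooth strongly pseudoconvex subdomains and mollify $\lambda$ there; mollification preserves distributional lower bounds on slightly shrunken sets.
\item Density of smooth forms in $\dom\ddbar\cap\dom\ddbar^*$ holds on smooth subdomains. The pass to the limit is the standard argument for non-smooth pseudoconvex domains, as in Straube's Lipschitz treatment, but here done through exhaustion.
\end{itemize}
Away from $\{\delta_D<t_0\}$ the weight $\delta_D^{-2s}$ is bounded, so the plain $L^2$ estimate covers that region.

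\emph{Step 2 (from weighted $L^2$ to $H^{s,-b}$).} This is the main obstacle. The difficulty is that $u$ is only controlled in $L^2$ with a singular weight together with $\ddbar u,\ddbar^*u$. A first-order elliptic gain is available only for $E_0u$ and only in the interior. I would use a Littlewood–Paley or Whitney-cube decomposition.
\begin{itemize}
\item At frequency scale $2^j$, split $D$ into the layer $\{\delta_D<2^{-j}\}$ and its complement.
\item On the layer, the weighted bound gives $\int_{\delta_D<2^{-j}}\abs{u}^2\le C2^{-2js}j^{2b}Q_D(u)$.
\item On the complement, interior elliptic regularity of $\ddbar\oplus\ddbar^*$ (Gårding on Whitney cubes, where $\ddbar^*$ is the formal adjoint) gives $\norm{\nabla u}_{L^2(\delta_D>2^{-j})}\lesssim$ a weighted $Q$-bound, with cube-size-dependent commutator terms controlled by the Hardy weight.
\end{itemize}
Concretely, I would estimate $\norm{P_j E_0u}^2$, where $P_j$ is the Littlewood–Paley projection at scale $2^j$. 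Write $E_0u=\chi_j u+(1-\chi_j)u$, with $\chi_j$ a cutoff to the layer of width $\sim2^{-j}$ built by a regularized distance. Then:
\begin{itemize}
\item $\norm{P_j(\chi_ju)}\le\norm{\chi_ju}$, which is bounded by the layer estimate.
\item $\norm{P_j((1-\chi_j)u)}\le 2^{-j}\norm{\nabla((1-\chi_j)u)}$. The gradient of $\chi_j$ produces a factor $2^j$ on the layer; the remaining interior gradient is controlled by the Whitney-local elliptic estimate.
\end{itemize}
Summing $\sum_j 2^{2js}j^{-2b}\norm{P_jE_0u}^2$ should then telescope into $\int\delta^{-2s}\ell^{-2b}\abs{u}^2+Q_D(u)$. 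The delicate point is the interior gradient term. Summing $2^{2j(s-1)}j^{-2b}\norm{\nabla u}^2_{\delta>2^{-j}}$ over $j$ gives $\int\delta^{2-2s}\ell^{-2b}\abs{\nabla u}^2$. I expect to control this by a weighted Caccioppoli inequality on Whitney cubes, $\int_{W}\abs{\nabla u}^2\lesssim\int_{W^*}(\abs{\ddbar u}^2+\abs{\vartheta u}^2)+\diam(W)^{-2}\int_{W^*}\abs{u}^2$, which reduces it back to Step 1. This requires $s<1$, consistent with the hypothesis.

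The constants depend only on $n,s,b,c,t_0,\operatorname{osc}\lambda,\diam D$, as claimed.
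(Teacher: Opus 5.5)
Your proposal is correct in outline, and Step~2 takes a genuinely different route from the paper. Two points in Step~1 need repair.

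\textbf{Step 1.} First, your transform has the wrong sign. For $\psi=-e^{-\lambda+c}$ one gets $\psi_{j\bar k}=e^{-\lambda+c}(\lambda_{j\bar k}-\lambda_j\lambda_{\bar k})$. This need not be nonnegative, and it does not dominate the gradient term. The paper (Theorem~\ref{thm:matrix}) normalizes $0\le\lambda\le1$ and uses $\varphi=te^{\lambda}$ with $t=(4e)^{-1}$. Then $\varphi_{j\bar k}=te^{\lambda}(\lambda_{j\bar k}+\lambda_j\lambda_{\bar k})$ and $\abs{\partial\varphi\lrcorner a}^2\le\frac14\mathcal L^\varphi_q(a)$, so the gradient term is absorbed.

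Second, the passage from smooth subdomains to $\mathcal V_q(D)$ is not a density argument. In general $u|_{D_\nu}$ does not lie in the adjoint domain on $D_\nu$. What you need is Straube's correction $u_\nu=S^*_{q,\nu}(\ddbar^*u|_{D_\nu})+S_{q+1,\nu}(\ddbar u|_{D_\nu})$ of Lemma~\ref{lem:exhaustion}, which gives $Q_{D_\nu}(u_\nu)\le Q_D(u)$ and $E_\nu u_\nu\rightharpoonup u$. You also need to truncate the unbounded weight so that weak lower semicontinuity applies.

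\textbf{Step 2.} The paper splits $u=v+h$.
\begin{itemize}
\item Here $v_J\in H^1_0(D)$ solves $\Delta v_J=\Delta u_J$; the data lie in $(H^1_0(D))^*$ by \eqref{eq:prelim-1}. Hence $E_0v\in H^1$.
\item The form $h$ has harmonic coefficients and $Q_D(h)\le CQ_D(u)$. The weighted bound for $h$ becomes a bound on $\norm{E_0h}_{H^{s,-b}}$ through Lemma~\ref{lem:harmonic}.
\item That lemma estimates the logarithmic Gagliardo kernel of Lemma~\ref{lem:gagliardo-log} on cross, far and near pairs, the near pairs via the harmonic gradient estimate.
\end{itemize}

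You instead work with $u$ itself, and this does go through.
\begin{itemize}
\item Split $E_0u=\chi_ju+(1-\chi_j)u$ with $\chi_j=\omega(2^j\delta_D)$. A Lipschitz cutoff suffices, so no regularized distance is needed.
\item Put $\delta_*=\min\{\delta_D,1/2\}$ and $\delta=\delta_*(x)$. The dyadic sums satisfy $\sum_{2^{-j}\gtrsim\delta}2^{2js}(1+j)^{-2b}\lesssim\delta^{-2s}\ell(\delta)^{-2b}$ and $\sum_{2^{-j}<\delta}2^{2j(s-1)}(1+j)^{-2b}\lesssim\delta^{2-2s}\ell(\delta)^{-2b}$; the second needs $s<1$.
\item Caccioppoli for the elliptic system $\ddbar\oplus\vartheta$ on Whitney cubes follows from $\norm{\ddbar w}_2^2+\norm{\vartheta w}_2^2=\frac14\norm{\nabla w}_2^2$ for compactly supported $w$.
\item The extra term $\int\delta_*^{2-2s}\ell(\delta_*)^{-2b}(\abs{\ddbar u}^2+\abs{\vartheta u}^2)$ is at most $CQ_D(u)$, because that weight is bounded. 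Nothing here uses boundary regularity.
\end{itemize}

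\textbf{Comparison.} Your route avoids both the Dirichlet problem and the harmonic lemma. It also yields a slightly more general inequality: $\norm{E_0u}^2_{H^{s,-b}}\lesssim\int_D\delta_*^{-2s}\ell(\delta_*)^{-2b}\abs u^2+\norm{\ddbar u}_2^2+\norm{\vartheta u}_2^2$ for every $L^2$ form whose distributional $\ddbar u$ and $\vartheta u$ lie in $L^2(D)$. The paper's route instead concentrates the boundary behaviour in harmonic coefficients, where mean-value estimates are pointwise. It also shows that the remaining part even has its zero extension in $H^1$.
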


For $b=0$ and smoothly bounded domains, the existence of such a weight gives a subelliptic estimate of gain $s$ by Catlin's criterion \cite{Cat87}. Straube \cite[Theorem~1]{Str97} proved the local version of this criterion on Lipschitz domains for $s<1/2$. Theorem~\ref{thm:zero-extension} extends Straube's theorem in three directions. It holds on arbitrary bounded pseudoconvex domains, it allows logarithmic weights, and it has the zero-extension norm on the left with a hypothesis on the induced $q$-Hessian only. The proof follows the proof of Straube. It consists of an exhaustion argument that corrects the forms on smooth subdomains, a weighted $L^2$ estimate on the full form domain (Theorem~\ref{thm:matrix}), and a reduction to harmonic coefficients. The new ingredient is Lemma~\ref{lem:harmonic}, an estimate for harmonic functions on arbitrary bounded open sets. This lemma carries the logarithmic factor, and it replaces the estimate of Jerison and Kenig \cite[Theorem~4.2]{JK95} which Straube used.

The norm of the zero extension always dominates the restriction norm. On a bounded Lipschitz domain the two norms are equivalent for $0<s<1/2$, see \cite[Theorems~3.33 and~3.40]{McL00}, \cite[Lemma~2.1]{HM17} and \cite{CHM17}, but they are not equivalent for $s=1/2$, even on bounded $C^\infty$ domains \cite[Remark~2.5]{HM17}. Accordingly, Proposition~\ref{prop:half-order} shows that the condition $b>1/2$ in Theorem~\ref{thm:directional} below cannot be removed from a universal half-order zero-extension estimate in degrees $q<n$. In top degree, on the other hand, Proposition~\ref{prop:top-degree} gives a first-order estimate on every bounded pseudoconvex domain.

The nonnegative self-adjoint operator associated with the closed form $Q_D$ is the $\ddbar$-Neumann Laplacian $\Box_q$. On a bounded pseudoconvex domain it has a bounded inverse $N_q$, the $\ddbar$-Neumann operator (Lemma~\ref{lem:uniform-L2}). For a compact positive operator $T$, we write $\mu_j(T)$ for its eigenvalues in decreasing order. The Schatten class $\mathfrak S_p$, $p>0$, consists of the compact operators whose singular values $s_j(T)$ satisfy $\sum_j s_j(T)^p<\infty$.

Our next result applies Theorem~\ref{thm:zero-extension} to the weights described above. On every $m$-$\C$-convex domain, it gives the logarithmic endpoint estimate at the order $1/m$, together with the eigenvalue bounds that follow from it.

\begin{theorem}[Directional power geometry]\label{thm:directional}
Let $D\Subset\C^n$, $n\ge 2$, be $m$-$\C$-convex. For every $b>1/2$ there is $C_b>0$, depending only on $D$, $m$, $b$ and $n$, such that for every $1\le q\le n$
\begin{equation}\label{eq:directional-estimate}
\norm{E_0u}^2_{H^{1/m,-b}(\R^{2n})}\le C_bQ_D(u),\qquad u\in\mathcal V_q(D).
\end{equation}
Moreover, $N_q$ is compact and its eigenvalues satisfy
\begin{equation}\label{eq:eigenvalues}
\mu_j(N_q)\le C_bj^{-1/(nm)}[\log(e+j)]^{2b},\qquad j\ge1.
\end{equation}
In particular, a subelliptic estimate of every gain $0<s<1/m$ holds in every degree, and $N_q\in\mathfrak S_p$ for every $p>nm$.
\end{theorem}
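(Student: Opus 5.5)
The plan is to deduce \eqref{eq:directional-estimate} from Theorem~\ref{thm:zero-extension} with $s=1/m$, using as weight a bounded transform of the logarithm of the Bergman kernel. Set $\varphi=\log K_D$. Since $D$ is bounded, $K_D\ge c_1>0$, so after adding a constant $B$ we may assume $\psi=B+\varphi\ge1$ on $D$. On a bounded $\C$-convex domain the estimates of Nikolov--Pflug--Zwonek \cite{NPZ11} give the following. For $z\in D$, choose a minimal basis $e_1,\dots,e_n$ at $z$ with radii $r_j\asymp r_D(z;e_j)$. Then $K_D(z)\asymp\prod_j r_j^{-2}$, and the Bergman metric $i\partial\ddbar\varphi$ is comparable to $\sum_j|\ip{v}{e_j}|^2/r_j^2$, which is itself comparable to $(|v|/r_D(z;v))^2$ up to dimensional constants. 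The $m$-$\C$-convexity bound \eqref{eq:mCconvex} therefore gives
\[
 i\partial\ddbar\varphi\ge c\,\delta_D^{-2/m}\,\mathrm{Id}
\]
near $\partial D$. The same estimates give $\psi\le C\ell(\delta_D)$, because every $r_j\ge\delta_D$ and hence $K_D\le C\delta_D^{-2n}$. Finally, $\varphi$ has self-bounded gradient: $|\partial\varphi|_{\varphi}^2\le C_2$ with respect to its own Levi form \cite{Zim21}.

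Next take $\lambda=-\psi^{-a}$ with $a>0$, so that $-1\le\lambda<0$ and $\operatorname{osc}\lambda\le1$. A direct computation gives
\[
 i\partial\ddbar\lambda=a\psi^{-a-1}\Bigl(i\partial\ddbar\varphi-(a+1)\psi^{-1}i\partial\varphi\wedge\ddbar\varphi\Bigr).
\]
Self-boundedness bounds the second term by $(a+1)C_2\psi^{-1}\,i\partial\ddbar\varphi$. This is at most $\tfrac12 i\partial\ddbar\varphi$ once $B$ is large, since $\psi\ge B$. Hence
\[
 i\partial\ddbar\lambda\ge\tfrac a2\psi^{-a-1}\,i\partial\ddbar\varphi\ge c\,\delta_D^{-2/m}\ell(\delta_D)^{-1-a}\,\mathrm{Id}
\]
near the boundary. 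This inequality also makes $\lambda$ plurisubharmonic everywhere, and since $\lambda$ is real analytic, distributional and pointwise inequalities coincide. The induced $q$-Hessian is bounded below by $q$ times the least eigenvalue, so $H^{[q]}_\lambda\ge c\,\delta_D^{-2/m}\ell(\delta_D)^{-2b}$ with $2b=1+a$. Any $b>1/2$ is attained by choosing $a=2b-1>0$. Since $D$ is $\C$-convex, it is pseudoconvex. Theorem~\ref{thm:zero-extension} with $s=1/m$ then yields \eqref{eq:directional-estimate}, with constants depending only on $D,m,b,n$. Here $1/m<1$ because $m\ge2$ by Proposition~\ref{prop:m-at-least-2}.

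For the eigenvalues, the plan is a min-max argument. By \eqref{eq:directional-estimate}, the form domain of $\Box_q$ embeds, via $E_0$, into $H^{1/m,-b}$ of forms supported in a fixed ball $\mathcal B\supset\bar D$. Hence
\[
 \mu_j(N_q)=\lambda_j(\Box_q)^{-1}\le\sup\bigl\{\|u\|_2^2:\ u\perp\text{a }(j-1)\text{-dimensional space},\ Q_D(u)\le1\bigr\}.
\]
This supremum is bounded by $C_b$ times the $j$th approximation number of the inclusion of $\{f\in H^{1/m,-b}(\R^{2n}):\operatorname{supp}f\subset\bar{\mathcal B}\}$ into $L^2$, squared. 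For that inclusion we will use the standard estimate: cut off to frequencies $|\xi|\le R$ and count the dimension of the compressed space as about $R^{2n}$. Functions supported in a fixed ball and with band-limited Fourier transform are essentially spanned by $O(R^{2n})$ functions, by a Weyl-type count. The high-frequency part has $L^2$ norm squared at most $R^{-2/m}[\log R]^{2b}$ times the $H^{1/m,-b}$ norm. Choosing $R^{2n}\asymp j$ gives $\mu_j\le Cj^{-1/(nm)}[\log(e+j)]^{2b}$. The Schatten claim follows because $\sum_j j^{-p/(nm)}(\log j)^{2bp}$ converges for $p>nm$. The subelliptic gains $s<1/m$ follow from the remark after \eqref{eq:Hsb}. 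We also need $\|u\|_2^2\le CQ_D(u)$, which is Lemma~\ref{lem:uniform-L2}.

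The main obstacle is the first step: obtaining the uniform lower bound of the Bergman metric by $\delta_D^{-2/m}$ from \cite{NPZ11} together with self-boundedness, with constants uniform up to the boundary. The Weyl-counting step needs care with the logarithm, but it is standard. If the counting of band-limited compactly supported functions is delicate, I would instead compare with the Dirichlet Laplacian on $\mathcal B$. Its eigenfunctions give a basis where $(-\Delta)^{1/(2m)}$ is diagonal with eigenvalues $\asymp k^{1/(2nm)}$.
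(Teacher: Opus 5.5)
Your argument for \eqref{eq:directional-estimate} is the paper's argument. The weight $-\psi^{-a}$ with $\psi=B+\log K_D$, the absorption of the gradient term once $\psi\ge2(a+1)C_2$, the lower bound $\beta_D(z;v)\ge1/(4r_D(z;v))$ from \cite{NPZ11} combined with \eqref{eq:mCconvex}, the bound $\psi\le C\ell(\delta_D)$, and Theorem~\ref{thm:zero-extension} with $s=1/m$ and $2b=1+a$ are exactly the paper's chain Theorem~\ref{thm:attenuation}, Corollary~\ref{cor:log-growth}, Theorem~\ref{thm:partial-collapse}. The paper states the Hessian bound through $\tau_n$ and \eqref{eq:bergman-7}, which is equivalent. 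Your spectral part also has the paper's structure from Theorem~\ref{thm:spectral}: factor $N_q=\iota\iota^*$ through the form domain normed by $Q_D^{1/2}$, bound the approximation numbers of $\iota$ by a Fourier truncation of rank about $R^{2n}$, and take $R^{2n}\asymp j$.

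The one step that fails as written is the truncation itself. On $\R^{2n}$ the projection onto $\{\abs\xi\le R\}$ has infinite rank. Moreover, no nonzero function supported in a ball is band-limited, so there is no finite-dimensional ``compressed space'' to count. Making your heuristic rigorous would require eigenvalue counts for the time--frequency limiting operator $\chi_{\mathcal B}P_R\chi_{\mathcal B}$, of Landau--Pollak--Slepian type, and that is not a routine step.

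The paper avoids this by periodizing $E_0u$ on a cube whose faces are at distance more than three from $D$. Translations of length less than one then never wrap around, so the Gagliardo-kernel proof of Lemma~\ref{lem:gagliardo-log} gives the same weighted bound for the lattice Fourier coefficients. Truncation to the modes $\abs\nu\le R$ then has rank at most $CR^{2n}$ in the literal sense.

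Your Dirichlet-Laplacian fallback also works, but it needs a lemma you did not state. For $u$ supported in $\bar D\Subset\mathcal B$ you must show $\sum_k\lambda_k^{1/m}\log(e+\lambda_k)^{-2b}\abs{c_k}^2\le C\bigl(\norm{E_0u}^2_{H^{1/m,-b}}+\norm u_2^2\bigr)$. Because of the logarithm, this is not one of the standard $H^s_0$ interpolation identities. One way to prove it: write the multiplier, up to equivalence, as $\int_0^1(1-e^{-t\lambda})\,t^{-1-1/m}\ell(t)^{-2b}\,dt$. Then use domain monotonicity of the Dirichlet heat kernel, and the fact that the exit probability from $\mathcal B$ before time $t$ is exponentially small in $1/t$ for starting points in $\bar D$. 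The rest of your counting, via Weyl's law $\lambda_k\asymp k^{1/n}$, then gives \eqref{eq:eigenvalues}.
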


Theorem~\ref{thm:directional} assumes neither boundary regularity nor Gromov hyperbolicity. The lens of Remark~\ref{rem:lens} is $2$-$\C$-convex with Lipschitz boundary, but its Kobayashi distance is not Gromov hyperbolic. Within the class of bounded convex domains with Gromov hyperbolic Kobayashi distance, Theorem~\ref{thm:directional} gives every gain below $1/m$ for every $m$ for which the domain is $m$-convex. Zimmer's theorem \cite[Theorem~16.1]{Zim22} gives the gains below $1/(\alpha m)$ with an additional parameter $\alpha\ge1$. We compare the two results in Section~\ref{sec:zimmer-comparison}.

\subsection{Lipschitz domains and the Gromov boundary}
We now turn to the geometric implication (i)$\Rightarrow$(ii). On Lipschitz $\C$-convex domains, Gromov hyperbolicity forces a power bound of the form \eqref{eq:mCconvex}. To state the result, we need some terminology. A complex affine hyperplane through a boundary point is a \emph{complex support} if it does not meet the domain. In a Lipschitz boundary chart, the domain is the epigraph of a Lipschitz function in a unit real direction $e$. An \emph{admissible frame} $(p,e,p+V_p,v)$ consists of a boundary point $p$ in such a chart, its inward chart direction $e$, a complex support $p+V_p$ at $p$, and a unit vector $v\in V_p$. The precise conventions are given in Section~\ref{sec:lipschitz}.

\begin{theorem}[Directional power law on Lipschitz domains]\label{thm:lipschitz}
Let $\Omega\Subset\C^n$ be $\C$-convex with Lipschitz boundary, suppose that $(\Omega,k_\Omega)$ is Gromov hyperbolic, and fix a finite Lipschitz boundary atlas. There are $C>0$, $A\ge1$, $t_1>0$ and $0<\alpha<1/2$, depending only on $\Omega$ and the atlas, such that
\[
 r_\Omega(z;v)\le C\delta_\Omega(z)^\alpha,\qquad z\in\Omega,\quad v\ne0,
\]
and, for every admissible frame $(p,e,p+V_p,v)$ of this atlas,
\begin{equation}\label{eq:lipschitz-relative-intro}
 r_\Omega(p+se;v)\le A(s/t)^\alpha r_\Omega(p+te;v),
 \qquad 0<s\le t\le t_1.
\end{equation}
\end{theorem}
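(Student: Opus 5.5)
The plan is to prove the relative estimate \eqref{eq:lipschitz-relative-intro} first and obtain the global bound from it. To get the relative estimate, I would show a one-step contraction: the directional radius shrinks by a fixed factor along the inward chart ray each time the depth is divided by a large constant. Iterating this gives the power law.

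\emph{Step 1: comparison with a model gauge.} On a $\C$-convex domain, the estimates of Nikolov, Pflug and Zwonek \cite{NPZ11} give
\[
\kappa_\Omega(z;v)\asymp |v|/r_\Omega(z;v).
\]
Consequently $k_\Omega$ is bi-Lipschitz to the length distance of the gauge $|v|/r_\Omega(z;v)$. Gromov hyperbolicity is invariant under quasi-isometry between geodesic spaces, so we may work with this gauge distance. Along the chart ray $\gamma(s)=p+se$, the Lipschitz chart gives $\delta_\Omega(p+se)\asymp s$. Together with the normal gauge $\kappa_\Omega(z;e)\asymp 1/\delta_\Omega$, this shows that $\gamma$, parametrized by $\log(1/s)$, is a quasi-geodesic ray converging to $p$.

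\emph{Step 2: the contraction by contradiction.} I claim there are $N$ and $\theta<1$ such that, for every admissible frame and $s\le t_1$,
\[
r_\Omega(p+(s/N)e;v)\le \theta\, r_\Omega(p+se;v).
\]
If this fails, choose frames $(p_j,e_j,p_j+V_j,v_j)$, depths $s_j$ and $N_j\to\infty$ for which the radius in direction $v_j$ stays comparable between the depths $s_j/N_j$ and $s_j$. Rescale affinely. The normal direction is scaled by $1/s_j$, and the direction $v_j$ by $1/r_\Omega(p_j+s_je_j;v_j)$, with the remaining complex directions normalized through a minimal basis of the complex support $V_j$. The rescaled domains are $\C$-convex and have uniformly controlled Lipschitz graphs. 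After passing to a subsequence, they converge in the local Hausdorff sense to a $\C$-convex limit $\Omega_\infty$. The Kobayashi distances converge, and the limit is again Gromov hyperbolic because the constant $\delta_0$ is uniform; this uses the standard stability of $\C$-convex rescalings, as in Zimmer \cite{Zim17} and \cite{BGNT24}. Since the radius is comparable over the whole depth range $[0,s_j]$ after $N_j\to\infty$, the limit domain contains, in the boundary of its complex support, a complex affine disc through the limit boundary point in direction $v_\infty$, of radius bounded below. This contradicts \cite[Proposition~4.2]{BGNT24}, which says that Gromov hyperbolic Lipschitz $\C$-convex domains have no nontrivial affine discs in the boundary.

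\emph{Step 3: iteration.} Iterating Step 2 gives
\[
r(p+se;v)\le\theta^{k}\,r(p+te;v)\qquad\text{for } s\le t/N^k.
\]
This is \eqref{eq:lipschitz-relative-intro} with $\alpha=\log(1/\theta)/\log N$ and $A=1/\theta$. If necessary, increase $N$ so that $\alpha<1/2$; this is harmless, since a weaker exponent is still admissible. For the global bound, given $z$ near $\partial\Omega$, I take $p$ to be the chart projection of $z$, so that $z=p+se$ with $s\asymp\delta_\Omega(z)$. By $\C$-convexity there is a complex support $p+V_p$. For $v\in V_p$, apply the relative bound with $t=t_1$, where $r$ is bounded by $\diam\Omega$. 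For a general $v$, write $v$ as its component in $V_p$ plus a transversal component. The transversal radius is $\lesssim\delta_\Omega(z)\le\delta_\Omega(z)^\alpha$, since the support hyperplane lies at distance $\lesssim s$ along that slice. For the combination I use the standard $\C$-convex fact that $r(z;\cdot)^{-1}$ is comparable to a norm, as in the minimal-basis estimates of \cite{NPZ11}. Points away from the boundary are trivial.

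\emph{The main obstacle} is Step 2: choosing the rescaling so that the limit is nondegenerate and $\C$-convex, retains Gromov hyperbolicity, and actually contains a boundary affine disc. The first approach I would try is the convergence framework of \cite{BGNT24}; if it is not available in the needed form, I would fall back on a direct thin-triangle argument. That argument would use two quasi-geodesic rays from $p+s_je_j$ and $p+s_je_j+\zeta v_j$ toward the boundary, which stay at bounded distance for a time of order $\log N_j\to\infty$, contradicting the exponential divergence of rays in a hyperbolic space.
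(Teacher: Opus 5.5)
Your outer structure (a one-step contraction along the inward chart ray, iteration, then passage to all directions through a norm) matches the paper's, but Step~2 has a genuine gap.

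\textbf{The contradiction in the limit is not available.} Even granting that your rescalings converge to a Gromov hyperbolic $\C$-convex limit, you plan to exclude an affine disc in its boundary by \cite[Proposition~4.2]{BGNT24}. That result concerns bounded domains with Lipschitz boundary, and your limit is neither.
\begin{enumerate}
\item It is unbounded.
\item The claim of ``uniformly controlled Lipschitz graphs'' is false. On the two-plane $\Pi(\xi,\zeta)=p+\xi e+\zeta v$ the domain is $\{\Ree\xi>F(\Imm\xi,\zeta)\}$ (Lemma~\ref{lem:oblique-chart}). Your rescaling replaces $F$ by $F_j(y,\zeta)=F(s_jy,r_j\zeta)/s_j$. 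This keeps the Lipschitz constant in $y$, but not the one in $\zeta$. Take ratios $\theta_j\to1$, as the negation of your claim allows. Then $F_j(0,\cdot)<1/N_j$ on $\abs\zeta<\theta_j$, while $F_j(0,\zeta_j^*)=1$ at the first contact point, where $\abs{\zeta_j^*}=1$. So the Lipschitz constants in $\zeta$ are at least $(1-1/N_j)/(1-\theta_j)\to\infty$.
\end{enumerate}
Nor can an affine disc in the boundary by itself contradict Gromov hyperbolicity. The projective cone $U_*$ of Remark~\ref{rem:projective-cone} is $\C$-convex and Gromov hyperbolic. Along $(-it,0)$ it has exactly your non-contracting profile: $r_{U_*}((-it,0);v)=1-t$ for unit $v\in\{0\}\times\C^{n-1}$. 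What distinguishes the Lipschitz case is the one piece of structure your rescaling does preserve, namely the uniform Lipschitz bound of $F$ in $\Imm\xi$. For $U_*$ this bound degenerates as $\norm{w'}\to1$, and your argument never uses it.

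\textbf{The fallback does not close the gap.} Two quasigeodesic rays that stay at bounded distance over a long stretch are no contradiction in a hyperbolic space, since asymptotic rays do exactly that. You need sides that stay far apart along their whole length, at a distance growing with the scale. The paper gets this at one fixed logarithmic scale, with no limit, assuming $R(\theta r)\ge(1-\theta)R(r)$:
\begin{enumerate}
\item The Lipschitz bound in $\Imm\xi$ thickens the discs into polydiscs $\Pi(D(u,c_0u)\times D(0,R_-))\subset\Omega$.
\item A second complex support at the first contact point $q=\Pi(r,R)$ is normalized by Lemma~\ref{lem:polydisc-support} as $\zeta-R-A(\xi-r)$ with $\abs A r\le c_0^{-1}\theta R$.
\item The polydiscs give upper bounds on the Kobayashi metric. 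The two supports, through \eqref{eq:k-support}, give lower bounds. Together these make the four sides of the rectangle with vertices $\Pi(r,0)$, $\Pi(r,(1-h)R)$, $\Pi(b,(1-h)R)$, $\Pi(b,0)$ uniform quasigeodesics. They also put the midpoint of one vertical side at distance more than the thinness constant from the other three sides.
\end{enumerate}
The second support at the contact point, with this slope control, is the missing idea.

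\textbf{Two smaller points.}
\begin{enumerate}
\item Your iteration for all $s\le t/N^k$ needs the radius to be monotone in the depth. This follows from the sublevel-set description of the centered discs (Lemma~\ref{lem:uniform-shrinking}).
\item For general $v=\lambda e+w_T$, separate lower bounds for the norm $\norm{\cdot}/r_\Omega(z;\cdot)$ on $e$ and on $w_T$ do not combine, because a norm can be large on two vectors and small on their sum. What works is $\norm v/r_\Omega(z;v)\ge\abs\lambda/s$, from the support at $p$ via Lemma~\ref{lem:support-norm}, together with $1/r_\Omega(z;e)\le1/(c_0s)$ from the chart disc. Then the normal part is absorbed.
\end{enumerate}
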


Theorem~\ref{thm:lipschitz} extends the directional power bound of Zimmer \cite[Corollary~7.2]{Zim22} and the two-depth estimate of Wang and Zimmer \cite[Theorem~1.5]{WZ24} (for $k=1$) from convex domains to Lipschitz $\C$-convex domains. Combined with Theorem~\ref{thm:directional}, it gives subelliptic estimates of every gain below $\alpha$ and $N_q\in\mathfrak S_p$ for $p>n/\alpha$ (Corollary~\ref{cor:lipschitz-subelliptic}). The Lipschitz hypothesis cannot be dropped. A bounded projective image of Zimmer's cone \cite{Zim17} is a $\C$-convex domain with Gromov hyperbolic Kobayashi distance on which no power bound holds (Remark~\ref{rem:projective-cone}).

The power bound also controls the correspondence between the Euclidean boundary and the Gromov boundary. Let $\Omega$ be a bounded Lipschitz $\C$-convex domain such that $(\Omega,k_\Omega)$ is Gromov hyperbolic, and let $m$ be as in \eqref{eq:mCconvex}. Fix $o\in\Omega$ and $R>\max\{1,3\diam\Omega\}$. By \cite[Lemma~2.3]{BZ17}, there are $A\ge0$ and $B>0$ with $k_\Omega(o,z)\le A+B\log(R/\delta_\Omega(z))$. Theorem~\ref{thm:boundary-comparison} gives
\[
 \frac14\log\frac{R}{\norm{x-y}+\delta_\Omega(x)+\delta_\Omega(y)}-C
 \le (x\mid y)_o
 \le Bm\log\frac{R}{\norm{x-y}+\delta_\Omega(x)+\delta_\Omega(y)}+C,
\]
where $(x\mid y)_o$ is the Gromov product. Consequently, the identity map extends to a homeomorphism between the Euclidean and the Gromov compactifications, and its restriction to the boundaries is bi-H\"older for every visual metric. The qualitative identification of the two compactifications is due to Bracci, Gaussier, Nikolov and Thomas \cite[Proposition~4.4]{BGNT24}. Bi-H\"older comparisons of this kind were obtained by Liu, Wang and Zhou for convex Dini-smooth domains \cite[Proposition~1.8]{LWZ22} and by Liu, Pu and Wang for smooth pseudoconvex domains of finite type in $\C^2$ \cite[Theorems~1.1 and~1.2]{LPW23}. On smooth $\C$-convex domains of finite type we give the exponents in terms of the maximal type (Corollary~\ref{cor:smooth}).

\subsection{Critical exponents}\label{sec:critical-intro}
Let $\Omega$ be as in Theorem~\ref{thm:main} and of finite type, and put $M=\max_{p\in\partial\Omega}\Delta_1(\partial\Omega,p)$. Three exponents can be attached to $\Omega$. The first is the supremum of the $\alpha$ with $r_\Omega\le C\delta_\Omega^\alpha$. The second is the supremum of the rates $\lambda$ at which the directional gauge expands along inward normals. The third is the supremum of the subelliptic gains in degree one. Corollary~\ref{cor:critical} shows that all three are equal to $1/M$, and that the first two suprema are attained. That no gain exceeds $1/M$ also follows from Catlin's necessary condition \cite{Cat83}. A subelliptic estimate of gain exactly $1/M$ is known in two cases. For smoothly bounded convex domains of finite type it follows from McNeal's weights \cite{McN92} and Catlin's criterion \cite{Cat87}. For smoothly bounded pseudoconvex domains of finite type in $\C^2$ it follows from the work of Kohn \cite{Koh72} together with that of Rothschild and Stein \cite{RS76}, and the fact that $1/M$ cannot be exceeded in $\C^2$ is due to Greiner \cite{Gre74}.

\subsection{Further results}
The geometric part of the article treats the generalized complex quasihyperbolic gauges of Wang and Zimmer \cite{WZ24} on domains with global complex supports. Theorem~\ref{thm:planes} characterizes Gromov hyperbolicity of the generalized $k$-dimensional distances by finite contact with affine $k$-planes, and it identifies the sharp normal expansion exponent. For convex domains this characterization is \cite[Theorem~1.6]{WZ24}. The Hardy collar criterion (Theorem~\ref{thm:hardy}) gives Gromov hyperbolicity of a collar metric from an integral condition on the transverse gauge, and it needs no convexity. Theorem~\ref{thm:curve-type} shows that this criterion, together with a comparison on Kobayashi balls of fixed radius, bounds the D'Angelo type.

On the analytic side we prove a criterion that depends on the form degree (Theorem~\ref{thm:partial-collapse}) and spectral bounds (Theorem~\ref{thm:spectral}). Condition~$R$ and the smooth extension of biholomorphisms on smooth $\C$-convex domains of finite type follow from our estimates in the classical way (Remark~\ref{rem:condition-R}).

\subsection{Relation to earlier work}\label{sec:intro-prior}
Balogh and Bonk \cite{BB00} proved that the Kobayashi distance of a bounded strongly pseudoconvex domain with $C^2$ boundary is Gromov hyperbolic. Besides the works cited in Remark~\ref{rem:intro-new}, we refer the reader to Gaussier--Seshadri \cite{GS18}, Nikolov--Thomas--Trybu\l a \cite{NTT16}, Andreev--Nikolov--Trybu\l a \cite{ANT19} and to the survey of Zimmer \cite{Zim17s}. Zhang \cite{Zha22} proved Gromov hyperbolicity for bounded pseudoconvex domains of finite type whose Levi form has corank one at every boundary point. Nikolov and Trybu\l a \cite{NT18} showed that removing a complex affine hyperplane from a bounded $\C$-convex domain destroys Gromov hyperbolicity. Our geometric arguments use the expanding-collar method of Wang and Zimmer \cite{WZ24}, with global complex supports in place of real supporting hyperplanes, and they use the results of Bharali--Zimmer \cite{BZ17} and Bracci--Gaussier--Nikolov--Thomas \cite{BGNT24} on visibility and the Gromov boundary. For related results on the boundary extension of isometries we refer the reader to \cite{BGZ21,BNT22,CMS24}. The local dependence of the hyperbolicity constants on the type of a convex domain is studied in \cite{LQWZ26}.

The basic estimates for invariant metrics and minimal bases on $\C$-convex domains are due to Nikolov, Pflug and Zwonek \cite{NPZ11}. Their convex predecessors include McNeal \cite{McN94} and Nikolov--Pflug \cite{NP03}, and the relation between different extremal bases is discussed in \cite{NPT13}. For bounded domains with $C^1$ boundary, $\C$-convexity is the same as lineal convexity \cite{APS04}, and there is a substantial theory of lineally convex domains of finite type. Extremal bases and the multitype were introduced by Conrad \cite{Con02}, holomorphic support functions were constructed by Diederich and Forn\ae ss \cite{DF03}, H\"older estimates for $\ddbar$ were proved by Diederich and Fischer \cite{DFi06}, and adapted plurisubharmonic functions on geometrically separated domains were constructed by Charpentier and Dupain \cite{CD14}. These works construct weights and kernels adapted to the anisotropic geometry of the boundary. Our weight is different in nature. It is a single explicit transform of $\log K_\Omega$, and its Hessian bound uses only the minimal-basis estimates of \cite{NPZ11} and the self-bounded gradient of $\log K_\Omega$ proved by Zimmer \cite[Proposition~4.12]{Zim21}.

For the theory of subelliptic estimates we also refer the reader to Catlin--D'Angelo \cite{CD10}, to Kohn's multiplier method \cite{Koh79} and to McNeal's work on convex domains \cite{McN92,McN02b}. In our proof of (ii)$\Rightarrow$(iii), Catlin's construction of weights is replaced by the transform of $\log K_\Omega$ described above. The transform of a potential with self-bounded gradient goes back to McNeal \cite{McN02a}, and estimates with general and logarithmic gains from such weights were developed by Khanh and Zampieri \cite{KZ10,KZ12}. The passage from the weight to the estimate is Theorem~\ref{thm:zero-extension}, which extends Straube's theorem \cite{Str97} and is proved by his method. In our proof of (iii)$\Rightarrow$(ii), Catlin's necessary conditions are replaced by the regularity theorem of Kohn and Nirenberg \cite{KN65} and by canonical solutions with Bergman-kernel data. The same conclusion can be reached through McNeal's lower bound for the Bergman metric \cite{McN92a}, see also Khanh--Zampieri \cite{KZ12}. Sobolev estimates on pseudoconvex domains without boundary regularity were obtained earlier by Berndtsson and Charpentier \cite{BC00} for the Bergman projection.

The weighted identity is due to Morrey \cite{Mor58} and Kohn \cite{Koh63}, in the weighted form of H\"ormander \cite{Hor65}. We also refer the reader to Folland--Kohn \cite{FK72} and McNeal--Varolin \cite{MV15}. Estimates on nonsmooth strictly pseudoconvex domains were obtained by Henkin--Iordan--Kohn \cite{HIK96} and Michel--Shaw \cite{MS98,MS01}, and bounded plurisubharmonic functions on such domains were studied by Harrington \cite{Har07} and Harrington--Shaw \cite{HS07}. Degree-dependent positivity and self-bounded gradients appear in \cite{Her07,KZ11,McN02a}. For compactness of $N_q$ without any rate we refer the reader to Zimmer's metric characterization \cite{Zim21}, which extends the theorem of Fu and Straube on convex domains \cite{FS98}, and to Boas \cite{Boa88} for small exceptional sets of infinite type. For spectral properties of the $\ddbar$-Neumann Laplacian we refer the reader to Fu \cite{Fu08} and to G\"o\u{g}\"u\c{s}--\c{S}ahuto\u{g}lu \cite{GoS18}.

\subsection{Plan of the paper}
We now present the plan of this article. Part~\ref{part:geometry}, which consists of Sections~\ref{sec:gauges} to~\ref{sec:comparison}, develops the directional geometry. There we treat complex supports and the generalized gauges, the Hardy collar criterion, finite contact with affine planes, the power law on Lipschitz domains, and the comparison of the Euclidean and Gromov boundaries. Part~\ref{part:analysis}, which consists of Sections~\ref{sec:prelim} to~\ref{sec:zimmer-comparison}, proves Theorem~\ref{thm:zero-extension}, the spectral bounds, the Bergman-kernel weights and Theorem~\ref{thm:directional}. Part~\ref{part:three}, which consists of Section~\ref{sec:necessity}, proves the converse analytic implication, completes the proof of Theorem~\ref{thm:main} and identifies the critical exponents.

\part{Directional geometry and Gromov hyperbolicity}\label{part:geometry}

\section{Directional gauges and complex supports}\label{sec:gauges}
In this section, we introduce the directional gauges of Wang and Zimmer and the complex supports that take the place of real supporting hyperplanes throughout the geometric part of the article. We show that, on a bounded domain each of whose boundary points admits a complex support, the gauge $\qone$ is a complex norm in its vector variable, the gauges $\qk$ are continuous and satisfy a quasi-triangle inequality, and the integrated distances $d_\Omega^{(k)}$ are proper and geodesic. We end the section with the splitting of the gauges into a normal part and a tangential part in a normal collar. We begin with notation.

Write $\Gr_\C(k,n)$ for the Grassmannian of complex $k$-dimensional linear subspaces. The complex tangent space is $T_p^\C\partial\Omega=\ker\partial\rho(p)$. For a complex $k$-plane $V$, we let
\[
 R_\Omega(z;V)=\dist\bigl(z,(z+V)\cap\partial\Omega\bigr),\qquad
 \delta_\Omega^{(k)}(z;v)=\sup_{\substack{V\in\Gr_\C(k,n)\\v\in V}}R_\Omega(z;V).
\]
The first quantity is the distance from the point to the boundary inside the affine plane through the point parallel to the given plane. The second quantity is the supremum of the first over all planes of the given dimension that contain the given direction. The generalized complex $k$-quasihyperbolic gauge of Wang and Zimmer \cite{WZ24} is
\[
 q_\Omega^{(k)}(z;v)=\frac{\norm v}{\delta_\Omega^{(k)}(z;v)},\qquad v\ne0,
\]
with value zero at $v=0$. We denote its integrated distance by $d_\Omega^{(k)}$.

When the boundary is $C^2$, we let $\nu$ be the outward unit normal, and we choose $r_0>0$ below a tubular-neighborhood radius. Throughout the geometric part of the article we use the normal collar
\begin{equation}\label{eq:collar-intro}
 p_r=p-r\nu(p),\qquad \Psi(p,t)=p_{r_0e^{-t}},\qquad
 H_t^{(k)}(p;v)=q_\Omega^{(k)}\bigl(\Psi(p,t);D_p\Psi(p,t)v\bigr),
\end{equation}
where $v\in T_p\partial\Omega$ is a real tangent vector. The parameter $t$ is thus the logarithmic Euclidean depth of the point of the collar. For a smooth defining function $\rho$, we write
\[
 M_k(\Omega)=\sup_{\substack{p\in\partial\Omega\\E^*E=I_k}}
 \ord_0\bigl(\rho(p+E\zeta)\bigr),
\]
where $E:\C^k\to\C^n$ is complex linear. The order of vanishing in this definition does not depend on the defining function, and it does not depend on the orthonormal coordinates chosen on the plane.

We also fix the following terminology. A symmetric homogeneous gauge $F$ is a nonnegative function on tangent vectors with $F(z;av)=|a|F(z;v)$ for real $a$. Its integrated distance is the infimum of $\int F(\sigma;\sigma')$ over all absolutely continuous curves joining the two given points. A metric is proper when its closed bounded balls are compact. It is geodesic when every pair of points is joined by a distance-realizing segment. We write $A\asymp B$ for a uniform two-sided comparison by positive constants. We write $B(a,r)$ for a Euclidean ball, and $\B^n=B(0,1)\subset\C^n$.

Euclidean norms are taken with respect to the standard Hermitian structure on $\C^n$, and the inner product is linear in its first variable. For a bounded domain $\Omega$, the directional radius in \eqref{eq:radius} equals
\[
 \dist\bigl(z,(z+\C v)\cap\partial\Omega\bigr).
\]
To see this, note that a centered disc of any smaller radius cannot leave $\Omega$ without meeting its boundary. On the other hand, a centered disc of larger radius contains a nearest boundary point in the line. The same argument shows that $R_\Omega(z;V)$ is the radius of the largest centered Euclidean ball in $z+V$ contained in $\Omega$. In particular, we have $q_\Omega^{(1)}(z;v)=\norm v/r_\Omega(z;v)$ and $q_\Omega^{(n)}(z;v)=\norm v/\delta_\Omega(z)$.

In what follows, we shall use the gauges $q_\Omega^{(k)}$ and the distances $d_\Omega^{(k)}$ defined above. The elementary bounds for these gauges are
\begin{equation}\label{eq:elementary}
 \frac{\norm v}{\diam\Omega}\le\qone(z;v)\le\qk(z;v)
 \le\frac{\norm v}{\delta_\Omega(z)}.
\end{equation}
We now introduce complex supports. A complex affine hyperplane $H$ is a \emph{support} for $\Omega$ if $H\cap\Omega=\varnothing$ and $H\cap\partial\Omega\ne\varnothing$. The geometric notion of $\C$-convexity goes back to Behnke and Peschl \cite{BP35}, to Martineau \cite{Mar68} and to Znamenskij \cite{Zna79}, and we refer the reader to \cite{APS04,Hor94} for the facts that we use. By a theorem of Andersson, Passare and Sigurdsson, and of H\"ormander \cite{APS04,Hor94}, every $\C$-convex domain is linearly convex, that is, every point of its complement lies on a complex affine hyperplane disjoint from the domain. In particular, every boundary point of a $\C$-convex domain admits a complex support. At a boundary point $p$ where $\partial\Omega$ is the graph of a function differentiable at $p$, such a support is necessarily the complex tangent hyperplane. Indeed, a complex hyperplane through $p$ which avoids $\Omega$ contains no direction transversal to the real tangent hyperplane at $p$, since every such direction enters $\Omega$, and the only complex hyperplane inside the real tangent hyperplane is the complex tangent hyperplane. Consequently, a $\C$-convex domain with $C^1$ boundary satisfies the \emph{support condition}
\begin{equation}\label{eq:global-support}
 (p+T_p^\C\partial\Omega)\cap\Omega=\varnothing,\qquad p\in\partial\Omega.
\end{equation}

Our first result expresses $\qone$ through the family of all complex affine hyperplanes that miss the domain. This representation gives the triangle inequality and the continuity of the gauge at the same time.

\begin{lemma}[The supporting norm]\label{lem:support-norm}
Suppose that $\Omega$ is bounded and every boundary point admits a complex support. Let $\mathcal H$ be the family of all complex affine hyperplanes disjoint from $\Omega$, and write $\phi_H(z)=\ell_H(z)-c_H$ for an affine equation of $H$. Then
\begin{equation}\label{eq:support-representation}
 \qone(z;v)=\sup_{H\in\mathcal H}\frac{\abs{\ell_H(v)}}{\abs{\phi_H(z)}}.
\end{equation}
Consequently, $\qone(z;\cdot)$ is a complex norm, and $\qone$ is continuous on $\Omega\times\C^n$.
\end{lemma}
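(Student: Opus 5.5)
The plan is to prove the representation \eqref{eq:support-representation} by a two-sided inequality, and then to read off the norm property and continuity from it.

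\emph{Upper bound for the supremum.} Fix $z\in\Omega$, $v\ne0$ and $H\in\mathcal H$. Consider the affine function $\zeta\mapsto\phi_H(z+\zeta v/\norm v)=\phi_H(z)+\zeta\,\ell_H(v)/\norm v$ on $\C$. Since $H$ misses $\Omega$, this function has no zero on the disc $\abs\zeta<r_\Omega(z;v)$. If $\ell_H(v)\ne0$, its unique zero lies at $\abs\zeta=\norm v\abs{\phi_H(z)}/\abs{\ell_H(v)}$, so this quantity is at least $r_\Omega(z;v)$. Rearranging gives $\abs{\ell_H(v)}/\abs{\phi_H(z)}\le\norm v/r_\Omega(z;v)=\qone(z;v)$. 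The case $\ell_H(v)=0$ is trivial. Note that $\phi_H(z)\ne0$ because $z\notin H$.

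\emph{Lower bound.} By the description of $r_\Omega$ given above, there is a boundary point $p=z+\zeta_0 v/\norm v$ with $\abs{\zeta_0}=r_\Omega(z;v)$. By hypothesis $p$ admits a complex support $H\in\mathcal H$ through $p$. The affine function $\zeta\mapsto\phi_H(z+\zeta v/\norm v)$ vanishes at $\zeta_0$ and not at $0$. Hence $\ell_H(v)\ne0$, and
\[
\frac{\abs{\ell_H(v)}}{\abs{\phi_H(z)}}=\frac{\norm v}{\abs{\zeta_0}}=\qone(z;v).
\]
Thus the supremum is attained and equals $\qone$. The boundedness of $\Omega$ ensures that $r_\Omega<\infty$ and that such a point $p$ exists.

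\emph{Consequences.} For each $H$, the map $v\mapsto\abs{\ell_H(v)}/\abs{\phi_H(z)}$ is a complex seminorm. A supremum of complex seminorms is therefore a complex seminorm, which gives complex homogeneity and the triangle inequality. Definiteness follows from \eqref{eq:elementary}, namely $\qone(z;v)\ge\norm v/\diam\Omega$, which also shows that the supremum is finite. For continuity, normalize $\abs{\ell_H}=1$ as a linear functional. The bound of the first step then gives $\abs{\phi_H(z)}\ge r_\Omega(z;\ell_H^*)\ge\delta_\Omega(z)$, uniformly in $H$. Near a point $(z_0,v_0)$, the family $\{(z,v)\mapsto\abs{\ell_H(v)}/\abs{\phi_H(z)}\}_{H\in\mathcal H}$ is then equi-Lipschitz on a compact neighborhood $\overline{B(z_0,\delta_\Omega(z_0)/2)}\times\overline{B(v_0,1)}$. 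Its denominators are bounded below by $\delta_\Omega(z_0)/2$, and the numerators and their derivatives are uniformly bounded. A supremum of an equi-Lipschitz family is Lipschitz, which gives continuity on $\Omega\times\C^n$.

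\emph{Main obstacle.} The only delicate point is the uniform lower bound on $\abs{\phi_H(z)}$ needed for equicontinuity. I would handle it as indicated: $\abs{\phi_H(z)}/\abs{\ell_H}$ is the Euclidean distance from $z$ to $H$, which is at least $\delta_\Omega(z)$ because $H\cap\Omega=\varnothing$.
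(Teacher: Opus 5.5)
Your proof is correct. The two-sided argument for \eqref{eq:support-representation} and the seminorm argument are essentially the paper's.

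Where you differ is continuity. The paper argues by semicontinuity. The supremum formula gives lower semicontinuity. Upper semicontinuity away from $v=0$ comes from lower semicontinuity of $r_\Omega$: a compact centered disc strictly inside $\Omega$ stays inside after small changes of center and direction. The upper bound in \eqref{eq:elementary} handles $v=0$.

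You instead normalize $\norm{\ell_H}=1$, so that $\abs{\phi_H(z)}=\dist(z,H)\ge\delta_\Omega(z)$ uniformly in $H$. This makes the functions $\abs{\ell_H(v)}/\abs{\phi_H(z)}$ equi-Lipschitz on $\overline{B(z_0,\delta_\Omega(z_0)/2)}\times\overline{B(v_0,1)}$. That route treats $v=0$ with no separate case. It also gives more: $\qone$ is locally Lipschitz on $\Omega\times\C^n$, with a constant depending only on $\delta_\Omega(z_0)$ and $\norm{v_0}$. The paper's route is shorter and purely topological, using nothing beyond the openness of $\Omega$, but it yields only continuity.

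One small slip: finiteness of the supremum does not follow from the lower bound $\qone(z;v)\ge\norm v/\diam\Omega$, which gives only definiteness. It follows from the upper bound $\qone(z;v)\le\norm v/\delta_\Omega(z)$ in \eqref{eq:elementary}, or simply from the equality you proved.
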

\begin{proof}
We first prove the representation formula. Fix a hyperplane in the family. If $\ell_H(v)\ne0$, the line $z+\C v$ meets $H$ at parameter $-\phi_H(z)/\ell_H(v)$. The largest centered disc of this line in $\Omega$ cannot contain that point, because the hyperplane is disjoint from the domain. This proves that the right side of \eqref{eq:support-representation} does not exceed the left side. For the reverse inequality, take a nearest boundary point $a=z+\zeta_0v$ in the line, and take a complex support $H$ through $a$. The equality $\ell_H(v)=0$ would force $z\in H$, which is impossible because the support is disjoint from the domain. Hence
\[
 \abs{\phi_H(z)}=\abs{\zeta_0}\abs{\ell_H(v)},\qquad
 \abs{\zeta_0}=r_\Omega(z;v)/\norm v.
\]
This gives equality in the representation formula.

We now prove the remaining assertions. A supremum of absolute values of complex linear functionals is subadditive and complex homogeneous. Positivity of $\qone(z;\cdot)$ follows from \eqref{eq:elementary}. Hence $\qone(z;\cdot)$ is a complex norm. It remains to prove continuity. The supremum formula gives lower semicontinuity. For the upper semicontinuity, observe that a compact centered disc strictly inside $\Omega$ remains inside after a sufficiently small change of center and direction. Thus the directional radius is lower semicontinuous, and the gauge is upper semicontinuous away from $v=0$. The upper bound in \eqref{eq:elementary} gives continuity at $v=0$.
\end{proof}

The infinitesimal Carath\'eodory metric is $\gamma_\Omega(z;v)=\sup\{|df_z(v)|:f\in\OO(\Omega,\D),\ f(z)=0\}$. The next lemma records the distance estimates that complex supports provide. The first estimate holds for the integrated distance of every gauge. The last two estimates rest on a comparison between the invariant metrics and $\qone$, which we recall in the proof.
\begin{lemma}[Global support estimates]\label{lem:global-support}
Under the hypotheses of Lemma~\ref{lem:support-norm}, for every $H\in\mathcal H$, $1\le k\le n$, and $z,w\in\Omega$,
\begin{equation}\label{eq:dk-support}
 \dk(z,w)\ge\left|\log\frac{\abs{\phi_H(z)}}{\abs{\phi_H(w)}}\right|.
\end{equation}
If $\Omega$ is $\C$-convex, then
\begin{equation}\label{eq:invariant-comparison}
 \tfrac14\qone\le\gamma_\Omega\le\kappa_\Omega\le\qone
\end{equation}
and
\begin{equation}\label{eq:k-support}
 k_\Omega(z,w)\ge\tfrac14\left|\log\frac{\abs{\phi_H(z)}}{\abs{\phi_H(w)}}\right|.
\end{equation}
\end{lemma}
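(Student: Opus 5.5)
For \eqref{eq:dk-support} I would reduce everything to a single one-variable computation along curves. Fix $H\in\mathcal H$ and set $f=\log\abs{\phi_H}$. This function is defined and smooth on $\Omega$, because $\phi_H$ does not vanish there.

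Let $\sigma$ be an absolutely continuous curve from $z$ to $w$. Its derivative satisfies
\[
 \Bigl|\tfrac{d}{dt}f(\sigma(t))\Bigr|\le\frac{\abs{\ell_H(\sigma'(t))}}{\abs{\phi_H(\sigma(t))}}\le \qone(\sigma;\sigma')\le\qk(\sigma;\sigma').
\]
The middle inequality is the easy half of the representation \eqref{eq:support-representation} in Lemma~\ref{lem:support-norm}. The last one is \eqref{eq:elementary}. Integrating along $\sigma$ and taking the infimum over curves gives $\dk(z,w)\ge\abs{f(z)-f(w)}$, which is \eqref{eq:dk-support}.

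For \eqref{eq:invariant-comparison}, the inequality $\gamma_\Omega\le\kappa_\Omega$ holds on every domain. For $\kappa_\Omega\le\qone$, I would use the disc $\zeta\mapsto z+r\zeta v/\norm v$ with $r<r_\Omega(z;v)$. It maps $\D$ into $\Omega$, so $\kappa_\Omega(z;v)\le\norm v/r$, and letting $r\to r_\Omega(z;v)$ gives the bound.

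The lower bound $\gamma_\Omega\ge\tfrac14\qone$ is the substantive step, and I see two routes to it.

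The first route is to quote the known estimate for $\C$-convex domains from \cite{NPZ11}, which I expect is exactly this constant $1/4$.

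The second route, which I would try first, is to produce it directly. Take a nearest boundary point $a=z+\zeta_0 v$ on the line $z+\C v$ and a complex support $H$ through $a$. Consider the map
\[
 g(x)=\frac{\ell_H(x)-c_H}{\ell_H(z)-c_H}.
\]
The projection of $\Omega$ under $x\mapsto\ell_H(x)$ is a domain in $\C$ omitting $c_H$. I would then need that this projection is simply connected. This holds because projections of $\C$-convex domains are $\C$-convex \cite{APS04}, and a planar $\C$-convex domain is simply connected. Granting that, $g(\Omega)$ is a simply connected domain omitting $0$ and containing $1$. By the Koebe quarter theorem, $\gamma_{g(\Omega)}(1;1)\ge\frac{1}{4\,\dist(1,\partial g(\Omega))}\ge\frac14$, where the last step uses $0\notin g(\Omega)$.

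Pulling back by $g$, the Carathéodory metric decreases under holomorphic maps, so
\[
 \gamma_\Omega(z;v)\ge\gamma_{g(\Omega)}(1;dg(v))\ge\frac{\abs{\ell_H(v)}}{4\abs{\phi_H(z)}}.
\]
The right-hand side equals $\tfrac14\qone(z;v)$ for the extremal $H$ found in the proof of Lemma~\ref{lem:support-norm}.

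\textbf{Main obstacle.} This Koebe/simple-connectivity step is where I expect the difficulty. The key justification is the $\C$-convexity of the projections, and otherwise I would simply cite \cite{NPZ11}.

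Finally, \eqref{eq:k-support} follows by rerunning the curve argument for \eqref{eq:dk-support} with $\kappa_\Omega\ge\tfrac14\qone$ in place of $\qone$. Along a curve, $\bigl|\tfrac{d}{dt}f(\sigma(t))\bigr|\le\qone\le4\kappa_\Omega$. Since $k_\Omega$ is the integrated distance of $\kappa_\Omega$, integrating and taking the infimum gives $k_\Omega(z,w)\ge\tfrac14\abs{f(z)-f(w)}$.
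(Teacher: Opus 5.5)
Your proposal is correct and follows the paper's proof. The derivative bound for $\log\abs{\phi_H}$ along absolutely continuous curves, integrated and then infimized over curves, gives \eqref{eq:dk-support}, and the same argument with $\qone\le4\kappa_\Omega$ gives \eqref{eq:k-support}.

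For \eqref{eq:invariant-comparison} the paper simply cites \cite[Proposition~1]{NPZ11}. Your direct argument is essentially the proof of that cited proposition. It projects along the extremal support, uses that linear images of $\C$-convex domains are $\C$-convex \cite{APS04} and hence simply connected in $\C$, and then applies Koebe. So either of your two routes works.
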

\begin{proof}
For an absolutely continuous curve $\sigma$ in $\Omega$, we have almost everywhere
\[
 \left|\frac{d}{du}\log\abs{\phi_H(\sigma(u))}\right|
 \le\frac{\abs{\ell_H(\sigma'(u))}}{\abs{\phi_H(\sigma(u))}}
 \le\qone(\sigma(u);\sigma'(u))\le\qk(\sigma(u);\sigma'(u)).
\]
The second inequality is the representation formula of the previous lemma, and the third is the elementary comparison between the gauges. We integrate along the curve and take the infimum over all curves joining the two points to obtain \eqref{eq:dk-support}. The invariant comparison is \cite[Proposition~1]{NPZ11}, stated with our normalization of the directional radius. Its lower bound gives the same derivative estimate with $4\kappa_\Omega$ on the right side, and integration gives \eqref{eq:k-support}. The same argument for the Carath\'eodory distance is in \cite[Proposition~2]{Nik14}.
\end{proof}

The gauge $\qone$ is a norm in its vector variable, but the gauges $\qk$ with $k\ge2$ need not satisfy a triangle inequality. The next lemma shows that they satisfy a quasi-triangle inequality with a constant depending only on the dimension, and that they are continuous.
\begin{lemma}\label{lem:quasitriangle}
Under the hypotheses of Lemma~\ref{lem:support-norm}, the gauges $\qk$ are continuous. There is $J_n\ge1$, depending only on $n$, such that
\begin{equation}\label{eq:quasitriangle}
 \qk(z;u+v)\le J_n\bigl(\qk(z;u)+\qk(z;v)\bigr)
\end{equation}
for all $z,u,v$ and $1\le k\le n$.
\end{lemma}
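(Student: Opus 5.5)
We reduce both assertions to the structure of $\delta_\Omega^{(k)}(z;v)$ as a supremum of the radii $R_\Omega(z;V)$ over planes $V\ni v$. Two facts about $R_\Omega$ are used. First, $R_\Omega(z;V)=\min_{w\in V,\|w\|=1}r_\Omega(z;w)$. This holds because the largest centered ball in $z+V$ contained in $\Omega$ is determined by the shortest directional radius inside $V$. Second, by Lemma~\ref{lem:support-norm}, $w\mapsto\qone(z;w)$ is a complex norm and is continuous in $(z,w)$. Hence
\[
\frac{1}{R_\Omega(z;V)}=\max_{w\in V,\ \|w\|=1}\qone(z;w),
\]
and this is jointly continuous in $(z,V)\in\Omega\times\Gr_\C(k,n)$. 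Continuity holds because it is a maximum of a continuous function over a compact fibre of the sphere bundle, and that fibre varies continuously with $V$.

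\emph{Continuity of $\qk$.} We write $1/\delta_\Omega^{(k)}(z;v)=\inf_{V\ni v}\max_{w\in V,\|w\|=1}\qone(z;w)$. The set $\{V\in\Gr_\C(k,n):v\in V\}$ is compact. For $v\neq0$ it varies continuously with $[v]\in\mathbb{P}^{n-1}$ in the Hausdorff sense, since every such $V$ is obtained by applying a unitary rotation close to the identity to a plane containing a nearby direction. The infimum of a jointly continuous function over a continuously varying compact family is continuous, so $\qk$ is continuous on $\Omega\times(\C^n\setminus\{0\})$. Continuity at $v=0$ follows from the upper bound $\qk(z;v)\le\|v\|/\delta_\Omega(z)$ in \eqref{eq:elementary}, together with local boundedness of $1/\delta_\Omega$.

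\emph{Quasi-triangle inequality.} The main point is to compare $\qk(z;\cdot)$ with a genuine norm up to a dimensional constant. We first fix $z$ and use the norm $N=\qone(z;\cdot)$. We then choose a minimal basis at $z$, in the sense of Nikolov--Pflug--Zwonek: successively take $e_1$ minimizing $N$ on the unit sphere, then $e_j$ minimizing $N$ on the unit sphere of $\{e_1,\dots,e_{j-1}\}^\perp$, which is the orthogonal complement and not the complement for $N$. Write $a_j=N(e_j)$, so $a_1\le\dots\le a_n$. The supporting-hyperplane representation \eqref{eq:support-representation} should give $N(\sum c_je_j)\asymp_n\max_j|c_j|a_j$. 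This is the standard fact that a norm of this support type is comparable to its coordinate norm in a minimal basis; a convex-body analogue is John's ellipsoid with constant $\sqrt n$. If that comparison is unavailable, we fall back on John's theorem for the symmetric convex body $\{N\le1\}$, which gives an ellipsoidal norm $E$ with $E\le N\le\sqrt{2n}\,E$. Next we show that
\[
 \frac{1}{\delta_\Omega^{(k)}(z;v)}\asymp_n \inf_{V\ni v,\ \dim V=k}\ \max_{w\in V,\|w\|=1}E(w)
\]
with constants depending on $n$ only. For the Hermitian norm $E$, with orthonormal eigenbasis $f_j$ and eigenvalues $b_1\le\dots\le b_n$, the right side can be computed explicitly. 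Writing $v=v'+v''$, where $v'\in\operatorname{span}(f_1,\dots,f_{k-1})$ and $v''$ is the component orthogonal to it, the right side is comparable to $E(v'')/\|v\|+b_{k-1}$ (interpreted suitably when $k=1$, and with $b_{k-1}$ replaced by the $(k-1)$-st level). Concretely, the optimal plane is $\operatorname{span}(f_1,\dots,f_{k-1},v)$ up to constants. Hence $\qk(z;v)\asymp_n E(P_kv)+b_{k-1}\|v\|$ for the projection $P_k$ onto $\operatorname{span}(f_k,\dots,f_n)$. The right side is a seminorm plus a multiple of the Euclidean norm, and this yields \eqref{eq:quasitriangle} with $J_n$ equal to the product of the two comparison constants.

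The main obstacle is the explicit min-max computation for an ellipsoid: showing that the best $k$-plane through $v$ is essentially the span of $v$ and the $k-1$ longest axes, with only dimensional losses. I would attempt it by Courant--Fischer. The maximum of $E$ on the unit sphere of $V$ is at least $E$ evaluated at the unit vector of $V$ orthogonal to $\operatorname{span}(f_1,\dots,f_{k-1})$ within $V$. This fails only if $V$ contains that span, and in that case $v''$ controls the value directly.
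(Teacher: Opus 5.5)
Your continuity argument is correct and is essentially the paper's. It uses the minimax formula $1/\delta_\Omega^{(k)}(z;v)=\inf_{V\ni v}\max_{w\in V,\norm{w}=1}\qone(z;w)$, compactness of the Grassmannian, and unitary rotations close to the identity.

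Your plan for \eqref{eq:quasitriangle} also follows the paper: replace $N=\qone(z;\cdot)$ by a Hermitian norm, then compute the model gauge explicitly. The paper gets the Hermitian norm from Auerbach's lemma, with $h\le N\le nh$. If you use John's theorem instead, you must add one observation. By uniqueness, the John ellipsoid of the circled body $\{N\le1\}$ is invariant under $u\mapsto e^{i\theta}u$, so $E$ really is Hermitian. Without this, you do not have $n$ eigenvalues, and you cannot apply complex Courant--Fischer. Separately, your first option misdescribes the minimal basis. Nikolov--Pflug--Zwonek take nearest boundary points, i.e.\ they maximize $N$. Successively minimizing $N$ produces a maximal basis instead.

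The genuine gap is the explicit min-max computation, which is the heart of the proof. Your formula $\qk(z;v)\asymp_n E(P_kv)+b_{k-1}\norm{v}$ is wrong, and so is your optimal plane $\operatorname{span}(f_1,\dots,f_{k-1},v)$.

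For $n=k=2$, the only complex $2$-plane is $\C^2$. So the model gauge at $v=f_1$ equals $b_2$, while your formula gives $b_1$, and $b_2/b_1$ is unbounded. Here your plane is not even $2$-dimensional. For $2\le k<n$ and $v=f_1+\varepsilon f_n$ with small $\varepsilon>0$, your plane contains $f_n$ and gives about $b_n\norm{v}$. The true value is comparable to $\max\{\varepsilon b_n,b_k\}$.

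Your own Courant--Fischer sketch actually yields $b_k$, not $b_{k-1}$. Every complex $k$-plane meets $\operatorname{span}(f_k,\dots,f_n)$, including planes that contain $\operatorname{span}(f_1,\dots,f_{k-1})$. Combined with $v\in V$, this gives the lower bound $\max\{E(v),b_k\norm{v}\}$.

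What is missing is the correct competitor for the upper bound. Take $V=\C v\oplus W$, where $W$ is a $(k-1)$-plane in $\operatorname{span}(f_1,\dots,f_k)\cap v^\perp$; it exists by a dimension count. For $u=a\hat v+w$ in this plane, with $\hat v=v/\norm{v}$, one gets $E(u)\le(E(\hat v)^2+b_k^2)^{1/2}\norm{u}$. Hence the model gauge lies between $\max\{E(v),b_k\norm{v}\}$ and $\sqrt2$ times it. That quantity is a norm, which gives \eqref{eq:quasitriangle}; it is exactly the paper's $N_k$. As written, however, you compare $\qk$ with a norm that is not comparable to it, so the quasi-triangle inequality is not proved.
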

\begin{proof}
Fix $z$ and set $q=\qone(z;\cdot)$. For a complex $k$-plane $V$, let
\[
 Q(V)=\max_{\substack{u\in V\\\norm u=1}}q(u).
\]
The interpretation of the directional radius as the radius of the largest centered ball gives
\begin{equation}\label{eq:minimax-gauge}
 \qk(z;v)=\norm v\inf_{V\ni v}Q(V).
\end{equation}
We first compare the norm just defined with a Hermitian norm. Choose $b_1,\ldots,b_n$ in the compact unit ball of $q$ maximizing the absolute value of their complex determinant. They form a basis. If $u=\sum_j c_jb_j$, then replacement of the $j$th column by $u/q(u)$ shows that $\abs{c_j}\le q(u)$. This is Auerbach's lemma. Also $q(u)\le\sum_j\abs{c_j}$. Therefore the Hermitian norm
\[
 h(u)=n^{-1/2}\Bigl(\sum_j\abs{c_j}^2\Bigr)^{1/2}
\]
satisfies $h\le q\le nh$.

Let $0<\lambda_1\le\cdots\le\lambda_n$ be the principal rates of $h$ relative to the Euclidean norm, and define
\[
 N_k(v)=\max\{h(v),\lambda_k\norm v\}.
\]
This is a norm. Write $q_h^{(k)}$ for the quantity in \eqref{eq:minimax-gauge} with $h$ in place of $q$. We compare these two quantities. The minimax principle and the constraint $v\in V$ give $N_k(v)\le q_h^{(k)}(v)$.

For the reverse estimate, we choose a complex $(k-1)$-plane
\[
 W\subset\spanC\{e_1,\ldots,e_k\}\cap v^\perp,
\]
where $e_j$ is an orthonormal eigenbasis for $h$. Such a plane exists by the dimension inequality. On $V=\C v\oplus W$, we write $u=a\widehat v+w$ orthogonally, where $\widehat v=v/\norm v$. Then
\[
 h(u)\le\abs a\,h(\widehat v)+\lambda_k\norm w
 \le\sqrt{h(\widehat v)^2+\lambda_k^2}\,\norm u.
\]
It follows that $q_h^{(k)}(v)\le\sqrt2N_k(v)$. Combining the two comparisons, we get
\[
 N_k(v)\le\qk(z;v)\le\sqrt2\,nN_k(v).
\]
Since the quantity on the right side of this comparison is a norm, this proves \eqref{eq:quasitriangle}, for example with $J_n=\sqrt2\,n$.

It remains to prove continuity, which follows from \eqref{eq:minimax-gauge} and the compactness of the Grassmannian. We make this explicit. A small change in a nonzero direction can be realized by a unitary map tending to the identity, and this map transports the constrained planes containing the old direction onto those containing the new one. Continuity of $q$ is uniform on the relevant compact unit spheres. The bound \eqref{eq:elementary} handles the zero vector. This completes the proof.
\end{proof}

We now show that the integrated distances are proper and geodesic. The logarithmic support estimates show that the distance from a fixed point tends to infinity at the boundary, and this is what properness needs.
\begin{lemma}\label{lem:proper}
If $\Omega$ is bounded and every boundary point admits a complex support, each $(\Omega,\dk)$ is a proper geodesic metric space. If $\Omega$ is $\C$-convex, the same holds for $(\Omega,k_\Omega)$.
\end{lemma}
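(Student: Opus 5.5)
The plan is to use the Hopf--Rinow theorem for length spaces. If a length space is complete and locally compact, then closed bounded balls are compact and any two points are joined by a minimizing geodesic (see \cite{BH99}). So for each $d=\dk$ it suffices to show three things: $d$ is a length metric inducing the Euclidean topology on $\Omega$, closed $d$-balls are complete (indeed compact), and $d$ is finite.

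First, $\dk$ is an integrated distance of a continuous gauge (Lemma~\ref{lem:quasitriangle}). It is therefore a length metric, given its positivity, which we check next.

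By \eqref{eq:elementary}, on a Euclidean ball $B(a,r)\Subset\Omega$ with $r<\delta_\Omega(a)/2$ we have the comparison
\[
 \frac{\norm v}{\diam\Omega}\le\qk(z;v)\le\frac{2\norm v}{\delta_\Omega(a)}.
\]
This gives local bi-Lipschitz comparison with the Euclidean distance. Hence $\dk$ is a genuine metric, it induces the Euclidean topology, and $(\Omega,\dk)$ is locally compact. It is finite on $\Omega$ because $\Omega$ is connected.

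For properness, fix $o\in\Omega$ and a closed ball $\{w:\dk(o,w)\le R\}$. We show it stays a positive Euclidean distance from $\partial\Omega$. Take $w$ with $\delta_\Omega(w)$ small and a nearest boundary point $p$. Choose a complex support $H$ at $p$, so that $H\in\mathcal H$. Then $\abs{\phi_H(w)}\le\norm{\ell_H}\,\norm{w-p}=\norm{\ell_H}\delta_\Omega(w)$. We may normalize $\norm{\ell_H}=1$, so that $\abs{\phi_H}$ is the Euclidean distance to $H$. Then $\abs{\phi_H(o)}\ge\delta_\Omega(o)$, since $H$ misses $\Omega$. By \eqref{eq:dk-support},
\[
 \dk(o,w)\ge\log\frac{\delta_\Omega(o)}{\delta_\Omega(w)}.
\]
This tends to infinity as $\delta_\Omega(w)\to0$, uniformly in the choice of $H$. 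So the closed ball lies in a compact set $\{\delta_\Omega\ge\eps\}$, and it is closed there since the topologies agree. Hence it is compact, and in particular the space is complete. Hopf--Rinow then yields both the proper and the geodesic property.

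For $k_\Omega$ on a $\C$-convex domain, the same scheme applies, using \eqref{eq:invariant-comparison} and \eqref{eq:k-support}. The comparison $\tfrac14\qone\le\kappa_\Omega\le\qone$ together with \eqref{eq:elementary} gives local bi-Lipschitz equivalence with the Euclidean metric. Moreover $k_\Omega$ is by definition the integrated distance of the upper semicontinuous Kobayashi--Royden metric, so it is a length metric. The estimate \eqref{eq:k-support} gives the bound $k_\Omega(o,w)\ge\tfrac14\log(\delta_\Omega(o)/\delta_\Omega(w))$, hence properness, and Hopf--Rinow again gives geodesics.

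The only delicate point is verifying that the integrated distance is a length metric in the Hopf--Rinow sense when the gauge is merely upper semicontinuous, as $\kappa_\Omega$ is. Here I would cite the standard fact \cite{Ven89} that the length of a curve with respect to $k_\Omega$ equals its integrated $\kappa_\Omega$-length. Alternatively, one can argue directly: take a minimizing sequence of curves parametrized proportionally to arc length. These curves stay in a compact set by the properness estimate and are uniformly Euclidean-Lipschitz there by the local comparison. Arzelà--Ascoli then gives a limit curve, and lower semicontinuity of length under uniform convergence completes the argument.
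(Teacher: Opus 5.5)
Your proof is correct and follows essentially the paper's route. You use the local Euclidean comparison for the topology, the complex-support estimates of Lemma~\ref{lem:global-support} to keep bounded balls away from $\partial\Omega$, and Hopf--Rinow for geodesics; your nearest-point normalization of $\phi_H$ only makes the paper's sequential argument quantitative. The ``delicate point'' at the end is not actually delicate: for any integrated distance, restricting $\sigma$ to subintervals gives $L_d(\sigma)\le\int F(\sigma;\sigma')$, so $d$ is a length metric whatever the semicontinuity of the gauge, and you need neither Venturini nor the Arzel\`a--Ascoli argument.
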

\begin{proof}
The local Euclidean bounds imply that the integrated distances induce the Euclidean topology. The gauges can be integrated along every absolutely continuous curve. See also \cite[Proposition~2.5]{WZ24}, which applies to every bounded domain. We now show that closed bounded balls are compact. Fix $z_0\in\Omega$. If $z_j\to p\in\partial\Omega$, then a support $H$ through $p$ satisfies $\phi_H(z_j)\to0$ and $\phi_H(z_0)\ne0$. Lemma~\ref{lem:global-support} shows that the relevant distance from $z_0$ tends to infinity along the sequence. Thus every closed bounded metric ball is relatively compact in $\Omega$, and hence it is compact. Finally, integrated distances are length distances, and proper length spaces are geodesic by the Hopf--Rinow--Cohn-Vossen theorem \cite[Proposition~I.3.7]{BH99}. This proves the lemma.
\end{proof}

In what follows, we shall use the following consequence of Gromov hyperbolicity. Fix $\Lambda\ge1$ and $b\ge0$. In a proper geodesic Gromov hyperbolic space, quadrilaterals with $(\Lambda,b)$-quasigeodesic sides have a common thinness constant. Our convention for parametrized quasigeodesics is
\[
 \Lambda^{-1}\abs{u-u'}-b\le d(\sigma(u),\sigma(u'))
 \le\Lambda\abs{u-u'}+b.
\]
To see the assertion about quadrilaterals, note that Morse stability places each side at a uniformly bounded Hausdorff distance from a geodesic with the same endpoints. Splitting the resulting geodesic quadrilateral into two triangles then gives the common thinness constant. For the stability of quasigeodesics we refer the reader to \cite[Theorem~III.H.1.7]{BH99}.

Complex supports also show that the directional radii of a bounded $\C$-convex domain collapse at the boundary, unless the boundary contains an affine disc. The argument is standard. We refer the reader to \cite{FS98} for convex domains and to \cite{Zim17}. We include the short proof for completeness.

\begin{lemma}\label{lem:shrinking}
Let $D\subset\C^n$, $n\ge2$, be a bounded $\C$-convex domain with no nontrivial affine complex disc in its boundary. Then
\begin{equation}\label{eq:Rtozero}
 \RR_D(z)\longrightarrow0\qquad(z\to\partial D).
\end{equation}
\end{lemma}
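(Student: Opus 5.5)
We argue by contradiction and compactness. Suppose \eqref{eq:Rtozero} fails. Then there are $\eps>0$, points $z_j\in D$ with $z_j\to p\in\partial D$, and unit vectors $v_j$ with $r_D(z_j;v_j)\ge\eps$. Passing to a subsequence, we may assume $v_j\to v$ with $\abs v=1$. By the definition \eqref{eq:radius}, the affine discs $\Delta_j=\{z_j+\zeta v_j:\abs\zeta<\eps\}$ lie in $D$. These discs converge uniformly on compact subsets of $\{\abs\zeta<\eps\}$ to the disc $\Delta=\{p+\zeta v:\abs\zeta<\eps\}$, so $\Delta\subset\overline D$. This disc is nontrivial and passes through the boundary point $p$. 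It remains to show that $\Delta\subset\partial D$, which contradicts the hypothesis.

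For this step we use a complex support. Since $D$ is $\C$-convex, it is linearly convex \cite{APS04,Hor94}. Hence there is a complex affine hyperplane $H\ni p$ with $H\cap D=\varnothing$, as recalled before \eqref{eq:global-support}. Write $\phi_H=\ell_H-c_H$. The function $h(\zeta)=\phi_H(p+\zeta v)=\zeta\,\ell_H(v)$ is holomorphic on $\{\abs\zeta<\eps\}$ and vanishes at $\zeta=0$. The key observation is that $h$ must vanish identically. If $\ell_H(v)\ne0$, then $\ell_H(v_j)\to\ell_H(v)\ne0$, and the function $h_j(\zeta)=\phi_H(z_j)+\zeta\ell_H(v_j)$ has its zero at $\zeta_j=-\phi_H(z_j)/\ell_H(v_j)$. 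Since $\phi_H(z_j)\to\phi_H(p)=0$, we get $\zeta_j\to0$. So for large $j$ the zero lies in $\{\abs\zeta<\eps\}$, and the point $z_j+\zeta_jv_j\in\Delta_j\subset D$ lies on $H$. This contradicts $H\cap D=\varnothing$. (This is also Hurwitz's theorem applied to $h_j\to h$.) Hence $\ell_H(v)=0$, and therefore $\Delta\subset H$.

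Now $H\cap D=\varnothing$ and $\Delta\subset\overline D$ together give $\Delta\subset\overline D\setminus D=\partial D$. So $\Delta$ is a nontrivial affine complex disc in $\partial D$, which is the desired contradiction.

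The only real point is the second step, where limiting discs that merely lie in $\overline D$ must be pushed into the boundary. The support hyperplane through the limit point handles this, and the argument uses no boundary regularity. The hypothesis $n\ge2$ plays no role beyond making the statement meaningful.
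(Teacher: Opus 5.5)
Your proof is correct and follows the paper's argument: take a limiting affine disc in $\overline D$ through $p$, use a complex support $H$ at $p$ to force the disc into $H$, and conclude that it lies in $H\cap\overline D\subset\partial D$. The only difference is that you verify the Hurwitz step by hand, locating the explicit zero $\zeta_j\to0$ of the affine functions $\phi_H(z_j+\zeta v_j)$, where the paper simply cites Hurwitz's theorem.
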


\begin{proof}
Suppose, to get a contradiction, that \eqref{eq:Rtozero} fails. Then there are $z_j\to p\in\partial D$, unit vectors $v_j\to v$ and $a>0$ with $z_j+a\zeta v_j\in D$ for $|\zeta|<1$. The maps $\phi_j(\zeta)=z_j+a\zeta v_j$ converge to $\phi(\zeta)=p+a\zeta v$, and $\phi(\D)\subset\overline D$. Let $H=\{\Lambda=0\}$ be a complex support through $p$. The functions $\Lambda\circ\phi_j$ have no zeros on $\D$, whereas their limit vanishes at $0$. By Hurwitz's theorem, $\Lambda\circ\phi\equiv0$. Hence $\phi(\D)\subset H\cap\overline D\subset\partial D$ is a nontrivial affine disc in the boundary. This contradiction proves the lemma.
\end{proof}

\subsection{The normal collar}
In this subsection, we split the gauges in the normal collar into a normal part and a tangential part. Throughout, we suppose that $\partial\Omega$ is $C^2$ and that \eqref{eq:global-support} holds. We choose $r_0$ below a tubular-neighborhood radius, and we use the notation in \eqref{eq:collar-intro}.

The next lemma is the normal splitting. It shows that the gauge of a vector in the collar is comparable to the sum of the absolute value of its normal component and the transverse gauge of its tangential component. It also shows that the transverse gauge at the initial level is controlled by the transverse gauge at every depth.

\begin{lemma}[Normal splitting]\label{lem:normal-splitting}
Suppose that $\Omega$ is bounded, $\partial\Omega$ is $C^2$, \eqref{eq:global-support} holds, and $r_0$ is as in \eqref{eq:collar-intro}, decreased if necessary. There are $C,L,B\ge1$ such that, for all $p\in\partial\Omega$, $0<r\le r_0$, $t\ge0$, $v\in T_p\partial\Omega$, $a\in\R$ and $1\le k\le n$,
\begin{align}
 C^{-1}\qk(p_r;v)&\le\qk(p_r;(I-rD\nu_p)v)\le C\qk(p_r;v),\label{eq:collar-vector}\\
 L^{-1}\bigl(\abs a+\Ht(p;v)\bigr)&\le\qk\bigl(\Psi(p,t);D\Psi(p,t)(v,a)\bigr)
 \le L\bigl(\abs a+\Ht(p;v)\bigr),\label{eq:normal-splitting}\\
 H_0^{(k)}(p;v)&\le B\Ht(p;v).\label{eq:initial-level}
\end{align}
\end{lemma}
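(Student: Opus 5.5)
Each of the three estimates rests on two ingredients proved earlier: the quasi-norm structure of $\qk(z;\cdot)$ from Lemma~\ref{lem:quasitriangle}, and the support estimate of Lemma~\ref{lem:support-norm} applied with the complex tangent hyperplane $p+T_p^\C\partial\Omega$, which lies in $\mathcal H$ by \eqref{eq:global-support}. By Lemma~\ref{lem:quasitriangle}, $\qk(z;\cdot)$ is comparable, up to $\sqrt2\,n$, to a genuine norm $N_k$. So it suffices to compare vectors through norm estimates and quasi-triangle inequalities, paying only constants depending on $n$.

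\emph{Estimate \eqref{eq:collar-vector}.} Write $w=(I-rD\nu_p)v$, so that $w-v=-rD\nu_p v$ and $\norm{w-v}\le r\norm{D\nu}_\infty\norm v$. By \eqref{eq:elementary},
\[
\qk(p_r;w-v)\le\norm{w-v}/\delta_\Omega(p_r)=\norm{w-v}/r\le C\norm v .
\]
It remains to bound $\norm v$ by $\qk(p_r;v)$. Here $v$ is a real tangent vector at $p$; its complex-tangential part and its component along $iN$, where $N=\nu(p)$, need separate treatment. For the $iN$-component, apply the support $H=p+T_p^\C\partial\Omega$ with $\ell_H(\cdot)=\langle\cdot,N\rangle$. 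Then $\abs{\phi_H(p_r)}=r$, so by Lemma~\ref{lem:support-norm} $\qone(p_r;v)\ge\abs{\langle v,N\rangle}/r$, and this dominates the $iN$-part. For the complex-tangential part, the crude bound $\qk\ge\norm v/\diam\Omega$ gives $\norm v\lesssim\qk(p_r;v)$. Combining with the quasi-triangle inequality yields
\[
\qk(p_r;w)\le J_n\bigl(\qk(p_r;v)+C\norm v\bigr)\le C'\qk(p_r;v).
\]
The reverse inequality is symmetric, since $\norm{w}\asymp\norm v$ once $r_0\norm{D\nu}<1/2$ (after decreasing $r_0$).

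\emph{Estimate \eqref{eq:normal-splitting}.} Differentiating $\Psi(p,t)=p-r_0e^{-t}\nu(p)$ gives
\[
D\Psi(p,t)(v,a)=(I-rD\nu_p)v+a\,r\,\nu(p),\qquad r=r_0e^{-t}.
\]
By \eqref{eq:collar-vector}, the tangential term has gauge comparable to $\Ht(p;v)$. For the normal term, \eqref{eq:elementary} gives the upper bound $\qk(p_r;ar\nu)\le\abs a$. The support $H$ with $\ell_H=\langle\cdot,N\rangle$ gives the lower bound $\qone(p_r;ar\nu)\ge\abs a$. The upper bound in \eqref{eq:normal-splitting} then follows from the quasi-triangle inequality. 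For the lower bound, the term $\abs a$ is recovered from the support functional, up to the error $\abs{\langle(I-rD\nu)v,N\rangle}/r$. This error is bounded by $\qone(p_r;(I-rD\nu)v)\le C\Ht(p;v)$. Hence $\abs a\lesssim\qk(\text{total})+\Ht$, and $\Ht\lesssim\qk(\text{total})+\qk(\text{normal part})\lesssim\qk(\text{total})+\abs a$. These two inequalities are circular, and adding them does not close the argument. This is the main obstacle. To break the circularity, I would instead pass to the norm $N_k$ at $p_r$ and use the functional $\ell_H/r$, which has dual norm at most a constant with respect to $\qone$, hence also with respect to $\qk$. This gives $\abs{a+\langle(I-rD\nu)v,N\rangle/r}\le C\qk(\text{total})$. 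The tangential piece is then controlled after subtracting the normal projection. For this I expect that the projection onto $\ker\ell_H$ along $\nu$ is bounded for $N_k$ at $p_r$; this follows because its norm is controlled by $\abs{\ell_H}/\phi_H\le\qone$.

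\emph{Estimate \eqref{eq:initial-level}.} The bound $\qk(p_{r_0};v')\le\norm{v'}/\delta_\Omega(p_{r_0})\le\norm{v'}/r_0$ holds, and $\norm{(I-r_0D\nu)v}\asymp\norm v$. On the other hand, $\Ht(p;v)\ge\norm{(I-rD\nu)v}/\diam\Omega\gtrsim\norm v$. Together these give $H_0^{(k)}(p;v)\le B\Ht(p;v)$ with $B\asymp\diam\Omega/r_0$.
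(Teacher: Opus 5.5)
Your arguments for \eqref{eq:collar-vector}, for the upper half of \eqref{eq:normal-splitting} and for \eqref{eq:initial-level} are fine. The splitting of $v$ into an $iN$-part and a complex-tangential part is not needed there, since $\qk(p_r;v)\ge\norm v/\diam\Omega$ for every $v$. The gap is in the lower half of \eqref{eq:normal-splitting}: your proposed escape from the circularity does not close it.

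Write $w=(I-rD\nu_p)v$ and $X=w+ar\nu(p)$, so that $\Ht(p;v)=\qk(p_r;w)$. The support functional gives $\abs{a+\ip w{\nu(p)}/r}\le\qk(p_r;X)$. The bounded projection onto $\ker\ell_H$ along $\nu(p)$ controls $X-\ip X{\nu(p)}\nu(p)=w-\ip w{\nu(p)}\nu(p)$. Neither quantity controls $\abs a$, or the component $\ip w{\nu(p)}\nu(p)$ of $w$, on its own, because $a$ and $\ip w{\nu(p)}/r$ may cancel in the first one. In fact no argument that uses only the support estimate and the quasi-triangle inequality can succeed. If $w$ were allowed to equal $-ar\nu(p)$, then $X=0$ while $\abs a+\Ht(p;v)=2\abs a>0$.

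What is missing is the structural fact that excludes this cancellation, namely $\Ree\ip w{\nu(p)}=0$. It holds because $v\in T_p\partial\Omega$ gives $\Ree\ip v{\nu(p)}=0$, and differentiating $\norm\nu^2=1$ gives $\Ree\ip{D\nu_pv}{\nu(p)}=0$. Since $a$ is real, $\abs{\ip X{\nu(p)}}=\abs{\ip w{\nu(p)}+ar}\ge r\abs a$. The support estimate then yields $\qk(p_r;X)\ge\abs a$ directly. The quasi-triangle inequality gives $\Ht(p;v)\le J_n\bigl(\qk(p_r;X)+\abs a\bigr)\le2J_n\qk(p_r;X)$, which is the lower splitting. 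This is the paper's argument, and it needs neither the norm $N_k$ nor the projection.
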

\begin{proof}
We decrease $r_0$ so that $\frac12\norm v\le\norm{(I-rD\nu_p)v}\le2\norm v$. The support at $p$ and \eqref{eq:elementary} give
\begin{equation}\label{eq:normal-support}
 \qk(p_r;u)\ge\frac{\abs{\ip u{\nu(p)}}}{r},
 \qquad \qk(p_r;r\nu(p))=1.
\end{equation}
Let $K=\sup_p\norm{D\nu_p}$ and $w=(I-rD\nu_p)v$. Since $\delta_\Omega(p_r)=r$, we have
\[
 \qk(p_r;w-v)\le K\norm v,\qquad
 \qk(p_r;v)\ge\frac{\norm v}{\diam\Omega},\qquad
 \qk(p_r;w)\ge\frac{\norm v}{2\diam\Omega}.
\]
We apply \eqref{eq:quasitriangle} in both directions to obtain \eqref{eq:collar-vector}.

We now prove the splitting. Since $\nu$ has unit length, $D\nu_pv$ is real tangent at $p$. Thus $\Ree\ip w{\nu(p)}=0$. We have $D\Psi(p,t)(v,a)=w+ar\nu(p)$, and \eqref{eq:normal-support} gives
\[
 \qk(p_r;w+ar\nu(p))\ge\abs a.
\]
The quasi-triangle inequality implies
\[
 \qk(p_r;w)\le J_n\bigl(\qk(p_r;w+ar\nu(p))+\abs a\bigr)
 \le2J_n\qk(p_r;w+ar\nu(p)).
\]
This proves the lower splitting. The upper splitting follows from the same quasi-triangle inequality and \eqref{eq:normal-support}. Finally, we have
\[
 \Ht(p;v)\ge\frac{\norm v}{2\diam\Omega},\qquad
 H_0^{(k)}(p;v)\le\frac{2\norm v}{r_0},
\]
which proves \eqref{eq:initial-level}.
\end{proof}

\section{Hardy collars and finite-type obstructions}\label{sec:hardy}
In this section, we give a criterion for the Gromov hyperbolicity of a collar metric in terms of an integral inequality of Hardy type for its transverse gauge. We then show that the same inequality, together with a comparison on one Kobayashi ball of fixed radius, bounds the D'Angelo type of the boundary. We first consider transverse fields without any convexity assumption. A continuous positive symmetric homogeneous gauge on a tangent bundle need not satisfy a triangle inequality. On compact subsets it is nevertheless uniformly comparable to a background norm. Its integral over absolutely continuous curves therefore defines a length distance with the usual local topology. Such a gauge $F$ is strongly integrable in the sense of \cite[Section~2.4]{WZ24}. To see this, let $\sigma$ be absolutely continuous. Then $t\mapsto F(\sigma(t);\sigma'(t))$ is measurable, since it is the composition of a continuous function with the measurable map $t\mapsto(\sigma(t),\sigma'(t))$. It is bounded by a constant times $\norm{\sigma'(t)}$, and hence it is integrable. For the Kobayashi--Royden metric this is due to Venturini \cite{Ven89}.

Our first lemma shows that the Hardy inequality forces exponential growth of the transverse gauge in the depth variable, at a rate given by the reciprocal of the Hardy constant.
\begin{lemma}\label{lem:hardy-growth}
Let $M$ be a compact smooth manifold with a background norm. Suppose that, for each $t\ge0$, $H_t$ is a positive symmetric homogeneous gauge on $TM$, that $(t,p,v)\mapsto H_t(p;v)$ is continuous on $[0,\infty)\times TM$, and that
\begin{equation}\label{eq:hardy}
 \int_0^t H_s(p;v)\,ds\le AH_t(p;v),
 \qquad p\in M,\quad v\in T_pM,\quad t\ge0,
\end{equation}
for some $A>0$. Then there are $b>0$ and $B\ge1$ such that
\[
 H_t(p;v)\ge be^{t/A}\norm v,\qquad H_0(p;v)\le BH_t(p;v).
\]
\end{lemma}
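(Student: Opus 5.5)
Fix $p\in M$ and $v\in T_pM$ with $v\neq0$. I would reduce \eqref{eq:hardy} to a Gronwall-type differential inequality for the scalar function $h(t)=H_t(p;v)$. This function is continuous and positive on $[0,\infty)$. Set
\[
 I(t)=\int_0^t h(s)\,ds .
\]
Then $I$ is $C^1$ with $I'=h$, and the hypothesis reads $I(t)\le A\,I'(t)$. Consequently, wherever $I>0$, that is, for all $t>0$ since $h>0$, we have $(e^{-t/A}I(t))'\ge0$. Hence $e^{-t/A}I(t)$ is nondecreasing on $(0,\infty)$.

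To obtain the growth bound, I would fix the normalization time $1$. For $t\ge1$ this gives $I(t)\ge e^{(t-1)/A}I(1)$. Combined with \eqref{eq:hardy}, it yields
\[
 h(t)\ge A^{-1}I(t)\ge A^{-1}e^{(t-1)/A}I(1).
\]
By continuity of $(t,p,v)\mapsto H_t(p;v)$ and positivity, the quantity
\[
 I(1)=\int_0^1H_s(p;v)\,ds
\]
is a continuous positive function on the compact unit sphere bundle of $TM$ with respect to the background norm. It is therefore bounded below by some $c>0$. Homogeneity then gives
\[
 H_t(p;v)\ge A^{-1}e^{-1/A}c\,e^{t/A}\norm v\qquad\text{for } t\ge1 .
\]
For $0\le t\le1$, the same compactness argument gives $\min_{[0,1]\times SM}H>0$. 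Since $e^{t/A}\le e^{1/A}$ on this range, the bound holds with a possibly smaller constant $b$.

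For the second inequality, compactness gives $H_0(p;v)\le C\norm v$, using continuity at $t=0$ on the unit sphere bundle. The first bound gives $H_t(p;v)\ge b e^{t/A}\norm v\ge b\norm v$. Hence $H_0\le (C/b)H_t$, and we may take $B=\max\{1,C/b\}$.

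The only delicate point is the lower bound near $t=0$. There the differential inequality gives nothing, because $I(0)=0$. This is why I normalize at time $1$ and handle $[0,1]$ by compactness. Joint continuity of $H$ on $[0,1]\times SM$ is exactly what this step requires.
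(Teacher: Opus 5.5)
Your proof is correct and matches the paper's argument step for step. You integrate $I\le AI'$ from the normalization time $1$, bound $I(1)$ below uniformly by compactness of $[0,1]$ times the unit sphere bundle, absorb the range $0\le t\le1$ into the constant, and get the second inequality from the uniform upper bound on $H_0$. The only cosmetic difference is that you bound $I(1)$ through its continuity on the sphere bundle, whereas the paper uses the pointwise bound $H_s\ge b_0\norm v$ on $[0,1]$; both rest on the same compactness fact.
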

\begin{proof}
Fix $(p,v)$ with $v\ne0$ and put $G(t)=\int_0^tH_s(p;v)\,ds$. Then $G>0$ for $t>0$, and the Hardy inequality reads $G\le AG'$. Integrating this differential inequality, we get, for $t\ge1$,
\[
 G(t)\ge G(1)e^{(t-1)/A},\qquad
 H_t(p;v)\ge A^{-1}G(1)e^{(t-1)/A}.
\]
We now make the lower bound uniform. Compactness of $[0,1]$ times the unit tangent bundle gives $H_s(p;v)\ge b_0\norm v$ there. In particular, $G(1)\ge b_0\norm v$ uniformly. Decreasing the resulting constant extends the lower bound to $0\le t\le1$. The uniform upper bound for $H_0$ on the unit tangent bundle proves the second assertion.
\end{proof}

We are now ready to present the principal result of this section. It gives Gromov hyperbolicity of the collar metric from the Hardy inequality alone. The proof compares the given gauge with a primitive gauge that expands at a definite exponential rate.

\begin{theorem}[Hardy collar criterion]\label{thm:hardy}
Let $M$ be compact and connected, and let $H$ satisfy the hypotheses of Lemma~\ref{lem:hardy-growth}. On $X=M\times[0,\infty)$ define
\[
 F(p,t;v,a)=\abs a+H_t(p;v),\qquad
 \widehat H_t(p;v)=H_0(p;v)+\int_0^tH_s(p;v)\,ds.
\]
Let $d$ and $\widehat d$ be the integrated distances of $F$ and $\widehat F=\abs a+\widehat H_t$. Let $A$ be the constant in \eqref{eq:hardy}, let $B$ be given by Lemma~\ref{lem:hardy-growth}, and put $C=A+B$ and $C_* =\max\{1,C\}$. Then
\begin{align}
 \widehat H_s(p;v)&\le e^{-(t-s)/C}\widehat H_t(p;v),
 &&0\le s\le t,\label{eq:primitive-expansion}\\
 C_*^{-1}\widehat d(x,y)&\le d(x,y)\le\widehat d(x,y)+1.
 \label{eq:primitive-distance}
\end{align}
Both distances are proper and geodesic, and both spaces are Gromov hyperbolic.
\end{theorem}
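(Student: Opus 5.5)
We prove \eqref{eq:primitive-expansion} first, then \eqref{eq:primitive-distance}, and then obtain hyperbolicity from the expanding-collar theorem of Wang and Zimmer \cite{WZ24}. The primitive gauge has a definite exponential expansion rate, which $H$ itself need not have. For the expansion, fix $(p,v)$ with $v\ne0$ and put $G(t)=\int_0^tH_s(p;v)\,ds$, so that $\widehat H_t=H_0+G(t)$ and $\frac{d}{dt}\widehat H_t=H_t$. By \eqref{eq:hardy} we have $G(t)\le AH_t$. By Lemma~\ref{lem:hardy-growth} we have $H_0\le BH_t$. Adding these gives $\widehat H_t\le (A+B)H_t=C\,\frac{d}{dt}\widehat H_t$. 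Integrating $(\log\widehat H_t)'\ge 1/C$ from $s$ to $t$ gives \eqref{eq:primitive-expansion}. Continuity and positivity of $\widehat H$ follow from those of $H$, so $\widehat F$ is again a continuous positive symmetric homogeneous gauge.

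For \eqref{eq:primitive-distance}, the lower bound follows pointwise. Since $\widehat H_t\le CH_t$, we get $\widehat F\le C_*F$, hence $\widehat d\le C_*d$. The upper bound does not hold pointwise, because $H_t$ may be much larger than $\widehat H_t$ at isolated depths. We therefore compare lengths of curves, with an additive constant. Take a curve $\sigma(u)=(p(u),t(u))$ that is almost $\widehat d$-minimizing between $x$ and $y$. We modify it so that its horizontal motion occurs at depths where $H$ is controlled by $\widehat H$. The key inequality is $\int_0^t H_s\,ds\le \widehat H_t$: the average of $H$ over the depths below $t$ is at most $\widehat H_t/t$. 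I would try a rescaling argument. A horizontal displacement performed at depth $t$ costs $\widehat H_t(p;v)\,du$ in $\widehat d$. Perform it instead spread over an interval of depths, moving vertically as well. The vertical cost is $|a|$ and the horizontal cost averages $\frac1{\tau}\int H_s\,ds$. Choosing the spreading window adaptively, or using a covering argument that exploits \eqref{eq:primitive-expansion}, should bound the new $F$-length by the old $\widehat F$-length plus a bounded vertical excursion. The ``$+1$'' suggests spending only an arbitrarily small vertical detour in total.

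This comparison is the main obstacle. If the sharp constant $1$ is hard to reach, I would first prove $d\le K\widehat d+K$, which suffices for hyperbolicity via quasi-isometry invariance. I would then try to sharpen it by shrinking the windows. Properness and geodesicity of both spaces are handled as in Lemma~\ref{lem:proper}. The gauges are continuous, so the integrated distances are length metrics inducing the product topology. The term $|a|$ gives $d((p,0),(q,t))\ge t$, so bounded sets have bounded depth and lie in compact slabs $M\times[0,T]$. Hopf--Rinow then gives geodesics.

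For hyperbolicity of $\widehat d$, we apply the expanding-collar theorem of Wang and Zimmer \cite{WZ24}. Its hypotheses are the product structure, the splitting $\widehat F=|a|+\widehat H_t$, uniform exponential expansion \eqref{eq:primitive-expansion}, and $\widehat H_0$ comparable to a background norm on compact $M$. If their theorem requires a lower bound on the horizontal gauge, Lemma~\ref{lem:hardy-growth} supplies $\widehat H_t\ge be^{t/A}\|v\|$. Hyperbolicity of $d$ then follows, since by \eqref{eq:primitive-distance} the identity is a quasi-isometry between geodesic spaces, and Gromov hyperbolicity is a quasi-isometry invariant \cite[Theorem~III.H.1.9]{BH99}.
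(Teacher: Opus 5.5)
The expansion \eqref{eq:primitive-expansion}, the lower bound $\widehat d\le C_*d$, properness and geodesicity, and the appeal to the Wang--Zimmer expanding-collar theorem all agree with the paper. The gap is the upper bound $d\le\widehat d+1$. You correctly call it the crux, but you only offer heuristics (adaptive spreading windows, a covering argument). You also do not prove even the coarse fallback $d\le K\widehat d+K$. This matters: some such bound is exactly what makes the identity a quasi-isometry. Without it, Gromov hyperbolicity of $(X,d)$ does not follow from that of $(X,\widehat d)$.

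The missing idea is to leave the curve's shape alone: shift the whole curve rigidly toward the initial level and average over the shift. For $0\le u\le1$ put $R_u(p,t)=(p,(t-u)_+)$, and write $L_F$, $L_{\widehat F}$ for the $F$- and $\widehat F$-lengths of curves.

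Let $\sigma=(p(\cdot),t(\cdot))$ be an absolutely continuous curve from $x$ to $y$. Join $x$ to $R_ux$ and $R_uy$ to $y$ by vertical segments, each of $F$-length at most $u$. This gives $d(x,y)\le 2u+L_F(R_u\sigma)$.

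Since $t\mapsto(t-u)_+$ is $1$-Lipschitz, the vertical speed of $R_u\sigma$ is at most that of $\sigma$. Its horizontal cost at parameter $\tau$ is $H_{(t(\tau)-u)_+}(p(\tau);p'(\tau))$. For every $t$,
\[
 \int_0^1H_{(t-u)_+}(p;v)\,du=(1-t)_+H_0(p;v)+\int_{(t-1)_+}^tH_s(p;v)\,ds\le\widehat H_t(p;v).
\]
This is the precise form of your remark about averages of $H$ below depth $t$. The $H_0$ term in $\widehat H$ is exactly what absorbs the truncation when $t<1$.

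Tonelli then gives $\int_0^1L_F(R_u\sigma)\,du\le L_{\widehat F}(\sigma)$. Integrating $d(x,y)\le2u+L_F(R_u\sigma)$ over $u\in[0,1]$ yields $d(x,y)\le1+L_{\widehat F}(\sigma)$. Taking the infimum over $\sigma$ gives the claim.

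The ``$+1$'' is $\int_0^1 2u\,du$, the averaged cost of the two vertical connectors over a window of length one. It is not an arbitrarily small detour. In this argument, a shorter window only trades the additive constant for a worse multiplicative one, so the proposed sharpening runs the wrong way.
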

\begin{proof}
We first prove the expansion of the primitive gauge. For fixed $(p,v)$ with $v\ne0$, the function $g(t)=\widehat H_t(p;v)$ satisfies
\[
 g(t)\le(A+B)H_t(p;v)=Cg'(t).
\]
The inequality combines the Hardy inequality with the bound for the initial gauge. Integrating $g'/g\ge1/C$ proves \eqref{eq:primitive-expansion}. Since $\widehat F\le C_*F$, the first inequality in \eqref{eq:primitive-distance} follows.

We now prove the second distance inequality. For $0\le u\le1$, consider the inward shift
\[
 R_u(p,t)=\bigl(p,(t-u)_+\bigr),\qquad (s)_+=\max\{s,0\}.
\]
The vertical displacement from a point to its image has $F$-length at most $u$. If $\sigma$ joins $x$ to $y$, then
\[
 d(x,y)\le2u+L_F(R_u\sigma).
\]
For every $t,p,v$,
\begin{align*}
 \int_0^1 H_{(t-u)_+}(p;v)\,du
 &=(1-t)_+H_0(p;v)+\int_{(t-1)_+}^tH_s(p;v)\,ds\\
 &\le\widehat H_t(p;v).
\end{align*}
The absolute vertical derivative of $R_u\sigma$ is no larger than that of $\sigma$. Tonelli's theorem therefore gives
\[
 \int_0^1 L_F(R_u\sigma)\,du\le L_{\widehat F}(\sigma).
\]
Integrating the preceding distance inequality in $u$ yields
$d(x,y)\le1+L_{\widehat F}(\sigma)$. Taking the infimum over all curves joining the two points proves the other inequality in \eqref{eq:primitive-distance}.

Next we show that both distances are proper and geodesic. Both distances dominate the absolute difference of heights, and both are locally comparable to a product Riemannian distance on compact slabs. Every closed bounded ball therefore lies in a compact slab, and hence it is compact. Thus both spaces are proper length spaces, and hence they are geodesic.

It remains to prove Gromov hyperbolicity. We use the expanding-collar theorem of Wang and Zimmer \cite[Theorem~1.1]{WZ24}. It applies to a strongly integrable pseudo-metric on $M\times(0,1]$, with $M$ compact, under three conditions. The first condition is that the radial vector field $\partial_r$ has length comparable to $1/r$. The second condition is that the radial and transverse directions are uniformly transverse. The third condition is that transverse lengths expand at a power rate as $r\downarrow0$. In the coordinate $r=e^{-t}$, the gauge $\widehat F$ becomes
\[
 \widehat F(p,r;v,b)=\frac{\abs b}{r}+\widehat H_{\log(1/r)}(p;v).
\]
It is continuous, homogeneous and locally comparable to a product norm, and hence it is strongly integrable. The radial vector field has length exactly $1/r$, and the splitting is exact, so the first two conditions hold. By \eqref{eq:primitive-expansion}, for $0<r\le r'\le1$,
\[
 \widehat H_{\log(1/r)}(p;v)\ge(r'/r)^{1/C}\,\widehat H_{\log(1/r')}(p;v),
\]
which is the third condition with exponent $1/C$. Hence $(X,\widehat d)$ is Gromov hyperbolic, and the quasi-isometry \eqref{eq:primitive-distance} transfers Gromov hyperbolicity to $(X,d)$. This completes the proof.
\end{proof}

The Hardy collar criterion treats one collar over a connected manifold. The next lemma allows the boundary to have more than one component. It assembles the collars over the components into a metric wedge and compares the wedge with the whole domain.
\begin{lemma}[Compact interior]\label{lem:compact-interior}
Let $U$ be a bounded domain with $C^2$ boundary, let $\Psi$ be the normal collar \eqref{eq:collar-intro} for $U$, and let $d_F$ be obtained by integrating a Borel symmetric homogeneous gauge locally uniformly comparable to the Euclidean norm. Suppose that the pullback of this gauge by $\Psi$ is uniformly comparable to $\abs a+H_t(p;v)$, for some continuous positive symmetric homogeneous gauge $H$ on $T\partial U$ depending continuously on $t\ge0$. Let $N_1,\ldots,N_\nu$ be the closed collars of the boundary components, and let $d_i$ be the distance on $N_i$ obtained by integrating the gauge along paths in $N_i$. If each $(N_i,d_i)$ is proper and Gromov hyperbolic, then $(U,d_F)$ is proper, geodesic, and Gromov hyperbolic.
\end{lemma}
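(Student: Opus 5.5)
We first check properness and the geodesic property of $(U,d_F)$, and then show that $(U,d_F)$ is quasi-isometric to a metric wedge of the $(N_i,d_i)$ glued to a compact core. Gromov hyperbolicity is invariant under quasi-isometries of geodesic spaces, and a wedge of finitely many hyperbolic spaces joined at points, or through a bounded set, is hyperbolic. Combining these gives the conclusion.

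\textbf{Properness and geodesics.} The gauge is locally uniformly comparable to the Euclidean norm, so $d_F$ induces the Euclidean topology and is a length distance. In the collar, the pullback of the gauge dominates $\abs a$. Hence $d_F(z_0,\Psi(p,t))\ge t-C$ for every $t\ge0$, by comparing the collar part of a path with its height change. A closed bounded ball therefore lies in $\{t\le T\}\cup K$ with $K$ compact, so it is compact. Properness then gives geodesics by Hopf--Rinow--Cohn-Vossen \cite[Proposition~I.3.7]{BH99}.

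\textbf{Quasi-isometric model.} Let $K_0=U\setminus\bigcup_i\Psi(\partial_iU\times(0,\infty))$, the compact core at depth $r_0$. It is compact and $d_F$-bounded, with diameter $D_0$. Form $Y$ by gluing the spaces $(N_i,d_i)$ to a single point $\star$, identifying each level set $\{t=0\}$ of $N_i$ with $\star$. Concretely, for $x\in N_i$ and $y\in N_j$ set
\[
 d_Y(x,y)=\begin{cases}
 \min\{d_i(x,y),\,h_i(x)+h_j(y)\}, & i=j,\\
 h_i(x)+h_j(y), & i\ne j,
 \end{cases}
\]
where $h_i(x)=d_i(x,\partial_0N_i)$. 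The identity map from $\bigcup N_i$ into $U$, with $K_0$ sent to $\star$, is $1$-Lipschitz up to the additive constant $D_0$, since paths in $N_i$ are paths in $U$. For the reverse inequality, take a $d_F$-geodesic between $x,y$. Either it stays in one collar $N_i$, and then its length is at least $d_i(x,y)$; or it meets $K_0$. In the second case its length is at least $d_F(x,K_0)+d_F(y,K_0)$. Since $\partial_0N_i$ is compact, $h_i$ and $d_F(\cdot,K_0)$ differ by a bounded amount: any $d_F$-path from $x$ to $K_0$ first exits $N_i$ through $\partial_0N_i$, and before that it lies in $N_i$. So the map is a quasi-isometry with multiplicative constant $1$.

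\textbf{Hyperbolicity of $Y$.} Each $N_i$ is hyperbolic, and $\partial_0N_i$ is compact, hence of bounded $d_i$-diameter. So $(N_i,d_i)$ is quasi-isometric, by collapsing $\partial_0N_i$, to a hyperbolic space with a basepoint. A tree-graded gluing of finitely many hyperbolic geodesic spaces at one point is hyperbolic with the same constant: geodesic triangles either lie in one piece or pass through $\star$ and split into triangles of the pieces. It remains to check that $Y$ is a geodesic space. This holds after replacing $d_Y$ by the induced length metric, which is comparable because each $N_i$ is proper and geodesic.

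\textbf{Main obstacle.} The main difficulty is the case $i=j$ with the minimum, that is, a geodesic leaving $N_i$ and returning. It has to be shown that this is absorbed in the quasi-isometry. My first approach is to note that $d_i(x,y)\le h_i(x)+h_j(y)+\diam_{d_i}\partial_0N_i$. This makes the minimum comparable to $d_i$ up to an additive constant, so that $Y$ is simply the one-point wedge of the $N_i$.
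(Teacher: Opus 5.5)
Your proposal is correct and follows the paper's argument. The paper likewise proves $d_F\le d_i\le d_F+D_i$ within a collar and $d_F(x,y)=d_i(x,\Sigma_i)+d_j(y,\Sigma_j)+O(1)$ across different collars, compares $(U,d_F)$ with a finite metric wedge of the $(N_i,d_i)$, and gets properness from the lower normal estimate. The only difference is cosmetic: the paper wedges at chosen points $b_i\in\Sigma_i$ rather than collapsing each initial level, and your bound $d_i(x,y)\le h_i(x)+h_i(y)+\diam_{d_i}\Sigma_i$ is exactly how the paper handles paths that leave and re-enter a collar.
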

\begin{proof}
The boundary has finitely many components. Let $\Sigma_1,\ldots,\Sigma_{\nu}$ be the initial levels of the collars $N_1,\ldots,N_\nu$. Each $\Sigma_i$ has finite intrinsic length diameter $D_i$. Indeed, it is a compact connected manifold, and the gauge is uniformly bounded above on it.

We first compare the collar distances with the distance of the domain. Let $x,y\in N_i$. A path in $U$ which leaves $N_i$ can have the portion between its first exit and its last re-entry replaced by a path in $\Sigma_i$ of length at most $D_i+\eta$. Taking infima and then letting $\eta\downarrow0$ gives
\begin{equation}\label{eq:same-collar}
 d_F(x,y)\le d_i(x,y)\le d_F(x,y)+D_i.
\end{equation}
If $x\in N_i$, $y\in N_j$ and $i\ne j$, then every joining path leaves $N_i$ through $\Sigma_i$ and enters $N_j$ through $\Sigma_j$. Hence
\begin{equation}\label{eq:different-collars}
 d_i(x,\Sigma_i)+d_j(y,\Sigma_j)
 \le d_F(x,y)
 \le d_i(x,\Sigma_i)+d_j(y,\Sigma_j)+C.
\end{equation}
For the upper bound, we choose one point $b_i\in\Sigma_i$ on each level and join the finitely many $b_i$ by fixed paths of finite length in $U$. Paths in the initial levels and these fixed paths give a uniform $C$.

We now form the metric wedge of the spaces $(N_i,d_i)$ by identifying the points $b_i$. Within one collar the wedge distance is $d_i$. Between different collars it is $d_i(x,b_i)+d_j(y,b_j)$. Since
\[
 d_i(x,\Sigma_i)\le d_i(x,b_i)\le d_i(x,\Sigma_i)+D_i,
\]
the inequalities \eqref{eq:same-collar} and \eqref{eq:different-collars} compare this wedge with the union of the collars in $U$, up to a bounded additive error. The compact remaining core has bounded $d_F$-distance to the initial levels. Thus the wedge and $(U,d_F)$ are quasi-isometric. The wedge is a proper geodesic Gromov hyperbolic space. Indeed, its geodesic triangles consist of triangles in its components and of segments through the common point, so the maximum of the finitely many hyperbolicity constants is a hyperbolicity constant for the wedge.

Finally, we prove properness directly. The lower normal estimate controls the variation of logarithmic depth along paths. Hence a bounded metric ball cannot approach the boundary. It lies in a compact Euclidean subset of $U$, and the local comparison of topologies gives properness. Since $d_F$ is a length distance, it is geodesic. Quasi-isometry invariance of Gromov hyperbolicity completes the proof.
\end{proof}

The collar arguments above were formulated over a smooth transverse manifold. They also apply to domains with $C^2$ boundary, for which the normal collar is only $C^1$, by the following reduction to a smooth reference hypersurface.

\begin{lemma}[A smooth reference hypersurface]\label{lem:reference}
Let $U$ be a bounded domain with $C^2$ boundary and outward unit normal $\nu$. There are a compact $C^\infty$ hypersurface $\Sigma$ and a $C^1$ diffeomorphism $h:\Sigma\to\partial U$ such that each $\sigma\in\Sigma$ lies on the normal segment through $h(\sigma)$. Consequently, Lemma~\ref{lem:hardy-growth}, Theorem~\ref{thm:hardy} and Lemma~\ref{lem:compact-interior} apply to collar gauges over $\partial U$.
\end{lemma}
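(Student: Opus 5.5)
The idea is to build $\Sigma$ as a smooth approximation of $\partial U$ that sits inside the tubular neighborhood and is transverse to the normal segments, and then to obtain $h$ by projecting along normals. First I fix a tubular radius $r_1>0$. Because $\partial U$ is $C^2$, the nearest-point projection $\pi$ is $C^1$ on $T=\{\dist(\cdot,\partial U)<r_1\}$, and the signed distance $d$ (positive outside $U$) is $C^2$ there with $\nabla d=\nu\circ\pi$. I then mollify $d$ on a slightly smaller neighborhood to get a $C^\infty$ function $\tilde d$ with $\norm{\tilde d-d}_{C^1}<\eps$. Choosing a regular value $c$ of $\tilde d$ with $|c|<\eps$ (Sard), I set $\Sigma=\{\tilde d=c\}\cap\{|d|<r_1/2\}$. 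For $\eps$ small this is a compact $C^\infty$ hypersurface contained in $\{|d|<2\eps\}$.

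Next I show that each normal segment meets $\Sigma$ exactly once. For $p\in\partial U$, put $g_p(s)=\tilde d(p+s\nu(p))-c$ with $|s|<r_1/2$. Since $g_p'(s)=\nabla\tilde d\cdot\nu(p)$ and $\nabla d(p+s\nu(p))=\nu(p)$, we get $g_p'\ge1-\eps>0$. Moreover $g_p(\pm r_1/2)$ has the sign of $\pm1$ because $|\tilde d-d|<\eps$. So there is a unique zero $s(p)$, and the implicit function theorem makes $s$ a $C^1$ function of $p$, since $(p,s)\mapsto p+s\nu(p)$ is $C^1$. The map $p\mapsto p+s(p)\nu(p)$ is then a $C^1$ bijection $\partial U\to\Sigma$, and its inverse is $h=\pi|_\Sigma$, which is $C^1$. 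Because $\pi$ restricted to $\Sigma$ has inverse $p\mapsto p+s(p)\nu(p)$, $h$ is a $C^1$ diffeomorphism, and by construction $\sigma=h(\sigma)+s\,\nu(h(\sigma))$. This step needs the transversality $\nabla\tilde d\cdot\nu>0$ uniformly, and it is the only place where the $C^2$ hypothesis enters, through the $C^1$ regularity of $\nu$ and $\pi$.

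For the consequence, I transport the collar gauge $H_t$ on $T\partial U$ to $T\Sigma$ by $h^*H_t(\sigma;w)=H_t(h(\sigma);Dh_\sigma w)$. This pullback is still continuous, positive, symmetric and homogeneous, it depends continuously on $t$, and the Hardy inequality \eqref{eq:hardy} is pointwise in $(p,v)$, so it transfers unchanged. Lemma~\ref{lem:hardy-growth} and Theorem~\ref{thm:hardy} therefore apply on the smooth compact manifold $\Sigma$ (componentwise, since $\partial U$ has finitely many components). The map $h\times\mathrm{id}$ is a $C^1$ diffeomorphism $\Sigma\times[0,\infty)\to\partial U\times[0,\infty)$ under which $|a|+H_t$ corresponds exactly to $|a|+h^*H_t$. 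It maps absolutely continuous curves to absolutely continuous curves, so it is an isometry of the integrated distances, and properness and hyperbolicity pass over. Lemma~\ref{lem:compact-interior} only uses the collar parametrization up to uniform comparability, so it applies verbatim. The main obstacle is the delicate part of the second step: making sure $\Sigma$ is a single transverse graph over $\partial U$ along normals and not merely close to it. The uniform bound $g_p'\ge1-\eps$ is what resolves this.
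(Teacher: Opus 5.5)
Your proposal is correct and follows the paper's proof essentially step by step: you smooth the $C^2$ signed distance in $C^1$, use the uniform sign of its derivative along the normal segments to get exactly one crossing, take $h=\pi|_\Sigma$ with its inverse given by the implicit function theorem, and pull the collar gauges back by $h$. The one difference is cosmetic. The appeal to Sard is unnecessary, because your bound $g_p'\ge1-\eps$ already gives $\nabla\tilde d\ne0$ on the tube, which is why the paper simply uses the level $0$.
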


\begin{proof}
The signed distance $d$ to $\partial U$, negative in $U$, is $C^2$ on a tubular neighborhood $N=\{\abs d<r_1\}$ by \cite[Lemma~14.16]{GT01}, and the nearest-point projection $\pi:N\to\partial U$ is $C^1$. We smooth the signed distance. Choose $\widetilde d\in C^\infty(N)$ with $\norm{\widetilde d-d}_{C^1(N)}<\min\{1/2,r_1/4\}$. Along each normal segment $t\mapsto p-t\nu(p)$, $\abs t\le r_1/2$, the derivative of $d$ is $-1$. Hence the derivative of $\widetilde d$ is less than $-1/2$, and $\widetilde d$ has opposite signs at the two ends of the segment. Therefore $\Sigma=\{\widetilde d=0\}\cap\{\abs d<r_1/2\}$ is a compact smooth hypersurface meeting each segment exactly once. The map $h=\pi|_\Sigma$ is a bijection onto $\partial U$. Its inverse is $p\mapsto p-t(p)\nu(p)$, where $t(p)$ solves $\widetilde d(p-t\nu(p))=0$, and this inverse is $C^1$ by the implicit function theorem. Pulling back by the bi-Lipschitz map $h$ changes the constants in the splitting, Hardy and expansion inequalities, but not their form, and it preserves the absolute continuity of curves. This proves the lemma.
\end{proof}

\subsection{A Hardy collar detects full curve type}
In this subsection, we show that a Hardy inequality for the transverse gauge of a collar metric bounds the D'Angelo type of the boundary. Yu \cite{Yu95} related finite type to the boundary growth of Kobayashi-type metrics. The next theorem uses three ingredients, namely a lower estimate for mixed vectors, a Hardy inequality for the transverse gauge, and a comparison on one fixed Kobayashi ball. We emphasize that the auxiliary distance need not be Gromov hyperbolic.
\begin{theorem}\label{thm:curve-type}
Let $U\Subset\C^n$, $n\ge2$, be a domain with $C^\infty$ boundary, and let $\Psi$ be the Euclidean normal collar \eqref{eq:collar-intro} for $U$. Let $d_F$ be induced by a Borel symmetric homogeneous gauge $F$ locally uniformly comparable to the Euclidean norm. Suppose that a transverse gauge $H$ on $T\partial U$ satisfies the hypotheses of Lemma~\ref{lem:hardy-growth} with $M=\partial U$, and that there is $c>0$ such that
\begin{equation}\label{eq:type-lower-splitting}
 F\bigl(\Psi(p,t);D\Psi(p,t)(v,a)\bigr)
 \ge c\bigl(\abs a+H_t(p;v)\bigr)
\end{equation}
for all $p\in\partial U$, $t\ge0$, $v\in T_p\partial U$ and $a\in\R$. Suppose that there are $R_*>0$ and $K_*<\infty$ such that, for all $x$ sufficiently near $\partial U$ and all $y\in U$,
\begin{equation}\label{eq:fixed-ball}
 k_U(x,y)\le R_*\quad\Longrightarrow\quad d_F(x,y)\le K_*.
\end{equation}
Then $A\ge2$ and
\[
 \Delta_1(\partial U,p)\le A,\qquad p\in\partial U.
\]
\end{theorem}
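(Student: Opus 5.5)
The plan is to play the exponential expansion of Lemma~\ref{lem:hardy-growth} against holomorphic discs obtained by pushing a curve of high contact order into $U$ along the normal. Lemma~\ref{lem:reference} lets us work over the smooth reference hypersurface, so Lemma~\ref{lem:hardy-growth} applies and gives
\[
 H_t(p;v)\ge b\,e^{t/A}\norm v .
\]
In terms of the depth $r=r_0e^{-t}$, this says that the transverse gauge at depth $r$ is at least $b\,(r_0/r)^{1/A}\norm v$. The core claim is the following. If a holomorphic germ $\gamma$ through $p$ has $\ord_0(\rho\circ\gamma)\ge m\,\ord_0(\gamma-p)$ with $m>A$, we derive a contradiction. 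This proves $\Delta_1\le A$. For the bound $A\ge2$, note that every smooth boundary point admits a germ with ratio at least $2$, for instance a complex tangent line. Hence the same argument, which yields ``ratio $\le A$'' for every germ, forces $A\ge2$.

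\textbf{Step 1 (a disc with large tangential reach).} Write $\gamma(\zeta)=p+\zeta^kw+O(\zeta^{k+1})$ with $k=\ord_0(\gamma-p)$, so that $\abs{\rho(\gamma(\zeta))}\le C\abs\zeta^{mk}$. For small $r$, consider the translated disc
\[
 g_r(\zeta)=\gamma\bigl(\eps\zeta\bigr)-C'r\,\nu(p),\qquad \eps=r^{1/(mk)} .
\]
Since $\rho(g_r(\zeta))\le C\eps^{mk}-c\,C'r<0$, the disc $g_r(\D)$ lies in $U$ once $C'$ is large. The Kobayashi distance between $g_r(0)$ and $g_r(1/2)$ is at most $k_\D(0,1/2)$. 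This fixes $R_*$; if the given $R_*$ is smaller, we use a smaller radius $\zeta_0$ in place of $1/2$. Then $x=g_r(0)=p_{C'r}$ and $y=g_r(\zeta_0)$ satisfy $k_U(x,y)\le R_*$, so \eqref{eq:fixed-ball} gives $d_F(x,y)\le K_*$. At the same time, the tangential separation of $x$ and $y$, measured in $\partial U$ after nearest-point projection, is at least $c\,\eps^k=c\,r^{1/m}$, while both points have depth comparable to $r$.

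\textbf{Step 2 (lower bound for $d_F$).} Take any path $\sigma$ from $x$ to $y$ and write it in collar coordinates as $(p(u),t(u))$. If $\sigma$ leaves the collar, its vertical cost alone is at least about $t$, and $t\to\infty$ as $r\to0$. Otherwise \eqref{eq:type-lower-splitting} bounds its $F$-length from below by
\[
 c\int\bigl(\abs{t'}+H_{t(u)}(p(u);p'(u))\bigr)\,du .
\]
Let $\tau=\min_u t(u)$. The vertical term contributes at least $t_x-\tau$, where $t_x=\log(r_0/C'r)$. The transverse term contributes at least $b\,e^{\tau/A}$ times the boundary length of $p(\cdot)$, which is at least $c\,r^{1/m}$. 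Writing $s=t_x-\tau\ge0$, this gives
\[
 d_F(x,y)\gtrsim s+e^{-s/A}\,r^{1/m-1/A}.
\]
The right side tends to $+\infty$ as $r\to0$ whenever $m>A$: either $s$ is large, or $e^{-s/A}$ is bounded below and $r^{1/m-1/A}\to\infty$. This contradicts $d_F(x,y)\le K_*$, and therefore every germ has ratio at most $A$, which is the bound $\Delta_1\le A$. For $A\ge2$, apply the same computation to a complex tangent line, which has ratio $m\ge2$ at every point. A point of strict convexity of $\partial U$, for example a farthest point from the origin, even gives $m=2$ exactly. If $A<2$, then $m=2>A$ and the same blow-up occurs.

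\textbf{Main obstacle.} I expect the hardest part to be Step 2. Two points need care there. First, one must control paths that wander in the transverse direction while changing depth; this is handled by the decomposition $s$ versus $e^{-s/A}$ above, using only the lower splitting. Second, one must make the tangential separation estimate uniform. Projecting $g_r(\zeta_0)$ to $\partial U$ moves it by only $O(r)$, which is much smaller than $r^{1/m}$. The background norm on $T\partial U$ is comparable to the Euclidean norm, so the projected points remain at boundary distance $\gtrsim r^{1/m}$. Singular germs need only the leading-term normalization $\zeta^k w$ in Step 1.
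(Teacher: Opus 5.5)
Your proposal is correct and follows essentially the paper's route. You combine the exponential growth from Lemma~\ref{lem:hardy-growth} with the lower splitting to show that a point at depth $r$ cannot be joined with bounded $d_F$-length (and hence, by the fixed-ball hypothesis, with bounded Kobayashi distance) to a point at Euclidean distance about $r^{1/m}$. You test this on a high-contact germ pushed inward along the normal, and you get $A\ge2$ from complex tangent lines.

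The only difference is bookkeeping. You trade the vertical cost $s$ against the transverse cost $e^{-s/A}r^{1/m-1/A}$ measured on the base projection. That step needs $m>1$, so the complex-tangent-line case giving $A\ge2$ should be carried out first. The paper instead bounds the full Euclidean speed by $C_1e^{-\varepsilon t}F$ with $\varepsilon=\min\{1,1/A\}$, obtains $\Delta_1\le\max\{1,A\}$ directly, and then uses $\Delta_1\ge2$.
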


\begin{proof}
Put $\eps=\min\{1,1/A\}$. We first show that balls of bounded gauge radius are small in the Euclidean sense near the boundary. Since $r=r_0e^{-t}$ and $D\nu$ is bounded,
\[
 \norm{D\Psi(p,t)(v,a)}\le C_0\norm v+r\abs a.
\]
Lemma~\ref{lem:hardy-growth} and \eqref{eq:type-lower-splitting} imply
\begin{equation}\label{eq:euclidean-speed}
 \norm{D\Psi(p,t)(v,a)}
 \le C_1e^{-\eps t}F\bigl(\Psi(p,t);D\Psi(p,t)(v,a)\bigr).
\end{equation}
Fix $K<\infty$, let $x=\Psi(p,T)$, and suppose that $d_F(x,y)\le K$. Choose a path from $x$ to $y$ with gauge length at most $K+1$. While the path lies in the collar, its total variation in $t$ is at most $(K+1)/c$. When $T>(K+1)/c$, the path cannot reach the initial level, and therefore it remains in the collar. Integration of \eqref{eq:euclidean-speed} along the path gives
\begin{equation}\label{eq:ball-euclidean}
 \norm{x-y}\le C_K\delta_U(x)^\eps
\end{equation}
for all sufficiently small $\delta_U(x)$, uniformly in $x,y$. Combining this inequality with \eqref{eq:fixed-ball}, we obtain $C_*,\eta>0$ such that
\begin{equation}\label{eq:k-ball-euclidean}
 \delta_U(x)<\eta,\quad k_U(x,y)\le R_*
 \quad\Longrightarrow\quad \norm{x-y}\le C_*\delta_U(x)^\eps.
\end{equation}

We now prove that this bound forces $\Delta_1\le1/\eps$. Let $\gamma$ be a holomorphic curve germ through $p$, of multiplicity $m=\ord_0(\gamma-p)$. Suppose that its contact ratio exceeds $1/\eps$. Then there is an integer $N$ with $N\eps>m$ and
\[
 \abs{\rho(\gamma(\zeta))}\le C_2\abs\zeta^N.
\]
For finite contact order, we take $N$ to be that order. For infinite contact, we choose any sufficiently large $N$. Let $b=d\rho_p[\nu(p)]>0$. On a fixed neighborhood of $p$, the derivative in the direction $\nu(p)$ is at least $b/2$. Choose $D>2C_2/b$ and define
\[
 f_R(\zeta)=\gamma(R\zeta)-DR^N\nu(p),\qquad \abs\zeta<1.
\]
For small $R$, all the normal segments used here remain in this neighborhood. The fundamental theorem of calculus gives
\[
 \rho(f_R(\zeta))\le C_2R^N-\tfrac12bDR^N<0.
\]
Thus $f_R:\D\to U$ is holomorphic. Its center is $x_R=p-DR^N\nu(p)$ and $\delta_U(x_R)=DR^N$. Choose a fixed $0<\theta<1$ with $k_\D(0,\theta)<R_*$, and let $y_R=f_R(\theta)$. Writing
$\gamma(\zeta)=p+c_m\zeta^m+O(\zeta^{m+1})$, $c_m\ne0$, we obtain
\[
 \norm{y_R-x_R}\asymp R^m,\qquad
 k_U(x_R,y_R)\le k_\D(0,\theta)<R_*.
\]
Equation~\eqref{eq:k-ball-euclidean} would imply $c_3R^m\le C_*D^\eps R^{N\eps}$, which is impossible as $R\downarrow0$. This contradiction shows that the contact ratio of every curve germ is bounded by the reciprocal of the exponent. Taking the supremum over all curve germs gives
$\Delta_1\le1/\eps=\max\{1,A\}$.

Finally, a smooth hypersurface in $\C^n$, $n\ge2$, has type at least two, since a nonzero complex tangent affine line has zero first-order term. Hence $A\ge2$ and $\Delta_1\le A$. This completes the proof.
\end{proof}

\begin{remark}
Condition \eqref{eq:fixed-ball} holds, with $K_*=\Lambda R_*+C$, whenever $d_F(x,y)\le\Lambda k_U(x,y)+C$ for all $x,y\in U$.
\end{remark}

As an application of the above theorem, we obtain the following corollary for the Kobayashi--Royden metric itself. It gives Gromov hyperbolicity and finite type at the same time, from a two-sided normal splitting and the Hardy inequality.
\begin{corollary}\label{cor:kobayashi-hardy}
Let $U\Subset\C^n$, $n\ge2$, be a domain with $C^\infty$ boundary, and let $\Psi$ be the normal collar \eqref{eq:collar-intro} for $U$. Suppose that there is a gauge $H$ on $T\partial U$ satisfying the hypotheses of Lemma~\ref{lem:hardy-growth} with $M=\partial U$, including \eqref{eq:hardy}, and constants $0<c\le C$ such that
\[
 c\bigl(\abs a+H_t(p;v)\bigr)\le\kappa_U\bigl(\Psi(p,t);D\Psi(p,t)(v,a)\bigr)\le C\bigl(\abs a+H_t(p;v)\bigr)
\]
for all $p\in\partial U$, $t\ge0$, $v\in T_p\partial U$ and $a\in\R$. Then $(U,k_U)$ is Gromov hyperbolic, and $\partial U$ is of finite type with
\[
 \Delta_1(\partial U,p)\le A,\qquad p\in\partial U.
\]
\end{corollary}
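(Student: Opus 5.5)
The proof has two halves, and both run through the integrated distance of the gauge $F=\kappa_U$ in the normal collar.

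For Gromov hyperbolicity, we set $F=\kappa_U$ and pull it back by $\Psi$. By hypothesis, this pullback is uniformly comparable to $\abs a+H_t(p;v)$. We next use Lemma~\ref{lem:reference} to replace $\partial U$ by a smooth reference hypersurface, so that the collar statements apply. Theorem~\ref{thm:hardy}, applied on each boundary component $M_i$, shows that the collar distance of the gauge $\abs a+H_t$ is proper, geodesic and Gromov hyperbolic. On each closed collar $N_i$, the intrinsic distance obtained by integrating $\kappa_U$ along paths in $N_i$ is bi-Lipschitz to this model distance, since the two gauges are pointwise comparable. Bi-Lipschitz maps are quasi-isometries, so each $(N_i,d_i)$ is proper and Gromov hyperbolic. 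Lemma~\ref{lem:compact-interior} now applies with $F=\kappa_U$. The Kobayashi--Royden metric is upper semicontinuous, hence Borel, and it is locally comparable to the Euclidean norm on the bounded domain $U$. Its integrated distance is $k_U$ by Venturini's theorem. We conclude that $(U,k_U)$ is Gromov hyperbolic.

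For finite type, we apply Theorem~\ref{thm:curve-type}, again with $F=\kappa_U$. The lower splitting \eqref{eq:type-lower-splitting} is the left-hand inequality of the hypothesis. Condition~\eqref{eq:fixed-ball} is immediate: $d_F=k_U$ because $k_U$ is the integrated distance of $\kappa_U$, so we may take $K_*=R_*$ for any $R_*$, as in the remark following the theorem. Theorem~\ref{thm:curve-type} then gives $\Delta_1(\partial U,p)\le A$ for every $p\in\partial U$, and in particular $\partial U$ is of finite type.

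The only step that needs care is the passage from pointwise comparability of the gauges to comparability of the collar distances in Lemma~\ref{lem:compact-interior}. The upper comparison is used only to obtain that passage. Hyperbolicity of $(N_i,d_i)$ must also be transferred through the quasi-isometry. This transfer is legitimate because both spaces are geodesic, being proper length spaces.
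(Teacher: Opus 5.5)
Your proposal is correct and follows the paper's proof: Theorem~\ref{thm:hardy} together with Lemma~\ref{lem:compact-interior} gives Gromov hyperbolicity, and Theorem~\ref{thm:curve-type} with $F=\kappa_U$, $d_F=k_U$ and $K_*=R_*$ gives the type bound. Two of your steps go beyond the paper's terse proof. Applying Theorem~\ref{thm:hardy} componentwise and transferring bi-Lipschitz to $(N_i,d_i)$ makes explicit a step the paper leaves implicit. The appeal to Lemma~\ref{lem:reference} is harmless but unnecessary, since $\partial U$ is already $C^\infty$.
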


\begin{proof}
Theorem~\ref{thm:hardy} and Lemma~\ref{lem:compact-interior} give Gromov hyperbolicity. For the type bound, we apply Theorem~\ref{thm:curve-type} with $F=\kappa_U$ and $d_F=k_U$. The fixed-ball comparison \eqref{eq:fixed-ball} then holds with $K_*=R_*$. This proves the corollary.
\end{proof}

\begin{remark}[Integrated perturbations]\label{rem:perturbations}
We note that the Hardy criterion tolerates errors that are controlled only after integration in the depth variable. Suppose that $H^0_s\le C_0e^{-\lambda(t-s)}H^0_t$ and that $E_t\ge0$ satisfies
\[
 \int_0^tE_s\,ds\le DH^0_t.
\]
Whenever $H=H^0+E$ is a continuous positive homogeneous gauge, we have
\[
 \int_0^tH_s\,ds\le(C_0/\lambda+D)H^0_t
 \le(C_0/\lambda+D)H_t.
\]
Thus the Hardy criterion permits errors for which $E_t/H^0_t$ is not uniformly bounded.
\end{remark}

\section{Finite affine-plane contact and the generalized metrics}\label{sec:planes}
In this section, we characterize Gromov hyperbolicity of the generalized distances by finite contact of affine complex planes with the boundary, and we identify the sharp normal expansion exponent. Throughout this section, $\Omega$ is bounded, has $C^\infty$ boundary, and satisfies \eqref{eq:global-support}. We recall the maximal contact order $M_k(\Omega)$ from Section~\ref{sec:gauges}. The order in its definition is taken in the $2k$ real variables underlying $\C^k$.

We are now ready to present the principal result of this section.

\begin{theorem}\label{thm:planes}
Let $\Omega\Subset\C^n$ be a domain with $C^\infty$ boundary, and suppose that \eqref{eq:global-support} holds.
For each $1\le k\le n$, the following conditions are equivalent.
\begin{enumerate}
\item $M_k(\Omega)<\infty$.
\item There are $C,\lambda>0$ such that
$H_s^{(k)}(p;v)\le Ce^{-\lambda(t-s)}H_t^{(k)}(p;v)$ for $0\le s\le t$.
\item There is $A<\infty$ such that
$\int_0^t H_s^{(k)}(p;v)\,ds\le AH_t^{(k)}(p;v)$ for all $p,v,t$.
\item $(\Omega,d_\Omega^{(k)})$ is Gromov hyperbolic.
\end{enumerate}
If $M=M_k(\Omega)<\infty$, then (ii) holds with $\lambda=1/M$ and fails for every $\lambda>1/M$. Moreover, after decreasing $r_0$, there is $C'>0$ such that
\begin{equation}\label{eq:two-depth-intro}
 R_\Omega(p_r;V)\le C'(r/u)^{1/M}R_\Omega(p_u;V),
 \qquad 0<r\le u\le r_0,
\end{equation}
for all $p\in\partial\Omega$ and $V\in\Gr_\C(k,n)$.
\end{theorem}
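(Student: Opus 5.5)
The plan is to prove the cycle (i)$\Rightarrow$(ii)$\Rightarrow$(iii)$\Rightarrow$(iv)$\Rightarrow$(i), with the sharpness statement and \eqref{eq:two-depth-intro} obtained along the way from (i)$\Rightarrow$(ii).

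\emph{(ii)$\Rightarrow$(iii).} Integrate: $\int_0^tH_s^{(k)}\,ds\le C\lambda^{-1}H_t^{(k)}$.

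\emph{(iii)$\Rightarrow$(iv).} Theorem~\ref{thm:hardy} makes each collar Gromov hyperbolic. The normal splitting, Lemma~\ref{lem:normal-splitting}, identifies $\qk$ in the collar with $\abs a+H_t^{(k)}$. Lemma~\ref{lem:compact-interior}, applied through the smooth reference hypersurface of Lemma~\ref{lem:reference}, then gives hyperbolicity of $(\Omega,d_\Omega^{(k)})$.

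\emph{(i)$\Rightarrow$(ii) and \eqref{eq:two-depth-intro}.} This is the central estimate. Fix $p$ and a $k$-plane $V$. Write $\rho$ locally as $\rho(p+w)=2\Ree\ip w{\nu}+\ldots$. The support condition \eqref{eq:global-support} implies that on $p+V\cap T_p^\C$ the function $\rho\ge0$ near $p$. Its Taylor expansion on $p+E\zeta$ is nonnegative and vanishes to order at most $M$. A compactness argument over $(p,E)$ then gives the uniform bound
\[
\rho(p+E\zeta)\ge c\abs\zeta^M \quad\text{for } \abs\zeta\le\eta .
\]
(Even order, nonnegativity, and uniformity all come from compactness of the unit sphere bundle; this requires care, since minimal orders can jump.)

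From this we derive the two-depth estimate. The point $p_r$ sits at depth $r$ along the normal, so $R_\Omega(p_r;V)$ is governed by the sublevel set $\{\zeta:\rho(p_r+E\zeta)<0\}$. Modulo tilting of $V$ relative to $T^\C_p$, this set behaves like $\{\abs\zeta^M\lesssim r\}$. A cleaner route uses \eqref{eq:global-support} at nearby boundary points together with $\C$-linear affine convexity-like scaling. Take the boundary point $a$ realizing $R_\Omega(p_u;V)$ and the support hyperplane at $a$. Comparing $\abs{\phi_H}$ along the segment from $p_r$ to $p_u$ (as in Lemma~\ref{lem:global-support}) with the order-$M$ contact bound yields $R(p_r)\le C'(r/u)^{1/M}R(p_u)$. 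Translating to $H^{(k)}$ via \eqref{eq:collar-vector} gives (ii) with $\lambda=1/M$.

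\emph{Sharpness.} For sharpness, choose $(p,E)$ attaining contact order $M$. Then $\rho(p+E\zeta)\le C\abs\zeta^M$, so $R_\Omega(p_r;V)\gtrsim r^{1/M}$ (the same translation trick as in the proof of Theorem~\ref{thm:curve-type}). Against $R\le C$ at depth $r_0$, this rules out any $\lambda>1/M$.

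\emph{(iv)$\Rightarrow$(i).} Argue by contradiction, which I expect to be the main obstacle. If $M_k=\infty$, take $(p_j,E_j)$ with contact orders $\to\infty$, or a limit plane with infinite-order contact. Rescaling along the normal yields, in the limit, a flat $k$-disc pattern in the boundary or a degenerate product-like geometry. Zimmer's style argument applies here: find geodesic quadrilaterals, built from normal rays at two points of the approximately flat disc, that fail to be thin. Rays normal at $p$ and at $p+E\zeta$ have comparable $d^{(k)}$-length, and their mutual distance at depth $r$ stays bounded by the large-radius estimate while their Gromov product diverges. I would first try the direct quasigeodesic route. Normal rays are $(\Lambda,b)$-quasigeodesics by \eqref{eq:dk-support} and Lemma~\ref{lem:normal-splitting}, and with the common thinness constant for quadrilaterals one can contradict the requirement that sides be $\delta_0$-close, using the infinite-order (hence super-polynomial) flatness to keep the two rays at bounded distance over arbitrarily long depth intervals while their bases are at distance one apart.
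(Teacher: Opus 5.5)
Your steps (ii)$\Rightarrow$(iii)$\Rightarrow$(iv) match the paper. The sharpness idea is also right, though you additionally need that a plane of maximal contact $M\ge2$ lies in $T^\C_p$, so that $v$ is real tangent, and a separate remark for $M=1$, which forces $k=n$.

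\textbf{The gap in (i)$\Rightarrow$(ii).}
\begin{itemize}
\item The uniform bound $\rho(p+E\zeta)\ge c\abs\zeta^M$ is false. At the origin of the convex, strongly pseudoconvex model $\{\Ree z_1+(\Ree z_2)^2<0\}$, the restriction to the complex tangent line is $(\Ree\zeta)^2$. This vanishes on $i\R$, so no bound $c\abs\zeta^N$ holds for any $N$.
\item You only treat planes inside $T^\C_p$. But $\delta^{(k)}_\Omega(z;v)$ and \eqref{eq:two-depth-intro} involve all $k$-planes, including tilted ones, uniformly as they approach tangency.
\item Even a correct pointwise bound would give only the one-depth estimate $R_\Omega(p_r;V)\lesssim r^{1/M}$. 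The ratio estimate also needs the matching lower bound $R_\Omega(p_u;V)\gtrsim u^{1/M}$. That fails for planes of lower contact order $m<M$, where $R_\Omega(p_u;V)\asymp u^{1/m}$, and for tilted planes, where it is $\asymp u$.
\item Your support-hyperplane variant yields at most $R_\Omega(p_r;V)\le R_\Omega(p_u;V)\abs{\phi_H(p_r)}/\abs{\phi_H(p_u)}$. Since $\abs{\phi_H(p_r)}$ need not tend to zero, this gives no power gain.
\end{itemize}
The missing idea is to work with the ball maximum rather than a pointwise bound. In graph coordinates, $p_r+EB(0,R)\subset\Omega$ holds exactly when the restricted graph function $G_{p,E}$ is $<r$ on $B_R$. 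So the radius is the first $R$ at which $F_{p,E}(R)=\max_{\abs\zeta\le R}G_{p,E}$ reaches $r$. One shows $F_{p,E}\asymp J_{p,E}(R)=\sum_{1\le\abs\alpha\le M}\abs{c_\alpha(p,E)}R^{\abs\alpha}$ uniformly in $(p,E)$, tilted planes included. Nonnegativity on $T^\C_p$ is used only to dominate the negative part of $G_{p,E}$ by its maximum. Then $J(cR)\ge c^MJ(R)$ for $0<c\le1$ gives $J^{-1}(r)/J^{-1}(u)\le(r/u)^{1/M}$.

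\textbf{The gap in (iv)$\Rightarrow$(i).} Your mechanism runs the wrong way. Two long normal sides that stay at bounded $d^{(k)}_\Omega$-distance, joined by caps of bounded length, form a thin quadrilateral. Bounded separation with divergent Gromov product is exactly how asymptotic rays behave in a hyperbolic space, so it gives no contradiction. Also, $p_r+E\zeta$ with $\zeta$ fixed leaves $\Omega$ once $r<h(\zeta)$, if the slice graph $h$ is not identically zero.

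A contradiction needs a fat quadrilateral, whose caps also have length comparable to the depth interval $j$. The paper gets this in three steps.
\begin{itemize}
\item It chooses $p$ and $V\subset T^\C_p$ whose slice graph $h$ is infinitely flat but not identically zero near $0$. If the limit plane $P$ contains an open piece of $\partial\Omega$, it passes to a relative boundary point of $P\cap\partial\Omega$. Then $\tau(r)=R_\Omega(p_r;V)$ is monotone, tends to zero, and satisfies $\tau(r)\ge c_Nr^{1/N}$.
\item From $\log(1/\tau(r_*e^{-t}))/t\to0$ it extracts depth windows of logarithmic length $2j$ on which $\tau$ decreases by at most the factor $1-e^{-j}$.
\item It aims both caps at a first-contact point $\xi$ and runs them to $\lambda=1-2e^{-j}$. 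The complex support at that point makes the caps quasigeodesics of length $\asymp j$. The support at $p$, with the triangle inequality, keeps the midpoint of the upper cap at distance $\gtrsim j$ from the other three sides.
\end{itemize}
The rescaling alternative you mention is not developed. Transferring Gromov hyperbolicity of $d^{(k)}_\Omega$ to rescaled limits would itself require proof.
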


\begin{remark}
For smoothly bounded convex domains, the equivalence of (i) and (iv) is due to Wang and Zimmer \cite[Theorem~1.6]{WZ24}. For bounded convex domains, Wang and Zimmer also prove that (iv) is equivalent to a two-depth contraction of the form \eqref{eq:two-depth-intro}. We refer the reader to \cite[Theorem~1.5]{WZ24}. Theorem~\ref{thm:planes} differs from these results in three respects. It replaces real convexity by the complex support condition \eqref{eq:global-support}. It adds the Hardy condition (iii). It identifies the sharp exponent $1/M_k(\Omega)$.
\end{remark}

We first note that finite order at every pair $(p,E)$ already implies $M_k(\Omega)<\infty$. This follows from the compactness of the boundary and of the complex Stiefel manifold. Indeed, a sequence of pairs with orders tending to infinity would have a limit pair, and at the limit pair every derivative would vanish.

The proof of the theorem follows the approach of Wang and Zimmer \cite{WZ24} in the convex case. The expanding-collar theorem \cite[Theorem~1.1]{WZ24} gives Gromov hyperbolicity, and thick quasigeodesic quadrilaterals rule it out. In the distance estimates, global complex supports take the place of real convexity. The finite-jet calculation of the next subsection gives a uniform comparison between two depths in each fixed affine plane.

\subsection{Uniform finite jets on fixed planes}
In this subsection, we fix an affine plane and compare its first contact with the boundary at two normal depths. The comparison rests on uniform control of the Taylor polynomial of the boundary graph restricted to the plane. To this end, choose unitary coordinates centered at $p$, with $\nu(p)=e_1$, so that in a uniform neighborhood
\begin{equation}\label{eq:smooth-graph}
 \Omega=\{(z_1,z'):x_1+f_p(y_1,z')<0\},\qquad
 f_p(0)=0,\quad Df_p(0)=0.
\end{equation}
Finitely many smooth coordinate families cover the compact boundary, and they give uniform derivative bounds. Since the complex tangent hyperplane at the base point does not meet $\Omega$, we have
\begin{equation}\label{eq:tangent-nonnegative}
 f_p(0,z')\ge0.
\end{equation}
For a complex linear isometry $E:\C^k\to\C^n$, we set
\[
 G_{p,E}(\zeta)=\Ree(E\zeta)_1+f_p\bigl(\Imm(E\zeta)_1,(E\zeta)'\bigr),\qquad
 F_{p,E}(R)=\max_{\norm\zeta\le R}G_{p,E}(\zeta).
\]
Within the coordinate neighborhood, we have
\begin{equation}\label{eq:sublevel-radius}
 p_r+EB(0,R)\subset\Omega
 \quad\Longleftrightarrow\quad G_{p,E}(\zeta)<r\quad(\norm\zeta<R).
\end{equation}
In this description the plane is fixed, and a change of the normal depth changes only the sublevel value.

The following lemma compares the maximum of the restricted graph function with its jet polynomial, uniformly in the boundary point and in the plane. It is the main technical step in the sufficient direction.

\begin{lemma}\label{lem:finite-jets}
Under the standing assumptions of this section, there are $R_*>0$ and $C_0\ge1$, independent of $p$ and $E$, such that
\begin{equation}\label{eq:positive-maximum}
 F_{p,E}(R)\le\norm{G_{p,E}}_{L^\infty(B_R)}\le C_0F_{p,E}(R),
 \qquad 0<R\le R_*.
\end{equation}
If $M=M_k(\Omega)<\infty$, then there are $c,C>0$, independent of $p$ and $E$, such that the polynomials
\begin{equation}\label{eq:jet-polynomial}
 J_{p,E}(R)=\sum_{1\le\abs\alpha\le M}\abs{c_\alpha(p,E)}R^{\abs\alpha},
\end{equation}
where $c_\alpha(p,E)$ are the real Taylor coefficients of $G_{p,E}$ at $0$ in real coordinates on $\C^k$, have nonnegative coefficients, not all zero, and satisfy
\begin{equation}\label{eq:jet-comparison}
 cJ_{p,E}(R)\le F_{p,E}(R)\le CJ_{p,E}(R),\qquad 0<R\le R_*.
\end{equation}
\end{lemma}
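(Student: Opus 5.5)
The plan has two parts. First I prove the comparison \eqref{eq:positive-maximum} between the maximum of $G_{p,E}$ and its sup-norm, using the tangency condition \eqref{eq:tangent-nonnegative} together with a rotation inside complex lines. Then I prove \eqref{eq:jet-comparison} by Taylor expansion, with the sup-norm as an intermediary.

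\emph{Step 1: the comparison \eqref{eq:positive-maximum}.} The left inequality is trivial. For the right one, write $w=E\zeta$ and $a(\zeta)=(E\zeta)_1$. Then $a$ is a complex linear functional on $\C^k$ with norm $\abs a\le1$, and
\[
 G_{p,E}(\zeta)=\Ree a(\zeta)+f_p\bigl(\Imm a(\zeta),w'\bigr).
\]
\begin{enumerate}
\item \emph{Lower bound for the $f_p$ term.} The bounds $f_p(0,z')\ge0$ from \eqref{eq:tangent-nonnegative}, $Df_p(0)=0$, and the uniform $C^2$ bounds on $f_p$ give $\partial_{y_1}f_p(0,z')=O(\abs{z'})$. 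A first-order Taylor expansion in $y_1$ then yields
\[
 f_p(y_1,z')\ge -C\abs{y_1}\bigl(\abs{y_1}+\abs{z'}\bigr).
\]
Since $\abs{\Imm a(\zeta)}\le\abs a\,\norm\zeta$, the $f_p$ term is at least $-C\abs a R^2$ on $B_R$.
\item \emph{Bound on $\norm{G_{p,E}}_{L^\infty(B_R)}$.} From (a), the negative part of $G_{p,E}$ on $B_R$ is at most $\abs a R+C\abs aR^2\le2\abs aR$. Consequently $\norm{G_{p,E}}_{L^\infty(B_R)}\le F_{p,E}(R)+2\abs aR$.
\item \emph{Lower bound for $F_{p,E}(R)$.} Choose $\zeta_0$ with $\norm{\zeta_0}=R$ and $a(\zeta_0)=\abs aR$. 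This is possible because complex rotation $\zeta_0\mapsto\tau\zeta_0$, $\abs\tau=1$, rotates $a(\zeta_0)$. Then by (a),
\[
 F_{p,E}(R)\ge G_{p,E}(\zeta_0)\ge\abs aR-C\abs aR^2\ge\tfrac12\abs aR
\]
for $R\le R_*$ small.
\end{enumerate}
Combining (b) and (c) gives $\norm{G_{p,E}}_{L^\infty(B_R)}\le 5F_{p,E}(R)$, so $C_0=5$ works. All constants depend only on the finitely many coordinate families.

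\emph{Step 2: the jet comparison \eqref{eq:jet-comparison}.} Assume $M=M_k(\Omega)<\infty$.
\begin{enumerate}
\item \emph{Nondegeneracy of the jet.} The function $x_1+f_p$ is a local defining function. By the invariance of the order noted in Section~\ref{sec:gauges}, $\ord_0G_{p,E}=\ord_0\rho(p+E\zeta)\le M$. Hence some coefficient $c_\alpha(p,E)$ with $1\le\abs\alpha\le M$ is nonzero; there is no constant term, since $G_{p,E}(0)=0$.
\item \emph{Uniformity by compactness.} The map $(p,E)\mapsto\sum_{\abs\alpha\le M}\abs{c_\alpha(p,E)}$ is continuous on the compact set formed by the boundary and the Stiefel manifold, and it is positive everywhere by (a). Its minimum is therefore some $c_1>0$. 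This gives $J_{p,E}(R)\ge c_1R^M$ for $R\le1$. The coefficients of $J_{p,E}$ are nonnegative by definition.
\item \emph{Taylor remainder.} Let $P$ be the Taylor polynomial of $G_{p,E}$ of degree $M$. Then $\abs{G_{p,E}-P}\le C_2R^{M+1}$ on $B_R$, uniformly in $(p,E)$.
\item \emph{Upper bound.} From (c),
\[
 F_{p,E}(R)\le J_{p,E}(R)+C_2R^{M+1}\le(1+C_2/c_1)J_{p,E}(R).
\]
\item \emph{Lower bound.} Apply equivalence of norms on the finite-dimensional space of real polynomials of degree at most $M$ on $\C^k$ to the rescaled polynomial $P_R(\zeta)=P(R\zeta)$, whose coefficients are $c_\alpha R^{\abs\alpha}$. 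This gives $\max_{B_R}\abs P\ge c_2J_{p,E}(R)$. Using (c) and then (b),
\[
 \norm{G_{p,E}}_{L^\infty(B_R)}\ge c_2J_{p,E}(R)-C_2R^{M+1}\ge\tfrac12c_2J_{p,E}(R)
\]
for $R\le R_*$ small. Step 1 then gives $F_{p,E}(R)\ge c_2J_{p,E}(R)/(2C_0)$.
\end{enumerate}

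The main obstacle is Step 1: bounding the negative part of $G_{p,E}$ by its positive maximum with a constant uniform in $(p,E)$. The point of the argument is that the support condition makes the $f_p$ term almost nonnegative, with error at most $C\abs aR^2$. The complex rotation then shows that the linear part alone pushes $F_{p,E}(R)$ up to order $\abs aR$, which dominates that error for small $R$.
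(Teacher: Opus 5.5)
Your proposal is correct and follows the paper's proof essentially step for step. For \eqref{eq:positive-maximum} you use the same tangency bound with an $O(\abs{a}R^2)$ error and the same rotation to a vector where $(E\zeta)_1=\abs{a}R$. For \eqref{eq:jet-comparison} you use the same Taylor remainder, compactness lower bound $\sum\abs{c_\alpha}\ge c_1$, and rescaled norm equivalence. One small simplification: at your rotated vector $\Imm a(\zeta_0)=0$, so \eqref{eq:tangent-nonnegative} gives $G_{p,E}(\zeta_0)\ge\abs{a}R$ directly, as the paper notes, and the $-C\abs{a}R^2$ correction is not needed there.
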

\begin{proof}
Let $a=\norm{\pi_1E}_{\mathrm{op}}$, where $\pi_1$ is the projection onto the complex normal coordinate. We first bound the negative part of the restricted graph function. The uniform second derivative bounds and $Df_p(0)=0$ imply
\[
 \abs{f_p(y_1,z')-f_p(0,z')}\le C_1\abs{y_1}(\abs{y_1}+\norm{z'}).
\]
On $EB_R$ the right side is at most $C_2aR^2$. Together with \eqref{eq:tangent-nonnegative}, this gives
$G_{p,E}\ge-aR-C_2aR^2$ on $B_R$. We next bound the positive part from below. Suppose first that $a>0$. By complex linearity, there is $\zeta_*$ with $\norm{\zeta_*}=R$ such that $(E\zeta_*)_1=aR$ is real and positive. At this vector, we have $G_{p,E}(\zeta_*)\ge aR$. Thus the absolute size of the negative part is at most $(1+C_2R)F_{p,E}(R)$. If $a=0$, then the whole restriction is nonnegative. This proves \eqref{eq:positive-maximum}. We note that the estimate is uniform also for planes approaching tangency.

We now compare the maximum with the jet polynomial. In real coordinates on $\C^k$, write
\[
 G_{p,E}(\zeta)=P_{p,E}(\zeta)+\mathcal R_{p,E}(\zeta),\qquad
 P_{p,E}(\zeta)=\sum_{1\le\abs\alpha\le M}c_\alpha(p,E)\zeta^\alpha.
\]
The remainder is bounded by $C_3\norm\zeta^{M+1}$, uniformly in the pair. Since the order of contact at every pair is at most the maximal order, no pair $(p,E)$ has all coefficients zero. Continuity of the coefficients and compactness therefore give
\[
 \sum_{1\le\abs\alpha\le M}\abs{c_\alpha(p,E)}\ge c_1>0.
\]
We next compare the polynomial with the sum of the absolute values of its coefficients. After the rescaling $\zeta=R\eta$, norm equivalence on the finite-dimensional space of polynomials gives a uniform $K$ with
\[
 K^{-1}J_{p,E}(R)\le\norm{P_{p,E}}_{L^\infty(B_R)}\le KJ_{p,E}(R).
\]
For $R\le1$, we have $J_{p,E}(R)\ge c_1R^M$. After decreasing $R_*$, the remainder is at most half of this lower bound for the polynomial. Combining these estimates with \eqref{eq:positive-maximum} proves \eqref{eq:jet-comparison}. We point out that no positivity of the Taylor polynomial has been used.
\end{proof}

The following elementary lemma converts the jet comparison into a two-depth estimate for the inverse function of the jet polynomial.

\begin{lemma}\label{lem:inverse-polynomial}
Let $J(R)=\sum_{j=1}^M a_jR^j$, where $a_j\ge0$ and $\sum_j a_j>0$, and let $\sigma=J^{-1}$. Then
\[
 \frac{\sigma(r)}{\sigma(u)}\le(r/u)^{1/M}\quad(0<r\le u),\qquad
 \sigma(br)\le b\sigma(r)\quad(b\ge1).
\]
\end{lemma}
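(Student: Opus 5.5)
The plan is to work with the homogeneity ratio of $J$ rather than with $J$ itself. Since $J$ is a nonzero polynomial with nonnegative coefficients and no constant term, it is continuous, strictly increasing on $[0,\infty)$, satisfies $J(0)=0$, and tends to infinity. So $\sigma=J^{-1}$ is a well-defined increasing bijection of $[0,\infty)$, and both inequalities can be stated as comparisons of $J$ under dilations.

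For the first inequality, the key observation is that, for $0<\theta\le1$ and every $R>0$,
\[
 \theta^M J(R)\le J(\theta R)\le \theta J(R).
\]
This holds termwise, because $\theta^M\le\theta^j\le\theta$ for $1\le j\le M$ and every coefficient $a_j$ is nonnegative.

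Given $0<r\le u$, set $R=\sigma(u)$ and $\theta=(r/u)^{1/M}\le1$. The left-hand inequality gives
\[
 J(\theta R)\ge\theta^M J(R)=(r/u)\,u=r.
\]
Since $\sigma$ is increasing, $\sigma(r)\le\theta R=(r/u)^{1/M}\sigma(u)$, which is the first claim.

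For the second inequality, let $b\ge1$ and $R=\sigma(r)$. Termwise, $b^j\ge b$ for $j\ge1$, so
\[
 J(bR)=\sum_j a_j b^jR^j\ge bJ(R)=br.
\]
Monotonicity of $\sigma$ then gives $\sigma(br)\le bR=b\sigma(r)$. Equivalently, this is the right-hand inequality $J(\theta R)\le\theta J(R)$ applied with $\theta=1/b$.

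I do not expect a genuine obstacle. The only care needed is the monotonicity bookkeeping: each step must invert the direction of the inequality correctly when passing from $J$ to $\sigma$. One should also note explicitly that $\sum_j a_j>0$ is what makes $J$ strictly increasing, so that $\sigma$ is defined on all of $(0,\infty)$.
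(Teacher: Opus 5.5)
Your proof is correct and follows essentially the same route as the paper, which likewise uses $J(cR)\ge c^MJ(R)$ for $0<c\le1$ with $R=\sigma(u)$, $c=(r/u)^{1/M}$, and $J(bR)\ge bJ(R)$ for $b\ge1$, then inverts via the increasing function $\sigma$. Your added remarks that $J$ is strictly increasing with $J(0)=0$ and $J(R)\to\infty$, so that $\sigma$ is defined on $(0,\infty)$, make explicit a point the paper leaves tacit.
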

\begin{proof}
For $0<c\le1$, the nonnegativity of the coefficients gives $J(cR)\ge c^MJ(R)$. Put $R=\sigma(u)$ and $c=(r/u)^{1/M}$, and apply the increasing inverse to the resulting inequality. This proves the first inequality. The second inequality follows in the same way from $J(bR)\ge bJ(R)$.
\end{proof}

We are now ready to prove the two-depth contraction, which is the main step in the sufficient direction.

\begin{proposition}[Two-depth contraction]\label{prop:two-depth}
Under the standing assumptions of this section, suppose that $M=M_k(\Omega)<\infty$. Then, after decreasing $r_0$, there is $C>0$, independent of $p$, $V$, $r$ and $u$, such that
\begin{equation}\label{eq:two-depth}
 R_\Omega(p_r;V)\le C(r/u)^{1/M}R_\Omega(p_u;V),
 \qquad 0<r\le u\le r_0,\quad V\in\Gr_\C(k,n).
\end{equation}
Consequently, there is $C'>0$, independent of $p$, $v$, $s$ and $t$, such that
\begin{equation}\label{eq:finite-type-expansion}
 \Hs(p;v)\le C'e^{-(t-s)/M}\Ht(p;v),\qquad 0\le s\le t.
\end{equation}
\end{proposition}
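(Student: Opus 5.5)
The plan is to reduce the two-depth contraction to a statement about a single $k$-plane through the two points $p_r$ and $p_u$ on the same inward normal, and then invoke Lemma~\ref{lem:finite-jets} and Lemma~\ref{lem:inverse-polynomial}.

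Fix $p$ and $V\in\Gr_\C(k,n)$, and choose a complex linear isometry $E:\C^k\to V$. By \eqref{eq:sublevel-radius}, as long as the relevant balls stay in the uniform coordinate neighborhood, $R_\Omega(p_r;V)$ is the supremum of the $R$ with $G_{p,E}<r$ on $B_R$. Hence
\[
 F_{p,E}\bigl(R_\Omega(p_r;V)\bigr)\le r,
\]
and also $F_{p,E}(R)\ge r$ for every $R>R_\Omega(p_r;V)$ within range. Here I use that $F_{p,E}$ is continuous and nondecreasing, so the first contact value equals $r$.

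Set $\sigma_{p,E}=J_{p,E}^{-1}$. By \eqref{eq:jet-comparison} we have $cJ\le F\le CJ$, which gives
\[
 \sigma_{p,E}(r/C)\le R_\Omega(p_r;V)\le\sigma_{p,E}(r/c).
\]
Lemma~\ref{lem:inverse-polynomial} then yields
\[
 R_\Omega(p_r;V)\le\sigma(r/c)\le(r/u)^{1/M}\sigma(u/c)\le(r/u)^{1/M}(C/c)\,\sigma(u/C)\le(C/c)(r/u)^{1/M}R_\Omega(p_u;V).
\]
The first inequality is the upper bound above. The second is the first part of Lemma~\ref{lem:inverse-polynomial}. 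The third uses $\sigma(br)\le b\sigma(r)$ with $b=C/c\ge1$. The last is the lower bound at depth $u$.

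The main obstacle is localization. We must ensure that $R_\Omega(p_u;V)\le R_*$, and that the first boundary contact of $p_u+V$ happens inside the coordinate neighborhood where \eqref{eq:smooth-graph} describes $\Omega$. I would handle both by decreasing $r_0$. The lower bound $F_{p,E}(R)\ge cJ_{p,E}(R)\ge cc_1R^M$ for $R\le R_*$ shows that within radius $R_*$ the plane meets $\{G\ge r\}$ once $r<cc_1R_*^M$. So for $r_0$ small, the contact radius at every depth $u\le r_0$ lies below $R_*$, uniformly in $p$ and $E$. Any boundary point outside the chart is farther away and therefore irrelevant. The support condition \eqref{eq:global-support} is already built into Lemma~\ref{lem:finite-jets} through \eqref{eq:tangent-nonnegative}.

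For \eqref{eq:finite-type-expansion}, take the inverse supremum over $V\ni w$ of the two-depth inequality, with $r=r_0e^{-t}$ and $u=r_0e^{-s}$. This gives
\[
 \delta^{(k)}_\Omega(p_r;w)\le Ce^{-(t-s)/M}\delta^{(k)}_\Omega(p_u;w)
\]
for a fixed vector $w$, that is,
\[
 \qk(p_u;w)\le Ce^{-(t-s)/M}\qk(p_r;w).
\]
It remains to pass from a fixed $w$ to the collar pushforwards. The vectors $D_p\Psi(p,s)v=(I-uD\nu_p)v$ and $(I-rD\nu_p)v$ differ from $v$. By \eqref{eq:collar-vector}, $\qk$ at each depth of these vectors is comparable to $\qk$ of $v$ itself, with uniform constants. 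Applying this with $w=v$ on both sides gives $\Hs(p;v)\le C'e^{-(t-s)/M}\Ht(p;v)$.
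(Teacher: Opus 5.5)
Your proposal is correct and follows the paper's proof essentially step for step: shrink $r_0$ below $\inf_{p,E}F_{p,E}(R_*)$ so that first contact happens within radius $R_*$ inside the chart, sandwich $\sigma_{p,E}(r/C)\le R_\Omega(p_r;V)\le\sigma_{p,E}(r/c)$ using the jet comparison, combine both inequalities of Lemma~\ref{lem:inverse-polynomial}, then take the supremum over planes containing $v$ and apply \eqref{eq:collar-vector} at both depths. Your explicit chain of inequalities, with constant $C/c$, is exactly the computation the paper leaves implicit.
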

\begin{proof}
We first fix the scale. Lemma~\ref{lem:finite-jets} gives $\inf_{p,E}F_{p,E}(R_*)>0$. We choose $r_0$ below this infimum and below the normal collar radius. By \eqref{eq:sublevel-radius}, the first contact of every affine plane based at $p_r$ then lies inside the coordinate neighborhood. With the notation
\[
 \tau_{p,E}(r)=R_\Omega(p_r;\operatorname{image}E),
\]
continuity and first contact give $F_{p,E}(\tau_{p,E}(r))=r$. Thus $\tau_{p,E}(r)$ is the first radius at which $F_{p,E}$ reaches the value $r$.

We now compare this radius with the inverse of the jet polynomial. Let $\sigma_{p,E}=J_{p,E}^{-1}$. The jet comparison gives
\[
 \sigma_{p,E}(r/C)\le\tau_{p,E}(r)\le\sigma_{p,E}(r/c).
\]
The second inequality of Lemma~\ref{lem:inverse-polynomial}, applied directly and also after replacing $r$ by $r/b$, compares both sides uniformly with $\sigma_{p,E}(r)$. Its first inequality, applied at $r$ and $u$, then proves \eqref{eq:two-depth}.

To pass from planes to directions, we take the supremum over all planes containing a fixed vector $v\in\C^n$. This gives
\[
 \qk(p_u;v)\le C(r/u)^{1/M}\qk(p_r;v).
\]
Finally, we put $u=r_0e^{-s}$ and $r=r_0e^{-t}$, and we use \eqref{eq:collar-vector} at both depths. This gives \eqref{eq:finite-type-expansion} and completes the proof.
\end{proof}

Integrating \eqref{eq:finite-type-expansion} in the lower depth gives the Hardy inequality
\begin{equation}\label{eq:finite-type-hardy}
 \int_0^tH_s^{(k)}(p;v)\,ds\le C'MH_t^{(k)}(p;v).
\end{equation}
Together with Lemma~\ref{lem:normal-splitting}, Theorem~\ref{thm:hardy} and Lemma~\ref{lem:compact-interior}, this inequality proves Gromov hyperbolicity. Alternatively, \eqref{eq:finite-type-expansion} and the normal splitting verify directly the hypotheses of the version of the expanding-collar theorem for domains. We refer the reader to \cite[Theorem~3.1]{WZ24}.

\subsection{Infinite contact and thick quadrilaterals}
In this subsection, we prove the converse direction. We show that infinite contact of an affine plane rules out Gromov hyperbolicity. The argument uses centered balls in one affine plane, and it does not require an analytic disc in the boundary.

We first show that infinite contact produces a plane in the complex tangent space whose slice graph is flat to infinite order at the base point but is not identically zero near it.

\begin{lemma}\label{lem:infinite-plane}
Under the standing assumptions of this section, suppose that $M_k(\Omega)=\infty$. Then there are $p\in\partial\Omega$, a complex $k$-plane $V\subset T_p^\C\partial\Omega$ and a complex linear isometry $E:\C^k\to V$ such that, in \eqref{eq:smooth-graph}, the function
\[
 h(\zeta)=f_p\bigl(0,(E\zeta)'\bigr)
\]
is nonnegative, vanishes to infinite order at zero, and is not identically zero on any neighborhood of zero. For this pair, the radius
$\tau(r)=R_\Omega(p_r;V)$ is nondecreasing for small $r>0$, tends to zero as $r\downarrow0$, and, for every integer $N\ge1$, satisfies
\begin{equation}\label{eq:infinite-radius}
 \tau(r)\ge c_Nr^{1/N}
\end{equation}
at all sufficiently small depths.
\end{lemma}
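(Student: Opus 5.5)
The proof has two parts: first extract the pair $(p,E)$ by compactness, then read off the properties of $\tau$ from the sublevel description \eqref{eq:sublevel-radius}.

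\emph{Choosing the pair.} Since $M_k(\Omega)=\infty$, the remark after Theorem~\ref{thm:planes} applies. If every pair $(p,E)$ had finite order, compactness of $\partial\Omega$ and of the Stiefel manifold would give $M_k<\infty$. So some pair $(p,E)$ has $G_{p,E}$ vanishing to infinite order at $0$.

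The linear part of $G_{p,E}$ is $\Ree(E\zeta)_1$, because $Df_p(0)=0$. Infinite order forces this to vanish, so $(E\zeta)_1=0$ for all $\zeta$. Hence $V=\operatorname{image}E\subset T^\C_p\partial\Omega$ and $G_{p,E}(\zeta)=f_p(0,(E\zeta)')=h(\zeta)$. By \eqref{eq:tangent-nonnegative}, $h\ge0$, and $h$ is flat at $0$.

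If $h$ vanishes identically near $0$, then $p+V$ contains a nontrivial affine $k$-disc in $\partial\Omega$. This is not a contradiction, and I expect this case to be the main obstacle. The plan is to replace $p$ by a boundary point of the zero set $Z=\{h=0\}$ relative to a small ball. Take $\zeta_0\in\partial Z$, set $p'=p+E\zeta_0$, and keep the same $E$. The complex tangent space is constant along the flat piece, because the support hyperplane at each point of $p+EZ^\circ$ must contain the disc. By continuity $V\subset T^\C_{p'}\partial\Omega$. In the graph coordinates at $p'$, the new slice function is nonnegative by \eqref{eq:tangent-nonnegative}. It is flat at $0$, being a limit of functions vanishing identically on open sets, with derivatives continuous in the base point. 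It is not identically zero near $0$, since $\zeta_0\in\partial Z$.

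One subtlety remains. If $h\equiv0$ on a whole neighborhood of $0$ in $\C^k$ and the zero set is connected and open, I would use the maximal connected flat piece. It is bounded, because $\Omega$ is bounded, so its relative boundary is nonempty and yields $\zeta_0$. This needs the coordinate charts to be transported along the flat piece. I expect that transport, rather than the estimates below, to require the most care.

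\emph{Properties of $\tau$.} By \eqref{eq:sublevel-radius}, $\tau(r)$ is the first radius with $F(\tau(r))=r$, where $F(R)=\max_{\norm\zeta\le R}h(\zeta)$. The function $F$ is continuous and nondecreasing, $F(0)=0$, and $F(R)>0$ for $R>0$ because $h\not\equiv0$ on any ball. Therefore $\tau(r)=\inf\{R:F(R)\ge r\}$ is nondecreasing in $r$. It tends to $0$ as $r\downarrow0$, since $F(R)>0$ for each fixed $R>0$.

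Flatness gives $h(\zeta)\le C_N\norm\zeta^N$, so $F(R)\le C_NR^N$. At $R=\tau(r)$ this reads $r\le C_N\tau(r)^N$, which is \eqref{eq:infinite-radius} with $c_N=C_N^{-1/N}$. It holds as soon as $r$ is small enough that $\tau(r)$ lies in the coordinate ball, which follows from $\tau(r)\to0$.
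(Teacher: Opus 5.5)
Your overall route is the paper's. You extract a pair of infinite order by compactness, use first-order vanishing to put $V$ in $T^\C_p\partial\Omega$, move the base point when the slice is flat, and read monotonicity, $\tau(r)\to0$ and \eqref{eq:infinite-radius} off the sublevel sets $\{h<r\}$. Those parts are fine. The gap is in the flat case, which you correctly identify as the crux but do not close.

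First, an arbitrary $\zeta_0\in\partial Z$ does not do the job. Your flatness argument (a limit of slices vanishing identically on open sets) needs $\zeta_0$ in the closure of the interior of $Z$. At a boundary point of $Z$ outside that closure, $h$ may vanish only to finite order. Your later ``maximal connected flat piece'' implicitly repairs this, but you then leave the transport of charts along that piece undone.

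That transport is unnecessary if you work with chart-free objects, as the paper does:
\begin{enumerate}
\item Put $P=p+V$ and $S=P\cap\partial\Omega$, which is compact. Its relative interior $O$ in $P$ is nonempty, since it contains a neighborhood of $p$ in $P$. It is also bounded, so it has a relative boundary point $q$ in the connected plane $P$.
\item Then $q\in S$ because $S$ is closed, and no relative neighborhood of $q$ lies in $S$ because $q\notin O$.
\item Every derivative along $P$ of a local defining function at $q$ vanishes on $O$ near $q$, hence at $q$ by continuity.
\item Only now pass to the standing graph coordinates at $q$. First-order vanishing gives $V\subset T^\C_q\partial\Omega$, so $h_q(\zeta)=f_q(0,(E\zeta)')$ is the restriction of the defining function $x_1+f_q$ to $P$.
\item Hence $h_q$ is flat at $0$, and nonnegative by \eqref{eq:tangent-nonnegative} at $q$. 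It is positive at points of $P\setminus S$ arbitrarily close to $q$, since these lie outside $\overline\Omega$ because $P\cap\Omega=\varnothing$.
\end{enumerate}
With this replacement, the rest of your argument applies verbatim at the new base point.
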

\begin{proof}
We choose pairs $(p_j,E_j)$ with contact orders tending to infinity, and we pass to a limit in the compact parameter space of boundary points and frames. Each derivative of fixed order vanishes at all but finitely many of these pairs, and therefore it vanishes at the limiting pair. First-order vanishing and complex linearity place the limiting plane $V$ in the complex tangent space.

We now consider two cases. If the slice graph of the limiting pair is not identically zero near the origin, we keep this pair. Otherwise, the affine plane $P=p+V$ contains a relatively open ball of $\partial\Omega$. By the support condition, $P\cap\Omega=\varnothing$. The set $S=P\cap\partial\Omega$ is compact, and its relative interior $O$ is nonempty. We choose a point $q$ in the boundary of $O$ relative to $P$. Then $q\in S$. On the other hand, no relative neighborhood of $q$ is contained in $S$, since $O$ is the entire relative interior. All derivatives of the restriction to $P$ of a defining function vanish on $O$, and hence they vanish at $q$. Thus $P$ is infinitely tangent at $q$, and the slice graph there has the required properties.

It remains to prove the assertions about the radius. For the pair obtained above, $h\ge0$ by the support condition, and the centered slice at a given depth is the sublevel set $\{h<r\}$. Hence its inradius is nondecreasing in the depth. Since $h$ is not identically zero on any neighborhood of zero, we have $\tau(r)\to0$. Otherwise a fixed ball would have $h=0$ on it. Infinite flatness gives $h(\zeta)\le C_N\norm\zeta^N$ on a sufficiently small ball, and this proves \eqref{eq:infinite-radius}.
\end{proof}

The next lemma shows that the slow decay of this radius produces, at arbitrarily large depths, intervals of prescribed logarithmic length on which the radius is almost constant.

\begin{lemma}\label{lem:slow-interval}
Under the standing assumptions of this section, let $\tau$ be the radius in Lemma~\ref{lem:infinite-plane}, and fix a small $r_*>0$. For every $L>0$, $0<\eta<1$ and prescribed sufficiently large $T$, there is $a\ge T$ such that
\begin{equation}\label{eq:slow-interval}
 \tau(r_*e^{-(a+L)})\ge(1-\eta)\tau(r_*e^{-a}).
\end{equation}
\end{lemma}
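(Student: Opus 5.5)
The plan is a pigeonhole argument on the logarithm of $\tau$ along the sequence of depths $r_*e^{-a}$, using the two properties from Lemma~\ref{lem:infinite-plane}: $\tau$ is monotone and decays slower than every power. Set $g(a)=\log\tau(r_*e^{-a})$ for $a\ge0$. Since $\tau$ is nondecreasing in $r$, the function $g$ is nonincreasing in $a$. Since $\tau(r)\to0$, we have $g(a)\to-\infty$. The key input is \eqref{eq:infinite-radius}. For every integer $N$ it gives $g(a)\ge\log c_N+(\log r_*-a)/N$ for large $a$. In other words,
\[
 \liminf_{a\to\infty}\frac{g(a)}{a}\ge-\frac1N\quad\text{for every }N,
 \qquad\text{so}\qquad \lim_{a\to\infty}\frac{g(a)}{a}=0 .
\]
The conclusion \eqref{eq:slow-interval} says exactly that $g(a)-g(a+L)\le-\log(1-\eta)=:\beta>0$ for some $a\ge T$.

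I argue by contradiction. Suppose $g(a)-g(a+L)>\beta$ for every $a\ge T$. Iterating along the arithmetic progression $a_j=T+jL$ gives
\[
 g(T+jL)<g(T)-j\beta .
\]
Dividing by $T+jL$ and letting $j\to\infty$ gives
\[
 \limsup_{j\to\infty}\frac{g(a_j)}{a_j}\le-\frac{\beta}{L}<0 .
\]
This contradicts $g(a)/a\to0$. Equivalently, choose $N$ with $1/N<\beta/L$ and compare directly with \eqref{eq:infinite-radius} at depth $r_*e^{-(T+jL)}$ for large $j$. So some $a\ge T$ of the form $T+jL$ satisfies the inequality, and \eqref{eq:slow-interval} follows after exponentiating.

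The only care needed is in the hypotheses "fix a small $r_*$" and "$T$ sufficiently large". The value $r_*$ should be within the range where Lemma~\ref{lem:infinite-plane} gives monotonicity and positivity of $\tau$. The value $T$ is arbitrary beyond $0$, and the largeness is only used so that the depths involved lie in the range where \eqref{eq:infinite-radius} holds with the chosen $N$; the threshold $j$ absorbs this. No step is a real obstacle. The main point to state cleanly is that the contradiction uses the bound \eqref{eq:infinite-radius} for a single $N$ depending on $L$ and $\eta$, so only finitely many constants $c_N$ enter.
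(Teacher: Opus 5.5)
Your proposal is correct and is essentially the paper's own proof. The paper works with $\log(1/\tau(r_*e^{-t}))$, which is your $-g$, and derives sublinear growth from \eqref{eq:infinite-radius}. It then iterates the failed inequality along $a=T+jL$ to get a linear lower bound, which contradicts that sublinear growth.
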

\begin{proof}
The function $g(t)=\log(1/\tau(r_*e^{-t}))$ is nondecreasing and eventually nonnegative. For every $N$, \eqref{eq:infinite-radius} gives an estimate $g(t)\le t/N+C_N$ on a tail. Hence $g(t)/t\to0$. Suppose, to get a contradiction, that \eqref{eq:slow-interval} fails for every $a\ge T$. Then $g(a+L)-g(a)>-\log(1-\eta)>0$ at every such depth. Iterating this inequality along $a=T+jL$ would give a positive linear lower bound for the growth of the function, which contradicts the fact that its quotient by the depth tends to zero. This proves the lemma.
\end{proof}

We now construct quasigeodesic quadrilaterals which are not uniformly thin. This proves that infinite contact rules out Gromov hyperbolicity.

\begin{proposition}\label{prop:infinite-not-gh}
Under the standing assumptions of this section, if $M_k(\Omega)=\infty$, then $(\Omega,\dk)$ is not Gromov hyperbolic.
\end{proposition}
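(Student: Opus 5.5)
The plan is to follow the thick-quadrilateral argument of Wang and Zimmer, with the fixed affine plane supplied by Lemma~\ref{lem:infinite-plane} in place of a convex face. Fix $p$, $V$, $E$ and $h$ as in Lemma~\ref{lem:infinite-plane}, a unit vector $w\in V$ and a small $r_*>0$. Write $x(t,\zeta)=p_{r_*e^{-t}}+\zeta$ for $\zeta\in V$. By \eqref{eq:sublevel-radius}, the slice of $\Omega$ through $p_{r_*e^{-t}}$ parallel to $V$ is $\{\zeta:h(E^{-1}\zeta)<r_*e^{-t}\}$ in the coordinates \eqref{eq:smooth-graph}. Its inradius about $0$ is $\tau(r_*e^{-t})$, and this radius is nonincreasing in $t$. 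Given a large $L$, I would set $\eps=e^{-L}$ and apply Lemma~\ref{lem:slow-interval} with this $L$ and with $\eta\ll\eps$, say $\eta=\eps^2$. This gives arbitrarily large depths $a$ with $\tau(r_*e^{-(a+L)})\ge(1-\eta)\tau(r_*e^{-a})$. Put $\tau_a=\tau(r_*e^{-a})$ and $\zeta_\pm=\pm(1-\eps)\tau_a w$. The quadrilateral has vertical sides $t\mapsto x(t,\zeta_\pm)$, $t\in[a,a+L]$, and horizontal sides the segments $[\zeta_-,\zeta_+]$ in the slices at depths $a$ and $a+L$. All four sides lie in $\Omega$ because the slices are nested and nearly equal.

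First I would prove lower bounds from \eqref{eq:dk-support}. The complex tangent hyperplane at $p$ belongs to $\mathcal H$ by \eqref{eq:global-support}, and $|\phi_H|$ is the normal depth. So $\dk$ between points at depths $t$ and $t'$ is at least $|t-t'|$. Next I would take a nearest boundary point $b_+$ of the slice at depth $a$ in the direction $w$ (within distance $\sim\eps\tau_a$ of $\zeta_+$) and a complex support $H_+$ through $b_+$. Since the slices at all depths in $[a,a+L]$ agree up to the factor $1-\eta$, the quantity $|\phi_{H_+}|$ should be $\lesssim\eps\tau_a$ along the right vertical side and $\gtrsim\tau_a$ near $\zeta_-$ and near the centre. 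Then \eqref{eq:dk-support} gives $\dk\gtrsim\log(1/\eps)=L$ between the left vertical side and the right one. The symmetric estimate uses a support through the boundary point near $\zeta_-$.

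Second, I would prove upper bounds showing that each side is a $(\Lambda,b)$-quasigeodesic with $\Lambda$ and $b$ independent of $L$ and $a$. On a horizontal side the gauge satisfies $\qk(z;w)\le1/R_\Omega(z;V)$. The slice contains a ball of radius about $\tau_a$ about the centre, and every point of $[\zeta_-,\zeta_+]$ lies at distance $\ge\eps\tau_a$ (up to $\eta$) from the slice boundary. Integrating $1/(\tau_a-|\zeta|)$ therefore gives length $\le2\log(1/\eps)+C=2L+C$, and the same computation on subsegments gives the quasigeodesic upper bound; the lower bound comes from the support estimates above. On a vertical side, the normal splitting \eqref{eq:normal-splitting} and the normal-support identity \eqref{eq:normal-support} bound the vertical speed by a constant in the $t$-parametrisation. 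For this I need the points $x(t,\zeta_\pm)$ to have Euclidean boundary distance comparable to the depth $r_*e^{-t}$, or at least the normal gauge to be bounded.

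\emph{This is the step I expect to be the main obstacle}, since $x(t,\zeta_\pm)$ sits near the edge of the slice and $\delta_\Omega$ there could be much smaller than the depth. What I would try first is to avoid $\delta_\Omega$ altogether. Instead of the straight normal segment, use $\qk(z;\nu)\le 1/R_\Omega(z;V')$ for a suitable plane $V'\ni\nu(p)$, or join $x(t,\zeta_\pm)$ to the centre line by a horizontal detour of length $\le L+C$ at the two ends only. This gives vertical sides of length $\le L+C$ consisting of a central normal segment, on which $\delta_\Omega\asymp r$ by the tubular neighbourhood, plus two short horizontal pieces. The quadrilateral then remains a $(\Lambda,b)$-quasigeodesic quadrilateral with uniform constants. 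If that fails, I would instead shrink $\zeta_\pm$ to $\pm\tau_a(1-\eps)w$ with $\eps$ chosen so that a definite fraction of the $\delta_\Omega$-ball survives, using $h\ge0$ and the $C^2$ bound on $f_p$.

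Finally, take the midpoint $m=x(a+L/2,\zeta_-)$ of the left side. By the first step, its distance to both horizontal sides is $\ge L/2-C$ and its distance to the right side is $\ge cL-C$. This contradicts the uniform thinness of $(\Lambda,b)$-quasigeodesic quadrilaterals noted after Lemma~\ref{lem:proper} once $L$ is large. Hence $(\Omega,\dk)$ is not Gromov hyperbolic.
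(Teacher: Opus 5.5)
The gap is at the step you flag yourself, and none of your three remedies closes it. You use the slow interval of Lemma~\ref{lem:slow-interval} over the same range $[a,a+L]$ as the height of the quadrilateral. So at $\zeta_\pm$ all you know is $h_\pm:=h(E^{-1}\zeta_\pm)<r_*e^{-(a+L)}$. Near the bottom of a vertical side, the clearance $r-h_\pm$ can therefore be an arbitrarily small fraction of the depth $r$. This is a real effect, not a weak upper bound. The real point $x(t,\zeta_\pm)+(r-h_\pm)\nu(p)$ lies on $\partial\Omega$, so $\qk(x;r\nu(p))\ge\qone(x;r\nu(p))\ge r/(r-h_\pm)$, and the side has unbounded speed in the parameter $t$.

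Each remedy fails for a concrete reason:
\begin{enumerate}
\item Remedy (a) cannot help, because $R_\Omega(x;V')\le r_\Omega(x;\nu(p))\le r-h_\pm$ for every $V'\ni\nu(p)$.
\item Remedy (b) makes both vertical sides contain the same central normal segment and reuse pieces of the horizontal sides. The quadrilateral degenerates, and the midpoint of your left side then lies on the right side.
\item Remedy (c) would need something like $\tau(\tfrac12 r_*e^{-(a+L)})\ge(1-\eps)\tau_a$. That is a slow-interval statement beyond depth $a+L$, which you have not arranged. The bounds $h\ge0$, the $C^2$ bound and $h\le C_N\abs\zeta^N$ say nothing useful at radius comparable to $\tau_a$.
\end{enumerate}

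The missing idea, which the paper uses, is to let the slow interval overshoot the quadrilateral. Apply Lemma~\ref{lem:slow-interval} with length $2L$ (the paper takes $2j$ and $\eta=e^{-j}$), and run the vertical sides only from depth $r_a$ to $r_ae^{-L}$. Then $h_\pm<r_ae^{-2L}$, so $r-h_\pm\ge(1-e^{-L})r\ge r/2$ along the whole side. The Lipschitz-graph estimate gives $\delta_\Omega\ge c_0r$, and \eqref{eq:elementary} bounds the speed. Alternatively, you can keep your sides and reparametrize by $s=-\log(r-h_\pm)$. Since $\delta_\Omega\asymp r-h_\pm$, \eqref{eq:elementary} bounds the $s$-speed. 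Applying \eqref{eq:dk-support} with a support at a nearest boundary point gives $\dk\ge\abs{s-s'}-C$. Thinness does not care that the side may be longer than $L$.

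Two further points need repair. First, $w$ cannot be fixed before $a$. It must point to a first-contact point $\xi$ of the centred ball at depth $a$; otherwise the slice boundary in direction $w$ may lie far beyond $\zeta_+$. Second, the slice need not be symmetric. In general there is no boundary point near $\zeta_-$, hence no lower distance bound on the left halves of your horizontal sides.

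The paper avoids this with a one-sided quadrilateral:
\begin{enumerate}
\item the central normal segment $-r_ae^{-u}\nu$, which is an exact geodesic since $\delta_\Omega=r$ there;
\item the normal segment through $c\xi$;
\item caps from $0$ to $c\xi$, parametrized by $-\log(1-\lambda)$.
\end{enumerate}
The thick point is the midpoint of the top cap. This choice also removes any need to control the normal component of the second support across depths. Your estimate for the distance to the right side needs that control, and you only assert that $|\phi_{H_+}|$ ``should be'' small there.
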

\begin{proof}
We use the pair given by Lemma~\ref{lem:infinite-plane}, and we translate coordinates so that $p=0$ and $p_r=-r\nu$. Fix an integer $j\ge2$. We apply Lemma~\ref{lem:slow-interval} with $L=2j$ and $\eta=e^{-j}$, choosing $T$ so large that all points considered below lie in the fixed graph neighborhood. We put
\[
 r_a=r_*e^{-a},\quad r_b=r_ae^{-2j},\quad r_c=r_ae^{-j},
 \quad R=\tau(r_a),\quad c=1-2\eta.
\]
By the choice of the interval and the monotonicity of the radius, we have $\tau(r)\ge(1-\eta)R$ for $r_b\le r\le r_a$. We choose $\xi\in V$ with $\norm\xi=R$ and $-r_a\nu+\xi\in\partial\Omega$. Then $h(\xi)=r_a$. We consider the normal sides
\[
 A(u)=-r_ae^{-u}\nu,\qquad B(u)=-r_ae^{-u}\nu+c\xi,
 \qquad 0\le u\le j,
\]
and the caps
\[
 \Gamma_0(\lambda)=-r_a\nu+\lambda\xi,\qquad
 \Gamma_1(\lambda)=-r_c\nu+\lambda\xi,
 \qquad 0\le\lambda\le c.
\]
These four curves form the quadrilateral. We first estimate the normal sides from below. The support at zero has equation $z_1=0$. It gives the distance lower bounds
\[
 \dk(A(u),A(u'))\ge\abs{u-u'},\qquad
 \dk(B(u),B(u'))\ge\abs{u-u'}.
\]
The speed of $A$ is one by \eqref{eq:normal-support}, so $A$ is a geodesic.

We next estimate the speed of the second normal side. Since $cR<(1-\eta)R\le\tau(r_b)$, we have $h(c\xi)<r_b$. Hence, for $r_c\le r\le r_a$,
\[
 r-h(c\xi)\ge r-r_b\ge(1-e^{-j})r\ge r/2.
\]
We now convert this vertical clearance into a Euclidean distance. A point with vertical clearance $b$ in a graph of Lipschitz constant $K_0$ has distance at least $b/\sqrt{1+K_0^2}$ from that graph. This follows by minimizing $a^2+\max\{b-K_0a,0\}^2$ over $a\ge0$. Once the construction is sufficiently small, the boundary points outside the graph patch are uniformly farther away. Thus
\[
 \delta_\Omega(-r\nu+c\xi)\ge c_0r
\]
with $c_0$ independent of $j$. Equation~\eqref{eq:elementary} bounds the speed of $B$ by $c_0^{-1}$. Therefore both normal sides are $(C_*,0)$-quasigeodesics for a fixed $C_*\ge1$.

We now turn to the caps. Parametrize the caps by $s=-\log(1-\lambda)$. The centered complex $k$-ball of radius $R$ at $-r_a\nu$ lies in $\Omega$, so
\[
 \qk(\Gamma_0(\lambda);\xi)\le(1-\lambda)^{-1}.
\]
For the lower bound, take a support through $-r_a\nu+\xi$. It satisfies
\[
 \phi_H(\Gamma_0(\lambda))=(\lambda-1)\ell_H(\xi),\qquad \ell_H(\xi)\ne0.
\]
Lemma~\ref{lem:global-support} gives the matching lower distance bound. Thus $\Gamma_0$ is a geodesic in the parameter $s$, of length $\ell_j=j-\log2$.

The second cap is treated in the same way. At depth $r_c$ there is a centered $k$-ball of radius at least $(1-\eta)R$. Hence, for $\lambda\le1-2\eta$,
\[
 \qk(\Gamma_1(\lambda);\xi)\le\frac1{1-\eta-\lambda}\le\frac2{1-\lambda}.
\]
For the lower bound, we locate the first boundary point on the ray. The ray $-r_c\nu+\lambda\xi$ first meets the boundary at some $\lambda_\bullet\in[1-\eta,1]$. The lower endpoint of this interval follows from the centered ball, and the upper endpoint from $h(\xi)=r_a>r_c$. A complex support at that boundary point gives, for $0\le\lambda_1<\lambda_2\le c$,
\[
 \dk(\Gamma_1(\lambda_1),\Gamma_1(\lambda_2))
 \ge\log\frac{\lambda_\bullet-\lambda_1}{\lambda_\bullet-\lambda_2}
 \ge\log\frac{1-\lambda_1}{1-\lambda_2}.
\]
Consequently, $\Gamma_1$ is a $(2,0)$-quasigeodesic in the same parameter.

Finally, we show that the midpoint of the first cap is far from the other three sides. Let $U_j$ be the midpoint of $\Gamma_0$, so that its parameter is
$\lambda_j=1-\sqrt{2\eta}$, and put $h_j=\ell_j/2$. The support $z_1=0$ gives
\[
 \dk(U_j,\Gamma_1)\ge j,\qquad
 \dk(U_j,A(u)),\ \dk(U_j,B(u))\ge u.
\]
The triangle inequality and the upper bounds on the normal sides also give
\[
 \dk(U_j,A(u)),\ \dk(U_j,B(u))\ge h_j-C_*u.
\]
Since $\max\{u,h_j-C_*u\}\ge h_j/(C_*+1)$, we obtain
\[
 \dist_{\dk}(U_j,A\cup B\cup\Gamma_1)
 \ge\min\left\{j,\frac{j-\log2}{2(C_*+1)}\right\}\longrightarrow\infty.
\]
All four sides have fixed quasigeodesic constants, and the distance from a point of one side to the union of the other three tends to infinity. This contradicts the uniform thinness of such quadrilaterals in a Gromov hyperbolic space. Hence the proof is complete.
\end{proof}

\begin{proof}[Proof of Theorem~\ref{thm:planes}]
We first prove the equivalence of the four conditions. Proposition~\ref{prop:two-depth} gives the normal expansion (ii) from the finite affine-plane contact (i). Integration of the expansion inequality gives the Hardy inequality (iii). By Lemma~\ref{lem:quasitriangle}, the map $(t,p,v)\mapsto H_t^{(k)}(p;v)$ is continuous. Hence Lemma~\ref{lem:normal-splitting}, Theorem~\ref{thm:hardy} and Lemma~\ref{lem:compact-interior} give Gromov hyperbolicity (iv) from the Hardy inequality. Proposition~\ref{prop:infinite-not-gh} proves the converse, namely that (iv) implies (i).

We now prove the optimality of the exponent. Choose a pair $(p,V)$ of contact order $M=M_k(\Omega)$. Such a pair exists because the finite contact orders are integers with maximum $M$. At this pair, the jet polynomial has no terms of degree below $M$, and therefore
\begin{equation}\label{eq:maximal-plane-radius}
 R_\Omega(p_r;V)\asymp r^{1/M}.
\end{equation}
For $0\ne v\in V$, this gives
\[
 \qk(p_r;v)\le Cr^{-1/M}\norm v.
\]
If $M>1$, the plane is complex tangent at $p$, so $v$ is a real boundary tangent vector. Lemma~\ref{lem:normal-splitting} transfers this upper bound to $H_t^{(k)}(p;v)$. On the other hand, an expansion with $\lambda>1/M$, applied with initial depth zero, would force $H_t^{(k)}(p;v)\ge ce^{\lambda t}$. This is a contradiction.

It remains to treat the case $M=1$. In this case $k=n$. Indeed, for $k<n$ there is a complex $k$-plane in the complex tangent hyperplane, and its contact order is at least two. For $k=n$, the gauge is $\norm v/\delta_\Omega$ and $H_t^{(n)}(p;v)\asymp e^t\norm v$. This proves optimality in the remaining case and completes the proof.
\end{proof}

\begin{remark}[Finite regularity]\label{rem:finite-regularity}
The sufficient direction needs much less boundary regularity. For a fixed integer $M\ge1$, it needs only a $C^{\max\{2,M\}}$ boundary, the support condition \eqref{eq:global-support}, and a nonzero affine $k$-plane jet of order at most $M$ at each boundary-plane pair. Compactness gives a uniform positive lower bound for the sum of the absolute jet coefficients. Uniform continuity of the order-$M$ derivatives gives a uniform $o(R^M)$ remainder, and this remainder is absorbed by $J_{p,E}(R)\ge cR^M$ as in Lemma~\ref{lem:finite-jets}. The normal splitting needs only $C^2$ regularity. Thus the same two-depth estimate gives Gromov hyperbolicity for $d_\Omega^{(k)}$.
\end{remark}

\section{Power geometry from Lipschitz Gromov hyperbolicity}\label{sec:lipschitz}
In this section, we prove Theorem~\ref{thm:lipschitz}. From now until the end of its proof, $\Omega\Subset\C^n$ is a $\C$-convex domain with Lipschitz boundary, $(\Omega,k_\Omega)$ is Gromov hyperbolic, and a finite Lipschitz atlas is fixed as in \eqref{eq:lipschitz-graph} below. We shall refer to these three requirements as the standing assumptions of this section.

The qualitative input that we take from Gromov hyperbolicity is the absence of nontrivial affine complex discs in $\partial\Omega$. This was proved by Bracci, Gaussier, Nikolov and Thomas. For $C^1$ boundaries it is due to Zimmer \cite[Theorem~1.4]{Zim17}. Our aim is to turn this qualitative statement into a quantitative one. We first show that the directional radii contract by a fixed factor between two depths whose ratio is one fixed logarithmic scale, and then we iterate this one-step contraction. The precise statement that we shall use is the following.

\begin{result}[Bracci--Gaussier--Nikolov--Thomas]\label{res:no-affine-discs}
A bounded $\C$-convex domain with Lipschitz boundary whose Kobayashi distance is Gromov hyperbolic has no nontrivial affine complex disc in its boundary. This is \cite[Proposition~4.2]{BGNT24}.
\end{result}
\subsection{Support-adapted charts}
In this subsection, we fix the Lipschitz charts in which all the estimates of this section are made, and we prove three lemmas about them. The first lemma shows that, on the two-plane spanned by the inward chart direction and a direction in a complex support, the domain is again the region above a graph that is Lipschitz in one real variable. The second lemma shows that the directional radii along such frames tend to zero uniformly and are monotone at small depths. The third lemma compares the height above the graph with the boundary distance.

Fix a finite Lipschitz atlas whose smaller patches cover $\partial\Omega$. In unitary affine coordinates, each larger chart has the form
\begin{equation}\label{eq:lipschitz-graph}
 \Omega\cap U=\{(z_1,z')\in U:\Ree z_1>g(\Imm z_1,z')\},
\end{equation}
where the graph functions have Lipschitz constant at most $\mathfrak L\ge1$. Let $p$ be a point of a smaller patch. We recenter the coordinates so that $p=0$, and we let $e=(1,0,\ldots,0)$ be the inward chart direction. An \emph{admissible frame} is a quadruple $(p,e,H_p,v)$, where $H_p=p+V_p$ is a complex support and $v\in V_p$ is a unit vector. Since $p+re\in\Omega$ for small $r>0$, whereas a complex support does not meet the domain, we have $e\notin V_p$. Consequently, the map
\[
 \Pi(\xi,\zeta)=p+\xi e+\zeta v
\]
is a complex affine embedding of $\C^2$.

Our first lemma says that, in the two complex variables of this embedding, the domain is the region above the graph of a function that is Lipschitz in the imaginary part of the first variable, on a bidisc whose size does not depend on the frame.

\begin{lemma}\label{lem:oblique-chart}
Under the standing assumptions of this section, there is a bidisc centered at $0\in\C^2$, independent of the admissible frame, on which
\begin{equation}\label{eq:oblique-graph}
 \Pi(x+iy,\zeta)\in\Omega\quad\Longleftrightarrow\quad x>F(y,\zeta),
\end{equation}
where $F$ is a real function satisfying
\begin{equation}\label{eq:oblique-lipschitz}
 F(0,0)=0,\qquad F(0,\zeta)\ge0,\qquad
 \abs{F(y,\zeta)-F(\widetilde y,\zeta)}\le \mathfrak L\abs{y-\widetilde y}.
\end{equation}
\end{lemma}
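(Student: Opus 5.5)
We define $F$ by solving for the real part of the first coordinate. In the chart coordinates \eqref{eq:lipschitz-graph} centred at $p=0$, write $v=(v_1,v')$. Then
\[
 \Pi(x+iy,\zeta)=\bigl(x+iy+\zeta v_1,\ \zeta v'\bigr),
\]
so $\Pi(x+iy,\zeta)\in\Omega\cap U$ if and only if
\[
 x+\Ree(\zeta v_1)>g\bigl(y+\Imm(\zeta v_1),\zeta v'\bigr).
\]
We therefore set
\[
 F(y,\zeta)=g\bigl(y+\Imm(\zeta v_1),\zeta v'\bigr)-\Ree(\zeta v_1).
\]
With this choice \eqref{eq:oblique-graph} holds wherever $\Pi$ of the bidisc lies in $U$.

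\textbf{The bidisc.} Since $\norm v=1$, we have $\abs{v_1}\le1$ and $\norm{v'}\le1$. Hence $\norm{\Pi(\xi,\zeta)}\le\abs\xi+\abs\zeta$. The smaller patches sit at a uniform distance $\rho>0$ inside the larger charts. So the bidisc of radius $\rho/2$ in each variable is mapped into $U$, whatever the frame. This gives the uniformity in the frame.

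\textbf{The three properties.} We check \eqref{eq:oblique-lipschitz} one item at a time.
\begin{itemize}
\item $F(0,0)=g(0,0)=0$, because $p=0$ lies on the boundary graph.
\item The Lipschitz bound in $y$ for fixed $\zeta$ is inherited directly from $g$. The argument $y+\Imm(\zeta v_1)$ is shifted by $y-\widetilde y$ while the second argument is unchanged, so the constant is $\mathfrak L$.
\item For $F(0,\zeta)\ge0$, note that $\Pi(0,\zeta)=\zeta v\in V_p=H_p$. The complex support does not meet $\Omega$, so $\Pi(0,\zeta)\notin\Omega$. By \eqref{eq:oblique-graph} this forces $0\le F(0,\zeta)$.
\end{itemize}

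\textbf{Main point of care.} The only real subtlety is that the equivalence must hold on the whole bidisc, not just near $0$. Points of $\Pi(\text{bidisc})$ lie in $U$, and within $U$ the description \eqref{eq:lipschitz-graph} is exact. So no further argument is needed, beyond shrinking the radius so that $\Pi(\text{bidisc})\subset U$ uniformly over all frames and over the finitely many charts.

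Note that $F$ is not claimed to be Lipschitz in $\zeta$ with a frame-independent constant. That is fine, since only the $y$-direction bound is asserted.
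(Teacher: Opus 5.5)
Your proposal is correct and follows the paper's proof essentially step for step. You use the same explicit formula $F(y,\zeta)=g\bigl(y+\Imm(\zeta v_1),\zeta v'\bigr)-\Ree(\zeta v_1)$, inherit the Lipschitz bound in $y$ from $g$, get the sign condition from $\Pi(0,\zeta)\in H_p$, and obtain a frame-independent bidisc from $\norm e=\norm v=1$ together with the compact containment of the smaller patches in the larger charts; your explicit radius $\rho/2$ and the check $F(0,0)=g(0,0)=0$ only spell out what the paper leaves implicit.
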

\begin{proof}
Write $v=(v_1,v')$. We substitute the coordinates of the embedded point into \eqref{eq:lipschitz-graph}. This gives
\[
 F(y,\zeta)=g\bigl(y+\Imm(\zeta v_1),\zeta v'\bigr)-\Ree(\zeta v_1).
\]
The Lipschitz estimate in $y$ follows from the Lipschitz estimate for the graph function. We next prove the sign condition. When $x=y=0$, the point lies in $H_p$, and a complex support cannot meet $\Omega$. Hence $F(0,\zeta)\ge0$. It remains to find the bidisc. Since $\norm e=\norm v=1$, since the atlas is finite, and since the smaller patches are compactly contained in the larger ones, a bidisc independent of the frame is mapped into the chart. This proves the lemma.
\end{proof}

Write $R_{p,e,v}(r)=r_\Omega(p+re;v)$. The next lemma says that these radii tend to zero uniformly in the frame, and that at small depths each of them is a monotone function of the depth. Both facts rest on the absence of affine discs in the boundary.
\begin{lemma}\label{lem:uniform-shrinking}
Under the standing assumptions of this section,
\begin{equation}\label{eq:uniform-shrinking}
 R_{p,e,v}(r)\longrightarrow0\qquad(r\downarrow0),
\end{equation}
uniformly over all admissible frames. Moreover, there is $r_0>0$, independent of the frame, such that each function $r\mapsto R_{p,e,v}(r)$ is nondecreasing on $(0,r_0]$.
\end{lemma}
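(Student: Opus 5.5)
The two assertions are proved separately. For the uniform convergence I will use Lemma~\ref{lem:shrinking} together with Result~\ref{res:no-affine-discs}. For the monotonicity I will use the oblique chart of Lemma~\ref{lem:oblique-chart} and the sign condition $F(0,\zeta)\ge0$.

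For \eqref{eq:uniform-shrinking}, Result~\ref{res:no-affine-discs} says that $\partial\Omega$ contains no nontrivial affine complex disc. Lemma~\ref{lem:shrinking} then gives $\RR_\Omega(z)\to0$ as $z\to\partial\Omega$. Since $\Omega$ is bounded and $\RR_\Omega$ tends to zero at the boundary, the convergence is uniform: for every $\eps>0$ there is $\eta>0$ with $\RR_\Omega(z)<\eps$ whenever $\delta_\Omega(z)<\eta$. For an admissible frame, the point $p+re$ satisfies $\delta_\Omega(p+re)\le\norm{re}=r$, and $R_{p,e,v}(r)\le\RR_\Omega(p+re)$. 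This gives the uniform convergence at once, with no dependence on the frame.

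For the monotonicity, I work in the bidisc of Lemma~\ref{lem:oblique-chart}, say $\{\abs\xi<\rho_1,\abs\zeta<\rho_2\}$, where the radii do not depend on the frame. Using the first part, I choose $r_0<\rho_1$ so small that $R_{p,e,v}(r)<\rho_2$ for all $r\le r_0$ and all admissible frames. Fix $0<r<r'\le r_0$. The disc $\{p+re+\zeta v:\abs\zeta<R\}$ with $R=R_{p,e,v}(r)$ lies in $\Omega$. By \eqref{eq:oblique-graph} with $y=0$, this means $r>F(0,\zeta)$ for $\abs\zeta<R$, and then also $r'>F(0,\zeta)$. Hence the disc of the same radius centered at $p+r'e$ lies in $\Omega$, at least within the bidisc. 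The choice of $r_0$ keeps these discs inside the bidisc, so they lie in $\Omega$. Therefore $R_{p,e,v}(r')\ge R_{p,e,v}(r)$.

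The one point that needs care is that the slice at depth $r'$ may leave the bidisc before it meets the boundary. Then \eqref{eq:oblique-graph} does not describe the first exit, and $R_{p,e,v}(r')$ is not simply the inradius of $\{\zeta:F(0,\zeta)<r'\}$. This does not break the argument, because I only need the lower bound $R_{p,e,v}(r')\ge R_{p,e,v}(r)$. That lower bound comes from the disc of radius $R_{p,e,v}(r)<\rho_2$, which lies in the bidisc, where the graph description holds. I expect this localisation to be the main (though mild) obstacle. The sign condition $F(0,\zeta)\ge0$ is not needed for monotonicity itself; it only guarantees that the sublevel sets $\{F(0,\cdot)<r\}$ are nested, so that the uniform smallness from the first part is what makes $r_0$ independent of the frame.
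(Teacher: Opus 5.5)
Your proposal is correct and follows the paper's proof. The paper gets uniform shrinking from Result~\ref{res:no-affine-discs}, Lemma~\ref{lem:shrinking} and $\delta_\Omega(p+re)\le r$, and gets monotonicity from the nesting of the sublevel sets $\{F(0,\cdot)<r\}$ inside the frame-independent bidisc of Lemma~\ref{lem:oblique-chart}; proving only the lower bound $R_{p,e,v}(r')\ge R_{p,e,v}(r)$ is a slightly more economical way to handle the localisation. One slip is in your closing remark: the sets $\{F(0,\cdot)<r\}$ are nested in $r$ for any function, so the sign condition $F(0,\zeta)\ge0$ plays no role anywhere in this lemma.
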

\begin{proof}
We first prove the uniform convergence. By Result~\ref{res:no-affine-discs}, the boundary contains no nontrivial affine complex disc, so Lemma~\ref{lem:shrinking} applies and gives $\RR_\Omega(z)\to0$ as $\delta_\Omega(z)\to0$. Since $\delta_\Omega(p+re)\le r$, we have $R_{p,e,v}(r)\le\sup\{\RR_\Omega(z):\delta_\Omega(z)\le r\}$. The right side does not depend on the frame, and it tends to zero with the depth. This proves \eqref{eq:uniform-shrinking}, uniformly in the frame.

We now prove the monotonicity. Using \eqref{eq:uniform-shrinking}, we choose $r_0$ so small that every centered disc in the direction of the frame, up to its first contact with the boundary, stays inside the uniform bidisc of Lemma~\ref{lem:oblique-chart}. In the coordinates of that lemma, the centered disc at depth $r$ is the largest centered disc contained in the sublevel set $\{F(0,\zeta)<r\}$. These sublevel sets increase with the depth. Hence the radii of the centered discs are nondecreasing in the depth, as claimed.
\end{proof}

Points near the boundary have to be written in the form used by the frames. The next lemma does this in a chosen chart and compares the height above the graph with the Euclidean boundary distance.

\begin{lemma}\label{lem:height-distance}
Under the standing assumptions of this section, there is $\delta_1>0$ such that every $z\in\Omega$ with $\delta_\Omega(z)<\delta_1$ can be written in one of the chosen charts as $z=p+re$, where $p\in\partial\Omega$ lies in a smaller patch, $e$ is the inward chart direction, and
\begin{equation}\label{eq:height-distance}
 \delta_\Omega(z)\le r\le\sqrt{1+\mathfrak L^2}\,\delta_\Omega(z).
\end{equation}
\end{lemma}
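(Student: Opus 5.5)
The plan is to take a nearest boundary point, move it inside a smaller patch, and then project $z$ vertically onto the graph. First I fix the constants. The smaller patches are compactly contained in the larger charts \eqref{eq:lipschitz-graph}, and the atlas is finite. Hence there is $\eta>0$ such that, whenever $q\in\partial\Omega$ lies in a smaller patch, the Euclidean ball $B(q,\eta)$ is contained in the corresponding larger chart $U$. Moreover, every point of $\partial\Omega$ lies in some smaller patch. I shall choose $\delta_1$ small compared with $\eta$ and with the size of the patches.

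Let $\delta_\Omega(z)<\delta_1$, and let $q\in\partial\Omega$ be a nearest boundary point, so that $\norm{z-q}=\delta_\Omega(z)$. Choose a smaller patch containing $q$ and work in its chart, with coordinates $z=(z_1,z')$ and $z_1=x+iy$. Since $z$ is close to $q$, the point $z$ lies in $U$. Because $z\in\Omega$, we have $\Ree z_1>g(\Imm z_1,z')$. Put $p=(g(\Imm z_1,z')+i\Imm z_1,z')$, which is the vertical projection of $z$ onto the graph. Then $p\in\partial\Omega$, and $z=p+re$ with $r=\Ree z_1-g(\Imm z_1,z')>0$. Here $e=(1,0,\dots,0)$ is the inward chart direction.

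The inequality $\delta_\Omega(z)\le r$ is immediate, since $r=\norm{z-p}$ and $p\in\partial\Omega$. For the upper bound, the domain near $z$ lies above a Lipschitz graph, so I use the elementary computation already made in the proof of Proposition~\ref{prop:infinite-not-gh}: a point at vertical clearance $r$ above a graph with Lipschitz constant $\mathfrak L$ has distance at least $r/\sqrt{1+\mathfrak L^2}$ from the graph. To apply it, the relevant graph points must lie in the chart. The nearest boundary point $q$ lies in $B(q,\eta)\subset U$. Within $U$, the boundary is exactly the graph, so $\delta_\Omega(z)=\dist(z,\text{graph}\cap U)$. This distance is at least the distance to the full graph, which gives $\delta_\Omega(z)\ge r/\sqrt{1+\mathfrak L^2}$, and this is \eqref{eq:height-distance}.

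The main obstacle is ensuring that $p$ itself lies in a smaller patch, and not merely in the larger chart. We have $\norm{p-q}\le\norm{p-z}+\norm{z-q}\le(1+\sqrt{1+\mathfrak L^2})\delta_\Omega(z)$, so $p$ is within a fixed multiple of $\delta_1$ of $q$. To handle this, I would first shrink each smaller patch to a slightly smaller family that still covers $\partial\Omega$; a Lebesgue-number argument provides it. Then $q$ lies in a shrunk patch, and choosing $\delta_1$ below the Lebesgue margin divided by $1+\sqrt{1+\mathfrak L^2}$ forces $p$ into the original smaller patch. This choice of $\delta_1$ depends only on the atlas, which is what the lemma requires.
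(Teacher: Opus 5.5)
Your proof is correct and follows the paper's own argument: project $z$ vertically onto the Lipschitz graph, bound the distance to the graph below by $r/\sqrt{1+\mathfrak L^2}$, and use a Lebesgue-number choice of chart and of $\delta_1$. Anchoring the chart at a nearest boundary point $q$, and getting $\norm{p-q}\le(1+\sqrt{1+\mathfrak L^2})\,\delta_\Omega(z)$ from the upper bound (which does not presuppose $p\in\partial\Omega$, so there is no circularity), is simply a more explicit version of the paper's step ``shrink the collar uniformly''.
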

\begin{proof}
In the chart coordinates, write $z=(x+iy,z')$, and take $p=(g(y,z')+iy,z')$ and $r=x-g(y,z')$. Thus the boundary point is reached from the given point by moving against the inward chart direction until the graph is met. The first inequality is immediate, because this boundary point is at Euclidean distance equal to the height from the given point. For the second inequality, we estimate the distance from the given point to an arbitrary point of the graph. By the Lipschitz bound on the graph function, a graph point whose horizontal displacement from $(y,z')$ has size $a$ is at Euclidean distance at least
\[
 \sqrt{a^2+\max\{0,r-\mathfrak La\}^2}
\]
from $z$. The minimum of this quantity over $a\ge0$ is $r/\sqrt{1+\mathfrak L^2}$. This gives the second inequality, provided every nearest boundary point of the given point lies in the larger chart. To ensure this, we choose the chart by a Lebesgue number of the cover of $\partial\Omega$ by the smaller patches, and we shrink the collar uniformly. Then $p$ lies in a smaller patch, and every nearest boundary point of $z$ lies in the corresponding larger chart. This proves the other inequality.
\end{proof}

\subsection{The second support and a fixed-scale rectangle}
In this subsection, we prove the one-step contraction. The argument places a quadrilateral with quasigeodesic sides inside the domain, at one fixed logarithmic scale, and it uses two complex supports, namely the support at the base point of the frame and a second support at the first contact point of the centered disc. Gromov hyperbolicity forces the quadrilateral to be thin. A lower bound for the distances between its sides contradicts this thinness unless the directional radius contracts.

We first record an elementary estimate for a complex support at a boundary point that is reached by a polydisc inside the domain. The lemma holds for arbitrary domains.

\begin{lemma}[Supporting-polydisc estimate]\label{lem:polydisc-support}
Let $\Omega\subset\C^n$ be a domain, and write $D(\xi_0,t)=\{\xi\in\C:\abs{\xi-\xi_0}<t\}$. Suppose that $p\in\C^n$, that $e,v\in\C^n$ are complex linearly independent, that $\Pi(\xi,\zeta)=p+\xi e+\zeta v$, and that
\[
 \Pi(D(r,\rho_1)\times D(0,\rho_2))\subset\Omega,\qquad
 q=\Pi(r,R)\in\partial\Omega,\qquad 0<\rho_2\le R.
\]
Then, for every complex support at $q$, an affine equation of the support can be normalized on this two-plane so that
\begin{equation}\label{eq:polydisc-support}
 \ell_q(\Pi(\xi,\zeta))=\zeta-R-A(\xi-r),\qquad
 \abs A\rho_1+\rho_2\le R.
\end{equation}
\end{lemma}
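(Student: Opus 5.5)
The plan is to pull the support back to the two-plane and read off its restriction as an affine function of $(\xi,\zeta)$ that misses the polydisc. Let $H=\{\phi=0\}$ be a complex support at $q$, with $\phi(z)=\Lambda(z)-\Lambda(q)$ for a complex linear functional $\Lambda$. Then
\[
 \psi(\xi,\zeta)=\phi(\Pi(\xi,\zeta))=a(\xi-r)+b(\zeta-R),\qquad a=\Lambda(e),\quad b=\Lambda(v).
\]
The function $\psi$ vanishes at $(r,R)$. Because $H\cap\Omega=\varnothing$, it has no zeros on the polydisc $P=D(r,\rho_1)\times D(0,\rho_2)$.

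The first step is to show that $b\ne0$. Suppose $b=0$. Then $a\ne0$, since $a=b=0$ would make $\psi\equiv0$ and $\psi$ would vanish on $P$. In that case $\psi$ vanishes at $(r,0)\in P$, which is impossible. Dividing by $b$ and setting $A=-a/b$ gives the normalized form $\zeta-R-A(\xi-r)$ on the two-plane.

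The second step is the inequality. The zero set of the normalized equation is $\{\zeta=R+A(\xi-r)\}$, and it must miss $P$. For $\xi=r+\eta$ with $|\eta|<\rho_1$, we choose the phase of $\eta$ so that $A\eta$ points in the direction $-R$, that is, toward the origin. With $|\eta|=t<\rho_1$, this gives $|R+A\eta|=R-|A|t$ whenever $|A|t\le R$.

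If $|A|\rho_1+\rho_2>R$, there are two cases.
\begin{itemize}
 \item If $|A|\rho_1<R$, take $t$ close to $\rho_1$. Then $R-|A|t<\rho_2$, so the point $(r+\eta,R+A\eta)$ lies in $P$ and is a zero, a contradiction.
 \item If $|A|\rho_1\ge R$, pick $t<\rho_1$ with $|A|t=R$. This gives the zero $(r+\eta,0)\in P$, again a contradiction.
\end{itemize}
Hence $|A|\rho_1+\rho_2\le R$.

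No step is a real obstacle. The only care needed is the degenerate case $b=0$ and the choice of phase. The hypothesis $\rho_2\le R$ enters only implicitly, through the consistency of $q\notin P$.
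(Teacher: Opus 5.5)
This is essentially the paper's proof: you rule out a vanishing $\zeta$-coefficient through the interior point $(r,0)$ exactly as the paper does, and your phase-choice construction is an explicit version of the paper's observation that the normalized functional maps the polydisc onto the open disc $D(-R,\rho_2+\abs A\rho_1)$, which must omit $0$ (the paper's phrasing needs no case split). There is one small slip in your case split. When $\abs A\rho_1=R$ exactly, no $t<\rho_1$ satisfies $\abs At=R$, so put this case into your first bullet, where $R-\abs At=\abs A(\rho_1-t)<\rho_2$ for $t$ close to $\rho_1$; also, the detour showing $a\ne0$ is unnecessary, since $\psi(r,0)=0$ whenever $b=0$.
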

\begin{proof}
We restrict an affine equation of the support to the two-plane. We claim that the coefficient of $\zeta$ in the restricted affine functional cannot vanish. Indeed, if it vanished, then the functional would not depend on the second variable, so its vanishing at $(r,R)$ would force its vanishing at $(r,0)$. The latter is an interior point of the domain, which the support cannot contain. Dividing by this coefficient, we obtain the normalization in the first part of the display. It remains to bound the slope. The image of the product of discs under the normalized functional is the open disc with center $-R$ and radius $\rho_2+\abs A\rho_1$. Since the support avoids $\Omega$, the normalized functional does not vanish on the product of discs, i.e., this disc avoids zero. Its radius is therefore at most $R$. This proves the lemma.
\end{proof}

We now fix the constants of the contraction argument. Put
\begin{equation}\label{eq:c0}
 c_0=\frac1{4(1+\mathfrak L)},\qquad \Lambda=c_0^{-1}.
\end{equation}
Let $D_0$ be a thinness constant for $(\Lambda,1)$-quasigeodesic quadrilaterals in $(\Omega,k_\Omega)$. Such a constant exists by Gromov hyperbolicity and the stability of quasigeodesics. Choose the logarithmic scale
\begin{equation}\label{eq:rectangle-scale}
 L>\max\{\log20,\log(4/c_0),4D_0+4\},\qquad
 h=e^{-L},\qquad\theta=e^{-2L}=h^2.
\end{equation}
Then $h<1/20$, $c_0^{-1}h<1/4$, and $c_0^{-1}\theta<h/4$. We shall use these three inequalities, and the lower bound on the scale by the thinness constant, in the proof below.

We are now ready to prove the one-step contraction, which is the heart of this section.

\begin{proposition}[One-step contraction]\label{prop:one-step}
Assume the standing assumptions of this section, and let $\theta$ be as in \eqref{eq:rectangle-scale}. After decreasing $r_0$ once, by an amount independent of the frame, we have
\begin{equation}\label{eq:one-step}
 R_{p,e,v}(\theta r)\le(1-\theta)R_{p,e,v}(r)
\end{equation}
for every admissible frame and every $0<r\le r_0$.
\end{proposition}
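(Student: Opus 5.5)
We argue by contradiction. Suppose that, for some admissible frame and some $0<r\le r_0$, we have $R_{p,e,v}(\theta r)>(1-\theta)R$, where $R=R_{p,e,v}(r)$. By Lemma~\ref{lem:uniform-shrinking} we may take $r_0$ so small that all the discs below stay inside the uniform bidisc of Lemma~\ref{lem:oblique-chart}. By the same lemma, $R_{p,e,v}$ is nondecreasing. Hence every depth $\theta r\le s\le r$ carries a centered disc of radius at least $(1-\theta)R$ in the direction $v$. We work in the coordinates $\Pi(\xi,\zeta)=p+\xi e+\zeta v$. Let $q=\Pi(r,R e^{i\phi})$ be the first contact point at depth $r$; after rotating $v$ we may assume $\phi=0$. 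The Lipschitz bound \eqref{eq:oblique-lipschitz} in $y$, together with the graph description \eqref{eq:oblique-graph}, gives the following thickening. Each point $\Pi(s,\zeta)$ with $\theta r\le s\le r$ and $|\zeta|<(1-\theta)R$ is the center of a horizontal disc $D(s,c_0 s)$ in the $\xi$ variable whose product with a slightly smaller $\zeta$-disc lies in $\Omega$. This is where $c_0=1/(4(1+\mathfrak L))$ enters.

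The quadrilateral has four sides. The first is the normal side $A(u)=\Pi(re^{-u},0)$ for $0\le u\le 2L$. The second is a parallel normal side $B(u)=\Pi(re^{-u},(1-2h)R)$. The two caps are $\Gamma_0(\lambda)=\Pi(r,\lambda R)$ and $\Gamma_1(\lambda)=\Pi(\theta r,\lambda R)$ for $0\le\lambda\le 1-2h$, parametrized by $-\log(1-\lambda)$.

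We bound speeds from above by $\kappa_\Omega\le\qone$ from \eqref{eq:invariant-comparison}, with directional radii read off the polydiscs just constructed. The normal sides have $\kappa$-speed at most $c_0^{-1}=\Lambda$, using the horizontal discs of radius $c_0 s$. The caps have speed at most a constant, because the contradiction hypothesis supplies centered discs of radius $(1-\theta)R$, and $\theta<h$.

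Distances are bounded from below through \eqref{eq:k-support} with two supports. The support $p+V_p$ has restricted equation $\xi$, so $k_\Omega$ dominates $\tfrac14|\log(\text{depth ratio})|$. The second support passes through $q$, and Lemma~\ref{lem:polydisc-support} normalizes it as $\zeta-R-A(\xi-r)$ with $|A|c_0 r\le R$. Along $\Gamma_0$ this gives $k_\Omega\ge\tfrac14|\log\frac{1-\lambda_1}{1-\lambda_2}|$. Along $\Gamma_1$ and $B$ the slope term is at most $|A|r\le c_0^{-1}R$. The inequalities $c_0^{-1}h<1/4$ and $c_0^{-1}\theta<h/4$ from \eqref{eq:rectangle-scale} are chosen exactly so that this slope correction is dominated by the clearance $2hR$ and keeps the logarithmic lower bounds. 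Combined, these bounds make all four sides $(\Lambda,1)$-quasigeodesics, possibly after adjusting the constant $1/4$ into $\Lambda$ by reparametrization.

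The contradiction comes from thinness. Take the midpoint $U$ of $\Gamma_0$, at $1-\lambda\approx\sqrt{2h}$. The first support shows $k_\Omega(U,\Gamma_1)\ge L/2$ and $k_\Omega(U,A(u)),k_\Omega(U,B(u))\ge u/4$. The second support, evaluated at $U$ versus points of $A$ and $B$, gives a lower bound of order $\tfrac14\log(1/\sqrt h)-C\,u$. This holds because points of $A$ and $B$ with small $u$ have $|\phi_q|\gtrsim hR$, while $|\phi_q(U)|\approx\sqrt h R$. Balancing $u/4$ against $L/8-Cu$ gives a distance larger than $D_0$ once $L>4D_0+4$, which contradicts the choice of $D_0$.

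The main obstacle is the bookkeeping of the second support along the deep sides: we must show that its $\xi$-slope $A$ cannot bring the hyperplane close to $B$ or $\Gamma_1$. I would first try the crude bound $|A|\le R/(c_0 r)$ from Lemma~\ref{lem:polydisc-support}. If that is insufficient, I would shrink the sides to depths within $[\theta r,hr]$ and use the finer clearance provided by the hypothesis radius $(1-\theta)R$.
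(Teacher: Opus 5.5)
There is a genuine gap, and it sits exactly at the step you flag as the main obstacle. With the bound you actually derive, $|A|c_0r\le R$, the slope term $A(\xi-r)$ of the second support can be as large as $c_0^{-1}R\ge4R$ on the deep sides. That swamps every scale your argument uses:
\begin{itemize}
\item the clearance $(1-\lambda)R\ge2hR$ along $\Gamma_1$;
\item the value $2hR$ of $\ell_q$ on $B$;
\item the value $\sqrt{2h}\,R$ of $\ell_q$ at $U$.
\end{itemize}
So neither the lower quasigeodesic bound for $\Gamma_1$ nor the separation of $U$ from $A\cup B$ follows. In particular, $\tfrac14\log(1/\sqrt h)-Cu$ is not a consequence of the crude bound. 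You cannot exclude that $|\ell_q(A(u))|$ or $|\ell_q(B(u))|$ is comparable to $|\ell_q(U)|$ for some $u$ of size at most about $c_0$, and there the first support gives only $u/4$. Your fallback of restricting the sides to depths in $[\theta r,hr]$ does not help, because $|\xi-r|\asymp r$ there as well.

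The missing step is to feed the contradiction hypothesis into Lemma~\ref{lem:polydisc-support} with the full $\zeta$-radius. The hypothesis at depth $\theta r$ gives $F(0,\zeta)<\theta r$ for $|\zeta|<(1-\theta)R$. Hence at depth $r$ the Lipschitz thickening has slack, and $\Pi(D(r,c_0r)\times D(0,(1-\theta)R))\subset\Omega$. The lemma with $\rho_1=c_0r$ and $\rho_2=(1-\theta)R$ then gives $|A|c_0r\le\theta R$, that is, $|A|r\le c_0^{-1}\theta R<hR/4$. This is \eqref{eq:second-support}. Only with this bound does the inequality $c_0^{-1}\theta<h/4$ do its job. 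With it, all second-support estimates hold uniformly in $u$, so no balancing is needed.

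Two smaller repairs are also required.
\begin{itemize}
\item Your thickening does not reach depth $\theta r$. From $F(0,\zeta)<\theta r$ you need $\theta r+\mathfrak Lc_0s<(1-c_0)s$, i.e.\ $s\ge\tfrac43\theta r$. So the Kobayashi speed of $B$ near its lower end is not controlled. Stop the normal sides at depth $2\theta r$, as the paper does.
\item With the midpoint on the cap $\Gamma_0$, even the corrected second support only separates $U$ from $A\cup B$ by about $L/8$. This exceeds $D_0$ only if $L\gtrsim8D_0$, not under the standing choice $L>4D_0+4$. Your balancing of $u/4$ against $L/8-Cu$ does not reach $D_0$ there either.
\end{itemize}
The paper avoids the second problem by taking the logarithmic midpoint $\Pi(\sqrt{br},0)$ of the long normal side $\zeta=0$. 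The first support separates this point from both caps by $\tfrac18\log(r/b)=\tfrac14(L-\tfrac12\log2)$. The refined second support separates it from the opposite normal side by more than $(L-1)/4$, since $|\ell_q|\ge R(1-c_0^{-1}\theta)$ at the midpoint and $|\ell_q|\le R(h+c_0^{-1}\theta)$ on that side.
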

\begin{proof}
Fix an admissible frame and omit its subscripts. Suppose, to get a contradiction, that $R(\theta r)\ge(1-\theta)R(r)$. Put
\[
 R=R(r),\qquad a=\theta r,\qquad b=2a,\qquad R_-=(1-\theta)R.
\]
After multiplying $v$ by a scalar of modulus one, we may assume that the first contact of the centered disc at depth $r$ with the boundary occurs at $q=\Pi(r,R)\in\partial\Omega$. We first show that a large polydisc in the two-plane lies in the domain. In the coordinates of the oblique chart, the assumed inequality says that $F(0,\zeta)<a$ for $\abs\zeta<R_-$. We use the Lipschitz estimate in the first variable to thicken this disc. If $b\le u\le r$, $\abs\lambda<c_0u$ and $\abs\zeta<R_-$, then Lemma~\ref{lem:oblique-chart} gives
\[
 F(\Imm\lambda,\zeta)<a+\mathfrak Lc_0u
 \le(\tfrac12+\mathfrak Lc_0)u<(1-c_0)u<\Ree(u+\lambda).
\]
All these points lie in the chart, by uniform shrinking. Consequently,
\begin{equation}\label{eq:rectangle-polydiscs}
 \Pi(D(u,c_0u)\times D(0,R_-))\subset\Omega,\qquad b\le u\le r.
\end{equation}
We next normalize the two supports. Apply Lemma~\ref{lem:polydisc-support} at $u=r$. The support through $q$ has an affine equation
\begin{equation}\label{eq:second-support}
 \ell_q(\Pi(\xi,\zeta))=\zeta-R-A(\xi-r),\qquad
 \abs A r\le c_0^{-1}\theta R.
\end{equation}
The original support at $p$ can be normalized globally so that
\begin{equation}\label{eq:first-support}
 \ell_p(\Pi(\xi,\zeta))=\xi.
\end{equation}
Consider the rectangle with vertices
\[
 \Pi(r,0),\quad \Pi(r,(1-h)R),\quad
 \Pi(b,(1-h)R),\quad \Pi(b,0).
\]
Since $(1-h)R<R_-$, the polydiscs above show that the rectangle lies in $\Omega$.

We now show that the four sides of the rectangle are quasigeodesics with uniform constants. Consider first the two vertical sides. On either of them, the first factor of \eqref{eq:rectangle-polydiscs} gives $\kappa_\Omega(\Pi(u,\zeta);e)\le(c_0u)^{-1}$. We parametrize the side by logarithmic depth, $u=re^{-s}$. The Kobayashi speed of this parametrization is then at most $\Lambda$. On the other hand, \eqref{eq:k-support} applied with $\ell_p$ bounds the Kobayashi distance between two points of the side below by $\abs{s-s'}/4$. Thus each vertical side is a $(\Lambda,0)$-quasigeodesic.

Consider next the horizontal side at height $u\in\{b,r\}$. We parametrize it by
\[
 \sigma_u(s)=\Pi\bigl(u,R(1-e^{-s})\bigr),\qquad 0\le s\le L.
\]
By the second factor of the polydiscs above, the remaining centered radius in direction $v$ along this side is at least $R(e^{-s}-\theta)$. Hence the Kobayashi speed satisfies
\begin{equation}\label{eq:horizontal-speed}
 \kappa_\Omega(\sigma_u(s);\sigma_u'(s))
 \le\frac{e^{-s}}{e^{-s}-\theta}=\frac1{1-\theta e^s}\le2.
\end{equation}
This is the upper bound. For the lower bound we use the second support. It satisfies
\[
 \ell_q(\sigma_u(s))=-Re^{-s}+A(r-u),\qquad
 \abs{A(r-u)}\le c_0^{-1}\theta R\le\tfrac14Re^{-s}.
\]
Thus
\[
 \tfrac34Re^{-s}\le\abs{\ell_q(\sigma_u(s))}\le\tfrac54Re^{-s}.
\]
For $s\le s'$, the global support estimate gives
\begin{equation}\label{eq:horizontal-distance}
 k_\Omega(\sigma_u(s),\sigma_u(s'))
 \ge\tfrac14(s'-s)-\tfrac14\log(5/3).
\end{equation}
Together with \eqref{eq:horizontal-speed}, this makes each horizontal side a $(\Lambda,1)$-quasigeodesic. We emphasize that the constants are independent of the frame and of $r,R$.

Finally, we show that the quadrilateral is not thin. Take the logarithmic midpoint $z_* =\Pi(\sqrt{br},0)$ of the left vertical side. We estimate its distance to the three other sides. Let $w$ be a point on either horizontal side. Between the midpoint and that side, the height changes by the factor equal to the square root of the ratio of the two depths, so the first support, i.e., \eqref{eq:first-support}, gives
\begin{equation}\label{eq:separation-horizontal}
 k_\Omega(z_*,w)\ge\tfrac18\log(r/b)
 =\tfrac14\bigl(L-\tfrac12\log2\bigr)>D_0.
\end{equation}
Suppose now that $w=\Pi(u,(1-h)R)$ lies on the opposite vertical side. Then \eqref{eq:second-support} gives
\[
 \abs{\ell_q(z_*)}\ge R(1-c_0^{-1}\theta),\qquad
 \abs{\ell_q(w)}\le R(h+c_0^{-1}\theta).
\]
Since $c_0^{-1}\theta<h/4$ and $h<1/20$,
\[
 \frac{1-c_0^{-1}\theta}{h+c_0^{-1}\theta}
 \ge\frac{1-h/4}{(5/4)h}>e^{L-1}.
\]
Therefore the global support estimate for the second support gives $k_\Omega(z_*,w)>(L-1)/4>D_0$. Thus the midpoint of the left vertical side is farther than $D_0$ from each of the other three sides of the quadrilateral. This contradicts the definition of $D_0$ as a thinness constant for quasigeodesic quadrilaterals. So our starting assumption must be wrong, and the proposition is proved.
\end{proof}

\subsection{Iteration and arbitrary directions}
In this subsection, we iterate the one-step contraction to obtain the relative power law along admissible frames, and we then pass from the directions in a complex support to arbitrary directions. The passage to arbitrary directions goes through the infinitesimal Carath\'eodory metric. At the end of the subsection we prove Theorem~\ref{thm:lipschitz}, and we derive from it the geometric half of Theorem~\ref{thm:main}.

Our next result is the relative two-depth estimate along admissible frames. It follows from the one-step contraction by iteration, and it gives the exponent of the power law.

\begin{proposition}\label{prop:power-contraction}
Assume the standing assumptions of this section, let $\theta$ be as in \eqref{eq:rectangle-scale}, let $r_0$ be as in Proposition~\ref{prop:one-step}, and put
\begin{equation}\label{eq:alpha}
 \alpha=\frac{\log(1-\theta)}{\log\theta}.
\end{equation}
Then $0<\alpha<1/2$ and, for every admissible frame,
\begin{equation}\label{eq:power-contraction}
 R_{p,e,v}(s)\le\frac1{1-\theta}(s/r)^\alpha R_{p,e,v}(r),
 \qquad 0<s\le r\le r_0.
\end{equation}
In particular, $R_{p,e,v}(r)\le C_\alpha r^\alpha$ for $0<r\le r_0$, where $C_\alpha=(1-\theta)^{-1}r_0^{-\alpha}\diam\Omega$.
\end{proposition}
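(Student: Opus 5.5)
The plan is to iterate the one-step contraction of Proposition~\ref{prop:one-step} along the geometric sequence of depths $\theta^k r$, and to use the monotonicity from Lemma~\ref{lem:uniform-shrinking} to pass from the discrete depths to arbitrary ones.

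First I check the range of $\alpha$. Since $0<\theta<1$, both $\log(1-\theta)$ and $\log\theta$ are negative, so $\alpha>0$. The bound $\alpha<1/2$ is equivalent to $\log(1-\theta)>\tfrac12\log\theta$, that is, $(1-\theta)^2>\theta$. This holds because $\theta=e^{-2L}<1/400$ by \eqref{eq:rectangle-scale}.

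Next I iterate. Fix an admissible frame and $0<s\le r\le r_0$. Every depth $\theta^jr$ with $j\ge0$ lies in $(0,r_0]$, so applying \eqref{eq:one-step} $k$ times gives
\[
R(\theta^kr)\le(1-\theta)^kR(r),\qquad k\ge0.
\]
Choose the integer $k\ge0$ with $\theta^{k+1}r<s\le\theta^kr$. By Lemma~\ref{lem:uniform-shrinking}, $R$ is nondecreasing on $(0,r_0]$, so
\[
R(s)\le R(\theta^kr)\le(1-\theta)^kR(r).
\]
By the definition of $\alpha$ we have $(1-\theta)^k=\theta^{k\alpha}=(\theta^{k+1})^\alpha/\theta^\alpha$. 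Since $\theta^{k+1}<s/r$, this gives
\[
(1-\theta)^k\le\theta^{-\alpha}(s/r)^\alpha=(1-\theta)^{-1}(s/r)^\alpha,
\]
because $\theta^\alpha=1-\theta$. This is exactly \eqref{eq:power-contraction}.

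For the final claim, take the larger depth equal to $r_0$ in \eqref{eq:power-contraction}. The radius $R_{p,e,v}(r_0)$ is at most $\diam\Omega$, since it is the radius of a disc contained in $\Omega$. Therefore
\[
R_{p,e,v}(r)\le(1-\theta)^{-1}(r/r_0)^\alpha\diam\Omega=C_\alpha r^\alpha,\qquad 0<r\le r_0.
\]

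The only step needing care is that both Proposition~\ref{prop:one-step} and the monotonicity use the same $r_0$. Proposition~\ref{prop:one-step} decreased $r_0$ once, and monotonicity persists on any smaller interval, so we take $r_0$ to be that decreased value throughout. Apart from this, the argument is routine bookkeeping.
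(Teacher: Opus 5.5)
Your proof is correct and follows the paper's argument essentially line by line: you iterate Proposition~\ref{prop:one-step} along the depths $\theta^k r$, interpolate with the monotonicity from Lemma~\ref{lem:uniform-shrinking}, and convert using $\theta^\alpha=1-\theta$. As in the paper, you then get the one-depth bound from $R_{p,e,v}(r_0)\le\diam\Omega$ and $\alpha<1/2$ from $(1-\theta)^2>\theta$.
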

\begin{proof}
We omit the subscripts of the frame. Given the two depths, choose the integer $j\ge0$ with $\theta^{j+1}r<s\le\theta^jr$. The monotonicity of the radii at small depths, together with repeated application of Proposition~\ref{prop:one-step}, gives
\[
 R(s)\le R(\theta^jr)\le(1-\theta)^jR(r).
\]
Since $\theta^\alpha=1-\theta$ and $(s/r)^\alpha>(1-\theta)^{j+1}$, this display proves \eqref{eq:power-contraction}. For the one-depth bound, we fix the larger depth at $r_0$ and use $R(r_0)\le\diam\Omega$. Finally, we prove the bound on the exponent. By \eqref{eq:rectangle-scale}, we have $1-\theta>\sqrt\theta=h$. Taking logarithms and dividing by the negative number $\log\theta$ gives $\alpha<1/2$.
\end{proof}

The exponent $\alpha$ depends only on the Lipschitz constant $\mathfrak L$ of the atlas and on the hyperbolicity constant of $(\Omega,k_\Omega)$, the latter through $D_0$. It is not explicit in terms of the boundary geometry.

The relative estimate above concerns directions in a complex support. The next proposition removes this restriction. It gives a lower bound for the infinitesimal Carath\'eodory metric in every direction, with the same exponent.

\begin{proposition}\label{prop:caratheodory-power}
Under the standing assumptions of this section, let $\alpha$ be as in \eqref{eq:alpha}. There are $c,\delta_0>0$ such that
\begin{equation}\label{eq:caratheodory-power}
 \gamma_\Omega(z;w)\ge c\delta_\Omega(z)^{-\alpha}\norm w,
 \qquad\delta_\Omega(z)<\delta_0,\quad w\in\C^n.
\end{equation}
\end{proposition}
\begin{proof}
Let the point be close to the boundary. We use Lemma~\ref{lem:height-distance} to write $z=p+re$, where
$\delta_\Omega(z)\le r\le\sqrt{1+\mathfrak L^2}\delta_\Omega(z)$. Choose a complex support $H_p=p+V_p$ with an affine equation $\ell_p$ whose linear part $L_p$ satisfies $L_p(e)=1$. We split an arbitrary vector into a normal component and a tangential component. For $w\in\C^n$, put
\[
 \lambda=L_p(w),\qquad w_T=w-\lambda e\in V_p.
\]
We estimate the metric on the two components separately. The power law in the directions of the support, i.e., the one-depth bound of the preceding proposition, and \eqref{eq:invariant-comparison} give
\begin{equation}\label{eq:tangential-gamma}
 \gamma_\Omega(z;w_T)\ge\frac{\norm{w_T}}{4C_\alpha r^\alpha},
\end{equation}
where the case of the zero vector is interpreted trivially. The supporting norm representation and the same invariant comparison give
\begin{equation}\label{eq:normal-gamma}
 \gamma_\Omega(z;w)\ge\frac{\abs\lambda}{4r}.
\end{equation}
We next bound the metric on the normal component from above. The chart disc $\{p+(r+\eta)e:\abs\eta<c_0r\}$ lies in $\Omega$, because
$F(\Imm\eta,0)\le \mathfrak Lc_0r<(1-c_0)r<\Ree(r+\eta)$. Therefore
\[
 \gamma_\Omega(z;\lambda e)\le\kappa_\Omega(z;\lambda e)
 \le\frac{\abs\lambda}{c_0r}\le4c_0^{-1}\gamma_\Omega(z;w).
\]
To combine the estimates, we use that the infinitesimal Carath\'eodory metric is a norm on a bounded domain. Indeed, it is a supremum of absolute values of differentials of disc-valued holomorphic maps, and scaled coordinate functions make it positive. Its triangle inequality yields
\[
 \gamma_\Omega(z;w_T)\le(1+4c_0^{-1})\gamma_\Omega(z;w).
\]
Equations~\eqref{eq:tangential-gamma} and \eqref{eq:normal-gamma} now give, for $r<1$,
\begin{align*}
 \norm w&\le\norm{w_T}+\abs\lambda\\
 &\le\bigl(4C_\alpha(1+4c_0^{-1})r^\alpha+4r\bigr)\gamma_\Omega(z;w)\\
 &\le C_1r^\alpha\gamma_\Omega(z;w).
\end{align*}
In the last step we used $r\le r^\alpha$, which holds since $\alpha<1$. The height-distance comparison \eqref{eq:height-distance} converts the height into the boundary distance and proves \eqref{eq:caratheodory-power}.
\end{proof}

\begin{proof}[Proof of Theorem~\ref{thm:lipschitz}]
The relative two-depth estimate in the theorem is Proposition~\ref{prop:power-contraction}. It remains to prove the power bound for the directional radius in every direction. Near the boundary, Proposition~\ref{prop:caratheodory-power} and \eqref{eq:invariant-comparison} give
\[
 c\delta_\Omega(z)^{-\alpha}\norm v
 \le\gamma_\Omega(z;v)\le\kappa_\Omega(z;v)
 \le\frac{\norm v}{r_\Omega(z;v)}.
\]
Canceling $\norm v$ proves the radius bound near the boundary. On the set $\{\delta_\Omega\ge\delta_0\}$, we use $r_\Omega\le\diam\Omega$ and enlarge the constant. Hence the power bound holds on the whole domain. In particular, \eqref{eq:mCconvex} holds with the real parameter $m=1/\alpha$. It also holds with any integer $m\ge1/\alpha$, after enlarging the constant. This completes the proof.
\end{proof}

We now turn to the relation between the power bound and the D'Angelo type. On a smoothly bounded $\C$-convex domain, the D'Angelo type at a boundary point equals the maximal order of contact of complex affine lines at that point \cite[Proposition~6]{NPZ11}. For convex domains this equality is due to McNeal \cite{McN92}, Boas--Straube \cite{BS92} and Yu \cite{Yu92}. The multitype version for lineally convex domains is due to Conrad \cite{Con02}. See also \cite[Proposition~8]{NPZ11}. In this article we need only a global bound on the type. The next lemma proves such a bound directly from the power law, singular curve germs included.

\begin{lemma}[Directional power bounds and full curve type]\label{lem:full-curve-type}
Let $\Omega\Subset\C^n$, $n\ge2$, be $\C$-convex with $C^\infty$ boundary. Suppose that
\[
 r_\Omega(z;v)\le C\delta_\Omega(z)^\alpha,
 \qquad z\in\Omega,\quad v\ne0,
\]
for some $C>0$ and $\alpha>0$. Then for each $\tau>0$ there is $C_\tau>0$ such that, for all $x,y\in\Omega$,
\begin{equation}\label{eq:fixed-ball-power}
 k_\Omega(x,y)\le \tau
 \quad\Longrightarrow\quad
 \norm{x-y}\le C_\tau\delta_\Omega(x)^\alpha.
\end{equation}
Consequently,
\[
 \Delta_1(\partial\Omega,p)\le\frac1\alpha,
 \qquad p\in\partial\Omega.
\]
\end{lemma}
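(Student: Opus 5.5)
The plan has two steps. First we derive the fixed-ball estimate \eqref{eq:fixed-ball-power} from the power bound, using the Kobayashi--Royden metric. Then we obtain the type bound by the same rescaled-disc argument as in the second half of the proof of Theorem~\ref{thm:curve-type}.

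\textbf{Step 1: the fixed-ball estimate.} By \eqref{eq:invariant-comparison} we have
\[
\kappa_\Omega(z;v)\ge\tfrac14\,\qone(z;v)=\frac{\norm v}{4\,r_\Omega(z;v)}\ge\frac{\norm v}{4C\,\delta_\Omega(z)^\alpha}.
\]
So Euclidean speed is at most $4C\delta_\Omega^\alpha$ times Kobayashi speed. We also need the depth to stay comparable to $\delta_\Omega(x)$ along a short path. For this we use the support estimate \eqref{eq:k-support} together with the smooth boundary.
\begin{itemize}
\item Take a nearest boundary point $p$ of a point $z$ on the path, and the complex tangent support at $p$, which exists by \eqref{eq:global-support}. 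Near the boundary, $\abs{\phi_H(z)}$ is comparable to $\delta_\Omega(z)$.
\item For a general point $w$ we have $\abs{\phi_H(w)}\ge\delta_\Omega(w)$, since $H$ misses $\Omega$. This alone gives only one direction of comparison.
\item The cleaner input is the standard lower bound $\kappa_\Omega(z;v)\ge\norm v/(4\delta_\Omega(z))$ in the normal direction. More simply, on a smooth $\C$-convex domain the known estimate $k_\Omega(x,y)\ge\frac12\abs{\log(\delta_\Omega(x)/\delta_\Omega(y))}$ holds (up to an additive constant), via supports at nearest points and the bounded curvature of $\partial\Omega$.
\end{itemize}
With this, every point $z$ on a path of Kobayashi length at most $\tau+1$ from $x$ satisfies $\delta_\Omega(z)\le C'_\tau\delta_\Omega(x)$. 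Integrating the speed bound then gives $\norm{x-y}\le 4C(\tau+1)(C'_\tau\delta_\Omega(x))^\alpha$.

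The main obstacle is this depth-comparability step: the quantity $\abs{\phi_H}$ uses a support at a single fixed point, so it does not directly control $\delta_\Omega$ along the whole path. I would handle it with the normal-support bound, by showing
\[
\abs{\tfrac{d}{du}\log\delta_\Omega(\sigma(u))}\le 4\,\kappa_\Omega(\sigma;\sigma')
\]
wherever $\delta_\Omega$ is differentiable near the boundary. At such a point, $\nabla\delta_\Omega$ is the inward normal at the nearest point $p$. The support at $p$ gives $\qone(z;v)\ge\abs{\ip v{\nu(p)}}/\delta_\Omega(z)$, as in \eqref{eq:normal-support}. Combined with \eqref{eq:invariant-comparison}, this yields the displayed bound. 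Away from the collar, the depth is bounded below anyway.

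\textbf{Step 2: the type bound.} We argue exactly as in the proof of Theorem~\ref{thm:curve-type}, with $\eps=\alpha$ and \eqref{eq:fixed-ball-power} in place of \eqref{eq:k-ball-euclidean}.
\begin{itemize}
\item Suppose a germ $\gamma$ through $p$ of multiplicity $m$ has contact ratio exceeding $1/\alpha$. Choose an integer $N$ with $N\alpha>m$ and $\abs{\rho\circ\gamma}\le C_2\abs\zeta^N$.
\item Push the rescaled germ inward: $f_R(\zeta)=\gamma(R\zeta)-DR^N\nu(p)$ maps $\D$ into $\Omega$.
\item Compare the centre $x_R$ with $y_R=f_R(\theta)$, where $k_\D(0,\theta)<\tau$. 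Then $\norm{x_R-y_R}\asymp R^m$, while \eqref{eq:fixed-ball-power} forces $\norm{x_R-y_R}\lesssim (DR^N)^\alpha$.
\item This is a contradiction as $R\downarrow0$, since $N\alpha>m$.
\end{itemize}
Taking the supremum over all germs gives $\Delta_1(\partial\Omega,p)\le1/\alpha$.
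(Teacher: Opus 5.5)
Your proposal is correct. Your Step~2 is exactly the paper's argument; the paper takes $\tau=1$ and $\theta=\tfrac12$.

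Step~1 differs from the paper only in how the depth is controlled along a short path. The paper uses a single support. Let $p$ be a nearest boundary point of $x$, and let $H$ be the complex tangent hyperplane at $p$, normalized so that $\abs{\phi_H(x)}=\delta_\Omega(x)$. Then every point of the path satisfies
$\delta_\Omega(\sigma(t))\le\dist(\sigma(t),H)=\abs{\phi_H(\sigma(t))}\le e^{4(\tau+1)}\delta_\Omega(x)$ by \eqref{eq:k-support}.

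So the one-sided inequality $\abs{\phi_H(w)}\ge\delta_\Omega(w)$, which you set aside as insufficient, is precisely the direction needed. The speed bound $\norm{\sigma'}\le4C\,\delta_\Omega(\sigma)^\alpha\,\kappa_\Omega(\sigma;\sigma')$ requires only an upper bound on the depth along the path. The obstacle you identify is therefore not there.

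Your differential inequality $\abs{\tfrac{d}{du}\log\delta_\Omega(\sigma(u))}\le4\kappa_\Omega(\sigma;\sigma')$ is also valid. It comes from moving supports at nearest points together with \eqref{eq:invariant-comparison}, and it gives a two-sided comparison of depths, which is more than is needed. It does cost something the paper avoids:
\begin{enumerate}
\item It requires $\delta_\Omega$ to be $C^1$ on a tubular collar $\{\delta_\Omega<r_0\}$.
\item It requires a first-exit argument. If $\delta_\Omega(x)<r_0e^{-4(\tau+1)}$, the path can never reach the level $\{\delta_\Omega=r_0\}$. Otherwise the estimate is trivial, because $\delta_\Omega\le\diam\Omega$.
\end{enumerate}
Your remark that the depth is bounded below away from the collar does not settle the case of a path leaving the collar. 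What must be controlled there is an upper bound on the depth.

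The paper's single-support argument needs no collar, no continuation argument and no regularity of $\delta_\Omega$. Indeed, a complex support at the nearest point $p$ is automatically orthogonal to $x-p$, since it must avoid the ball $B(x,\delta_\Omega(x))$.

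Finally, the estimate $k_\Omega(x,y)\ge\tfrac12\abs{\log(\delta_\Omega(x)/\delta_\Omega(y))}$ that you quote as known is not needed. It is also not what complex supports give: on $\C$-convex domains they yield the coefficient $\tfrac14$ in \eqref{eq:ratio}, and the coefficient $\tfrac12$ appears in the paper only in the convex case \eqref{eq:convexratio}.
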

\begin{proof}
We first convert the power bound for the radius into a lower bound for the Kobayashi metric. By \eqref{eq:invariant-comparison},
\[
 \kappa_\Omega(z;v)\ge c\,\delta_\Omega(z)^{-\alpha}\norm v.
\]
Fix $x,y$ with $k_\Omega(x,y)\le \tau$, and choose an absolutely continuous joining curve $\sigma$ of Kobayashi gauge length at most $\tau+1$. We shall show that this curve stays at a boundary distance comparable to that of its initial point. Let $p$ be a nearest boundary point to $x$. The complex tangent hyperplane $H$ at $p$ avoids $\Omega$. Choose an affine equation $\phi_H$ with linear part of Euclidean norm one. Since $x-p$ is normal to the boundary, we have $|\phi_H(x)|=\delta_\Omega(x)$. Lemma~\ref{lem:global-support}, applied to each initial segment of $\sigma$, gives
\[
 \delta_\Omega(\sigma(t))
 \le\dist(\sigma(t),H)=|\phi_H(\sigma(t))|
 \le e^{4(\tau+1)}\delta_\Omega(x).
\]
Integrating the metric lower bound along $\sigma$ and using this estimate for the boundary distance, we obtain
\[
 \norm{x-y}\le\int\norm{\sigma'(t)}\,dt
 \le c^{-1}e^{4\alpha(\tau+1)}(\tau+1)\delta_\Omega(x)^\alpha,
\]
and this proves \eqref{eq:fixed-ball-power}.

We now prove the type bound. From this point on, we argue exactly as in the part of the proof of Theorem~\ref{thm:curve-type} that follows \eqref{eq:k-ball-euclidean}, with $\alpha$ in place of $\eps$ and with \eqref{eq:fixed-ball-power} for $\tau=1$ in place of \eqref{eq:k-ball-euclidean}. The point of the unit disc used there may be taken to be $\tfrac12$, since $k_\D(0,\tfrac12)=\tfrac12\log3<1$. This argument shows that every holomorphic curve germ through a point $p\in\partial\Omega$ has contact ratio at most $1/\alpha$. Taking the supremum over all curve germs, we get $\Delta_1(\partial\Omega,p)\le1/\alpha$. This proves the lemma.
\end{proof}

We are now ready to prove the geometric half of Theorem~\ref{thm:main}, namely the equivalence of its conditions (i) and (ii).

\begin{proposition}[Gromov hyperbolicity and finite type]\label{prop:geometric-half}
Let $\Omega\Subset\C^n$, $n\ge2$, be $\C$-convex with $C^\infty$ boundary. Then $(\Omega,k_\Omega)$ is Gromov hyperbolic if and only if $\partial\Omega$ is of finite type.
\end{proposition}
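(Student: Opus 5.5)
The plan is to prove the two implications separately, assembling results already established in the paper. The direction (i)$\Rightarrow$(ii) goes through the directional power law. The direction (ii)$\Rightarrow$(i) goes through the generalized distance $d_\Omega^{(1)}$ and its comparison with $k_\Omega$.

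For (i)$\Rightarrow$(ii), assume $(\Omega,k_\Omega)$ is Gromov hyperbolic.
\begin{enumerate}
\item A $C^\infty$ boundary is in particular Lipschitz, so we may fix a finite Lipschitz atlas and apply Theorem~\ref{thm:lipschitz}.
\item It gives $C>0$ and $0<\alpha<1/2$ with $r_\Omega(z;v)\le C\delta_\Omega(z)^\alpha$ for all $z\in\Omega$ and $v\ne0$.
\item This is exactly the hypothesis of Lemma~\ref{lem:full-curve-type}, which yields $\Delta_1(\partial\Omega,p)\le1/\alpha$ for every $p\in\partial\Omega$. Hence $\partial\Omega$ is of finite type.
\end{enumerate}
Nothing further is needed here, since all the difficulty was absorbed into Section~\ref{sec:lipschitz}.

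For (ii)$\Rightarrow$(i), assume $\partial\Omega$ is of finite type.
\begin{enumerate}
\item Since $\partial\Omega$ is $C^1$ and $\Omega$ is $\C$-convex, the support condition \eqref{eq:global-support} holds, as shown in Section~\ref{sec:gauges}.
\item We check that $M_1(\Omega)<\infty$. For each $p\in\partial\Omega$ and unit vector $E\in\C^n$, the line $\zeta\mapsto p+E\zeta$ is a smooth holomorphic curve germ with $\ord_0(\gamma-p)=1$. So its contact order $\ord_0\rho(p+E\zeta)$ is at most $\Delta_1(\partial\Omega,p)<\infty$. Every pair $(p,E)$ therefore has finite order.
\item By the compactness remark following Theorem~\ref{thm:planes}, a sequence of pairs with orders tending to infinity would converge to a pair at which all derivatives vanish. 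This is impossible, so $M_1(\Omega)<\infty$.
\item Theorem~\ref{thm:planes} with $k=1$ (condition (i)$\Rightarrow$(iv)) then shows that $(\Omega,d_\Omega^{(1)})$ is Gromov hyperbolic.
\end{enumerate}

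It remains to transfer hyperbolicity from $d_\Omega^{(1)}$ to $k_\Omega$.
\begin{enumerate}
\item By \eqref{eq:invariant-comparison}, $\tfrac14\qone\le\kappa_\Omega\le\qone$ pointwise.
\item Since $k_\Omega$ is the integrated distance of $\kappa_\Omega$ (Venturini) and $d_\Omega^{(1)}$ is the integrated distance of $\qone$, integrating along absolutely continuous curves and taking infima gives $\tfrac14 d_\Omega^{(1)}\le k_\Omega\le d_\Omega^{(1)}$.
\item Both spaces are proper and geodesic by Lemma~\ref{lem:proper}. The identity map is therefore a bi-Lipschitz, hence quasi-isometric, map between geodesic spaces.
\item Gromov hyperbolicity is a quasi-isometry invariant among geodesic spaces \cite[Theorem~III.H.1.9]{BH99}, so $(\Omega,k_\Omega)$ is Gromov hyperbolic.
\end{enumerate}

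The only point I expect to need care is the passage from pointwise finite type to the uniform bound $M_1(\Omega)<\infty$ required by Theorem~\ref{thm:planes}. The plan is to rely on the compactness observation recorded just after that theorem, applied over the compact set $\partial\Omega\times\{|E|=1\}$. Everything else is a direct citation of earlier results.
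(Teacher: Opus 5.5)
Your proposal is correct and follows the paper's proof essentially step for step. For (i)$\Rightarrow$(ii) the paper likewise uses Theorem~\ref{thm:lipschitz} and then Lemma~\ref{lem:full-curve-type}; for (ii)$\Rightarrow$(i) it gets $M_1(\Omega)<\infty$ from the same compactness argument, applies Theorem~\ref{thm:planes} with $k=1$, and transfers hyperbolicity to $k_\Omega$ via the bi-Lipschitz comparison \eqref{eq:invariant-comparison}, Lemma~\ref{lem:proper}, and quasi-isometry invariance.
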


\begin{proof}
Suppose first that $(\Omega,k_\Omega)$ is Gromov hyperbolic. The boundary is smooth, hence Lipschitz, so Theorem~\ref{thm:lipschitz} applies and gives $r_\Omega\le C\delta_\Omega^\alpha$. Lemma~\ref{lem:full-curve-type} then gives $\Delta_1(\partial\Omega,p)\le1/\alpha$ for every $p$. Hence the boundary is of finite type.

Conversely, suppose that $\partial\Omega$ is of finite type. We first show that the maximal order of contact of complex lines is finite. Complex affine lines are holomorphic curve germs, so every complex line has finite order of contact with $\partial\Omega$. This order is upper semicontinuous in the base point and the direction, because order at least $N$ means the vanishing of finitely many continuous derivatives. By compactness of the set of base points and directions, $M_1(\Omega)<\infty$. The support condition \eqref{eq:global-support} holds by $\C$-convexity. Therefore Theorem~\ref{thm:planes} with $k=1$ shows that $(\Omega,d^{(1)}_\Omega)$ is Gromov hyperbolic. It remains to pass from this distance to the Kobayashi distance. By \eqref{eq:invariant-comparison}, $k_\Omega$ and $d^{(1)}_\Omega$ are bi-Lipschitz equivalent. Both are proper geodesic metrics (Lemma~\ref{lem:proper}), and Gromov hyperbolicity of geodesic spaces is invariant under quasi-isometries \cite[Theorem~III.H.1.9]{BH99}. Hence $(\Omega,k_\Omega)$ is Gromov hyperbolic. This completes the proof.
\end{proof}

\begin{remark}[Zimmer's projective cone]\label{rem:projective-cone}
The Lipschitz hypothesis in Theorem~\ref{thm:lipschitz} cannot simply be removed. Let $n\ge2$ and let
\[
 \mathcal C=\{(z_0,z')\in\C\times\C^{n-1}:\Imm z_0>\norm{z'}\},\qquad
 T(z_0,z')=\left(\frac1{z_0+i},\frac{z'}{z_0+i}\right).
\]
By \cite[Theorem~1.8]{Zim17}, $(\mathcal C,k_{\mathcal C})$ is Gromov hyperbolic. Substituting $z_0=1/w_0-i$ and $z'=w'/w_0$ in the inequality defining the cone and multiplying by $\abs{w_0}^2$, we see that $T$ maps $\mathcal C$ onto
\[
 U_*=\{(w_0,w'):\abs{w_0}^2+\Imm w_0+\abs{w_0}\norm{w'}<0\},
\]
and the resulting inequality implies $\abs{w_0}+\norm{w'}<1$, so $U_*$ is bounded. The projective map identifies the complex-line sections of $U_*$ with convex line sections of $\mathcal C$, so $U_*$ is $\C$-convex, and $k_{U_*}$ is Gromov hyperbolic. For $x_t=(-it,0)$, $0<t<1$, and every unit direction $v=(0,v')$, restricting the defining inequality to the line through $x_t$ in the direction $v$ gives $r_{U_*}(x_t;v)=1-t$, whereas $\delta_{U_*}(x_t)\le t$, since $(0,0)\in\partial U_*$. Thus no positive directional power estimate holds on $U_*$, and by Theorem~\ref{thm:lipschitz} the domain $U_*$ does not have Lipschitz boundary. We also note that $\partial U_*$ contains the complex affine ball $\{0\}\times\B^{n-1}$, cf.\ \cite[Proposition~1.9]{Zim17}, since $(-i\varepsilon,w')$, $0<\varepsilon<1$, lies in $U_*$ exactly when $\norm{w'}<1-\varepsilon$, while no point with $w_0=0$ lies in $U_*$.
\end{remark}
\section{Euclidean and Gromov boundary comparison}\label{sec:comparison}
In this section, we compare the Euclidean closure of a Gromov hyperbolic domain of the kind considered in this paper with its Gromov compactification. We estimate the Gromov product of two interior points in terms of their Euclidean distance and their distances to the boundary, and we show that the identity map extends to a homeomorphism between the two compactifications. We use the directional radius defined in \eqref{eq:radius}. Throughout this section, the $m$-$\C$-convexity estimate is written in its global form
\begin{equation}\label{eq:mconvex}
 r_\Omega(z;v)\le C_r\delta_\Omega(z)^{1/m},
 \qquad z\in\Omega,\quad v\ne0,
\end{equation}
where $C_r>0$ and $m\ge1$. The global form is equivalent to the collar estimate \eqref{eq:mCconvex}, because the interior set left out by the collar is compact. The restriction $m\ge1$ is no loss of generality. Indeed, the directional radius satisfies $r_\Omega(z;v)\ge\delta_\Omega(z)$, and this inequality rules out $m<1$ as $z$ tends to the boundary.

We fix the normalization of the Kobayashi distance so that
\[
 k_{\D}(0,r)=\frac12\log\frac{1+r}{1-r}.
\]
For a base point $o\in\Omega$, the Gromov product with respect to this base point is
\[
 (x\mid y)_o=\frac12\bigl(k_\Omega(o,x)+k_\Omega(o,y)-k_\Omega(x,y)\bigr).
\]
We denote the Gromov compactification by $\overline{\Omega}^{\,G}$ and the Gromov boundary by
$\partial_G\Omega$. A visual metric with parameter $\varepsilon>0$
and base point $o$ is a metric $\rho_\varepsilon$ on $\partial_G\Omega$ such that
\begin{equation}\label{eq:visual}
 Q^{-1}e^{-\varepsilon(\xi\mid\eta)_o}
 \le \rho_\varepsilon(\xi,\eta)
 \le Q e^{-\varepsilon(\xi\mid\eta)_o}
 \qquad(\xi,\eta\in\partial_G\Omega)
\end{equation}
for some $Q\ge1$. The Gromov product of two boundary points is defined in
Subsection~\ref{sec:boundary}. It is shown in \cite[Proposition~III.H.3.21]{BH99} that a proper geodesic Gromov hyperbolic
space admits visual metrics for every sufficiently small $\varepsilon>0$.

We are now ready to state the principal result of this section. It gives a two-sided estimate of the Gromov product in terms of the Euclidean distance of two points and their distances to the boundary, with exponents that are explicit in the exponent of the global estimate above and in the constant of the logarithmic growth bound for the Kobayashi distance from the base point.

\begin{theorem}[Quantitative boundary comparison]\label{thm:boundary-comparison}
Let $\Omega\Subset\C^n$ be a $\C$-convex domain with Lipschitz boundary
such that $(\Omega,k_\Omega)$ is Gromov hyperbolic, and let $m\ge1$ and
$C_r>0$ be such that \eqref{eq:mconvex} holds. Fix $o\in\Omega$ and
$R>\max\{1,3\diam(\Omega)\}$. Then there exist $A\ge0$ and $B>0$ such
that
\begin{equation}\label{eq:logupper}
 k_\Omega(o,z)\le A+B\log\frac{R}{\delta_\Omega(z)}\qquad(z\in\Omega).
\end{equation}
Fix such $A,B$ and put
\[
 S(x,y)=\norm{x-y}+\delta_\Omega(x)+\delta_\Omega(y).
\]
Then there exists $C_0\ge0$ such that
\begin{equation}\label{eq:main}
 \frac14\log\frac{R}{S(x,y)}-C_0
 \le (x\mid y)_o
 \le Bm\log\frac{R}{S(x,y)}+C_0
 \qquad(x,y\in\Omega).
\end{equation}
Moreover, the identity map on $\Omega$ extends to a homeomorphism
\begin{equation}\label{eq:identification}
 \mathcal I:\overline{\Omega}\longrightarrow\overline{\Omega}^{\,G},
\end{equation}
and for every visual metric $\rho_\varepsilon$ satisfying
\eqref{eq:visual} there exist $c,C>0$ such that
\begin{equation}\label{eq:boundary-main}
 c\left(\frac{\norm{p-q}}{R}\right)^{\varepsilon Bm}
 \le \rho_\varepsilon\bigl(\mathcal I(p),\mathcal I(q)\bigr)
 \le C\left(\frac{\norm{p-q}}{R}\right)^{\varepsilon/4}
 \qquad(p,q\in\partial\Omega).
\end{equation}
The constant $C_0$ depends only on $\Omega$, its finite Lipschitz atlas,
$m,C_r,o,R,A,B$ and the hyperbolicity constant, and the constants $c,C$ depend in
addition on $\rho_\varepsilon$.
\end{theorem}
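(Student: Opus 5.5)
The plan has four steps: the logarithmic growth bound \eqref{eq:logupper}, the lower bound in \eqref{eq:main}, the upper bound in \eqref{eq:main}, and finally the passage to the boundary.

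\textbf{Logarithmic growth.} This is \cite[Lemma~2.3]{BZ17}, as recalled in the introduction. Directly, write $z=p+re$ via Lemma~\ref{lem:height-distance}. Join $z$ to the depth $r_1$ along the chart direction. The disc bound $\kappa_\Omega(p+ue;e)\le(c_0u)^{-1}$ from the proof of Proposition~\ref{prop:caratheodory-power} gives a cost of $c_0^{-1}\log(r_1/r)$. The remaining compact set is then joined to $o$ at bounded cost.

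\textbf{Lower bound.} Here I would prove $(x\mid y)_o\ge\frac14\log(R/S)-C_0$ from Gromov hyperbolicity as follows.
\begin{enumerate}
\item Take a geodesic $[x,y]$. By hyperbolicity, $(x\mid y)_o$ equals $\dist(o,[x,y])$ up to an additive constant.
\item Bound $\dist(o,[x,y])$ from below through complex supports. For $w\in[x,y]$, I claim $\delta_\Omega(w)\lesssim S$ fails only if $w$ is far from $o$.
\item Concretely, let $p$ be a nearest boundary point to $x$ and $\phi_H$ the support through $p$, normalized with unit linear part. Then $\abs{\phi_H(x)}\le\delta_\Omega(x)$, and $\abs{\phi_H(o)}\gtrsim1$.
\item For any $w$ on the geodesic, either $k(w,x)\ge\frac12\log(R/S)-C$, or \eqref{eq:k-support} gives $\abs{\phi_H(w)}\lesssim e^{4k(x,w)}S$. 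In the second case $w$ lies Euclidean-close to the hyperplane $H$ through $p$.
\end{enumerate}
Combining these with $k(o,w)\ge\frac14\log(\abs{\phi_H(o)}/\abs{\phi_H(w)})$, and splitting the geodesic at its point equidistant to $x$ and $y$, should give $k(o,w)\ge\frac14\log(R/S)-C$. That is the left inequality. I expect the constant $\frac14$ to come exactly from \eqref{eq:k-support}. The step to watch is making the alternative in item 4 quantitatively sharp.

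\textbf{Upper bound.} I would use the power law \eqref{eq:mconvex} through Lemma~\ref{lem:full-curve-type}'s argument. The Kobayashi ball estimate $k(x,y)\le\tau\Rightarrow\norm{x-y}\le C_\tau\delta^{1/m}$, iterated along a geodesic, should give
\[
k_\Omega(x,y)\ge m^{-1}\text{-scaled terms},
\]
but the cleaner route is the following construction.
\begin{enumerate}
\item Set $t=S(x,y)^m$, roughly.
\item Move $x$ and $y$ inward along chart directions to points $x',y'$ at depth $\asymp S$. This costs at most $c_0^{-1}\log(S/\delta(x))+C$, and similarly for $y$.
\item Show $k(x',y')\le C$, since $\norm{x'-y'}\lesssim S$ and both points sit at depth comparable to $S$. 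Lipschitz charts give a chain of boundedly many Kobayashi balls of Euclidean radius $\asymp S$.
\item Then $(x\mid y)_o\lesssim(x'\mid y')_o+C\le\min(k(o,x'),k(o,y'))+C\le A+B\log(R/S)+C$.
\end{enumerate}
This already gives the factor $B$, so $Bm$ is weaker and holds since $m\ge1$. Actually, the depth in item 2 may need to be $S$ or $S^m$ depending on the argument. The outcome is at worst the stated bound, so I would not optimize. The main obstacle is item 3, connecting at depth $S$ when the boundary points lie in different charts; a Lebesgue-number argument handles this for small $S$.

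\textbf{Boundary extension.} I would use Result~\ref{res:no-affine-discs} and \cite[Proposition~4.4]{BGNT24} for the homeomorphism $\mathcal I$, or derive it directly from \eqref{eq:main}. A sequence converges at infinity iff $(x_j\mid x_k)_o\to\infty$, iff $S(x_j,x_k)\to0$, iff the sequence is Euclidean Cauchy toward $\partial\Omega$. For boundary points $p,q$, take $x_j\to p$ and $y_j\to q$. The Gromov product of the boundary points is within a bounded additive error of $\liminf(x_j\mid y_j)_o$, and $S\to\norm{p-q}$. Inserting this into \eqref{eq:visual} yields \eqref{eq:boundary-main}.
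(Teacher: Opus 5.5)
Your derivation of \eqref{eq:logupper} and your passage from \eqref{eq:main} to the boundary statements are fine. Neither half of \eqref{eq:main}, however, is actually proved, and your key construction is in the wrong half.

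\textbf{Upper bound.} The step $(x\mid y)_o\lesssim(x'\mid y')_o+C$ goes the wrong way. Suppose $x',y'$ lie within bounded distance of $[o,x]$ and $[o,y]$, and $k_\Omega(x',y')\le C$. Then the triangle inequality gives $(x\mid y)_o\ge(x'\mid y')_o-C'$. Nothing bounds $(x\mid y)_o$ from above, because the geodesics $[o,x]$ and $[o,y]$ may stay together well past $x'$ and $y'$.

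The bound you derive, $(x\mid y)_o\le B\log(R/S)+C$, is in fact false. Take the unit ball of $\C^2$, where $A=0$ and $B=\frac12$ are admissible. Let $x=(1-\delta)(1,0)$ and $y=(1-\delta)(\sqrt{1-d^2},d)$ with $\delta\ll d^2$. The explicit formula for $k_{\B}$ gives $k_{\B}(x,y)=\log(d^2/\delta)+O(1)$, hence $(x\mid y)_0=\log(1/d)+O(1)$, while $S(x,y)\asymp d$. So the factor $m$ (here $m=2$) is genuinely needed. The construction you settle on never uses \eqref{eq:mconvex}, which is the only place $m$ can come from.

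The missing idea is a lower bound for $k_\Omega(x,y)$, obtained by controlling the Euclidean length of a geodesic. Let $\gamma$ be a unit-speed geodesic from $x$ to $y$.
\begin{enumerate}
\item By \eqref{eq:mconvex} and \eqref{eq:invariant-comparison}, $\norm{\gamma'(t)}\le4C_r\delta_\Omega(\gamma(t))^{1/m}$.
\item Rearranging \eqref{eq:logupper} gives $\delta_\Omega(\gamma(t))\le Re^{A/B}e^{-k_\Omega(o,\gamma(t))/B}$.
\item The triangle inequality gives $k_\Omega(o,\gamma(t))\ge(x\mid y)_o+\abs{t-t_0}$.
\end{enumerate}
Integrating yields $\norm{x-y}\le Ce^{-(x\mid y)_o/(Bm)}$. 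The $\delta_\Omega$ terms of $S$ are handled by $(x\mid y)_o\le\min\{k_\Omega(o,x),k_\Omega(o,y)\}$.

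\textbf{Lower bound.} The dichotomy in your item~4 gives no information where it matters. Combining your two support inequalities yields only $k_\Omega(o,w)\ge\frac14\log(c/S)-k_\Omega(x,w)$, which degrades linearly in $k_\Omega(x,w)$. The other alternative, $k_\Omega(w,x)\ge\frac12\log(R/S)-C$, says nothing about $k_\Omega(o,w)$. Yet $\dist(o,[x,y])$ is governed by the middle of the geodesic, which is far from both endpoints.

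More fundamentally, complex supports only bound Kobayashi distances from below. A lower bound for $(x\mid y)_o$ needs an upper bound for $k_\Omega(x,y)$, or equivalently a localization of $[x,y]$ at depth $\lesssim S$. That upper bound is exactly your lift-and-connect construction:
\begin{enumerate}
\item In one Lipschitz chart, lift to $u=p_x+hV$ and $v=p_y+hV$ with $h\asymp S$. The segment $[u,v]$ then keeps boundary distance $\gtrsim h$ and has bounded Kobayashi length.
\item Gromov hyperbolicity enters through the Morse lemma. An inward vertical curve, concatenated with a geodesic from $o$ to its top, is a uniform quasigeodesic. So $u,v$ lie within $K_0$ of points $u'\in[o,x]$ and $v'\in[o,y]$.
\item The triangle inequality gives $(x\mid y)_o\ge(u'\mid v')_o$.
\item The support estimate \eqref{eq:radiallower}, which is where the $\frac14$ comes from, gives $(u'\mid v')_o\ge\frac14\log(R/h)-C$.
\end{enumerate}
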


\begin{remark}
We point out that the hypothesis \eqref{eq:mconvex} only names the exponent. By Theorem~\ref{thm:lipschitz}, an exponent $m$ as in
Theorem~\ref{thm:boundary-comparison} always exists under the other hypotheses. The constants $A,B$ are
supplied by Lemma~\ref{lem:inward} below.
\end{remark}

The homeomorphism \eqref{eq:identification} is due to Bracci,
Gaussier, Nikolov and Thomas \cite[Proposition~4.4]{BGNT24}. The new
content of Theorem~\ref{thm:boundary-comparison} is the two-sided estimate
\eqref{eq:main}, whose exponents are explicit in $m$ and $B$. Its proof uses
complex supports in place of the real supporting hyperplanes that are used
for convex domains in \cite{LWZ22}. The proof occupies the remainder of this section. We first collect the Kobayashi estimates that we need (Lemma~\ref{lem:speed}), then we describe the Lipschitz boundary charts and the inward curves (Lemmas~\ref{lem:charts} and~\ref{lem:inward}), and finally we prove the two sides of the Gromov product estimate (Lemmas~\ref{lem:length} and~\ref{lem:lowerproduct}) and the boundary assertions (Subsection~\ref{sec:boundary}).

\subsection{Kobayashi estimates and Euclidean speed}

In this subsection, we record the estimates for the Kobayashi metric and distance that we shall use, and we derive from them a bound on the Euclidean speed of Kobayashi geodesics. We begin with the infinitesimal estimates. By \eqref{eq:invariant-comparison},
\begin{equation}\label{eq:directional}
 \kappa_\Omega(z;v)\ge\frac{\norm{v}}{4r_\Omega(z;v)}.
\end{equation}
Combining this inequality with \eqref{eq:mconvex}, we get
\begin{equation}\label{eq:metriclower}
 \kappa_\Omega(z;v)\ge\frac{1}{4C_r}\delta_\Omega(z)^{-1/m}\norm{v}.
\end{equation}
Since the Euclidean ball $B(z,\delta_\Omega(z))$ is contained in $\Omega$, we also have
\begin{equation}\label{eq:metricupper}
 \kappa_\Omega(z;v)\le\frac{\norm{v}}{\delta_\Omega(z)}.
\end{equation}

We next derive a lower bound for the Kobayashi distance in terms of the ratio of the boundary distances. Let $z,w\in\Omega$ with $\delta_\Omega(w)\le\delta_\Omega(z)$.
Choose a nearest boundary point $q$ to $w$. Since $\Omega$ is $\C$-convex, there is a complex support $H$
through $q$. We normalize $\phi_H$ so that its linear part has
Euclidean norm one. Then
$\abs{\phi_H(w)}=\dist(w,H)\le\norm{w-q}=\delta_\Omega(w)$. On the other hand, $H$
is disjoint from $\Omega$, so it does not meet the ball
$B(z,\delta_\Omega(z))$, and hence $\abs{\phi_H(z)}\ge\delta_\Omega(z)$.
Therefore \eqref{eq:k-support} gives
$k_\Omega(z,w)\ge\frac14\log(\delta_\Omega(z)/\delta_\Omega(w))$.
Interchanging the roles of $z$ and $w$, we obtain
\begin{equation}\label{eq:ratio}
 k_\Omega(z,w)\ge\frac14\left|\log\frac{\delta_\Omega(z)}{\delta_\Omega(w)}\right|,
 \qquad z,w\in\Omega.
\end{equation}
This is the argument of Nikolov \cite[Proposition~2]{Nik14}, who proves
$c_\Omega(z,w)\ge\frac14\log\bigl(\delta_\Omega(z)/(4\delta_\Omega(w))\bigr)$
for the Carath\'eodory distance $c_\Omega\le k_\Omega$.
In particular, taking the base point as one of the two points, we see that for the fixed $o$ and $R$ there is $b_1\ge0$ such that
\begin{equation}\label{eq:radiallower}
 k_\Omega(o,z)\ge\frac14\log\frac{R}{\delta_\Omega(z)}-b_1.
\end{equation}

We recall from Lemma~\ref{lem:proper} that every bounded $\C$-convex domain is a
proper geodesic metric space for $k_\Omega$. Thus any two points are joined by a distance geodesic, and
the standard Gromov compactification is available as soon as the space is Gromov hyperbolic.

The following direct comparison with a continuous weighted Euclidean norm gives the Euclidean speed estimate that we need below.

\begin{lemma}\label{lem:speed}
Let $\Omega\Subset\C^n$ be a $\C$-convex domain satisfying
\eqref{eq:mconvex}. Every $k_\Omega$-unit-speed distance geodesic
$\gamma:[a,b]\to\Omega$ is Euclidean Lipschitz and satisfies
\begin{equation}\label{eq:speed}
 \norm{\gamma'(t)}\le4C_r\delta_\Omega(\gamma(t))^{1/m}
 \quad\text{for almost every }t\in[a,b].
\end{equation}
\end{lemma}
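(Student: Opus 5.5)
The argument compares the Kobayashi length with the Euclidean length through the pointwise lower bound \eqref{eq:metriclower}, and then uses the distance-realizing property of the geodesic to pass from lengths to speeds. Put $\omega(z)=4C_r\delta_\Omega(z)^{1/m}$. By \eqref{eq:metriclower} we have $\norm v\le\omega(z)\kappa_\Omega(z;v)$ for all $z\in\Omega$ and $v\in\C^n$. The weight $\omega$ is continuous and positive on $\Omega$, so the gauge $\norm v/\omega(z)$ is a continuous weighted Euclidean norm. Its integrated distance $d_\omega$ therefore satisfies $d_\omega\le k_\Omega$, since $k_\Omega$ is the integrated distance of $\kappa_\Omega$ and every curve has $\omega^{-1}$-weighted Euclidean length at most its Kobayashi length (Venturini's integration formula).

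\emph{Step 1: Lipschitz property.} Let $\gamma$ be a unit-speed $k_\Omega$-geodesic. Its image is compact in $\Omega$, so $\delta_\Omega\ge\delta_*>0$ on a compact neighborhood $K$ of the image. On $K$ the infinitesimal comparison above gives $\norm{x-y}\le C\,k_\Omega(x,y)$ for nearby points $x,y\in K$. To see this, note that for $x,y$ close enough, near-minimizing Kobayashi curves from $x$ to $y$ stay inside $K$, because $k_\Omega$ induces the Euclidean topology. Hence $\gamma$ is locally Lipschitz, and by compactness it is Lipschitz on $[a,b]$, with $\norm{\gamma(t)-\gamma(t')}\le C|t-t'|$. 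In particular $\gamma'$ exists almost everywhere.

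\emph{Step 2: the sharp pointwise bound.} Fix $t$ at which $\gamma$ is differentiable and a Lebesgue point of the relevant quantities. For small $h>0$, continuity of $\omega$ gives
\[
\norm{\gamma(t+h)-\gamma(t)}\le\sup_{B}\omega\cdot d_\omega\text{-type length}\le\bigl(\omega(\gamma(t))+o(1)\bigr)\,k_\Omega(\gamma(t),\gamma(t+h))=\bigl(\omega(\gamma(t))+o(1)\bigr)h .
\]
Here $B$ is a small Euclidean ball around $\gamma(t)$, and the middle inequality comes from integrating $\norm{\sigma'}\le\omega(\sigma)\kappa_\Omega(\sigma;\sigma')$ along near-minimizing curves. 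Those curves lie in $B$ by Step 1 and the topology comparison. The last equality holds because $\gamma$ is a unit-speed distance geodesic. Dividing by $h$ and letting $h\downarrow0$ gives $\norm{\gamma'(t)}\le\omega(\gamma(t))=4C_r\delta_\Omega(\gamma(t))^{1/m}$, which is \eqref{eq:speed}.

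\emph{Main obstacle.} The delicate point is the localization in Step 2. We need that near-minimizing curves for $k_\Omega$ between two nearby points do not wander far, so that the oscillation of $\omega$ along them is small. This follows from $\kappa_\Omega(z;v)\ge\norm v/(4\diam\Omega)$, which is \eqref{eq:invariant-comparison} together with \eqref{eq:elementary}. That bound shows a curve of Kobayashi length $\le 2h$ has Euclidean length $\le 8h\diam\Omega$, so it stays within a ball of radius $O(h)$. The other ingredient is the fact that $k_\Omega$ is the integrated form of $\kappa_\Omega$, which is Venturini's theorem \cite{Ven89}.
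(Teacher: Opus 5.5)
Your proposal is correct and takes essentially the paper's route: compare $k_\Omega$ with the integrated distance of the weighted Euclidean gauge $\norm v/\omega(z)$ via \eqref{eq:metriclower}, then localize using continuity of the weight and the fact that the geodesic realizes distance. The paper localizes through a local expansion of that weighted distance, whereas you localize the near-minimizing Kobayashi curves directly. Your Step 1 detour through a compact neighborhood is unnecessary: since $\omega$ is bounded above on $\Omega$, the global bound $\norm{x-y}\le C\,k_\Omega(x,y)$ gives the Lipschitz property at once, and this is how the paper argues.
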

\begin{proof}
We compare the Kobayashi distance with the length distance of a weighted Euclidean gauge. Set
\[
 w(z)=\frac{1}{4C_r}\delta_\Omega(z)^{-1/m},\qquad F(z;v)=w(z)\norm{v},
\]
and let $d_F$ be the length distance of this gauge. Integrating
\eqref{eq:metriclower} along curves gives $d_F\le k_\Omega$. Since $\Omega$ is bounded,
the weight $w$ has a positive lower bound $w_*>0$, and hence
\[
 w_*\norm{z-z'}\le d_F(z,z')\le k_\Omega(z,z').
\]
This proves that $\gamma$ is Euclidean Lipschitz.

Next we show that the continuity of $w$ implies the local expansion
\begin{equation}\label{eq:localF}
 d_F(z,z')=w(z)\norm{z-z'}+o(\norm{z-z'})\qquad(z'\to z).
\end{equation}
The straight segment gives the upper bound. For the lower bound, fix $a>0$ and
choose a small ball about $z$, compactly contained in $\Omega$, on
which $w\ge w(z)-a$. A curve that stays in this ball has $F$-length
at least $(w(z)-a)\norm{z-z'}$. A curve that leaves the ball has $F$-length bounded below by a fixed
positive constant, so such a curve cannot improve the
upper bound once $z'$ is sufficiently close to $z$. Letting $a\downarrow0$
proves \eqref{eq:localF}.

Finally, since $\gamma$ is Euclidean Lipschitz, its Euclidean derivative exists at almost every $t$. At such a parameter, $d_F(\gamma(t+h),\gamma(t))\le |h|$, because the geodesic has unit Kobayashi speed and the length distance is dominated by the Kobayashi distance. Hence \eqref{eq:localF} gives
$w(\gamma(t))\norm{\gamma'(t)}\le1$, which is \eqref{eq:speed}. This completes the proof.
\end{proof}

\subsection{Lipschitz boundary charts and inward curves}\label{sec:charts}

In this subsection, we describe the boundary geometry that the lower estimate for the Gromov product requires. We fix a finite collection of Lipschitz graph charts, and we show that the inward vertical curves in these charts are quasi-geodesics that stay close to the geodesics from the base point.
All graph coordinates in this section are real orthonormal coordinates
on $\C^n=\R^{2n}$. They are used only for Euclidean geometry.

Our first lemma fixes the charts and records the two properties of them that we shall use, namely a two-sided bound for the boundary distance of a point at a given height above the graph, and a representation in a single chart of any two points that are close to each other and to the boundary.

\begin{lemma}\label{lem:charts}
Let $\Omega\Subset\C^n$ be a domain with Lipschitz boundary, and let
$S(x,y)$ be as in Theorem~\ref{thm:boundary-comparison}. Then a finite collection of
nested graph charts can be chosen, together with constants
$h_0>0$, $c_h\in(0,1]$, $\mathfrak L\ge0$ and $C_1\ge1$ that do not depend on the chart, with the following properties. In each chart,
$\Omega$ lies above a graph
\[
 \{(a,\varphi(a)):a\in U\subset\R^{2n-1}\},
 \qquad |\varphi(a)-\varphi(b)|\le \mathfrak L\norm{a-b},
\]
with inward vertical unit vector $V$. For every graph point $p$ in the
smaller chart and every $0<h\le h_0$,
\begin{equation}\label{eq:height}
 c_hh\le\delta_\Omega(p+hV)\le h.
\end{equation}
The points at height $h_0$ over all the smaller charts lie in a fixed
compact subset of $\Omega$.

Moreover, whenever $S(x,y)$ is sufficiently small, one of the charts
admits representations
\begin{equation}\label{eq:representations}
 x=p_x+t_xV,\qquad y=p_y+t_yV,
\end{equation}
in which $p_x,p_y$ lie in its smaller graph region, $0<t_x,t_y<h_0$,
and
\begin{equation}\label{eq:chartbounds}
 t_x\le c_h^{-1}\delta_\Omega(x),\quad
 t_y\le c_h^{-1}\delta_\Omega(y),\quad
 \norm{p_x-p_y}\le C_1 S(x,y).
\end{equation}
\end{lemma}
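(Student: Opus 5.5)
The plan is to build the charts from the definition of a Lipschitz domain together with compactness of $\partial\Omega$, and then obtain each listed property by elementary Euclidean geometry. The same computation as in Lemma~\ref{lem:height-distance} does most of the work.

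First, I fix the atlas. At each boundary point, the Lipschitz hypothesis gives real orthonormal coordinates $(a,s)\in\R^{2n-1}\times\R$ and a cylinder $U\times(-\eta,\eta)$ in which $\Omega$ is $\{s>\varphi(a)\}$, with $\varphi$ Lipschitz. I shrink each cylinder to a concentric smaller one of half the size and extract a finite subcover of $\partial\Omega$ by the smaller cylinders. Then I set $\mathfrak L$ to be the maximal Lipschitz constant and $V$ to be the vertical unit vector in each chart. Let $\lambda>0$ be a Lebesgue number of this finite cover, and let $d_0>0$ be the minimal horizontal and vertical margin between smaller and larger cylinders. I choose $h_0$ small compared with $\lambda$ and $d_0$, so that for a graph point $p$ in a smaller chart, the segment $\{p+hV:0<h\le h_0\}$ stays inside the larger cylinder above the graph, hence in $\Omega$.

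Next, the two-sided height bound. The upper bound $\delta_\Omega(p+hV)\le h$ is immediate, since $p\in\partial\Omega$. For the lower bound, I argue as in Lemma~\ref{lem:height-distance}. A graph point with horizontal displacement $a$ lies at distance at least $\sqrt{a^2+\max\{0,h-\mathfrak La\}^2}\ge h/\sqrt{1+\mathfrak L^2}$. Boundary points outside the larger chart lie at distance at least $d_0$, so they are at distance at least $h_0\ge h$ once $h_0\le d_0$. This gives $c_h=(1+\mathfrak L^2)^{-1/2}$. The points $p+h_0V$ therefore satisfy $\delta_\Omega\ge c_hh_0$, so they lie in the compact set $\{\delta_\Omega\ge c_hh_0\}$.

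Finally, the representations. Suppose $S(x,y)<\epsilon$, with $\epsilon$ small compared with $\lambda,h_0,d_0$. Let $q$ be a nearest boundary point to $x$. Then $q$ lies in some smaller cylinder together with its $\lambda$-ball, which contains $x$ and $y$, since $\norm{x-q}+\norm{x-y}<\epsilon$. In that chart I write $x=p_x+t_xV$, where $p_x=(a_x,\varphi(a_x))$ and $t_x=s_x-\varphi(a_x)>0$, and similarly for $y$. The height bound applied at $p_x$ gives $\delta_\Omega(x)\ge c_ht_x$, which requires $t_x\le h_0$. For this I use $t_x\le(1+\mathfrak L)\norm{x-q}$: the horizontal displacement $a_x-a_q$ has size at most $\norm{x-q}$, and $\varphi$ is $\mathfrak L$-Lipschitz. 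Since $\norm{x-q}<\epsilon$, this is small. For the last bound, $\norm{p_x-p_y}\le\sqrt{1+\mathfrak L^2}\,\norm{a_x-a_y}\le\sqrt{1+\mathfrak L^2}\,\norm{x-y}$. Hence $C_1=\sqrt{1+\mathfrak L^2}$ already works, even without the $\delta$ terms in $S$.

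The main obstacle is the bookkeeping that guarantees $p_x$ and $p_y$ lie in the \emph{smaller} graph region, not just the larger one. I would handle this by using Lebesgue numbers for a cover by cylinders of one-quarter size: $a_q$ lies in the quarter region, and $\norm{a_x-a_q},\norm{a_y-a_q}<2\epsilon$ keep $a_x,a_y$ in the half region.
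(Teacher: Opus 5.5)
Your proposal is correct and follows essentially the same route as the paper: nested Lipschitz cylinders with a Lebesgue number taken for a still smaller cover, the graph-distance computation giving $c_h=(1+\mathfrak L^2)^{-1/2}$ once $h_0$ is below the chart margins, and the bound $t_x\le(1+\mathfrak L)\delta_\Omega(x)$ obtained from a nearest boundary point. The only difference is the last estimate. You use the Lipschitz graph to get $\norm{p_x-p_y}\le\sqrt{1+\mathfrak L^2}\,\norm{x-y}$, whereas the paper uses $\norm{p_x-p_y}\le\norm{x-y}+t_x+t_y$ together with the bounds on $t_x,t_y$; both are valid, and yours does not even need the $\delta_\Omega$ terms of $S$.
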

\begin{proof}
We first choose the charts. Since $\partial\Omega$ is compact and Lipschitz, it is covered by finitely many Lipschitz graph
charts. We shrink these charts so that the smaller charts still cover the boundary,
while their closures keep a positive distance from the edges of the larger
charts. More precisely, we take the base of each larger graph chart to be a Euclidean
ball and use three nested smaller balls before selecting the finite
cover. In particular, the straight segment between two base points in
the middle ball stays inside the larger base, with a positive margin.
The graph Lipschitz constants are made uniform by taking their maximum, and the positive margins are made uniform by taking their minimum.

We now prove the two-sided bound for the boundary distance. For $p=(a,\varphi(a))$, the point $p+hV$ has vertical height $h$ above the graph.
The graph Lipschitz bound shows that its distance from the graph is at least
$h/\sqrt{1+\mathfrak L^2}$. Indeed, for any graph point $(b,\varphi(b))$,
\[
 h\le |\varphi(a)+h-\varphi(b)|+\mathfrak L\norm{a-b}
 \le\sqrt{1+\mathfrak L^2}\,
 \norm{(a,\varphi(a)+h)-(b,\varphi(b))}.
\]
Boundary points outside the larger chart stay a uniformly positive
distance away, by the margins chosen above. Taking $h_0$ smaller if necessary, we obtain the lower
bound in \eqref{eq:height}. The upper bound follows by comparison with
the boundary point $p$. The same positive lower bound at height $h_0$, together with the boundedness of the domain,
gives the compactness assertion.

It remains to prove the last assertion. Choose a closest boundary point to $x$.
If $S(x,y)$ is sufficiently small, then the two points are close to this boundary point, so both $x$ and $y$ lie in one of the
larger graph charts, with a fixed margin from its edges. This follows
from a Lebesgue number for a still smaller finite cover of the boundary.
The vertical projections of the two points onto the graph give
\eqref{eq:representations}. The projections remain in the smaller
region once the covers are fixed in this order and the
smallness threshold is decreased. Since a nearest boundary point of $x$ lies on the
graph, the Lipschitz bound gives $t_x\le(1+\mathfrak L)\delta_\Omega(x)$, and similarly for $y$. Thus $t_x,t_y<h_0$ once $S(x,y)$ is small. Equation~\eqref{eq:height},
applied with $p=p_x$, $h=t_x$ and with $p=p_y$, $h=t_y$, then gives the
first two bounds in \eqref{eq:chartbounds}. Finally,
\[
 \norm{p_x-p_y}\le\norm{x-y}+t_x+t_y\le C_1 S(x,y).
\]
This completes the proof.
\end{proof}

The next lemma collects three facts about the inward vertical curves of the charts. That these curves are quasi-geodesics is
\cite[Lemma~4.1]{BGNT24}. The bound
$k_\Omega(o,z)\le A+B\log(1/\delta_\Omega(z))$ on domains satisfying an interior
cone condition is \cite[Lemma~2.3]{BZ17}. We recall the short proofs,
because we need the constants in \eqref{eq:quasi} and the comparison of these curves
with geodesics from $o$.

\begin{lemma}\label{lem:inward}
Let $\Omega\Subset\C^n$ be a $\C$-convex domain with Lipschitz
boundary, and let $h_0$ and $V$ be as in Lemma~\ref{lem:charts}.
Then the curves
\[
 \sigma_p(t)=p+h_0e^{-t}V,\qquad t\ge0,
\]
where $p$ is a graph point in a smaller chart, are quasi-geodesics with
constants independent of $p$. For every $o\in\Omega$ and
$R>\max\{1,3\diam\Omega\}$, \eqref{eq:logupper} holds for some $A\ge0$
and $B>0$. If $(\Omega,k_\Omega)$ is Gromov hyperbolic, then there is
$K_0\ge0$ such that, for every point
$x=p+t_xV$ sufficiently close to the boundary, every point $p+hV$ with $t_x\le h\le h_0$ has
$k_\Omega$-distance at most $K_0$ from any geodesic segment $[o,x]$.
\end{lemma}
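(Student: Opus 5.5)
The proof has three parts, taken in the order of the statement: the quasigeodesic property of the inward vertical curves, then the logarithmic upper bound \eqref{eq:logupper}, then the fellow-travelling statement under Gromov hyperbolicity.

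\emph{Quasigeodesic property.} Fix a graph point $p$ in a smaller chart and set $\sigma_p(t)=p+h_0e^{-t}V$. By \eqref{eq:height}, $\delta_\Omega(\sigma_p(t))\ge c_hh_0e^{-t}$. Since $\norm{\sigma_p'(t)}=h_0e^{-t}$, \eqref{eq:metricupper} bounds the Kobayashi speed by $c_h^{-1}$. Hence $k_\Omega(\sigma_p(t),\sigma_p(t'))\le c_h^{-1}\abs{t-t'}$. For the lower bound I use \eqref{eq:ratio}. Equation \eqref{eq:height} gives $\delta_\Omega(\sigma_p(t))\in[c_hh_0e^{-t},h_0e^{-t}]$, so
\[
 k_\Omega(\sigma_p(t),\sigma_p(t'))\ge\tfrac14\abs{t-t'}-\tfrac14\log(1/c_h).
\]
All constants depend only on the charts, not on $p$.

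\emph{Logarithmic upper bound.} Let $z$ be close to the boundary. Lemma~\ref{lem:charts} writes $z=p+t_zV$ with $t_z\le c_h^{-1}\delta_\Omega(z)$. I connect $o$ to $\sigma_p(0)$; these points lie in a fixed compact subset of $\Omega$, so their distance is bounded uniformly in $p$. I then follow $\sigma_p$ down to $z$. The upper quasigeodesic bound gives
\[
 k_\Omega(o,z)\le C+c_h^{-1}\log(h_0/t_z)\le A+B\log\bigl(R/\delta_\Omega(z)\bigr),
\]
where I use $t_z\ge\delta_\Omega(z)$ from \eqref{eq:height}. For points away from the collar, the left side is bounded and $\log(R/\delta_\Omega)\ge0$ because $R>\diam\Omega$; these are absorbed into $A$.

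\emph{Fellow travelling.} Assume Gromov hyperbolicity and let $x=p+t_xV$. Consider the concatenated path $\beta$: a fixed geodesic from $o$ to $\sigma_p(0)$, followed by $\sigma_p$ restricted to $[0,\log(h_0/t_x)]$. The first piece has uniformly bounded length. I claim $\beta$ is a quasigeodesic with uniform constants. The upper bound follows from the speed bounds. The lower bound is the main point: for $w$ on the initial piece and $w'=\sigma_p(t)$, \eqref{eq:ratio} gives $k_\Omega(w,w')\ge\tfrac14 t-C$, because $\delta_\Omega(w)$ is bounded below. Morse stability \cite[Theorem~III.H.1.7]{BH99} then places $\beta$ within a uniform Hausdorff distance $K_0$ of every geodesic $[o,x]$. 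The points $p+hV$ with $t_x\le h\le h_0$ are exactly the points of $\sigma_p$ on this range, which proves the claim. The main obstacle is only this uniformity: the constants must not depend on $p$ or $x$. This is secured by the compactness of the height-$h_0$ level from Lemma~\ref{lem:charts} and by the chart-independent constants $c_h,h_0$.
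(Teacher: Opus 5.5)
Your proposal is correct and follows the paper's proof essentially step by step: the same speed and ratio bounds from \eqref{eq:metricupper}, \eqref{eq:height} and \eqref{eq:ratio}, the same route through the compact height-$h_0$ level for \eqref{eq:logupper}, and the same concatenation plus Morse lemma. The only small difference is in the mixed-pair lower bound for the concatenation. You apply \eqref{eq:ratio} directly, whereas the paper combines the triangle inequality with the quasigeodesic bound already proved for $\sigma_p$. Your version needs the uniform lower bound for $\delta_\Omega$ along the initial geodesic piece, and you should state why it holds: that piece stays within $k_\Omega$-distance $L_0$ of $o$, and \eqref{eq:ratio} then gives $\delta_\Omega\ge\delta_\Omega(o)e^{-4L_0}$ there, uniformly in $p$.
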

\begin{proof}
We first prove that the curves are quasi-geodesics. The Euclidean speed of $\sigma_p$ is $h_0e^{-t}$, which is the height of the point above the graph. Hence
\eqref{eq:metricupper} and \eqref{eq:height} give
\[
 k_\Omega(\sigma_p(s),\sigma_p(t))\le c_h^{-1}|s-t|.
\]
For the reverse inequality, let $s\le t$. Then \eqref{eq:ratio} and \eqref{eq:height} imply
\[
 k_\Omega(\sigma_p(s),\sigma_p(t))
 \ge\frac14(t-s)-\frac14\log(c_h^{-1}).
\]
Thus there are uniform $\lambda\ge1$ and $\beta\ge0$ such that
\begin{equation}\label{eq:quasi}
 \lambda^{-1}|s-t|-\beta
 \le k_\Omega(\sigma_p(s),\sigma_p(t))
 \le\lambda|s-t|+\beta.
\end{equation}

We next prove the logarithmic bound. The points at height $h_0$ over the smaller charts lie in a compact subset of the domain, so
$k_\Omega(o,p+h_0V)\le L_0$ for a uniform constant. For $x=p+t_xV$, the upper bound along the inward curve proved above gives
\[
 k_\Omega(o,x)\le L_0+c_h^{-1}\log\frac{h_0}{t_x}
 \le L_0+c_h^{-1}\log\frac{R}{\delta_\Omega(x)},
\]
where we also take $h_0<R$. The complement of the boundary chart
neighborhoods is compact. Increasing the additive constant therefore proves
\eqref{eq:logupper} on all of $\Omega$.

To prove the final assertion, we concatenate a unit-speed geodesic from
$o$ to $\sigma_p(0)$ with the restriction of $\sigma_p$ to
$[0,\log(h_0/t_x)]$. We claim that this concatenation is a quasi-geodesic with
uniform constants. To see this, note that its initial part has length at most $L_0$.
Consider two points on different parts, with remaining initial length $a\le L_0$
and subsequent parameter $t$. The triangle inequality and
\eqref{eq:quasi} give the lower bound
\[
 \lambda^{-1}t-\beta-a
 \ge \lambda^{-1}(a+t)-\beta-2L_0,
\]
and the corresponding upper bound is immediate. Two points on the same
part satisfy the required estimates already. This proves the claim. The Morse lemma
\cite[Chapter~III.H, Theorem~1.7]{BH99} now places the concatenation
at a uniform Hausdorff distance from $[o,x]$. Every point $p+hV$ in the
asserted range lies on the inward portion of the concatenation, and hence lies within a uniform distance of the geodesic segment. This completes the proof.
\end{proof}

\subsection{The two Gromov-product estimates}\label{sec:products}

In this subsection, we prove the two Gromov-product estimates that together give the two-sided inequality \eqref{eq:main} of Theorem~\ref{thm:boundary-comparison}. The upper estimate integrates the Euclidean speed bound along a geodesic and uses the logarithmic upper bound for the distance from the base point. The lower estimate uses the Lipschitz charts, the inward curves and Gromov hyperbolicity.

\subsubsection{The upper estimate and Euclidean geodesic length}
We first prove the upper estimate. The idea is to integrate the Euclidean speed bound of Lemma~\ref{lem:speed} against the growth of $k_\Omega(o,\cdot)$ along a geodesic. This argument follows the visibility estimates of Bharali--Zimmer \cite{BZ17}. We point out that the next lemma uses neither Gromov hyperbolicity nor Lipschitz regularity of the boundary.

\begin{lemma}\label{lem:length}
Let $\Omega\Subset\C^n$ be a $\C$-convex domain satisfying
\eqref{eq:mconvex}. Fix $o\in\Omega$ and $R>\max\{1,3\diam\Omega\}$,
suppose that $A\ge0$ and $B>0$ satisfy \eqref{eq:logupper}, and let
$S(x,y)$ be as in Theorem~\ref{thm:boundary-comparison}. Then there is $C_2>0$ such that
every Kobayashi distance geodesic $\gamma$ from $x$ to $y$ satisfies
\begin{equation}\label{eq:length}
 \ell_E(\gamma)\le C_2 e^{-(x\mid y)_o/(Bm)},
\end{equation}
where $\ell_E(\gamma)=\int\norm{\gamma'(t)}\,dt$ denotes the Euclidean
length of $\gamma$. Consequently, there is $C_3\ge0$ such that
\begin{equation}\label{eq:upperproduct}
 (x\mid y)_o\le Bm\log\frac{R}{S(x,y)}+C_3
 \qquad(x,y\in\Omega).
\end{equation}
\end{lemma}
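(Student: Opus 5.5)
The plan is to parametrize a geodesic $\gamma:[0,T]\to\Omega$ from $x$ to $y$ by $k_\Omega$-arclength, with $T=k_\Omega(x,y)$, and to integrate the speed bound \eqref{eq:speed} of Lemma~\ref{lem:speed}. This requires a lower bound for $k_\Omega(o,\gamma(t))$ that can be converted into an upper bound for $\delta_\Omega(\gamma(t))$ through \eqref{eq:logupper}. The triangle inequality gives
\[
 k_\Omega(o,\gamma(t))\ge k_\Omega(o,x)-t,\qquad k_\Omega(o,\gamma(t))\ge k_\Omega(o,y)-(T-t).
\]
Averaging these two bounds with suitable weights, or taking their maximum, gives $k_\Omega(o,\gamma(t))\ge (x\mid y)_o+\min\{t,T-t\}$. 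To see this, note that $k_\Omega(o,x)-t=(x\mid y)_o+(T-t)+\tfrac12(k_\Omega(o,x)-k_\Omega(o,y))$, and symmetrically for the other bound. The maximum of the two right sides is at least $(x\mid y)_o+\tfrac12\max\{T-t,t\}\ge (x\mid y)_o+\min\{t,T-t\}/1$, up to a harmless factor $\tfrac12$ on the linear term. We use the version $k_\Omega(o,\gamma(t))\ge (x\mid y)_o+\tfrac12\abs{t-T/2}$, which suffices.

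Inverting \eqref{eq:logupper} gives
\[
 \delta_\Omega(\gamma(t))\le R\exp\bigl(-(k_\Omega(o,\gamma(t))-A)/B\bigr).
\]
Inserting this into \eqref{eq:speed} yields
\[
 \norm{\gamma'(t)}\le 4C_rR^{1/m}e^{A/(Bm)}\,e^{-(x\mid y)_o/(Bm)}\,e^{-\abs{t-T/2}/(2Bm)}.
\]
The last factor is integrable over $\R$ with integral $4Bm$, independently of $T$. Integrating therefore gives \eqref{eq:length} with $C_2=16BmC_rR^{1/m}e^{A/(Bm)}$. Lemma~\ref{lem:speed} guarantees that $\gamma$ is Euclidean Lipschitz, so $\ell_E(\gamma)=\int\norm{\gamma'}$ and $\norm{x-y}\le\ell_E(\gamma)$. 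Geodesics exist by Lemma~\ref{lem:proper}.

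For \eqref{eq:upperproduct} we must also control $\delta_\Omega(x)$ and $\delta_\Omega(y)$, not only $\norm{x-y}$. Since $(x\mid y)_o\le k_\Omega(o,x)$, inequality \eqref{eq:logupper} gives $\delta_\Omega(x)\le Re^{A/B}e^{-(x\mid y)_o/B}$, and similarly for $y$. Because $m\ge1$ and $(x\mid y)_o\ge0$, we have $e^{-(x\mid y)_o/B}\le e^{-(x\mid y)_o/(Bm)}$. Summing the three bounds gives $S(x,y)\le C'e^{-(x\mid y)_o/(Bm)}$. Taking logarithms gives $(x\mid y)_o\le Bm\log(C'/S(x,y))$, and since $R>1$ the constant can be absorbed into $C_3$.

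The only delicate point is the lower bound for $k_\Omega(o,\gamma(t))$ in terms of the Gromov product and the distance to the midpoint, in particular keeping the decay rate independent of $T$. I expect the elementary triangle-inequality argument above to work. If the bookkeeping of the factor $\tfrac12$ needs more care, the fallback is to use separately $\max\{k_\Omega(o,x)-t,\,k_\Omega(o,y)-T+t\}$ together with the identity $k_\Omega(o,x)+k_\Omega(o,y)-T=2(x\mid y)_o$.
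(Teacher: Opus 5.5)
Your overall route is the paper's: unit-speed parametrization, a triangle-inequality lower bound for $k_\Omega(o,\gamma(t))$, inversion of \eqref{eq:logupper}, integration of \eqref{eq:speed}, and then $\norm{x-y}\le\ell_E(\gamma)$ together with $(x\mid y)_o\le\min\{k_\Omega(o,x),k_\Omega(o,y)\}$ for the boundary distances. However, the lower bound you actually use is false.

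Write $a=k_\Omega(o,x)$, $b=k_\Omega(o,y)$ and $P=(x\mid y)_o$. Your identity is off by $T/2$. In fact $a-t=P+(T/2-t)+\tfrac12(a-b)$ and $b-T+t=P+(t-T/2)-\tfrac12(a-b)$, so
\[
 \max\{a-t,\,b-T+t\}=P+\abs{t-t_0},\qquad t_0=\tfrac12(a-b+T)\in[0,T].
\]
The maximum equals $P$ exactly at $t=t_0$. Hence none of your bounds $P+\tfrac12\max\{t,T-t\}$, $P+\min\{t,T-t\}$ or $P+\tfrac12\abs{t-T/2}$ follows, and they fail in general. For example, let $x$ lie on a geodesic from $o$ to $y$, and let $\gamma$ be the subsegment from $x$ to $y$. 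Then $b=a+T$ and $P=a$, and at $t=0$ your bound would give $a=k_\Omega(o,x)\ge a+T/4$. The decay is centred at $t_0$, which depends on the asymmetry $a-b$, and not at the midpoint.

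The repair is exactly your fallback, and it is what the paper does: use $k_\Omega(o,\gamma(t))\ge P+\abs{t-t_0}$. Since $\int_0^T e^{-\abs{t-t_0}/(Bm)}\,dt\le 2Bm$ independently of $T$ and $t_0$, the rest of your argument goes through unchanged, with $C_2=8BmC_rR^{1/m}e^{A/(Bm)}$. Your derivation of \eqref{eq:upperproduct} sums the three exponential bounds for $\norm{x-y}$, $\delta_\Omega(x)$ and $\delta_\Omega(y)$. It is correct, and it is only a repackaging of the paper's step, which takes the minimum of three logarithmic bounds and uses $S\le 3D$.
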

\begin{proof}
We first prove the length estimate. Suppose that $x\ne y$, and parametrize $\gamma:[0,\ell]\to\Omega$ by Kobayashi
arclength, where $\ell=k_\Omega(x,y)$. Put
\[
 a=k_\Omega(o,x),\quad b=k_\Omega(o,y),\quad
 P=(x\mid y)_o,\quad t_0=\frac{a-b+\ell}{2}.
\]
The triangle inequality gives $0\le t_0\le\ell$. It also gives, for every
$t\in[0,\ell]$,
\begin{align}
 k_\Omega(o,\gamma(t))
 &\ge\max\{a-t,b-\ell+t\}\notag\\
 &=P+|t-t_0|.\label{eq:triangle-growth}
\end{align}
Indeed, the distance from the base point to a point of the geodesic is at least the distance from the base point to either endpoint minus the distance along the geodesic to that endpoint. To convert this growth into a Euclidean speed bound, we need the boundary distance along the geodesic, and the logarithmic upper bound controls it by the distance from the base point. Rearranging \eqref{eq:logupper} gives
\[
 \delta_\Omega(\gamma(t))\le R e^{A/B}
                e^{-k_\Omega(o,\gamma(t))/B}.
\]
Lemma~\ref{lem:speed} and \eqref{eq:triangle-growth} therefore imply
\[
 \norm{\gamma'(t)}
 \le 4C_r R^{1/m} e^{A/(Bm)}
       e^{-P/(Bm)} e^{-|t-t_0|/(Bm)}
\]
almost everywhere. The last exponential factor has integral at most
$2Bm$ on $[0,\ell]$. Integrating the speed bound therefore proves \eqref{eq:length}. For $x=y$, the
constant geodesic has zero Euclidean length, and the assertion is immediate.

We now deduce the product estimate from the length estimate. For $x\ne y$, the Euclidean distance between the endpoints of the geodesic is at most its Euclidean length, so
\begin{equation}\label{eq:pairbound}
 P\le Bm\log\frac{R}{\norm{x-y}}+C_4.
\end{equation}
The Gromov product is also at most each of the two distances from the base point, that is, $P\le\min\{k_\Omega(o,x),k_\Omega(o,y)\}$. Because $m\ge1$ and
$R>\delta_\Omega(z)$ at every point, the logarithmic term in \eqref{eq:logupper} is nonnegative, and this bound consequently gives
\[
 P\le Bm\log\frac{R}{\delta_\Omega(x)}+A,
 \qquad
 P\le Bm\log\frac{R}{\delta_\Omega(y)}+A.
\]
We now take the minimum of these two bounds and of the distance bound \eqref{eq:pairbound}. When
$x=y$, only the two boundary-distance bounds are needed. Writing
\[
 D(x,y)=\max\{\norm{x-y},\delta_\Omega(x),\delta_\Omega(y)\},
\]
we obtain $P\le Bm\log(R/D(x,y))+C_5$. Since
$S(x,y)\le3D(x,y)$, \eqref{eq:upperproduct} follows.
\end{proof}

\subsubsection{The lower estimate}
We now prove the lower estimate. Here Gromov hyperbolicity enters, through the last assertion of
Lemma~\ref{lem:inward}, which places the inward curves within bounded distance of the geodesics from the base point. The power estimate \eqref{eq:mconvex} is not used. The next lemma gives the lower bound in the two-sided estimate.

\begin{lemma}\label{lem:lowerproduct}
Let $\Omega\Subset\C^n$ be a $\C$-convex domain with Lipschitz boundary
such that $(\Omega,k_\Omega)$ is Gromov hyperbolic. Fix $o\in\Omega$
and $R>\max\{1,3\diam\Omega\}$, and let $S(x,y)$ be as in
Theorem~\ref{thm:boundary-comparison}. Then there is $C_6\ge0$ such that
\begin{equation}\label{eq:lowerproduct}
 (x\mid y)_o\ge\frac14\log\frac{R}{S(x,y)}-C_6
 \qquad(x,y\in\Omega).
\end{equation}
\end{lemma}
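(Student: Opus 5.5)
The plan is to use the inward vertical curves of Lemma~\ref{lem:charts} and Lemma~\ref{lem:inward} to find a point that lies near both geodesics $[o,x]$ and $[o,y]$ and is close to $o$ in a quantified way. Gromov hyperbolicity then converts this into a lower bound for the product.

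\emph{Reduction to small $S(x,y)$.} If $S(x,y)\ge s_0$ for a fixed small $s_0>0$, then $\log(R/S)$ is bounded above and $(x\mid y)_o\ge0$. So \eqref{eq:lowerproduct} holds after enlarging $C_6$, and we may assume $S(x,y)$ is small. Lemma~\ref{lem:charts} then gives the representations \eqref{eq:representations} in one chart, with $t_x\le c_h^{-1}\delta_\Omega(x)$, $t_y\le c_h^{-1}\delta_\Omega(y)$ and $\norm{p_x-p_y}\le C_1S$.

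\emph{A common height.} Set $h=\min\{h_0,\,KS(x,y)\}$, with $K\ge1$ large, fixed so that $h\ge t_x,t_y$. This is possible because $t_x,t_y\le c_h^{-1}S$. Consider $u=p_x+hV$ and $u'=p_y+hV$.
\begin{itemize}
\item By Lemma~\ref{lem:inward}, $u$ lies within $K_0$ of $[o,x]$ and $u'$ lies within $K_0$ of $[o,y]$.
\item The points $u$ and $u'$ are at Kobayashi distance bounded independently of $S$. Their Euclidean distance is $\norm{p_x-p_y}\le C_1S\le C_1h/K$. If $K$ is large compared with $C_1$ and $c_h^{-1}$, the segment $[u,u']$ stays at boundary distance $\ge c_hh/2$, by \eqref{eq:height} and the Lipschitz graph structure (each point of the segment lies at height comparable to $h$ above the graph). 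Then \eqref{eq:metricupper} bounds $k_\Omega(u,u')$ by a constant.
\end{itemize}
So there is a point $w=u$ with $\dist(w,[o,x])\le K_0$ and $\dist(w,[o,y])\le K_1$.

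\emph{From a nearby point to the Gromov product.} Use the standard estimate, valid in any geodesic space: if $w$ is within $D$ of $[o,x]$ and of $[o,y]$, then
\[
 (x\mid y)_o\ge k_\Omega(o,w)-2D.
\]
To see this, take $a\in[o,x]$ and $b\in[o,y]$ with $k(w,a),k(w,b)\le D$. Then
\[
 k(x,y)\le k(x,a)+2D+k(b,y)=k(o,x)-k(o,a)+k(o,y)-k(o,b)+2D,
\]
and $k(o,a),k(o,b)\ge k(o,w)-D$. Notably this step needs no thin-triangle input; hyperbolicity enters only through Lemma~\ref{lem:inward}. Finally, \eqref{eq:radiallower} gives
\[
 k_\Omega(o,w)\ge\tfrac14\log\frac{R}{\delta_\Omega(w)}-b_1\ge\tfrac14\log\frac{R}{h}-b_1\ge\tfrac14\log\frac{R}{S}-\tfrac14\log K-b_1,
\]
using $\delta_\Omega(w)\le h\le KS$. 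This yields \eqref{eq:lowerproduct}.

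\emph{Main obstacle.} The delicate step is the uniform bound on $k_\Omega(u,u')$. It needs the straight segment between two points at height $h$ with horizontal separation $\lesssim h/K$ to stay quantitatively inside $\Omega$ and inside the larger chart. This is where the nested-chart margins of Lemma~\ref{lem:charts} and the choice of $K$ relative to $\mathfrak L$ must be handled carefully. If the segment argument fails, I would instead join $u$ to $u'$ along a path at height $2h$ and estimate it with the cone condition.
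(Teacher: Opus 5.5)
Your proposal is correct and is essentially the paper's argument: lift both points to a common height $h\asymp S(x,y)$ (the paper takes $h=DS$ with $D\ge\max\{c_h^{-1},2\mathfrak L C_1\}$, which is exactly the dependence on $\mathfrak L$ you flag), bound the Kobayashi length of the straight segment by \eqref{eq:metricupper}, place the lifted points near $[o,x]$ and $[o,y]$ via Lemma~\ref{lem:inward}, and conclude with the triangle inequality and \eqref{eq:radiallower}. The only cosmetic difference is that the paper uses the two nearby geodesic points and bounds $(x\mid y)_o\ge(u'\mid v')_o$, instead of a single point $w$ near both geodesics.
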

\begin{proof}
Put $s=S(x,y)$. We consider two cases. Suppose first that $s$ is sufficiently small, so that Lemma~\ref{lem:charts} represents both points in one chart. Thus
\[
 x=p_x+t_xV,\quad y=p_y+t_yV,\quad
 p_x=(a,\varphi(a)),\quad p_y=(b,\varphi(b)).
\]
Let $\mathfrak L$ be the common graph Lipschitz bound. The idea is to lift both points to a common height proportional to their separation, where a straight segment joins them at bounded Kobayashi distance, and then to compare the lifted points with the geodesics from the base point. Fix
$D\ge\max\{c_h^{-1},2\mathfrak LC_1\}$, depending only on the chart
constants, and set
\[
 h=Ds,\qquad u=p_x+hV,\qquad v=p_y+hV.
\]
By \eqref{eq:chartbounds}, $t_x,t_y\le c_h^{-1}s\le h$, and by the choice of the constant,
$h\ge2\mathfrak LC_1s$. After reducing the threshold for $s$, we also have
$h\le h_0$.

We claim that the straight segment $[u,v]$ has boundary distance
at least $c_2h$, for a uniform $c_2>0$. To see this, observe that its point with parameter
$\theta\in[0,1]$ has base
$a_\theta=(1-\theta)a+\theta b$, and that its height above the graph is
\[
 h+(1-\theta)\varphi(a)+\theta\varphi(b)
      -\varphi(a_\theta)
 \ge h-\mathfrak L\norm{a-b}\ge h/2.
\]
It also has height at most $h+\mathfrak L\norm{a-b}$. The graph-distance
argument in the proof of Lemma~\ref{lem:charts} therefore gives the claim, since the
fixed chart margins exclude any nearer boundary point outside the chart.
By the same margins and the smallness of $s$, the segment remains inside the larger chart. Hence the upper bound \eqref{eq:metricupper} for the Kobayashi metric, integrated along the segment, implies
\begin{equation}\label{eq:bridge}
 k_\Omega(u,v)\le\int_{[u,v]}\frac{\norm{dw}}{\delta_\Omega(w)}
 \le\frac{\norm{p_x-p_y}}{c_2h}
 \le\frac{C_1}{c_2D}=:J.
\end{equation}

We now compare the lifted points with geodesics from the base point. Choose geodesics $[o,x]$ and $[o,y]$. Since the lifted points lie on the inward curves at admissible heights, the last assertion of Lemma~\ref{lem:inward} gives
points $u'\in[o,x]$, $v'\in[o,y]$ with
\[
 k_\Omega(u,u')\le K_0,\qquad k_\Omega(v,v')\le K_0.
\]
By the triangle inequality, \eqref{eq:bridge} yields $k_\Omega(u',v')\le J+2K_0$.
Furthermore, $\delta_\Omega(u),\delta_\Omega(v)\le h$, so the radial lower bound
\eqref{eq:radiallower} gives
\begin{equation}\label{eq:lifts-radial}
 k_\Omega(o,u'),\ k_\Omega(o,v')
 \ge\frac14\log\frac{R}{h}-b_1-K_0.
\end{equation}

Since $u'$ and $v'$ lie on the selected geodesics, the distance from the base point to each endpoint is the sum of the distances along that geodesic, and the triangle
inequality gives
\begin{align*}
 k_\Omega(x,y)
 &\le k_\Omega(x,u')+k_\Omega(u',v')+k_\Omega(v',y)\\
 &=k_\Omega(o,x)-k_\Omega(o,u')+k_\Omega(u',v')
        +k_\Omega(o,y)-k_\Omega(o,v').
\end{align*}
Rearranging this inequality and applying \eqref{eq:lifts-radial}, we get
\[
 (x\mid y)_o\ge(u'\mid v')_o
 \ge\frac14\log\frac{R}{h}-C_7
 =\frac14\log\frac{R}{s}-C_7-\frac14\log D.
\]
This proves the estimate for small $s$. It remains to treat the second case. If $s$ is bounded below by
the chosen positive threshold, then $\log(R/s)$ is bounded above. In this case we increase
the constant and use $(x\mid y)_o\ge0$. This completes the proof.
\end{proof}

We record what has been proved so far. Lemma~\ref{lem:inward} gives the existence of constants satisfying \eqref{eq:logupper}, and Lemmas~\ref{lem:length} and~\ref{lem:lowerproduct} together give the two-sided estimate \eqref{eq:main} in
Theorem~\ref{thm:boundary-comparison}.

\subsection{Boundary identification and visual metrics}\label{sec:boundary}

In this subsection, we prove the remaining assertions of Theorem~\ref{thm:boundary-comparison}, namely the homeomorphism between the Euclidean closure and the Gromov compactification, and the bi-H\"older estimate for visual metrics. Both follow from the two-sided estimate of the previous subsection. We first recall the description of the Gromov boundary in terms of Gromov products.
A \emph{Gromov sequence} is a sequence $(x_j)$ in $\Omega$ such that
$(x_j\mid x_k)_o\to\infty$ as $j,k\to\infty$. Two such sequences
are equivalent if their mixed products tend to infinity. The
Gromov boundary consists of the equivalence classes. For boundary points, we use the
standard extension
\begin{equation}\label{eq:boundaryproduct}
 (\xi\mid\eta)_o
 =\inf\left\{\liminf_{j\to\infty}(x_j\mid y_j)_o:
                (x_j)\in\xi,\ (y_j)\in\eta\right\}
\end{equation}
of the Gromov product, where $(x_j)\in\xi$ means that $(x_j)$ represents
the Gromov class $\xi$. If one argument belongs to $\Omega$, it is kept
fixed in this definition. Equivalent conventions change the product
by a uniformly bounded amount. In the Gromov compactification,
convergence to $\xi\in\partial_G\Omega$ is characterized by divergence of the
product with $\xi$. We refer the reader to \cite[Section~III.H.3]{BH99}.

\begin{proof}[Proof of the boundary assertions of Theorem~\ref{thm:boundary-comparison}]
We first define the map on the boundary. Suppose that $x_j\to p\in\partial\Omega$ in the Euclidean
topology. Then
\[
 S(x_j,x_k)\longrightarrow0\qquad(j,k\to\infty).
\]
The lower bound in \eqref{eq:main} therefore shows that $(x_j)$ is a Gromov
sequence. If another sequence tends to the same $p$, the same lower
bound, applied to the mixed products, shows that the two sequences are equivalent. We may therefore
define $\mathcal I(p)$ to be this common Gromov class.

Conversely, let $(x_j)$ be a Gromov sequence. The upper bound in
\eqref{eq:main} gives
\begin{equation}\label{eq:cauchy}
 S(x_j,x_k)\le R\exp\left(\frac{C_0-(x_j\mid x_k)_o}{Bm}\right)
 \longrightarrow0.
\end{equation}
Thus $(x_j)$ is a Cauchy sequence for the Euclidean distance, and $\delta_\Omega(x_j)\to0$. Hence it converges
to a point of $\partial\Omega$. Applying \eqref{eq:cauchy} to
mixed products shows that equivalent Gromov sequences have the same
Euclidean limit. The two constructions are inverse to each other. Therefore $\mathcal I$ is a
bijection between the two boundaries, and it extends the identity on $\Omega$.

We now check that this bijection is a homeomorphism. This requires the convergence of interior points to boundary points as well as the convergence of boundary points to boundary points.
Extend $\delta_\Omega$ continuously to $\overline{\Omega}$ by setting it equal to zero on
$\partial\Omega$. For $p\in\partial\Omega$ and $z\in\overline{\Omega}$,
$z\ne p$, the estimate \eqref{eq:main} for interior points, together with
the definition \eqref{eq:boundaryproduct} of the boundary product, gives
\begin{equation}\label{eq:mixed}
 \frac14\log\frac{R}{\norm{p-z}+\delta_\Omega(z)}-C_0
 \le (\mathcal I(p)\mid\mathcal I(z))_o
 \le Bm\log\frac{R}{\norm{p-z}+\delta_\Omega(z)}+C_0.
\end{equation}
To see this, note that every sequence representing $\mathcal I(p)$ has Euclidean limit
$p$, and similarly for a boundary point $z$. An interior point $z$ is kept
fixed. Thus both bounding expressions have the asserted limits along the representing sequences.

It follows from \eqref{eq:mixed} that $z_j\to p$ in $\overline{\Omega}$ if and
only if $\mathcal I(z_j)\to\mathcal I(p)$ in the Gromov compactification. This settles continuity at boundary points, in both directions. At interior points, we note that the interior $\Omega$ is open in both compactifications, and that the
induced topology there is the Euclidean one. Since $\mathcal I$ is the identity
on $\Omega$ and maps boundary to boundary, it and its inverse are
continuous at interior points as well. Both compactifications are
metrizable in this proper geodesic setting
\cite[Chapter~III.H]{BH99}. Hence the sequential criterion for continuity proves that
$\mathcal I$ is a homeomorphism.

Finally, we prove the visual estimate. For distinct $p,q\in\partial\Omega$, \eqref{eq:mixed} reduces to
\begin{equation}\label{eq:boundarylog}
 \frac14\log\frac{R}{\norm{p-q}}-C_0
 \le(\mathcal I(p)\mid\mathcal I(q))_o
 \le Bm\log\frac{R}{\norm{p-q}}+C_0.
\end{equation}
Exponentiating this inequality and applying the defining inequality \eqref{eq:visual} of a visual metric proves
\eqref{eq:boundary-main} for distinct points. The case $p=q$ is immediate.
\end{proof}

In particular, the boundary restriction of $\mathcal I$ is H\"older of
exponent $\varepsilon/4$, and its inverse satisfies
\begin{equation}\label{eq:inverseholder}
 \norm{\mathcal I^{-1}(\xi)-\mathcal I^{-1}(\eta)}
 \le C\rho_\varepsilon(\xi,\eta)^{1/(\varepsilon Bm)}.
\end{equation}

\begin{remark}\label{rem:rays}
The same estimates give a quantitative rate for the landing of geodesic rays.
If $\gamma:[0,\infty)\to\Omega$ is a unit-speed geodesic with
$\gamma(0)=o$, then \eqref{eq:logupper} and \eqref{eq:speed} imply
\[
 \int_T^\infty\norm{\gamma'(t)}\,dt\le C e^{-T/(Bm)}.
\]
Consequently $\gamma$ has a Euclidean endpoint $p\in\partial\Omega$,
and $\norm{\gamma(T)-p}\le C e^{-T/(Bm)}$. That the ray lands also
follows from the preceding boundary identification. The displayed
estimate records the rate at which it lands.
\end{remark}

\subsection{Dini-smooth and convex domains}\label{sec:dini}

In this subsection, we show that Dini-smoothness of the boundary allows the coefficient one half in the logarithmic upper bound for the distance from the base point, and that convexity allows the coefficient one half in place of one quarter in the lower product estimate. Both improvements sharpen the visual exponents. We first recall the definition of Dini-smoothness.
A $C^1$ boundary is Dini-smooth if the modulus of continuity $\omega$
of its unit normal, in local charts, satisfies
$\int_0^1\omega(t)\,dt/t<\infty$.

The following corollary is the Dini-smooth case of the boundary comparison. The improvement comes from an estimate of Nikolov and Andreev for the Kobayashi distance on Dini-smooth domains.

\begin{corollary}[Dini-smooth boundary]\label{cor:dini}
Under the hypotheses of Theorem~\ref{thm:boundary-comparison}, suppose in addition
that $\partial\Omega$ is Dini-smooth. Then \eqref{eq:logupper} holds
with $B=1/2$ and some $A\ge0$. Consequently, for every visual metric
$\rho_\varepsilon$ satisfying \eqref{eq:visual} there are $c,C>0$
such that, for all $p,q\in\partial\Omega$,
\begin{equation}\label{eq:dini}
 c\left(\frac{\norm{p-q}}{R}\right)^{\varepsilon m/2}
 \le\rho_\varepsilon\bigl(\mathcal I(p),\mathcal I(q)\bigr)
 \le C\left(\frac{\norm{p-q}}{R}\right)^{\varepsilon/4},
\end{equation}
and
\begin{equation}\label{eq:dini-inverse}
 \norm{p-q}\le
 C\rho_\varepsilon\bigl(\mathcal I(p),\mathcal I(q)\bigr)^{2/(\varepsilon m)}.
\end{equation}
\end{corollary}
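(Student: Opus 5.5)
The plan is to prove the logarithmic bound \eqref{eq:logupper} with $B=1/2$ by an internal-tangent-ball argument. After that, the visual estimates follow by plugging $B=1/2$ into Theorem~\ref{thm:boundary-comparison}.

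\textbf{Step 1: reduce to points near the boundary.} For a Dini-smooth ($C^{1,\mathrm{Dini}}$) boundary, Nikolov and Andreev give an estimate of the form
\[
 k_\Omega(o,z)\le \tfrac12\log\frac{1}{\delta_\Omega(z)}+C
\]
on bounded domains. Their proof compares with a Dini-smooth model domain inside $\Omega$ touching at the nearest boundary point. I would cite that estimate directly. Since $R>1$ is fixed, $\frac12\log(1/\delta)\le\frac12\log(R/\delta)$, so \eqref{eq:logupper} holds with $B=1/2$ after absorbing constants into $A$. On the compact set $\{\delta_\Omega\ge\delta_1\}$ the distance $k_\Omega(o,\cdot)$ is bounded, which handles points away from the boundary.

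\textbf{Step 2: the main obstacle, if the citation must be justified.} The key point is the mechanism behind the coefficient $1/2$. Fix the nearest boundary point $p$ of $z$ and move inward along the normal segment from depth $\delta_1$ to depth $\delta=\delta_\Omega(z)$.

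\begin{itemize}
 \item In the normal direction, the Kobayashi metric satisfies $\kappa_\Omega(p-t\nu;\nu)\le\frac{1+\eta(t)}{2t}$, with $\int_0\eta(t)\,dt/t<\infty$. This comes from comparing with a domain bounded by a Dini-smooth graph, whose normal slice contains a half-disc-like region $\{x>\omega$-perturbation$\}$.
 \item The bound is obtained by mapping a disc onto such a region through a Dini-smooth conformal map, so the Kellogg–Warschawski theorem applies in the Dini setting.
 \item Integrating $\frac{1+\eta(t)}{2t}$ from $\delta$ to $\delta_1$ gives $\frac12\log(\delta_1/\delta)+C$.
\end{itemize}

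Uniformity in $p$ comes from the uniform modulus of continuity of the normal. Checking this uniform conformal comparison is where I expect the real work.

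\textbf{Step 3: the visual estimates.} With $B=1/2$, the estimate \eqref{eq:boundary-main} of Theorem~\ref{thm:boundary-comparison} becomes
\[
 c\left(\frac{\norm{p-q}}{R}\right)^{\varepsilon m/2}
 \le\rho_\varepsilon\bigl(\mathcal I(p),\mathcal I(q)\bigr)
 \le C\left(\frac{\norm{p-q}}{R}\right)^{\varepsilon/4},
\]
which is \eqref{eq:dini}. Raising the lower bound to the power $2/(\varepsilon m)$ and absorbing $R$ and $c$ into the constant yields \eqref{eq:dini-inverse}.
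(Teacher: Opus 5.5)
Your proposal follows the paper's proof. The paper also cites Nikolov--Andreev, specifically their two-point bound $k_\Omega(x,y)\le\log\bigl(1+C_*\norm{x-y}/\sqrt{\delta_\Omega(x)\delta_\Omega(y)}\bigr)$ with $x=o$ fixed, to get \eqref{eq:logupper} with $B=1/2$; it then substitutes $B=1/2$ into Theorem~\ref{thm:boundary-comparison} and rearranges the lower visual bound, exactly as in your Steps~1 and~3.

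Your optional Step~2 is unnecessary, and as stated it claims more than Kellogg--Warschawski gives. That theorem yields continuity of $f'$, hence $\eta(t)\to0$, but not $\int_0\eta(t)\,dt/t<\infty$. For a self-contained argument it suffices that the Riemann map $f:\D\to G$ of a Dini-smooth model slice $G$ satisfies $\abs{f'}\le C$: then $\delta_G(f(w))\le(1-\abs w^2)\abs{f'(w)}$ gives $k_G(f(0),f(w))\le\frac12\log\bigl(4C/\delta_G(f(w))\bigr)$ directly.
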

\begin{proof}
It is shown by Nikolov--Andreev \cite[Corollary~8]{NA17} that for every bounded
Dini-smooth domain there is $C_*>0$ such that
\begin{equation}\label{eq:NA}
 k_\Omega(x,y)\le\log\left(1+
 \frac{C_*\norm{x-y}}{\sqrt{\delta_\Omega(x)\delta_\Omega(y)}}\right).
\end{equation}
We apply this estimate with $x=o$ fixed and use the boundedness of $\Omega$ to absorb the numerator into the constant. If $T=\delta_\Omega(y)$, then the
right side is at most
\[
 \log(1+C_{**}/\sqrt T)
 =\frac12\log(1/T)+\log(\sqrt T+C_{**}).
\]
The second term on the right is bounded above. After adjusting the additive constant and
inserting $R$ into the logarithm, this gives \eqref{eq:logupper} with $B=1/2$.
We now apply Theorem~\ref{thm:boundary-comparison} with this value of the coefficient. This gives the two-sided visual estimate, and rearranging its lower bound gives the estimate for the inverse.
\end{proof}

\begin{remark}
We note that a $C^\infty$ boundary is Dini-smooth. Hence Corollary~\ref{cor:dini}
applies to every domain with $C^\infty$ boundary that satisfies the other
hypotheses of Theorem~\ref{thm:boundary-comparison}.
\end{remark}

In the convex case, real supporting hyperplanes give a lower bound for the Gromov product with coefficient one half in place of one quarter. This improves the right-hand visual exponent, as the next corollary shows.

\begin{corollary}[The convex case]\label{cor:convex}
Let $\Omega\Subset\C^n$ be a convex domain satisfying
\eqref{eq:mconvex} such that $(\Omega,k_\Omega)$ is Gromov
hyperbolic. Fix $o\in\Omega$ and $R>\max\{1,3\diam\Omega\}$, let $A,B$
satisfy \eqref{eq:logupper}, and let $S(x,y)$ be as in
Theorem~\ref{thm:boundary-comparison}. Then there is $C_0\ge0$ such that
\begin{equation}\label{eq:convexproduct}
 \frac12\log\frac{R}{S(x,y)}-C_0
 \le(x\mid y)_o
 \le Bm\log\frac{R}{S(x,y)}+C_0
 \qquad(x,y\in\Omega).
\end{equation}
Consequently, the upper bound in \eqref{eq:boundary-main} holds with
$\varepsilon/2$ in place of $\varepsilon/4$. If $\partial\Omega$ is also
Dini-smooth, then for every visual metric $\rho_\varepsilon$ satisfying
\eqref{eq:visual} there are $c,C>0$ such that, for all
$p,q\in\partial\Omega$,
\begin{equation}\label{eq:convexvisual}
 c\left(\frac{\norm{p-q}}{R}\right)^{\varepsilon m/2}
 \le\rho_\varepsilon\bigl(\mathcal I(p),\mathcal I(q)\bigr)
 \le C\left(\frac{\norm{p-q}}{R}\right)^{\varepsilon/2}.
\end{equation}
\end{corollary}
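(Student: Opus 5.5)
The upper bound in \eqref{eq:convexproduct} is exactly Lemma~\ref{lem:length}, which uses only $\C$-convexity and \eqref{eq:mconvex}; convex domains are $\C$-convex. So the work is the lower bound with coefficient $\tfrac12$. The improvement comes from replacing the complex support estimate \eqref{eq:k-support}, whose constant $\tfrac14$ reflects $\tfrac14\qone\le\kappa_\Omega$, by the sharp convex estimate. For a real supporting hyperplane $\{\Ree L=c\}$ with $\Omega\subset\{\Ree L<c\}$, the map $z\mapsto c-L(z)$ sends $\Omega$ into the right half-plane. The Kobayashi distance of the half-plane, with our normalization $k_\D(0,r)=\tfrac12\log\frac{1+r}{1-r}$, satisfies $k_{\HH}(w,w')\ge\tfrac12\abs{\log(\Ree w/\Ree w')}$. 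Holomorphic maps contract $k$, so
\[
 k_\Omega(z,w)\ge\tfrac12\left|\log\frac{c-\Ree L(z)}{c-\Ree L(w)}\right|.
\]
Applying this with a supporting hyperplane at a nearest boundary point, exactly as in the derivation of \eqref{eq:ratio}, gives $k_\Omega(z,w)\ge\tfrac12\abs{\log(\delta_\Omega(z)/\delta_\Omega(w))}$. Taking $z=o$ upgrades \eqref{eq:radiallower} to $k_\Omega(o,z)\ge\tfrac12\log(R/\delta_\Omega(z))-b_1$.

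Next I rerun the proof of Lemma~\ref{lem:lowerproduct} verbatim with this sharper radial bound. A bounded convex domain has Lipschitz boundary, so Lemmas~\ref{lem:charts} and~\ref{lem:inward} apply. The lift $h=Ds$, the bridge estimate \eqref{eq:bridge}, and the Morse-lemma points $u',v'$ are unchanged. The only change is in \eqref{eq:lifts-radial}, which now reads $k_\Omega(o,u'),k_\Omega(o,v')\ge\tfrac12\log(R/h)-b_1-K_0$. The final rearrangement is also unchanged, and gives $(x\mid y)_o\ge\tfrac12\log(R/s)-C$ for small $s$. For $s$ bounded below I use $(x\mid y)_o\ge0$ as before. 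Combining with Lemma~\ref{lem:length} yields \eqref{eq:convexproduct}.

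For the boundary consequences, I repeat the passage from \eqref{eq:main} to \eqref{eq:mixed} and \eqref{eq:boundarylog} in Subsection~\ref{sec:boundary}, now with lower coefficient $\tfrac12$. Exponentiating against \eqref{eq:visual} turns the upper visual exponent $\varepsilon/4$ into $\varepsilon/2$. If $\partial\Omega$ is also Dini-smooth, Corollary~\ref{cor:dini} gives \eqref{eq:logupper} with $B=\tfrac12$. Inserting $Bm=m/2$ into the upper product bound gives the left exponent $\varepsilon m/2$ in \eqref{eq:convexvisual}.

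The only step needing care is the half-plane lower bound with the correct constant under this normalization. It must be checked that $\tfrac12$ is not lost, since $k_{\HH}(1,t)=\tfrac12\abs{\log t}$ on the real axis and the vertical displacement only increases the distance. Everything else is a substitution into arguments already given.
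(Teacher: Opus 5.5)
Your proposal is correct and follows the paper's proof essentially step for step. You replace \eqref{eq:ratio} by Mercer's sharper bound $k_\Omega(z,w)\ge\frac12\bigl|\log(\delta_\Omega(z)/\delta_\Omega(w))\bigr|$, obtained from a real supporting hyperplane at a nearest boundary point and the half-plane distance, feed the improved radial bound into \eqref{eq:lifts-radial} with the rest of Lemma~\ref{lem:lowerproduct} and the boundary argument unchanged, and take $B=1/2$ from Corollary~\ref{cor:dini}; the only cosmetic difference is that the paper uses the specific supporting hyperplane orthogonal to $w-q$, whereas any supporting hyperplane at $q$ works since only $\dist(w,H)\le\norm{w-q}$ is needed.
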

\begin{proof}
Bounded convex domains have Lipschitz boundary. Therefore the argument of the previous subsections applies, and we only need to see where the sharper coefficient comes from. We replace
\eqref{eq:ratio} by the sharper estimate \eqref{eq:convexratio}
below. This estimate is due to Mercer \cite[Proposition~2.4]{Mer93}. We also refer the reader to B\l ocki \cite{Blo05} and Nikolov \cite{Nik14}, where the
corresponding bound for the Carath\'eodory distance is recorded. We
include the short proof. Suppose first that
$\delta_\Omega(z)\ge\delta_\Omega(w)$, and choose $q\in\partial\Omega$ with
$\norm{w-q}=\delta_\Omega(w)$. We first note that the hyperplane through $q$ orthogonal to
$w-q$ supports $\Omega$. Indeed, a supporting hyperplane at $q$
must also support the interior ball $B(w,\delta_\Omega(w))$, and its normal
is consequently parallel to $w-q$.

Let $\nu=(w-q)/\delta_\Omega(w)$ and define the affine holomorphic functional
\[
 \ell(\zeta)=\sum_{j=1}^n(\zeta_j-q_j)\overline{\nu_j}.
\]
Then $\ell(\Omega)\subset\{\Ree\zeta>0\}$,
$\Ree\ell(w)=\delta_\Omega(w)$, and
$\Ree\ell(z)\ge\delta_\Omega(z)$. The last inequality follows because the
supporting hyperplane is disjoint from $\Omega$. The Kobayashi distance decreases
under $\ell$, and the Kobayashi distance of the half-plane is explicit. Together, these give
\[
 k_\Omega(z,w)\ge\frac12
       \left|\log\frac{\Ree\ell(z)}{\Ree\ell(w)}\right|
 \ge\frac12\log\frac{\delta_\Omega(z)}{\delta_\Omega(w)}.
\]
Interchanging the roles of $z,w$ proves
\begin{equation}\label{eq:convexratio}
 k_\Omega(z,w)\ge\frac12
       \left|\log\frac{\delta_\Omega(z)}{\delta_\Omega(w)}\right|.
\end{equation}
In particular, the radial lower bound \eqref{eq:radiallower} holds with coefficient $1/2$.
We use this improved radial lower bound in
\eqref{eq:lifts-radial}. All the other steps in the proof of
Lemma~\ref{lem:lowerproduct} are unchanged. This gives
\eqref{eq:convexproduct}. The boundary argument is the same as before, and
Corollary~\ref{cor:dini} supplies $B=1/2$ in the Dini-smooth case. We also note that, for convex domains with Dini-smooth boundary, the lower bound in \eqref{eq:convexproduct} follows directly from \eqref{eq:convexratio} and the estimate \eqref{eq:NA} of Nikolov--Andreev, since these give $k_\Omega(o,x)\ge\frac12\log(1/\delta_\Omega(x))-C$ and $k_\Omega(x,y)\le\log\bigl(CS(x,y)/\sqrt{\delta_\Omega(x)\delta_\Omega(y)}\bigr)$, and the boundary distances cancel in the Gromov product. This proves the corollary.
\end{proof}

\begin{remark}
We emphasize that for general $\C$-convex domains our proof gives the right-hand visual
exponent $\varepsilon/4$. The sharper coefficient in the convex case
comes from real supporting hyperplanes. The general $\C$-convex argument uses complex supports only, and it does not use this sharper coefficient.
\end{remark}

\subsection{Consequences for the domains in the main theorems}
In this subsection, we specialize the boundary comparison to the domains that appear in our main theorems, namely the Lipschitz domains with Gromov hyperbolic Kobayashi distance of Theorem~\ref{thm:lipschitz}, and the smooth domains of finite type of Theorem~\ref{thm:main}. We also deduce a boundary extension theorem for Kobayashi isometries.

Our first corollary needs no smoothness of the boundary. Theorem~\ref{thm:lipschitz} supplies the exponent in the power estimate for the directional radius.

\begin{corollary}\label{cor:lipschitz}
Let $\Omega\Subset\C^n$, $n\ge2$, be a $\C$-convex domain with
Lipschitz boundary such that $(\Omega,k_\Omega)$ is Gromov hyperbolic.
Let $\alpha$ be as in Theorem~\ref{thm:lipschitz}, fix $o\in\Omega$
and $R>\max\{1,3\diam\Omega\}$, and let $A\ge0$ and $B>0$ satisfy
\eqref{eq:logupper}. Then the identity extends to a homeomorphism
$\mathcal I:\overline\Omega\to\overline\Omega^{\,G}$, and for every
visual metric $\rho_\varepsilon$ satisfying \eqref{eq:visual} there
are $c,C>0$ such that
\begin{equation}\label{eq:lipschitz-visual}
 c\left(\frac{\norm{p-q}}R\right)^{\varepsilon B/\alpha}
 \le \rho_\varepsilon(\mathcal I(p),\mathcal I(q))
 \le C\left(\frac{\norm{p-q}}R\right)^{\varepsilon/4},
 \qquad p,q\in\partial\Omega.
\end{equation}
In particular, the boundary restriction of $\mathcal I$ is bi-H\"older
for every such visual metric.
\end{corollary}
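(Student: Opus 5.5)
This corollary is a direct specialization of Theorem~\ref{thm:boundary-comparison}, so the plan is to check its hypotheses and substitute $m=1/\alpha$. First, by Theorem~\ref{thm:lipschitz}, applied to $\Omega$ with a fixed finite Lipschitz atlas, there are $C>0$ and $0<\alpha<1/2$ with $r_\Omega(z;v)\le C\delta_\Omega(z)^\alpha$ for all $z\in\Omega$ and $v\ne0$. This is exactly the global estimate \eqref{eq:mconvex} with $C_r=C$ and $m=1/\alpha$. The restriction $m\ge1$ required in Section~\ref{sec:comparison} holds automatically, since $\alpha<1/2$ gives $m>2$. The constants $A,B$ in \eqref{eq:logupper} exist by Lemma~\ref{lem:inward}. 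Here they are simply taken as given in the statement.

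Second, we apply Theorem~\ref{thm:boundary-comparison} with these data. Its hypotheses are that $\Omega$ is $\C$-convex with Lipschitz boundary, that $k_\Omega$ is Gromov hyperbolic, and that \eqref{eq:mconvex} holds; all three are now verified. The theorem then gives the homeomorphism $\mathcal I:\overline\Omega\to\overline\Omega^{\,G}$ extending the identity. It also gives the visual estimate \eqref{eq:boundary-main} with lower exponent $\varepsilon Bm=\varepsilon B/\alpha$ and upper exponent $\varepsilon/4$, which is precisely \eqref{eq:lipschitz-visual}.

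Finally, the bi-H\"older claim is obtained by rearranging the two sides. The upper bound says that $\mathcal I|_{\partial\Omega}$ is H\"older of exponent $\varepsilon/4$ with respect to the Euclidean distance. The lower bound gives $\norm{p-q}\le C'\rho_\varepsilon(\mathcal I(p),\mathcal I(q))^{\alpha/(\varepsilon B)}$, as in \eqref{eq:inverseholder}, so the inverse is H\"older of exponent $\alpha/(\varepsilon B)$.

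There is no genuine obstacle. The only point needing care is that Theorem~\ref{thm:lipschitz} yields the power bound on all of $\Omega$ and not only in a collar. The theorem as stated already provides this global form, and in any case the collar form and the global form are equivalent on a bounded domain, as noted after \eqref{eq:mconvex}.
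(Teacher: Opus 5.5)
Your proposal is correct and follows the paper's proof: Theorem~\ref{thm:lipschitz} supplies the global estimate \eqref{eq:mconvex} with $m=1/\alpha>2$, and Theorem~\ref{thm:boundary-comparison} with this exponent then gives the homeomorphism and the visual estimate \eqref{eq:lipschitz-visual}. Your rearrangement of the lower bound into a H\"older bound for the inverse, with exponent $\alpha/(\varepsilon B)$, is also correct.
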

\begin{proof}
Theorem~\ref{thm:lipschitz} supplies \eqref{eq:mconvex} with
$m=1/\alpha>2$. With this exponent, Theorem~\ref{thm:boundary-comparison} gives the homeomorphism and the visual estimate.
\end{proof}

In the smooth finite type case, the exponent in the power estimate is the maximal type, and the boundary is Dini-smooth. This gives the following corollary.

\begin{corollary}[Maximal type and the visual boundary]\label{cor:smooth}
Let $\Omega\Subset\C^n$, $n\ge2$, be $\C$-convex with $C^\infty$
boundary of finite type, and put
$M=\max_{p\in\partial\Omega}\Delta_1(\partial\Omega,p)$. Fix
$o\in\Omega$ and $R>\max\{1,3\diam\Omega\}$. Then the identity extends
to a homeomorphism $\mathcal I:\overline\Omega\to\overline\Omega^{\,G}$,
and for every visual metric $\rho_\varepsilon$ satisfying
\eqref{eq:visual} there are $c,C>0$ such that
\begin{equation}\label{eq:smooth-visual}
 c\left(\frac{\norm{p-q}}R\right)^{\varepsilon M/2}
 \le \rho_\varepsilon(\mathcal I(p),\mathcal I(q))
 \le C\left(\frac{\norm{p-q}}R\right)^{\varepsilon/4},
 \qquad p,q\in\partial\Omega.
\end{equation}
Consequently, the inverse boundary map is H\"older with exponent
$2/(\varepsilon M)$. If $\Omega$ is convex, the exponent
$\varepsilon/4$ on the right can be replaced by $\varepsilon/2$.
\end{corollary}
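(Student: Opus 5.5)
The plan is to reduce Corollary~\ref{cor:smooth} to Corollary~\ref{cor:dini}, which requires only the global power estimate \eqref{eq:mconvex} with exponent $m=M$ and Gromov hyperbolicity.

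First, Gromov hyperbolicity comes from finite type. By Proposition~\ref{prop:geometric-half}, finite type implies that $(\Omega,k_\Omega)$ is Gromov hyperbolic. A $C^\infty$ boundary is Lipschitz and Dini-smooth, so the hypotheses of Theorem~\ref{thm:boundary-comparison} and Corollary~\ref{cor:dini} hold except for the identification of the exponent. That corollary supplies $B=1/2$ in \eqref{eq:logupper}. Theorem~\ref{thm:boundary-comparison} gives the homeomorphism $\mathcal I$, and \eqref{eq:dini} is then exactly \eqref{eq:smooth-visual} once we have \eqref{eq:mconvex} with $m=M$. The inverse Hölder bound with exponent $2/(\varepsilon M)$ is \eqref{eq:dini-inverse}.

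The main step is the power bound $r_\Omega(z;v)\le C\delta_\Omega(z)^{1/M}$. I would first show $M_1(\Omega)\le M$. A complex affine line through $p$ is a smooth curve germ with $\ord_0(\gamma-p)=1$, so its order of contact is at most $\Delta_1(\partial\Omega,p)\le M$. This gives $M_1(\Omega)\le M<\infty$, and Proposition~\ref{prop:two-depth} with $k=1$ then yields the two-depth contraction \eqref{eq:two-depth} with exponent $1/M_1$. Taking $u=r_0$ and bounding $R_\Omega(p_{r_0};V)\le\diam\Omega$, I get $r_\Omega(p_r;v)\le C r^{1/M_1}\le C' r^{1/M}$ for $r\le r_0<1$, at the normal points $p_r$.

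To pass to arbitrary $z$ near the boundary, I would take $p$ to be the nearest boundary point, so that $z=p_r$ with $r=\delta_\Omega(z)$ inside the tubular neighbourhood. Away from the collar, $r_\Omega\le\diam\Omega$ after enlarging the constant. This handles every direction $v$, since \eqref{eq:two-depth} holds for all $V\in\Gr_\C(1,n)$. This is the step I expect to need the most care, because it depends on matching the collar points $p_r$ with all points near the boundary and on $M_1\le M$. Both look routine here, and the harder equality $M_1=M$ is not needed because only an upper bound on the exponent is used.

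For the convex case, Corollary~\ref{cor:convex} with $B=1/2$, obtained from Dini-smoothness, upgrades the right-hand exponent from $\varepsilon/4$ to $\varepsilon/2$.
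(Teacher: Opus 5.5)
Your proof follows the paper's route: Gromov hyperbolicity from Proposition~\ref{prop:geometric-half}, the power bound from Proposition~\ref{prop:two-depth} with $k=1$ and the larger depth fixed, and then Corollaries~\ref{cor:dini} and~\ref{cor:convex} with $B=1/2$. Your observation that the easy inequality $M_1(\Omega)\le M$ is enough for the exponent is correct, since a larger $m$ in \eqref{eq:mconvex} gives a weaker bound.

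The one step you leave unjustified is $M<\infty$, which you simply write down. Finiteness of type at each point does not by itself make the maximum finite. The paper supplies it by combining two facts. The proof of Proposition~\ref{prop:geometric-half} shows $M_1(\Omega)<\infty$. Lemma~\ref{lem:full-curve-type}, applied with $\alpha=1/M_1(\Omega)$, then gives $\Delta_1(\partial\Omega,p)\le M_1(\Omega)$. Hence $M=M_1(\Omega)$ is finite and attained. You should add this step, or at least cite D'Angelo's local boundedness of the type for finiteness.
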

\begin{proof}
By Proposition~\ref{prop:geometric-half}, $(\Omega,k_\Omega)$ is Gromov
hyperbolic, and the proof of that proposition shows that $M_1(\Omega)<\infty$. We next identify the exponent.
Proposition~\ref{prop:two-depth} with $k=1$ and the larger depth fixed
gives \eqref{eq:mconvex} with $m=M_1(\Omega)$. Lemma~\ref{lem:full-curve-type}
then gives $M\le M_1(\Omega)$, and a complex line of maximal contact gives
$M\ge M_1(\Omega)$. Thus \eqref{eq:mconvex} holds with $m=M$. Finally, a smooth
boundary is Dini-smooth, so Corollaries~\ref{cor:dini}
and~\ref{cor:convex} give the assertions. This proves the corollary.
\end{proof}

We end this section with the boundary extension of Kobayashi isometries between two domains of the kind considered here. The H\"older exponents are explicit in the constants of the two domains.

\begin{corollary}[Boundary extension of isometries]\label{cor:isometry}
For $i=1,2$, let $\Omega_i$ satisfy the hypotheses of
Theorem~\ref{thm:boundary-comparison}, with exponent $m_i$ in \eqref{eq:mconvex}
and constant $B_i$ in \eqref{eq:logupper}. Then every surjective
Kobayashi isometry $F:\Omega_1\to\Omega_2$ extends to a homeomorphism
$\overline F:\overline\Omega_1\to\overline\Omega_2$, and there is $C>0$
such that
\begin{align}
 \norm{\overline F(p)-\overline F(q)}
 &\le C\norm{p-q}^{1/(4B_2m_2)},\qquad p,q\in\partial\Omega_1,
 \label{eq:map-holder}\\
 \norm{\overline F^{-1}(u)-\overline F^{-1}(v)}
 &\le C\norm{u-v}^{1/(4B_1m_1)},\qquad u,v\in\partial\Omega_2.
 \label{eq:inverse-map-holder}
\end{align}
If both boundaries are Dini-smooth, the displayed exponents can be taken
to be $1/(2m_2)$ and $1/(2m_1)$.
\end{corollary}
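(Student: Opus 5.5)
The plan is to route everything through Theorem~\ref{thm:boundary-comparison}. A surjective Kobayashi isometry is an isometry of proper geodesic Gromov hyperbolic spaces. So it preserves Gromov products up to the change of base point, and it extends to a homeomorphism of the Gromov compactifications. Composing with the identifications $\mathcal I_1,\mathcal I_2$ of \eqref{eq:identification} gives
\[
 \overline F=\mathcal I_2^{-1}\circ F^G\circ\mathcal I_1,
\]
a homeomorphism $\overline\Omega_1\to\overline\Omega_2$ extending $F$. The same construction for $F^{-1}$ gives the inverse.

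For the H\"older estimate I fix $o\in\Omega_1$ and use $F(o)\in\Omega_2$ as the base point in $\Omega_2$. Then the isometry gives the exact identity
\[
 (F(x)\mid F(y))_{F(o)}=(x\mid y)_o .
\]
Take $p,q\in\partial\Omega_1$ and sequences $x_j\to p$, $y_j\to q$. By the lower bound in \eqref{eq:main} for $\Omega_1$,
\[
 (x_j\mid y_j)_o\ge\tfrac14\log\bigl(R_1/S_1(x_j,y_j)\bigr)-C_0 .
\]
By the upper bound in \eqref{eq:main} for $\Omega_2$, applied at the base point $F(o)$,
\[
 (F(x_j)\mid F(y_j))_{F(o)}\le B_2m_2\log\bigl(R_2/S_2(F(x_j),F(y_j))\bigr)+C_0' .
\]
Here I allow the constant $B_2$ in \eqref{eq:logupper} to be taken with respect to the base point $F(o)$. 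Changing the base point only alters the additive constants $A$ and $C_0$, since $k_{\Omega_2}(o',\cdot)$ differs by a bounded amount; this is the point I would check carefully.

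Combining the two bounds and exponentiating gives
\[
 S_2(F(x_j),F(y_j))\le C\,S_1(x_j,y_j)^{1/(4B_2m_2)} .
\]
Letting $j\to\infty$, the boundary-distance terms vanish, and $F(x_j)\to\overline F(p)$ by the homeomorphism. This yields \eqref{eq:map-holder}. Applying the same argument to $F^{-1}$ gives \eqref{eq:inverse-map-holder}.

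In the Dini-smooth case, Corollary~\ref{cor:dini} lets me take $B_i=1/2$, which gives exponents $1/(2m_i)$.

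The main obstacle is making sure the continuous extension $\overline F$ obtained abstractly coincides with the limit along sequences. This follows from the sequential description of $\mathcal I$ in the proof of Theorem~\ref{thm:boundary-comparison}.
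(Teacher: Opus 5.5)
Your proposal is correct and follows the paper's proof. You extend $F$ to the Gromov compactifications, compose with $\mathcal I_1,\mathcal I_2$, use base points $o$ and $F(o)$ so that Gromov products are preserved exactly, and combine the lower bound in $\Omega_1$ with the upper bound in $\Omega_2$. Your base-point remark is right, since changing the base point or $R$ only changes $A$ and not $B$. The only difference is cosmetic: you apply \eqref{eq:main} to interior sequences and pass to the limit using the continuity of $\overline F$, whereas the paper applies the boundary estimate \eqref{eq:boundarylog} after noting that the isometry preserves boundary Gromov products.
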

\begin{proof}
Choose $o_1\in\Omega_1$ and $o_2=F(o_1)$. An isometry preserves the Gromov products of interior points exactly, with respect to these base points. Hence it carries Gromov sequences and their
equivalence classes bijectively to those for $\Omega_2$. Since convergence in the Gromov compactification is characterized by the divergence of Gromov products, the map so obtained is a homeomorphism of the Gromov
compactifications. Composing it with the identifications in
Theorem~\ref{thm:boundary-comparison} for the two domains gives $\overline F$.

We now prove the H\"older estimates. For distinct boundary points $p,q$, the boundary products are also
preserved. This follows from their definition as an infimum of lower limits over representing sequences, together with the surjectivity of the isometry.
We apply the lower bound of \eqref{eq:boundarylog} in $\Omega_1$ and
the upper bound in $\Omega_2$ and obtain
\[
 \frac14\log\frac{R_1}{\norm{p-q}}-K_1
 \le B_2m_2\log\frac{R_2}{\norm{\overline F(p)-\overline F(q)}}+K_2,
\]
where $R_i$ and $K_i$ are the constants $R$ and $C_0$ of
\eqref{eq:boundarylog} for $\Omega_i$. Rearranging this inequality proves
\eqref{eq:map-holder}. Applying the same argument to $F^{-1}$ proves
\eqref{eq:inverse-map-holder}. For the last assertion, Corollary~\ref{cor:dini} permits
$B_1=B_2=1/2$. This completes the proof.
\end{proof}

\begin{remark}
A biholomorphism is a surjective Kobayashi isometry. Hence
Corollary~\ref{cor:isometry} applies to biholomorphisms between the
domains of Corollary~\ref{cor:lipschitz}. For isometries between
$m$-convex Dini-smooth convex domains, bi-H\"older extension was proved
by Liu, Wang and Zhou \cite[Theorem~1.2]{LWZ22} and by Liu, Pu and Wang
\cite[Theorem~1.6]{LPW23}. For the continuous extension of isometries by means of
visibility, we refer the reader to \cite{BGZ21,CMS24}. We emphasize that Corollary~\ref{cor:isometry}
gives H\"older estimates on the boundaries, and not on the whole closures.
\end{remark}

\part{Estimates from interior weights}\label{part:analysis}

\section{Preliminaries for the analytic part}\label{sec:prelim}

In this section, we fix the notation of the analytic part of the paper. We then recall the facts that we shall use about the complex of maximal distributional operators on square-integrable forms, about Hessian inequalities in the sense of distributions and about the weighted inequality of Morrey, Kohn and H\"ormander.

\subsection{Notation}\label{sec:notation}
Throughout Part~\ref{part:analysis}, $D$ denotes a bounded domain in $\C^n$. We assume regularity of the boundary only where it is stated. Besides the notation of Sections~\ref{sec:introduction} and~\ref{sec:gauges}, we shall use the following notation. Constants $c,C$ may change from line to line, and they depend only on the quantities indicated.
\begin{enumerate}
\item The set $\HH=\{\zeta\in\C:\operatorname{Re}\zeta>0\}$ is the right half-plane. We write $\Vol$ for Lebesgue measure.
\item The function $K_D(z,w)$ is the Bergman kernel of $D$, i.e., the reproducing kernel of the Bergman space $A^2(D)=\OO(D)\cap L^2(D)$. We write $K_D(z)=K_D(z,z)$ for its restriction to the diagonal, and $P_D$ denotes the Bergman projection. The Bergman metric is
\[
 \beta_D(z;v)^2=\sum_{j,k=1}^n
 \frac{\partial^2\log K_D(z)}{\partial z_j\partial\bar z_k}v_j\bar v_k.
\]
These conventions agree with those of \cite{Ber70,JP13}.
\item The operator $N_q$ is the $\ddbar$-Neumann operator on $L^2_{(0,q)}(D)$, and $S_q$ is the canonical solution operator, which is defined after the statement of Lemma~\ref{lem:uniform-L2}. For a compact operator $T$, we write $s_k(T)$ for its singular values.
\item The space $C^k(\overline D)$ consists of the functions whose derivatives through order $k$ extend continuously to $\overline D$.
\end{enumerate}

\subsection{The \texorpdfstring{$L^2$}{L2} complex}\label{sec:complex}

In this subsection, we set up the complex of maximal distributional operators on square-integrable forms, and we record the facts about eigenvalues of induced Hessians that let us pass from one form degree to a higher one.

Let $\mathcal H_q(D)=L^2_{(0,q)}(D)$, with the standard coefficient norm. Inner products are linear in the first variable. We write
\[
T_q=\ddbar_q:\dom T_q\subset\mathcal H_q(D)\longrightarrow\mathcal H_{q+1}(D),\qquad T_n=0.
\]
These operators are closed and densely defined. Since distributional derivatives commute, we have $T_qT_{q-1}=0$ on the natural domain of the composition. The adjoint $T^*_{q-1}$ agrees in the interior with the formal adjoint $\vartheta$. In the sense of distributions, we have
\begin{equation}\label{eq:prelim-1}
\ddbar\vartheta+\vartheta\ddbar=-\tfrac14\Delta
\end{equation}
coefficientwise.

Let $H$ be a Hermitian matrix. After diagonalization, one sees that the eigenvalues of $H^{[q]}$ are
\begin{equation}\label{eq:prelim-2}
\mu_{j_1}+\dots+\mu_{j_q},\qquad j_1<\dots<j_q.
\end{equation}
Therefore, if $H\ge cG$ with $G\ge 0$, then $H^{[q]}\ge cG^{[q]}$. We shall also use the identity
\begin{equation}\label{eq:prelim-3}
\sideset{}{'}\sum_{|K|=q-1}\sum_{j=1}^n|u_{jK}|^2=q|u|^2.
\end{equation}
A lower bound for an induced Hessian passes to higher degrees in the following way. If $H^{[q]}\ge w\,\mathrm{Id}$ for $w\ge 0$, then $H^{[q']}\ge(q'/q)w\,\mathrm{Id}$ for $q\le q'\le n$. To see this, note that the sum of the first $q$ eigenvalues is at least $w$. Hence $\mu_q\ge w/q$, and all subsequent eigenvalues are at least $w/q$ as well, which gives the bound.

\subsection{Distributional inequalities}\label{sec:distributional}
In this subsection, we recall what a Hessian inequality means for a bounded weight. For $\lambda \in L^\infty(D;\R)$, the entries of $H^{[q]}_\lambda$ are distributions. Let $A$ be a continuous Hermitian endomorphism field on $\Lambda^{0,q}\C^n$. The inequality $H^{[q]}_\lambda \ge A$ in distributions is understood as in Section~\ref{sec:introduction}. That is, the inequality is tested against constant coefficient vectors $\eta$ and against nonnegative functions $\chi\in C^\infty_c(D)$. This notion is invariant under constant unitary changes of coordinates.

\subsection{The weighted inequality on smooth domains}\label{sec:weighted-identity}
In this subsection, we state the weighted inequality on smoothly bounded pseudoconvex domains from which all our estimates start. The following result is due to Morrey, Kohn and H\"ormander.

\begin{result}[Morrey--Kohn--H\"ormander, \cite{Hor65,Koh63,Mor58}]\label{res:morrey-kohn-hormander}
Suppose that $U$ is smoothly bounded and pseudoconvex, that $\varphi$ is real and $C^2$ near $\overline U$, and that $a\in\mathcal V_q(U)$. Then
\begin{equation}\label{eq:prelim-4}
\int_U\mathcal L^\varphi_q(a)e^{-\varphi}\le\norm{T_qa}^2_\varphi+\norm{T^*_{q-1}a+(\partial\varphi)\lrcorner a}^2_\varphi.
\end{equation}
\end{result}

The inequality \eqref{eq:prelim-4} is the weighted identity of Morrey \cite{Mor58} and Kohn \cite{Koh63}, in the weighted form of H\"ormander \cite{Hor65}, after the nonnegative gradient term and the nonnegative Levi boundary term have been discarded. See also Folland--Kohn \cite{FK72}. H\"ormander's form of the identity \cite{Hor65} allows the $C^2$ weights that we use here. The passage from forms that are smooth up to the boundary to the full form domain is H\"ormander's density theorem \cite{Hor65}. For the weighted identity and for the density theorem in the graph norm we also refer the reader to McNeal--Varolin \cite[Theorems~1.1.3 and~1.1.4]{MV15}. We shall use the inequality only on smooth relatively compact subdomains of $D$.

\section{Uniform estimates and exhaustion correction}\label{sec:exhaustion}

In this section, we collect the facts about canonical solutions on an arbitrary bounded pseudoconvex domain that we shall use in the rest of this part. We first prove uniform estimates whose constants depend only on the dimension and on the radius of a ball containing the domain. We then show how to correct the restrictions of a form to the members of a smooth exhaustion so that the corrected forms satisfy the adjoint boundary condition.

\subsection{Uniform estimates and canonical solutions}\label{sec:uniform}
The following lemma is H\"ormander's $L^2$ existence theorem \cite{Hor65}. See also \cite{MV15,Str10}. We recall the argument because we need the constants to depend on $R$ only, without any assumption on $\partial D$.

\begin{lemma}[Uniform $L^2$ estimates]\label{lem:uniform-L2}
Let $D\subset B(0,R)\subset\C^n$ be a pseudoconvex open set. For $1\le j\le n$ and $g\in\ker T_j$, there exists $a\in\dom T_{j-1}$ with
\[
T_{j-1}a=g,\qquad\norm{a}_2\le C_R\norm{g}_2.
\]
In particular, $\operatorname{Ran}T_{j-1}=\ker T_j$ is closed. Moreover,
\begin{equation}\label{eq:exhaustion-1}
\norm{u}_2^2\le C_RQ_D(u),\qquad u\in\mathcal V_q(D),\quad 1\le q\le n.
\end{equation}
Here $C_R$ depends only on $n$ and $R$.
\end{lemma}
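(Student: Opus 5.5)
The plan is H\"ormander's classical argument with the weight $\varphi(z)=|z|^2$, run on a smooth exhaustion. First prove the basic estimate on smooth domains. Then pass to $D$ by a limiting argument for the $\ddbar$-equation. Finally derive \eqref{eq:exhaustion-1} by Hilbert-space duality.

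\textbf{Step 1: smooth subdomains.} Let $U\Subset D$ be smoothly bounded and pseudoconvex; a smooth strictly psh exhaustion of $D$ supplies an increasing family $U_\nu\uparrow D$. Take $\varphi=|z|^2$, so $\mathcal L^\varphi_q(a)=q|a|^2\ge|a|^2$ by \eqref{eq:prelim-3}. Since $0\le\varphi\le R^2$, the weights $e^{-\varphi}$ are comparable to $1$ with constants depending only on $R$.

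Result~\ref{res:morrey-kohn-hormander} is stated with $T^*_{q-1}a+(\partial\varphi)\lrcorner a$, which is the adjoint with respect to the weighted inner product. With this, one gets
\[
 \norm a_\varphi^2\le\norm{T_qa}_\varphi^2+\norm{T^{*,\varphi}_{q-1}a}_\varphi^2 .
\]
I work entirely in the weighted spaces $L^2(U,e^{-\varphi})$. This avoids having to absorb the unweighted term $|\partial\varphi|\le 2R$, which would otherwise only be possible with the wrong sign. Since these spaces have norms equivalent to the unweighted ones with constants depending only on $R$, this suffices.

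\textbf{Step 2: solving $\ddbar$ on $U$.} Given $g\in\ker T_j$ on $U$, apply the standard Riesz-representation argument in the weighted space. The functional $T^{*,\varphi}a\mapsto\langle g,a\rangle_\varphi$ is bounded by $\norm g_\varphi$. To see this, split $a=a_1+a_2$ with $a_1\in\ker T_j$ and $a_2\perp\ker T_j$. Then $a_2\in\ker T^{*,\varphi}$, and $\langle g,a_2\rangle_\varphi=0$. This yields $u_U$ with $\ddbar u_U=g$ on $U$ and
\[
 \norm{u_U}_{L^2(U)}\le e^{R^2/2}\norm g_2 .
\]

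\textbf{Step 3: passage to $D$, the main obstacle.} For the exhaustion $U_\nu$ the solutions $u_\nu$ are uniformly bounded in $L^2$. Extend them by zero and take a weak limit $a$ in $L^2(D)$. For each fixed $U_\mu$, $\ddbar u_\nu=g$ holds on $U_\mu$ for $\nu\ge\mu$, and this passes to the weak limit in distributions. Hence $\ddbar a=g$ on $D$, $a\in\dom T_{j-1}$, and $\norm a_2\le C_R\norm g_2$ with $C_R=e^{R^2/2}$. Closedness of the range is immediate: the range equals $\ker T_j$, which is closed.

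\textbf{Step 4: the estimate on $\mathcal V_q(D)$.} For $u\in\mathcal V_q(D)$, decompose $u=u_1+u_2$ with $u_1\in\ker T_q$ and $u_2\perp\ker T_q$; both lie in $\mathcal V_q(D)$. By Step 3, $u_1=T_{q-1}a$ with $\norm a\le C\norm{u_1}$, so
\[
 \norm{u_1}^2=\langle T^*_{q-1}u_1,a\rangle\le C\norm{T^*u_1}\norm{u_1}.
\]
Now $T^*u_2=0$, since $u_2\perp\operatorname{Ran}T_{q-1}\subset\ker T_q$, and therefore $T^*u_1=T^*u$. For $u_2$, use the surjectivity of $T_q$ onto $\ker T_{q+1}$ and closed-range duality: the adjoint $T_q^*$ has closed range, and the bound for its inverse gives $\norm{u_2}\le C\norm{T_qu_2}=C\norm{T_qu}$. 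When $q=n$, $T_n=0$ and $\ker T_n$ is everything, so $u_2=0$. Combining the two pieces gives \eqref{eq:exhaustion-1}.

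I expect the only delicate points to be the weighted-adjoint bookkeeping in Step 1 and the domain membership of the weak limit in Step 3. The closed-graph argument handles the latter.
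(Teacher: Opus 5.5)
Your proposal is correct and follows essentially the paper's route: $L^2$ duality on a smooth pseudoconvex exhaustion, weak limits of the zero-extended solutions, and then the estimate on $D$ from the orthogonal splitting of $u$ along $\ker T_q$, using solvability in degrees $q$ and $q+1$. The only difference is on the smooth subdomains. There you solve in $L^2(U,e^{-|z|^2})$ with the weighted adjoint, whereas the paper takes $\varphi=t|z|^2$ with $tR^2\le1/4$ and absorbs $(\partial\varphi)\lrcorner a$ (which is $O(t^2)$ against the $O(t)$ Hessian) to get the unweighted estimate first; so your aside that this absorption is only possible with the wrong sign is inaccurate, though harmless since you never use it.
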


Before the proof, we fix some notation. We denote by $S_j$ the minimum-norm solution operator of $T_{j-1}$ on $\ker T_j$, extended by zero on $(\ker T_j)^\perp$.

\begin{proof}
We first suppose that $D=U$ is smooth and pseudoconvex. We choose $t>0$ with $tR^2\le 1/4$, and we apply Result~\ref{res:morrey-kohn-hormander} to the weight $\varphi=t|z|^2$. The gradient term in the adjoint norm has to be absorbed. For this weight we have
\[
|\partial\varphi(v)|^2\le tR^2\mathcal L^\varphi_1(v)\le\tfrac14\mathcal L^\varphi_1(v),
\]
so by the inequality $|x+y|^2\le 2|x|^2+2|y|^2$ half of the Hessian integral is absorbed into the left side. The weighted and the unweighted norms are uniformly equivalent, and $\mathcal L^\varphi_q(a)=tq|a|^2$. Hence we get \eqref{eq:exhaustion-1} on $U$.

We next deduce the existence of solutions from the coercive estimate by duality. Let $P$ be the orthogonal projection onto $\ker T_j$. Let $b\in\dom T^*_{j-1}$. Pairing with elements $T_{j-1}c\in\ker T_j$ of the range, we get $Pb\in\dom T^*_{j-1}$ and $T^*_{j-1}Pb=T^*_{j-1}b$. By coercivity applied to $Pb$, we obtain
\[
\norm{Pb}_2\le C_R\norm{T^*_{j-1}b}_2.
\]
Now let $g$ be closed. The conjugate-linear functional $T^*_{j-1}b\mapsto(g,b)=(g,Pb)$ is then well defined, and it is bounded by $C_R\norm{g}_2\norm{T^*_{j-1}b}_2$. Therefore, by the Hahn--Banach and Riesz theorems, there exists $a$ with $(a,T^*_{j-1}b)=(g,b)$ and with the desired norm bound. By the double-adjoint characterization, this means $T_{j-1}a=g$.

For general $D$, we take an increasing exhaustion of $D$ by smoothly bounded strictly pseudoconvex open sets $U_\nu\Subset D$. Such an exhaustion is obtained from a smooth strictly plurisubharmonic exhaustion function and its regular sublevel values, see \cite{Hor65}. We solve on $U_\nu$ for $g|_{U_\nu}$, and we extend the solutions by zero as $L^2$ forms on $D$. By the uniform bound of the smooth case, this gives a bounded sequence. A weakly convergent subsequence has a limit with the same bound, and compactly supported tests give $T_{j-1}a=g$ for this limit. This proves exactness and closed range, and it gives the existence of $S_j$.

It remains to obtain coercivity on $D$. We decompose $u=h+b$ orthogonally with $h\in\ker T_q$. Here $b$ is the minimum-norm solution of $T_qb=T_qu$, so that $\norm{b}_2\le C_R\norm{T_qu}_2$. In top degree we have $b=0$. Set $a=S_qh$. Since $T_{q-1}a=h$, the adjoint identity gives
\[
\norm{h}_2^2=(u,h)=(T^*_{q-1}u,a)\le C_R\norm{T^*_{q-1}u}_2\norm{h}_2.
\]
Orthogonality of the decomposition now gives \eqref{eq:exhaustion-1}.
\end{proof}

The next lemma records the identities satisfied by the canonical solution operators and by their adjoints. We shall use them to build the corrected forms in the next subsection.

\begin{lemma}[Canonical identities]\label{lem:canonical}
Let $D\Subset\C^n$ be pseudoconvex, let $1\le j\le n$, and let $P_{j-1}$ be the orthogonal projection onto $(\ker T_{j-1})^\perp$. Then
\[
S_jT_{j-1}b=P_{j-1}b\quad(b\in\dom T_{j-1}),
\]
and, for every $f\in\mathcal H_{j-1}(D)$,
\begin{equation}\label{eq:exhaustion-2}
S_j^*f\in\ker T_j\cap\dom T^*_{j-1},\qquad T^*_{j-1}S_j^*f=P_{j-1}f.
\end{equation}
If $j\le n-1$ and $g\in\ker T_{j+1}$, then $S_{j+1}g$ has $T_j$-derivative $g$ and $T^*_{j-1}$-derivative zero.
\end{lemma}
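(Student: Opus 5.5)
The argument is Hilbert-space bookkeeping for the closed, densely defined operators $T_{j-1}$ and $T_j$. It uses three facts: $T_jT_{j-1}=0$, the closed range from Lemma~\ref{lem:uniform-L2}, and the definition of $S_j$ as the minimum-norm solution operator extended by zero on $(\ker T_j)^\perp$.

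\emph{First identity.} Take $b\in\dom T_{j-1}$ and split it as $b=P_{j-1}b+(b-P_{j-1}b)$. Since $\ker T_{j-1}$ is closed, we have $b-P_{j-1}b\in\ker T_{j-1}\subset\dom T_{j-1}$. Hence $P_{j-1}b\in\dom T_{j-1}$ and $T_{j-1}P_{j-1}b=T_{j-1}b$. Because $T_{j-1}b\in\ker T_j$, the solution $S_jT_{j-1}b$ is the unique solution orthogonal to $\ker T_{j-1}$. The vector $P_{j-1}b$ is such a solution, so $S_jT_{j-1}b=P_{j-1}b$.

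\emph{Range and kernel of $S_j^*$.} The operator $S_j$ is bounded by Lemma~\ref{lem:uniform-L2}, so $S_j^*$ is bounded. Since $S_j$ vanishes on $(\ker T_j)^\perp$, we get $S_j^*f\perp(\ker T_j)^\perp$, that is, $S_j^*f\in\ker T_j$.

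\emph{Membership in $\dom T^*_{j-1}$.} For $b\in\dom T_{j-1}$, the first identity gives
\[
(T_{j-1}b,S_j^*f)=(S_jT_{j-1}b,f)=(P_{j-1}b,f)=(b,P_{j-1}f).
\]
The right side is bounded in $\norm b_2$. Therefore $S_j^*f\in\dom T^*_{j-1}$ and $T^*_{j-1}S_j^*f=P_{j-1}f$. This step is the only point that needs care, namely testing against all of $\dom T_{j-1}$ rather than only a core, and it is immediate from the definition of the adjoint.

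\emph{Last assertion.} Let $j\le n-1$ and $g\in\ker T_{j+1}$. By exactness in Lemma~\ref{lem:uniform-L2}, $u=S_{j+1}g$ satisfies $T_ju=g$, so its $T_j$-derivative is $g$. By construction $u\in(\ker T_j)^\perp$. Since $\operatorname{Ran}T_{j-1}\subset\ker T_j$, the general relation $\overline{\operatorname{Ran}T_{j-1}}^{\perp}=\ker T^*_{j-1}$ gives $u\in\ker T^*_{j-1}$. In particular $u\in\dom T^*_{j-1}$ with $T^*_{j-1}u=0$.
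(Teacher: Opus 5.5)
Your proof is correct and follows the paper's argument step for step. The first identity comes from the minimum-norm characterization, the adjoint identity from pairing with $f$, the inclusion $S_j^*f\in\ker T_j$ from the vanishing of $S_j$ on $(\ker T_j)^\perp$, and the last assertion from orthogonality to $\operatorname{Ran}T_{j-1}$. You also fill in the routine details that the paper leaves implicit: closedness of $\ker T_{j-1}$, boundedness of $S_j$, and testing the adjoint against all of $\dom T_{j-1}$.
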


\begin{proof}
The first identity is the minimum-norm characterization of the canonical solution. Pairing the first identity with $f$, we get the adjoint identity. Since $S_j$ vanishes on $(\ker T_j)^\perp$, its adjoint takes values in $\ker T_j$. Finally, $S_{j+1}g$ is orthogonal to $\ker T_j$, and hence to $\operatorname{Ran}T_{j-1}$. This orthogonality is exactly the condition that its adjoint derivative is zero. Its $T_j$-derivative is $g$ by construction. This proves the lemma.
\end{proof}

\subsection{Correcting forms on an exhaustion}\label{sec:correction}
In this subsection, we modify the restrictions of a form to the members of a smooth exhaustion so that the modified forms satisfy the adjoint boundary condition on each member. This correction is due to Straube \cite[proof of Theorem~1]{Str97}. We give a version on the full domain, in which the zero extensions of the corrected forms converge weakly to the given form. The following lemma is the precise statement.

\begin{lemma}[Full-domain exhaustion correction]\label{lem:exhaustion}
Let $D$ be a bounded pseudoconvex domain, and let $D_\nu\Subset D_{\nu+1}\Subset D$ be an increasing exhaustion of $D$ by smoothly bounded pseudoconvex open sets. For every $u\in\mathcal V_q(D)$ there exist $u_\nu\in\mathcal V_q(D_\nu)$ such that
\begin{equation}\label{eq:exhaustion-3}
Q_{D_\nu}(u_\nu)\le Q_D(u),\qquad E_\nu u_\nu\rightharpoonup u\text{ in }\mathcal H_q(D),
\end{equation}
where $E_\nu$ is zero extension to $D$. The zero extensions of their adjoint derivatives converge strongly to $T^*_{q-1}u$.
\end{lemma}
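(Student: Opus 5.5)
We construct the corrected forms with the canonical operators of the smooth domains $D_\nu$, following Straube. Put $f=T^*_{q-1}u\in\mathcal H_{q-1}(D)$ and $g=T_qu\in\ker T_{q+1}$. Write $T^\nu$, $S^\nu$ and $P^\nu_{q-1}$ for the objects of Lemmas~\ref{lem:uniform-L2} and~\ref{lem:canonical} on $D_\nu$. Since $D_\nu$ is pseudoconvex, these lemmas apply with constants depending only on a ball containing $D$. We define
\[
 u_\nu=(S^\nu_q)^*\bigl(f|_{D_\nu}\bigr)+S^\nu_{q+1}\bigl(g|_{D_\nu}\bigr),
\]
where the second term is omitted when $q=n$. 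Restriction preserves $\dom\ddbar$, so $g|_{D_\nu}\in\ker T^\nu_{q+1}$. By Lemma~\ref{lem:canonical}, the first term lies in $\ker T^\nu_q\cap\dom T^{\nu*}_{q-1}$ and has adjoint derivative $P^\nu_{q-1}(f|_{D_\nu})$. The second term has $\ddbar$-derivative $g|_{D_\nu}$ and adjoint derivative zero. Hence
\[
 u_\nu\in\mathcal V_q(D_\nu),\qquad T^\nu_q u_\nu=g|_{D_\nu},\qquad
 T^{\nu*}_{q-1}u_\nu=P^\nu_{q-1}(f|_{D_\nu}).
\]
Since $P^\nu_{q-1}$ is an orthogonal projection, this gives $Q_{D_\nu}(u_\nu)\le\norm g^2+\norm f^2=Q_D(u)$. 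The coercive estimate \eqref{eq:exhaustion-1} on $D_\nu$ then bounds $\norm{u_\nu}$ uniformly in $\nu$.

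Next we prove strong convergence of the adjoint derivatives. Write $f|_{D_\nu}=P^\nu_{q-1}(f|_{D_\nu})+k_\nu$ with $k_\nu\in\ker T^\nu_{q-1}$. Passing to a subsequence, we have $E_\nu P^\nu_{q-1}(f|_{D_\nu})\rightharpoonup\gamma$ and $E_\nu k_\nu\rightharpoonup k$, so that $\gamma=f-k$. Every compact subset of $D$ lies in $D_\nu$ for large $\nu$, so $k$ is $\ddbar$-closed on $D$ in distributions; since $k\in L^2$, we get $k\in\ker T_{q-1}$. Because $f=T^*_{q-1}u$ is orthogonal to $\ker T_{q-1}$, we have $(f,k)=0$. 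Therefore $\norm\gamma^2=\norm f^2+\norm k^2$. On the other hand,
\[
 \norm\gamma^2\le\liminf_\nu\norm{P^\nu_{q-1}(f|_{D_\nu})}^2
 =\lim_\nu\bigl(f|_{D_\nu},P^\nu_{q-1}(f|_{D_\nu})\bigr)=(f,\gamma)=\norm f^2 .
\]
Hence $k=0$ and $\gamma=f$. Moreover, the norms converge to $\norm f$, so the weak convergence is strong. Since the limit is independent of the subsequence, the whole sequence $E_\nu T^{\nu*}_{q-1}u_\nu$ converges strongly to $T^*_{q-1}u$.

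Finally we identify the weak limit of $E_\nu u_\nu$. Let $w$ be a weak limit of a subsequence. Testing with compactly supported smooth forms gives $\ddbar w=g$ in distributions, so $w\in\dom T_q$ and $T_qw=T_qu$. For $\psi\in\dom T_{q-1}(D)$, the restriction $\psi|_{D_\nu}$ lies in $\dom T^\nu_{q-1}$, and so
\[
 \bigl(T^{\nu*}_{q-1}u_\nu,\psi|_{D_\nu}\bigr)=\bigl(u_\nu,T_{q-1}\psi|_{D_\nu}\bigr).
\]
Passing to the limit gives $(f,\psi)=(w,T_{q-1}\psi)$. Hence $w\in\dom T^*_{q-1}$ and $T^*_{q-1}w=f$. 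Now $w-u\in\mathcal V_q(D)$ and $Q_D(w-u)=0$, so \eqref{eq:exhaustion-1} on $D$ gives $w=u$. Since every subsequence has the same weak limit, the whole sequence converges weakly to $u$.

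The main obstacle is the identification $k=0$ in the second step. It uses the orthogonality of $T^*_{q-1}u$ to $\ker T_{q-1}(D)$ together with the fact that limits of closed forms on the exhaustion are closed on $D$. Both facts hold without any boundary regularity of $D$.
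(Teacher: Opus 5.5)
Your proposal is correct and follows essentially the paper's route. It uses the same Straube-type correction $u_\nu=(S^\nu_q)^*(f|_{D_\nu})+S^\nu_{q+1}(g|_{D_\nu})$, removes the projection defect using that $T^*_{q-1}u$ is orthogonal to $\ker T_{q-1}$ and that weak limits of closed forms on the exhaustion are closed on $D$, and identifies the weak limit by restriction to $D_\nu$ and coercivity; the only difference is cosmetic, in that the paper shows $\norm{h_\nu}^2=(f,E_\nu h_\nu)\to0$ directly for $h_\nu=(I-P_\nu)(f|_{D_\nu})$, whereas you get the same conclusion from norm convergence of the projected part.
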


\begin{proof}
Put $f=T^*_{q-1}u$ and $g=T_qu$. We define the corrected forms by
\begin{equation}\label{eq:exhaustion-4}
u_\nu=S^*_{q,\nu}(f|_{D_\nu})+S_{q+1,\nu}(g|_{D_\nu}),
\end{equation}
where the second term is omitted when $q=n$. The restriction of $g$ is closed. Let $P_\nu$ be the orthogonal projection onto $(\ker T_{q-1,\nu})^\perp$. Then Lemma~\ref{lem:canonical}, applied to both terms, gives
\begin{equation}\label{eq:exhaustion-5}
T_{q,\nu}u_\nu=g|_{D_\nu},\qquad T^*_{q-1,\nu}u_\nu=P_\nu(f|_{D_\nu}),
\end{equation}
and these two identities prove the energy bound in \eqref{eq:exhaustion-3}. By Lemma~\ref{lem:uniform-L2}, the forms $E_\nu u_\nu$ are then bounded uniformly in $L^2$.

We next show that the projections in the second identity disappear in the limit. Let $h_\nu=(I-P_\nu)(f|_{D_\nu})$. By testing on compact subsets, any weak subsequential limit $h$ of $E_\nu h_\nu$ is closed. Since $f=T^*_{q-1}u$ is orthogonal to $\ker T_{q-1}$, we have
\[
\norm{h_\nu}_2^2=(f,E_\nu h_\nu)\longrightarrow(f,h)=0
\]
along that subsequence. The same argument applies to every subsequence, so $E_\nu h_\nu\to 0$ strongly. Consequently,
\begin{equation}\label{eq:exhaustion-6}
E_\nu P_\nu(f|_{D_\nu})\longrightarrow f\quad\text{strongly in }L^2(D).
\end{equation}
By the second identity above, this is the strong convergence of the zero extensions of the adjoint derivatives asserted in the lemma.

The following observation lets the adjoint condition pass through zero extension. Let $a\in\dom T^*_{q-1,\nu}$ and $b\in\dom T_{q-1}$. The restriction of a form in the maximal domain to a subdomain lies in the maximal domain of the subdomain, and therefore
\[
(E_\nu a,T_{q-1}b)_D=(a,T_{q-1,\nu}(b|_{D_\nu}))_{D_\nu}=(E_\nu T^*_{q-1,\nu}a,b)_D.
\]
Hence $E_\nu a\in\dom T^*_{q-1}$, with the adjoint derivative given by the right side of the identity above.

We can now identify the weak limits. Let $u^*$ be a weak subsequential limit of $E_\nu u_\nu$. By \eqref{eq:exhaustion-6} and the observation just made, we get $T^*_{q-1}u^*=f$. Compactly supported tests in \eqref{eq:exhaustion-5} give $T_qu^*=g$. By coercivity applied to $u^*-u$, we get $u^*=u$. Since the weak subsequential limit is unique, the whole sequence converges. This completes the proof.
\end{proof}

\begin{remark}\label{rem:exhaustion}
In general, $E_\nu u_\nu$ need not belong to $\dom T_q$ on $D$. In what follows, we shall use only three properties of the corrected forms. These are that zero extension preserves the adjoint domain, that $T_q$ can be tested against compactly supported forms, and that the zero extensions converge weakly in $L^2$.
\end{remark}

\section{The matrix-weighted estimate}\label{sec:matrix}

In this section, we prove a weighted estimate on the full form domain of an arbitrary bounded pseudoconvex domain. This estimate is the analytic input of the proof of Theorem~\ref{thm:zero-extension} in Section~\ref{sec:proof-zero-extension}. The weight is bounded, and its induced Hessian is only assumed to dominate a continuous nonnegative endomorphism field in the sense of distributions. The following theorem is the precise statement. The constant in the estimate is absolute, and the weight enters only through its oscillation.

\begin{theorem}[Matrix-weighted estimate]\label{thm:matrix}
Let $D \Subset \C^n$ be pseudoconvex and $1 \le q \le n$. Suppose that $\lambda \in L^\infty(D;\R)$ and that $A$ is a continuous nonnegative Hermitian endomorphism field on $\Lambda^{0,q}\C^n$ satisfying $H^{[q]}_\lambda \ge A$ in distributions. Then
\begin{equation}\label{eq:matrix-1}
\int_D \langle A(z)u(z), u(z)\rangle \, dV(z) \le C \max\{1, \operatorname{osc}_D \lambda\}\, Q_D(u), \qquad u \in \mathcal V_q(D),
\end{equation}
where $\operatorname{osc}_D\lambda=\operatorname{ess\,sup}_D\lambda-\operatorname{ess\,inf}_D\lambda$ and $C$ is an absolute constant.
\end{theorem}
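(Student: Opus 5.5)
We will prove the estimate first on smooth subdomains, using a regularized weight in the inequality of Result~\ref{res:morrey-kohn-hormander}, and then pass to $D$ with the correction of Lemma~\ref{lem:exhaustion}.

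\emph{Step 1: normalization and smoothing.} Subtracting a constant from $\lambda$, we may assume $0\le\lambda\le\operatorname{osc}_D\lambda=:\omega$. If $\omega<1$, we replace $\lambda$ by $\lambda'=\lambda/\max\{1,\omega\}$ and $A$ by $A/\max\{1,\omega\}$. The hypothesis and the conclusion scale correctly under this change, so it suffices to treat $0\le\lambda\le1$ and show
\[
\int\langle Au,u\rangle\le C\max\{1,\omega\}\,Q_D(u).
\]
Fix a smooth subdomain $U\Subset D$ from an exhaustion as in Lemma~\ref{lem:exhaustion}. Mollify and set $\lambda_\epsilon=\lambda*\chi_\epsilon$ near $\overline U$. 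Since the distributional inequality is tested against constant $\eta$ and nonnegative $\chi$, and mollification is a positive average of translates, we get $H^{[q]}_{\lambda_\epsilon}\ge A_\epsilon:=A*\chi_\epsilon$ pointwise, with $0\le\lambda_\epsilon\le1$. Continuity of $A$ gives $A_\epsilon\to A$ uniformly on $\overline U$.

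\emph{Step 2: the twisted weight.} The key point is to use $\varphi=-\log(2-\lambda_\epsilon)$ rather than $\lambda_\epsilon$ itself, in the spirit of Berndtsson/McNeal's self-bounded gradient trick. We have
\[
\partial\bar\partial\varphi=\frac{\partial\bar\partial\lambda_\epsilon}{2-\lambda_\epsilon}+\frac{\partial\lambda_\epsilon\wedge\bar\partial\lambda_\epsilon}{(2-\lambda_\epsilon)^2},
\]
so $\mathcal L^\varphi_q(a)\ge\frac12\langle A_\epsilon a,a\rangle+|\partial\varphi\lrcorner a|^2$. The rank-one term controls the gradient: by Cauchy--Schwarz, the induced Hessian of $\partial\varphi\wedge\bar\partial\varphi$ on $(0,q)$-forms dominates $|(\partial\varphi)\lrcorner a|^2$. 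In Result~\ref{res:morrey-kohn-hormander}, we expand $\|T^*a+\partial\varphi\lrcorner a\|_\varphi^2\le 2\|T^*a\|^2_\varphi+2\|\partial\varphi\lrcorner a\|^2_\varphi$. The last term must be absorbed, and the factor $2$ is a problem.

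To fix this, we use $\varphi=-\tfrac12\log(2-\lambda_\epsilon)$, or more generally $-s\log(2-\lambda_\epsilon)$ with small $s$. Then the gradient term scales like $s^2$ while the rank-one Hessian term scales like $s$. Choosing $s=1/4$ leaves a net positive $\frac s4\langle A_\epsilon a,a\rangle$ after absorption. Since $e^{-\varphi}\in[1,2^{s}]$, the weighted and unweighted norms are comparable with absolute constants. This yields
\[
\int_U\langle A_\epsilon a,a\rangle\le C\,Q_U(a),\qquad a\in\mathcal V_q(U),
\]
with $C$ absolute, after the normalization. We then let $\epsilon\to0$.

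\emph{Step 3: passage to $D$.} For $u\in\mathcal V_q(D)$, take $u_\nu$ from Lemma~\ref{lem:exhaustion}, so that $Q_{D_\nu}(u_\nu)\le Q_D(u)$ and $E_\nu u_\nu\rightharpoonup u$. On a fixed compact $K\subset D$, the functional $v\mapsto\int_K\langle Av,v\rangle$ is convex and continuous on $L^2$, since $A$ is bounded on $K$. It is therefore weakly lower semicontinuous, and
\[
\int_K\langle Au,u\rangle\le\liminf_\nu\int_{D_\nu}\langle Au_\nu,u_\nu\rangle\le C\max\{1,\omega\}Q_D(u).
\]
Letting $K\uparrow D$ by monotone convergence proves \eqref{eq:matrix-1}.

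The main obstacle is Step 2: choosing the transform of $\lambda$ so that the gradient term from the twisted adjoint is absorbed with a constant depending only on $\operatorname{osc}\lambda$. The $\max\{1,\omega\}$ factor arises exactly from the rescaling in Step 1.
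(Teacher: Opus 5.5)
Your proposal is correct and follows the paper's proof. You normalize to $0\le\lambda\le1$, mollify, and apply the Morrey--Kohn--H\"ormander inequality to a convex increasing transform of $\lambda$ whose rank-one Hessian term absorbs the twisted gradient term (you use $-\tfrac14\log(2-\lambda_\epsilon)$ where the paper uses $(4e)^{-1}e^{\lambda_\epsilon}$); you then pass to $D$ via the corrected forms of Lemma~\ref{lem:exhaustion} and weak lower semicontinuity, with only cosmetic differences: you handle a possibly unbounded $A$ by restricting to compact sets $K\uparrow D$ instead of by spectral truncation, and ``if $\omega<1$'' in Step~1 should read $\omega>1$.
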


\begin{proof}
We first normalize the weight. We divide $\lambda$ and $A$ by $\Theta = \max\{1, \operatorname{osc}_D \lambda\}$, and we subtract the essential infimum of the normalized function. Hence we may assume that $0 \le \lambda \le 1$ almost everywhere.

We next prove the estimate on smooth subdomains for smooth weights. Let $U \Subset D$ be smooth pseudoconvex, and suppose that $\lambda$ is smooth near $\overline U$. Set $t = (4e)^{-1}$ and $\varphi = te^\lambda$. The exponential is chosen so that the gradient term can be absorbed by the Hessian term. By the chain rule,
\begin{equation}\label{eq:matrix-2}
\mathcal L^{\varphi}_q(a) = te^\lambda \left( \mathcal L^{\lambda}_q(a) + \sideset{}{'}\sum_{|K|=q-1} \biggl| \sum_j \lambda_j a_{jK} \biggr|^2 \right).
\end{equation}
Both terms in parentheses are nonnegative, because $H^{[q]}_\lambda \ge A \ge 0$. Thus
\[
|(\partial\varphi)\lrcorner a|^2 = t^2 e^{2\lambda} \sideset{}{'}\sum_{K} \biggl| \sum_j \lambda_j a_{jK} \biggr|^2 \le te^\lambda \mathcal L^{\varphi}_q(a) \le \tfrac14 \mathcal L^{\varphi}_q(a).
\]
Put $I = \int_U \mathcal L^{\varphi}_q(a) e^{-\varphi}$. Then the inequality \eqref{eq:prelim-4} gives
\[
I \le 2\bigl( \norm{T_q a}^2_\varphi + \bigl\| T^*_{q-1} a \bigr\|^2_\varphi \bigr) + \tfrac12 I.
\]
Since $0 < \varphi \le 1/4$ and $H^{[q]}_\varphi \ge tA$, we obtain
\begin{equation}\label{eq:matrix-3}
\int_U \langle Aa, a\rangle \le 4t^{-1} e^{1/4} Q_U(a), \qquad a \in \mathcal V_q(U).
\end{equation}

Now, let $\lambda$ be nonsmooth. We remove the smoothness assumption by regularization. Fix $U \Subset D$, and let $\chi_\eps$ be a nonnegative standard convolution kernel, with $\eps$ smaller than the distance from $\overline U$ to $\partial D$. Then $\lambda_\eps = \lambda * \chi_\eps$ is smooth near $\overline U$ and takes values in $[0,1]$. Moreover, it satisfies
\[
H^{[q]}_{\lambda_\eps} \ge A_\eps := A * \chi_\eps \ge 0,
\]
which follows by convolving the distributional inequality for each constant vector. We apply \eqref{eq:matrix-3} with $A_\eps$ in place of the field, and we let $\eps\to0$. Since $A_\eps \to A$ uniformly on $\overline U$, we get the same estimate with $A$. The constant is independent of $U$.

To pass to $D$, we use the exhaustion and the corrected forms $u_\nu$ of Lemma~\ref{lem:exhaustion}. The field need not be bounded, so we truncate it first. For $L > 0$, let $A_L$ be the spectral truncation of $A$, which replaces each eigenvalue $a$ by $\min\{a, L\}$. This is a bounded continuous nonnegative field with $A_L \le A$. Since multiplication by $A_L^{1/2}$ is bounded on $L^2$, the weak lower semicontinuity of the norm, together with the estimate already proved on the members of the exhaustion, gives
\[
\int_D \langle A_L u, u\rangle \le \liminf_{\nu} \int_{D_\nu} \langle A_L u_\nu, u_\nu\rangle \le C Q_D(u).
\]
We let $L \to \infty$. By monotone convergence, we get the normalized assertion. Restoring $\Theta$, we get \eqref{eq:matrix-1}. This completes the proof.
\end{proof}

On smoothly bounded domains, \eqref{eq:matrix-3} is Catlin's weighted estimate \cite[Theorem~2.1]{Cat87}. The argument above, in which the exponential weight absorbs the gradient term, is the standard proof of that estimate. Theorem~\ref{thm:matrix} extends the estimate in two directions. It holds on the full form domain of an arbitrary bounded pseudoconvex domain, and it allows bounded weights whose Hessian bound holds only in the sense of distributions. In particular, if $H^{[q]}_\lambda\ge w\operatorname{Id}$ in distributions with continuous $w\ge0$, then $\int_D w|u|^2\le C\max\{1,\operatorname{osc}_D\lambda\}\,Q_D(u)$ for $u\in\mathcal V_q(D)$.

The hypothesis of Theorem~\ref{thm:matrix} is a global lower bound in a fixed degree. In our applications, the lower bound is available only on a collar of the boundary. The following corollary shows that a lower bound on a collar gives an estimate on a smaller collar, and that the estimate passes to all higher form degrees.

\begin{corollary}[Collars and higher degrees]\label{cor:collars}
Let $D\Subset\C^n$ be pseudoconvex, $1\le q\le n$ and $0<t_1<t_0$. Suppose that $\lambda\in L^\infty(D;\R)$ satisfies $H^{[q]}_\lambda\ge0$ in distributions on $D$, and $H^{[q]}_\lambda\ge w\operatorname{Id}$ in distributions on $\{\delta_D<t_0\}$, where $w\ge0$ is continuous there. Then for every $q\le q'\le n$ there exists $C$ such that
\[
\int_{\{\delta_D<t_1\}} w|u|^2 \le C Q_D(u),\qquad u\in\mathcal V_{q'}(D).
\]
\end{corollary}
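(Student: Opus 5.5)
The plan has three steps: modify the weight so that its $q$-Hessian is bounded below by a continuous field on all of $D$, apply Theorem~\ref{thm:matrix} in degree $q$, and then pass to higher degrees with the eigenvalue argument of Subsection~\ref{sec:complex}.

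\emph{Step 1: a global weight.} Choose a cutoff. Let $\psi\in C^\infty(\R)$ be nondecreasing with $\psi=0$ on $(-\infty,t_1]$ and $\psi=1$ on $[t_2,\infty)$, where $t_1<t_2<t_0$. Put $w_1=(1-\psi(\delta_D))\,w$. This function is continuous on $D$, nonnegative, and vanishes where $\delta_D\ge t_2$. We want $H^{[q]}_\lambda\ge w_1\mathrm{Id}$ in distributions on all of $D$. Here two facts combine. On $\{\delta_D<t_0\}$ the hypothesis gives $H^{[q]}_\lambda\ge w\ge w_1$. On $\{\delta_D>t_2\}$ we have $w_1=0$ and $H^{[q]}_\lambda\ge0$. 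These two open sets cover $D$. A distributional inequality that holds locally on an open cover holds globally, by a partition of unity: split the nonnegative test function $\chi$ as $\chi\rho_1+\chi\rho_2$, where $\rho_i\ge0$ are subordinate to the cover. With $A=w_1\mathrm{Id}$, Theorem~\ref{thm:matrix} then gives
\[
\int_{\{\delta_D<t_1\}}w|u|^2\le\int_D w_1|u|^2\le C\max\{1,\operatorname{osc}_D\lambda\}\,Q_D(u),\qquad u\in\mathcal V_q(D).
\]
This settles $q'=q$.

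\emph{Step 2: higher degrees.} The goal is $H^{[q']}_\lambda\ge (q'/q)\,w_1\,\mathrm{Id}\ge w_1\mathrm{Id}$ in distributions. Pointwise, this is the eigenvalue remark after \eqref{eq:prelim-3}. It does not transfer immediately, because the Hessian is only a distribution. I would regularize as in the proof of Theorem~\ref{thm:matrix}. Fix $U\Subset D$ and set $\lambda_\eps=\lambda*\chi_\eps$. Then $\lambda_\eps$ is smooth near $\overline U$ and satisfies $H^{[q]}_{\lambda_\eps}\ge(w_1*\chi_\eps)\mathrm{Id}\ge0$ pointwise, with convolution taken for each constant vector $\eta$. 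For a smooth weight, the pointwise eigenvalue argument gives
\[
H^{[q']}_{\lambda_\eps}\ge \tfrac{q'}{q}(w_1*\chi_\eps)\,\mathrm{Id}.
\]
For each constant $(0,q')$-vector $\eta$, the quantity $\langle H^{[q']}_{\lambda_\eps}\eta,\eta\rangle$ is a fixed linear combination of the entries $(\lambda_\eps)_{j\bar k}$. These entries converge to $\lambda_{j\bar k}$ in distributions. Since $w_1*\chi_\eps\to w_1$ locally uniformly, we can let $\eps\to0$ against nonnegative test functions. This yields $H^{[q']}_\lambda\ge w_1\mathrm{Id}$ in distributions on $D$. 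Theorem~\ref{thm:matrix} in degree $q'$ then finishes the proof, with $C$ depending on $\operatorname{osc}_D\lambda$.

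\emph{Main obstacle.} The step needing most care is the passage of the degree-raising inequality through distributions. The point is that $\langle H^{[q']}\eta,\eta\rangle$ is linear in the Hessian entries, so weak limits preserve the inequality. Everything else is a direct application of Theorem~\ref{thm:matrix}.
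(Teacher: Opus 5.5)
Your proof is correct and follows the paper's argument: you cut off $w$ to get a continuous global lower bound, using $H^{[q]}_\lambda\ge0$ away from the collar, and apply Theorem~\ref{thm:matrix}; then you convolve locally and use the eigenvalue remark after \eqref{eq:prelim-3} to reach degree $q'$. The only difference is the order of the steps: you cut off before raising the degree, whereas the paper raises the degree on the collar first and then cuts off. Your explicit partition-of-unity gluing and weak-limit argument fill in details the paper leaves implicit.
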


\begin{proof}
We first produce a global lower bound from the bound on the collar. Let $\chi$ be a continuous cutoff with values in $[0,1]$ that is equal to one on $\{\delta_D<t_1\}$ and that vanishes near $\{\delta_D\ge t_0\}$, and extend $\chi w$ by zero to $D$. Since the induced Hessian is nonnegative on all of $D$, the field $\chi w\operatorname{Id}$ is a valid global lower bound. Hence Theorem~\ref{thm:matrix} applies to this lower bound. We next pass to higher degrees. For $q'\ge q$, we convolve locally, and we use \eqref{eq:prelim-2} and the inequality that follows \eqref{eq:prelim-3} for the regularized weights. Passing back to distributions, we get $H^{[q']}_\lambda \ge (q'/q) w \operatorname{Id}$ on $\{\delta_D<t_0\}$ and nonnegativity everywhere. Therefore, Theorem~\ref{thm:matrix} in degree $q'$ gives the assertion.
\end{proof}

\section{Harmonic zero extension}\label{sec:harmonic}

In this section, we prove the two real-variable estimates on which the proof of Theorem~\ref{thm:zero-extension} in Section~\ref{sec:proof-zero-extension} rests. Throughout this section, $U\subset\R^d$ is a bounded open set. The estimates below use no pseudoconvexity. For $b=0$, the equivalence \eqref{eq:harmonic-1} is the classical characterization of $H^s$ by the Gagliardo seminorm, and it is due to Gagliardo \cite{Gag57}. We refer the reader to \cite[Proposition~3.4]{DNPV12} for this characterization. We include the proof because the logarithmic factor must be carried through it.

We begin with the kernel characterization of the logarithmically weighted spaces. The kernel below is the Gagliardo kernel multiplied by a negative power of the logarithm of the distance, and this power produces the logarithmic factor in the multiplier that defines the norm.

\begin{lemma}[A logarithmic Gagliardo kernel]\label{lem:gagliardo-log}
Let $d \ge 1$, $0 < s < 1$, and $b \ge 0$. Define
\[
\kappa_{s,b}(y) = |y|^{-d-2s} \ell(|y|)^{-2b} \quad (0 < |y| < 1), \qquad \kappa_{s,b}(y) = 0 \quad (|y| \ge 1).
\]
For every $f \in L^2(\R^d)$,
\begin{equation}\label{eq:harmonic-1}
\norm{f}^2_{H^{s,-b}} \asymp \norm{f}^2_2 + \int_{\R^d} \int_{\R^d} |f(x+y) - f(x)|^2 \kappa_{s,b}(y)\, dy\, dx.
\end{equation}
Both sides may be infinite. The constants depend only on $d, s, b$.
\end{lemma}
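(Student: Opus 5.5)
By Plancherel, the double integral can be written on the Fourier side. For fixed $y$, $\int|f(x+y)-f(x)|^2dx=\int|e^{iy\cdot\xi}-1|^2|\widehat f(\xi)|^2d\xi$. Tonelli's theorem then gives
\[
\int\!\!\int |f(x+y)-f(x)|^2\kappa_{s,b}(y)\,dy\,dx=\int_{\R^d} m(\xi)\,|\widehat f(\xi)|^2\,d\xi,
\qquad m(\xi)=\int_{|y|<1}|e^{iy\cdot\xi}-1|^2\,|y|^{-d-2s}\ell(|y|)^{-2b}\,dy .
\]
All quantities are nonnegative, so this identity holds even when both sides are infinite. The lemma therefore reduces to the multiplier comparison
\[
1+m(\xi)\asymp(1+|\xi|^2)^s[\log(e+|\xi|)]^{-2b},
\]
uniformly in $\xi$, with constants depending only on $d,s,b$.

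For $|\xi|\le 2$ the right side is bounded above and below by constants. On the left, the bound $|e^{iy\cdot\xi}-1|^2\le|y|^2|\xi|^2$ makes $m$ bounded, since $\int_{|y|<1}|y|^{2-d-2s}\,dy<\infty$ and $\ell\ge1$. So only large $|\xi|$ needs work. By rotation invariance we may take $\xi=R e_1$ with $R\ge2$. In polar coordinates $y=\rho\omega$ we get
\[
m=\int_0^1 g(R\rho)\,\rho^{-1-2s}\ell(\rho)^{-2b}\,d\rho,\qquad g(t)=\int_{S^{d-1}}|e^{it\omega_1}-1|^2\,d\omega .
\]
The function $g$ satisfies $g(t)\asymp\min\{t^2,1\}$. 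The upper bound is clear. For the lower bound, $g$ is continuous, positive for $t>0$, behaves like $ct^2$ near $0$, and tends to $2|S^{d-1}|$ as $t\to\infty$ by Riemann--Lebesgue; for $d=1$, $g$ is periodic-like but bounded below on $t\ge1$ on average. To avoid the $d=1$ subtlety, I would instead bound from below on a dyadic range of $\rho$: averaging $|e^{iR\rho\omega_1}-1|^2$ over $\rho\in[1/R,2/R]$ gives a quantity bounded below uniformly.

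The main step is then the one-dimensional integral. Split it at $\rho=1/R$. On $\rho<1/R$ the integrand is comparable to $R^2\rho^{1-2s}\ell(\rho)^{-2b}$. Since $\ell$ is slowly varying, this part is $\asymp R^{2s}\ell(1/R)^{-2b}$. For the upper bound I use that $\rho^{1-2s}\ell(\rho)^{-2b}$ is essentially increasing when $s<1$ (or directly that $\ell(\rho)\ge\ell(1/R)$). On $1/R<\rho<1$ the integrand is comparable to $\rho^{-1-2s}\ell(\rho)^{-2b}$, and its integral is dominated by the lower endpoint. Concretely, $\int_{1/R}^1\rho^{-1-2s}\ell(\rho)^{-2b}\,d\rho\le\ell(1/R)^{-2b}\cdot$ hmm — here $\ell(\rho)\le\ell(1/R)$, so the crude bound goes the wrong way. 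I handle this with a Potter-type bound: for $1/R\le\rho\le1$, $\ell(1/R)/\ell(\rho)\le C_\eps(\rho R)^{\eps}$, so $\ell(\rho)^{-2b}\le C\,\ell(1/R)^{-2b}(\rho R)^{2b\eps}$. Choosing $2b\eps<2s$ keeps the integral convergent at scale $1/R$, and this part is again $\lesssim R^{2s}\ell(1/R)^{-2b}$. The lower bound comes from the dyadic range $[1/R,2/R]$ alone.

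Finally, $\ell(1/R)=\log(eR)\asymp\log(e+R)$ and $R^{2s}\asymp(1+R^2)^s$ for $R\ge2$, which closes the comparison. I expect the slowly-varying (Potter) estimate for $\ell^{-2b}$ on the outer range to be the only delicate point; everything else is Plancherel bookkeeping.
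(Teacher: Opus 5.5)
Your proposal is correct and follows the paper's proof. Both use Plancherel and Tonelli to reduce to the multiplier, split at $|y|=R^{-1}$ with $|e^{it}-1|\le\min\{2,|t|\}$ for the upper bound, and restrict to the annulus $R^{-1}<|y|<2R^{-1}$ for the lower bound. Your Potter-type bound $\ell(1/R)/\ell(\rho)\le 1+\log(\rho R)\le C_\varepsilon(\rho R)^\varepsilon$ is the paper's inequality $(1+T)/(1+T-u)\le 1+u$ in other variables, and $\ell(\rho)\ge\ell(1/R)$ on the inner range is a valid shortcut for the paper's dyadic estimate.

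Do drop the claim $g(t)\asymp\min\{t^2,1\}$. It fails for $d=1$, where $g(t)=8\sin^2(t/2)$ vanishes on $2\pi\mathbb Z$. The averaging over $\rho\in[1/R,2/R]$ that you switch to is what the argument actually needs.
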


\begin{proof}
We use the unitary Fourier transform with phase $e^{-ix\cdot\xi}$. We first write the double integral as a Fourier multiplier. By the theorems of Tonelli and Plancherel, the double integral equals $\int A(\xi) |\widehat f(\xi)|^2\, d\xi$, where
\[
A(\xi) = \int_{|y|<1} |e^{iy\cdot\xi} - 1|^2 \kappa_{s,b}(y)\, dy.
\]
We shall compare this multiplier with the multiplier that defines the norm. To this end, we need two radial estimates. For $0 < a \le 1/2$, they read
\begin{align}
\int_0^a r^{1-2s} \ell(r)^{-2b}\, dr &\le C a^{2-2s} \ell(a)^{-2b}, \label{eq:harmonic-2}\\
\int_a^1 r^{-1-2s} \ell(r)^{-2b}\, dr &\le C a^{-2s} \ell(a)^{-2b}. \label{eq:harmonic-3}
\end{align}
The first estimate follows by dividing the interval of integration into dyadic intervals and summing geometric powers. For the second estimate, we put $T = \log(1/a)$ and $r = e^{-t}$. After factoring $e^{2sT}(1+T)^{-2b}$, the remaining integral is bounded by
\[
\int_0^T e^{-2su} (1+u)^{2b}\, du \le \int_0^\infty e^{-2su} (1+u)^{2b}\, du,
\]
because $(1+T)/(1+T-u) \le 1+u$ for $0 \le u \le T$. This proves the second estimate.

We now estimate the multiplier at large frequencies. For $R = |\xi| \ge 4$, we use the inequality $|e^{it} - 1| \le \min\{2, |t|\}$ and split the integral at $|y| = R^{-1}$. By \eqref{eq:harmonic-2} and \eqref{eq:harmonic-3}, we get $A(\xi) \le C R^{2s} [\log(e+R)]^{-2b}$. For the reverse inequality, we restrict the integral to the annulus $R^{-1} < |y| < 2R^{-1}$ and put $\eta = Ry$. By rotation invariance, we may assume that the direction of $\xi$ is a fixed unit vector $e$. The integral
\[
\int_{1<|\eta|<2} |e^{i\eta\cdot e} - 1|^2 |\eta|^{-d-2s}\, d\eta
\]
is a positive constant depending only on $d$ and $s$, and $\ell(|\eta|/R) \asymp \log(e+R)$ on this annulus. This gives the reverse inequality at large frequencies. For bounded $R$, the added $L^2$ term controls the multiplier. Hence, $1 + A(\xi)$ is comparable to the multiplier in \eqref{eq:Hsb}. This proves the lemma.
\end{proof}

We now present the second estimate. In the argument of Straube \cite{Str97}, the harmonic part is estimated by the theorem of Jerison and Kenig \cite[Theorem~4.2]{JK95} on harmonic functions in Lipschitz domains, combined with the subaveraging argument of Detraz \cite{Det81}. The next lemma replaces these two tools. Because it estimates the zero extension directly, it needs no regularity of $\partial U$.

\begin{lemma}[Harmonic coefficients on arbitrary open sets]\label{lem:harmonic}
Let $U \subset \R^d$ be bounded and open, and set $\delta_*(x) = \min\{\dist(x, \partial U), 1/2\}$. If $h \in L^2(U)$ is distributionally harmonic, $0 < s < 1$, and $b \ge 0$, then
\begin{equation}\label{eq:harmonic-4}
\norm{E_0 h}^2_{H^{s,-b}(\R^d)} \le C \int_U \delta_*(x)^{-2s} \ell(\delta_*(x))^{-2b} |h(x)|^2\, dx
\end{equation}
provided the right side is finite. The constant $C$ depends only on $d$, $s$ and $b$.
\end{lemma}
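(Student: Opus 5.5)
The plan is to reduce everything to the kernel characterization of Lemma~\ref{lem:gagliardo-log}. By that lemma it suffices to bound $\norm{E_0h}_2^2$ and the double integral
\[
I=\int_{\R^d}\int_{\R^d}\abs{E_0h(x+y)-E_0h(x)}^2\kappa_{s,b}(y)\,dy\,dx
\]
by the right side of \eqref{eq:harmonic-4}. The $L^2$ term is immediate. Since $\delta_*\le1/2$ and $t\mapsto t^{-2s}\ell(t)^{-2b}$ is bounded below on $(0,1/2]$, the weight is bounded below by a constant.

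For $I$, we may ignore pairs with $x\notin U$ and $x+y\notin U$, since the integrand vanishes there. Writing $\delta=\delta_*(x)$, we split the remaining pairs into two regions.

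The first region is $x\in U$ with $\abs y<\delta/2$, so that $x+y\in U$. Here we use interior regularity of the harmonic function $h$. By the mean-value property applied to derivatives on balls of radius comparable to $\delta$, we have $\sup_{B(x,\delta/2)}\abs{\nabla h}\le C\delta^{-1}\bigl(\text{avg}_{B(x,3\delta/4)}\abs h^2\bigr)^{1/2}$. This gives $\abs{h(x+y)-h(x)}^2\le C\abs y^2\delta^{-2}\,\text{avg}_{B(x,3\delta/4)}\abs h^2$. Integrating in $y$ with \eqref{eq:harmonic-2} gives $\int_{\abs y<\delta/2}\abs y^2\kappa_{s,b}(y)\,dy\le C\delta^{2-2s}\ell(\delta)^{-2b}$. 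The contribution of this region is therefore at most
\[
C\int_U\delta_*(x)^{-2s}\ell(\delta_*(x))^{-2b}\,\text{avg}_{B(x,3\delta_*(x)/4)}\abs h^2\,dx .
\]
We then exchange the order of integration. If $z\in B(x,3\delta_*(x)/4)$, then $\delta_*(z)$ and $\delta_*(x)$ are comparable, with ratio in $[1/4,7/4]$; this uses the truncation at $1/2$ and the $1$-Lipschitz property of $\delta_*$. Hence the set of such $x$ for fixed $z$ has measure at most $C\delta_*(z)^d$. Together with $\ell(ct)\asymp\ell(t)$ for $t\le1/2$, this bounds the contribution by the right side of \eqref{eq:harmonic-4}.

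The second region consists of all other pairs. On it we use $\abs{E_0h(x+y)-E_0h(x)}^2\le2\abs{E_0h(x+y)}^2+2\abs{E_0h(x)}^2$.

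For the term $\abs{h(x)}^2$ with $x\in U$, we have $\abs y\ge\delta_*(x)/2$. By \eqref{eq:harmonic-3}, $\int_{\abs y\ge\delta_*(x)/2}\kappa_{s,b}\le C\delta_*(x)^{-2s}\ell(\delta_*(x))^{-2b}$, since $\kappa_{s,b}$ vanishes for $\abs y\ge1$.

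For the term $\abs{h(z)}^2$ with $z=x+y\in U$, we check that $\abs y\ge\delta_*(z)/3$ in every case.
\begin{enumerate}
\item If $x\notin U$, then $\abs y\ge\dist(z,\partial U)\ge\delta_*(z)$.
\item If $x\in U$ and $\abs y\ge\delta_*(x)/2$, then $\delta_*(z)\le\delta_*(x)+\abs y\le3\abs y$.
\end{enumerate}
Substituting $x=z-y$ and applying \eqref{eq:harmonic-3} again gives the same weighted bound. Combining both regions with Lemma~\ref{lem:gagliardo-log} proves \eqref{eq:harmonic-4}, with constants depending only on $d,s,b$.

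The main obstacle is the first region. There one must check carefully that the interior gradient estimate and the change of order of integration give constants independent of $U$. The truncation $\delta_*=\min\{\dist,1/2\}$ is designed exactly so that $\delta_*$ is $1$-Lipschitz, $\ell(\delta_*)$ is well defined, and the comparabilities $\delta_*(z)\asymp\delta_*(x)$ and $\ell(\delta_*(z))\asymp\ell(\delta_*(x))$ hold uniformly.
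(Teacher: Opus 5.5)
Your proof is correct and follows essentially the same route as the paper: the logarithmic Gagliardo characterization, an interior gradient estimate combined with \eqref{eq:harmonic-2} and a Fubini exchange based on the $1$-Lipschitz property of $\delta_*$ for the near pairs, and the splitting $|a-b|^2\le2|a|^2+2|b|^2$ with the tail bound \eqref{eq:harmonic-3} for all remaining pairs. The differences are only cosmetic: you merge the paper's cross pairs (one point outside $U$) and far pairs into a single region, and you obtain the gradient bound from the mean-value property rather than from Caccioppoli's inequality and the subharmonicity of $|\nabla h|^2$.
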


\begin{proof}
By interior elliptic regularity, $h$ is smooth. We shall estimate the double integral in the kernel characterization in three pieces, according to the position of the pair of points. We first introduce the weight. Set $W(t) = t^{-2s}\ell(t)^{-2b}$ for $0 < t \le 1/2$. This weight is positive, it is bounded below by a positive constant, and it satisfies $W(t') \asymp W(t)$ when $t'/t$ stays in a fixed compact subset of $(0,\infty)$. By \eqref{eq:harmonic-3}, we get
\begin{equation}\label{eq:harmonic-5}
\int_{a<|y|<1} \kappa_{s,b}(y)\, dy \le C W(a), \qquad 0 < a \le 1/2.
\end{equation}
We first consider the pairs with one point inside the open set and the other point outside it. For such a pair $x \in U$, $y \notin U$, we have $|x-y| \ge \delta_U(x) \ge \delta_*(x)$. Hence, for a fixed interior point, the integration in the exterior variable runs over distances at least the truncated boundary distance of that point. By \eqref{eq:harmonic-5}, it follows that the cross contribution to \eqref{eq:harmonic-1} is at most $C\int_U W(\delta_*)|h|^2$.

We next consider the far pairs, i.e., the pairs in $U \times U$ with $|x-y| \ge \delta_*(x)/4$. For these pairs, we use the inequality $|h(x) - h(y)|^2 \le 2|h(x)|^2 + 2|h(y)|^2$. The first term is controlled by \eqref{eq:harmonic-5}. For the second term, we need the distance between the two points to dominate the truncated boundary distance of the second point as well. Since $\delta_*$ is $1$-Lipschitz, we have $\delta_*(y) \le \delta_*(x) + |x-y| \le 5|x-y|$. Interchanging $x$ and $y$, we get the same bound for the second term.

It remains to consider the near pairs, i.e., the pairs with $|x-y| < \delta_*(x)/4$. For these pairs, we shall use the interior harmonic gradient estimate
\begin{equation}\label{eq:harmonic-6}
\sup_{B(x,\delta_*(x)/4)} |\nabla h|^2 \le C \delta_*(x)^{-d-2} \int_{B(x,\delta_*(x)/2)} |h(z)|^2\, dz.
\end{equation}
We recall why this estimate holds. By Caccioppoli's inequality on concentric interior balls, the integral of $|\nabla h|^2$ is bounded by the inverse square radius times the integral of $|h|^2$. By the mean-value inequality for the subharmonic function $|\nabla h|^2$, we then get \eqref{eq:harmonic-6}. Both inequalities follow from compactly supported integration by parts and the harmonic mean-value formula.

For a near pair, the segment joining $x$ to $y$ lies in this interior ball, so the difference of the values at its endpoints is bounded by its length times the supremum of the gradient over the ball. Multiplying \eqref{eq:harmonic-6} by $|x-y|^2$, integrating in $y$ against the kernel and using \eqref{eq:harmonic-2}, we see that the near contribution is bounded by
\[
C \int_U \delta_*(x)^{-d} W(\delta_*(x)) \int_{B(x,\delta_*(x)/2)} |h(z)|^2\, dz\, dx.
\]
On the region of integration, $\delta_*(x)/2 \le \delta_*(z) \le 3\delta_*(x)/2$ and $|x-z| \le \delta_*(z)$. The first relation lets us replace the weight at the outer point by the weight at the inner point, and the second relation lets us interchange the order of integration. Therefore, by Fubini's theorem and the volume of this last ball, the expression is bounded by $C\int_U W(\delta_*(z))|h(z)|^2\, dz$. Finally, the $L^2$ term in the kernel characterization is controlled by the same weighted integral, because the weight is bounded below. Hence we get \eqref{eq:harmonic-4}. This proves the lemma.
\end{proof}

\section{Proof of Theorem~\ref{thm:zero-extension}}\label{sec:proof-zero-extension}
In this section, we prove Theorem~\ref{thm:zero-extension}. The proof follows Straube's argument \cite{Str97}. We first obtain a weighted $L^2$ estimate from Theorem~\ref{thm:matrix}. We then split $u$ into a part with $H^1_0$ coefficients and a part with harmonic coefficients, and we estimate the zero extensions of the two parts separately.

\begin{proof}[Proof of Theorem~\ref{thm:zero-extension}]
We first reduce to the degree in which the hypothesis on the induced Hessian is stated. By Corollary~\ref{cor:collars}, it suffices to work in degree $q$. Next we obtain the weighted estimate. On a collar, the hypothesis on the induced Hessian and the matrix estimate bound the weighted integral by the energy. On the complement of the collar, the weight is bounded and coercivity bounds the integral by the energy. Combining the two bounds, we get
\begin{equation}\label{eq:proof-zero-extension-1}
\int_D W(\delta_*)|u|^2 \le C Q_D(u), \qquad u \in \mathcal V_q(D),
\end{equation}
where $W(t) = t^{-2s}\ell(t)^{-2b}$ and $\delta_* = \min\{\delta_D, 1/2\}$.

Now, write $u = \sum\nolimits'_{|J|=q} u_J\, d\bar z_J$. We shall show that the Laplacian of each coefficient is controlled by the energy in the dual norm. By the distributional identity \eqref{eq:prelim-1}, we get
\begin{equation}\label{eq:proof-zero-extension-2}
\norm{\Delta u_J}_{(H^1_0(D))^*} \le C Q_D(u)^{1/2}.
\end{equation}
To see this, observe that each coefficient of $\Delta u$ is a sum of first derivatives of coefficients of $T_q u$ and $T^*_{q-1} u$. Integration against a compactly supported test function therefore gives the estimate for such test functions, and the estimate extends to $H^1_0(D)$ by density.

We now solve the Dirichlet problem with these Laplacians as data. By the Lax--Milgram theorem for the Dirichlet energy, there exists $v_J \in H^1_0(D)$ such that $\Delta v_J = \Delta u_J$. The coercivity of the Dirichlet energy needed here is Poincar\'e's inequality. Observe that Poincar\'e's inequality holds on $D$ by zero extension into a fixed ball containing $D$, without any assumption on $\partial D$. With $v = \sum\nolimits'_J v_J\, d\bar z_J$, we get
\begin{equation}\label{eq:proof-zero-extension-3}
\norm{E_0 v}^2_{H^1(\R^{2n})} + Q_D(v) \le C Q_D(u).
\end{equation}
We next check that the form with these coefficients lies in the form domain. To see that $v\in\mathcal V_q(D)$, we approximate each $v_J$ in $H^1_0(D)$ by compactly supported smooth functions. Their first formal derivatives converge in $L^2$. Hence, by closedness of $T_q$ and $T^*_{q-1}$, $v$ lies in $\mathcal V_q(D)$.

Set $h = u - v$. Then the coefficients of $h$ are harmonic, $h \in \mathcal V_q(D)$ and $Q_D(h) \le C Q_D(u)$. The right side of \eqref{eq:harmonic-4} is the weighted integral that we have already bounded by the energy. Therefore, by \eqref{eq:proof-zero-extension-1} and Lemma~\ref{lem:harmonic}, we get $\norm{E_0 h}^2_{H^{s,-b}}\le C Q_D(u)$. For the other part, the first-order estimate \eqref{eq:proof-zero-extension-3} is stronger than what we need, since $s < 1$. Hence, $E_0 v$ is bounded in the same norm. Adding the two estimates, we get \eqref{eq:zero-extension}. We point out that no Hardy inequality and no extension operator for $D$ is used. This completes the proof.
\end{proof}

\section{Top degree and the half-order obstruction}\label{sec:complements}

In this section, we record two complements to Theorem~\ref{thm:zero-extension}. The first complement concerns top degree. It is standard that on $(0,n)$-forms the $\ddbar$-Neumann problem is a Dirichlet problem, see \cite{FK72,Str10}. We give the short proof because we need the estimate on arbitrary bounded open sets, without pseudoconvexity or boundary regularity. The second complement is the half-order obstruction mentioned in Subsection~\ref{sec:intro-estimates}.

We begin with the first-order estimate in top degree.

\begin{proposition}[Top degree]\label{prop:top-degree}
Let $D \subset \C^n$ be a bounded open set. There is $C_D$, depending only on $n$ and $\operatorname{diam}D$, such that
\begin{equation}\label{eq:complements-1}
\norm{E_0 u}^2_{H^1(\R^{2n})} \le C_D \bigl\| \ddbar^* u \bigr\|^2_2, \qquad u \in \dom \ddbar^*_{n-1} \subset L^2_{(0,n)}(D).
\end{equation}
\end{proposition}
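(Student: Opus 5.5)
The plan is to identify $\dom\ddbar^*_{n-1}$ on $(0,n)$-forms with the zero-extension space $H^1_0$. Write $u=f\,d\bar z_1\wedge\dots\wedge d\bar z_n$ with $f\in L^2(D)$. The adjoint $\vartheta$ acts in the interior as
\[
 \vartheta(f\,d\bar z_1\wedge\dots\wedge d\bar z_n)=-\sum_j \pm\,\partial_j f\, d\bar z_{\{1..n\}\setminus j}.
\]
Hence $u\in\dom\ddbar^*_{n-1}$ means that there are $g_j\in L^2(D)$ with
\[
(f,\partial_{\bar j}\varphi)_D=-(g_j,\varphi)_D
\]
for every $\varphi$ in the maximal domain of $\ddbar$ on functions, that is, for every $\varphi\in L^2(D)$ with $\ddbar\varphi\in L^2$. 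In particular this holds for every $\varphi\in C^\infty_c(\C^n)$ restricted to $D$, since such restrictions lie in $\dom\ddbar_{n-1}$ with no boundary condition. The key first step is therefore the following: the zero extension $F=E_0f$ satisfies, for all $\varphi\in C_c^\infty(\R^{2n})$,
\[
\int F\,\overline{\partial_{\bar j}\varphi}=-\int E_0g_j\,\overline{\varphi},
\]
so $\partial_j F=E_0 g_j$ (up to sign conventions) holds as distributions on all of $\C^n$, with $\|\partial_jF\|_{L^2(\R^{2n})}\le\|\ddbar^*u\|_2$.

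The second step upgrades control of the holomorphic derivatives $\partial_jF$ to a full $H^1$ bound. Since $F\in L^2$ with compact support, Plancherel gives $|\widehat{\partial_{\bar j}F}|=|\widehat{\partial_jF}|$, because the symbols $i\xi$-combinations for $\partial_j$ and $\partial_{\bar j}$ have equal modulus. Thus $\|\nabla F\|_2^2=2\sum_j\|\partial_jF\|_2^2\le 2\|\ddbar^*u\|_2^2$, with normalization constants depending only on $n$. Alternatively, one can observe that $\ddbar\vartheta$ is $-\tfrac14\Delta$ on top forms and integrate by parts.

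The third step is the $L^2$ part, handled by Poincaré on a ball. $F$ is supported in a ball $B$ of radius $\diam D$ (after translation), and $F\in H^1(\R^{2n})$ vanishes outside $B$, so $F\in H^1_0(B)$ and $\|F\|_2\le C(\diam D)\|\nabla F\|_2$. Combining the second and third steps gives \eqref{eq:complements-1}.

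The main obstacle is the first step. One must make sure that test functions $\varphi\in C^\infty_c(\R^{2n})$ genuinely lie in $\dom\ddbar$ on arbitrary bounded open $D$. They do, since $\varphi|_D$ and all its derivatives are bounded and $D$ has finite volume. Then the defining relation of the Hilbert adjoint with such $\varphi$ is exactly the distributional identity on $\R^{2n}$, so no boundary regularity or pseudoconvexity is needed. Care with signs and the factor conventions in $\vartheta$ only affects constants.
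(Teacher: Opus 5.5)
Your proposal is correct and follows the paper's proof essentially step for step. You test the adjoint identity against restrictions of compactly supported ambient $(0,n-1)$-forms to get $\vartheta(E_0u)=E_0(\ddbar^*u)$ on $\R^{2n}$, use Plancherel to pass from the $\partial/\partial z_j$ derivatives to the full gradient, and finish with Poincar\'e on a ball of radius $\operatorname{diam}D$. One harmless slip: with $\partial_j=\tfrac12(\partial_{x_j}-i\partial_{y_j})$ the identity is $\|\nabla F\|_2^2=4\sum_j\|\partial_jF\|_2^2$, not $2\sum_j\|\partial_jF\|_2^2$, which only changes the constant.
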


\begin{proof}
Write $u = f\, d\bar z_1 \wedge \cdots \wedge d\bar z_n$. We first show that the formal adjoint commutes with zero extension in top degree. Testing the adjoint identity against restrictions of compactly supported smooth ambient $(0,n-1)$-forms, we get $\vartheta(E_0 u) = E_0(\ddbar^* u)$ in ambient distributions, cf. \cite{FK72}. Its distinct coefficients are $\pm\partial(E_0 f)/\partial z_j$. By Fourier multipliers,
\[
\norm{\nabla E_0 f}^2_2 = 4 \sum_j \norm{\partial(E_0 f)/\partial z_j}^2_2 = 4 \bigl\| \ddbar^* u \bigr\|^2_2.
\]
Since the function is already in $L^2$, the Fourier argument first establishes membership in $H^1$. It remains to control the norm of the function itself. Its support lies in the interior of a ball of radius $\operatorname{diam}D$ containing $D$, where Poincar\'e's inequality controls its $L^2$ norm. This proves the proposition.
\end{proof}

We now turn to the half-order obstruction. The next proposition shows that a half-order zero-extension estimate with logarithmic exponent at most one half fails in every degree below the top as soon as the boundary contains a smooth patch, and that on pseudoconvex domains no bounded weight satisfies the corresponding lower bound on the induced Hessian.

\begin{proposition}[Sharp half-order logarithmic obstruction]\label{prop:half-order}
Let $D \Subset \C^n$ be a domain and $1 \le q < n$. Suppose that $W$ is an open neighborhood of a point $p\in\partial D$ such that $W\cap\partial D$ is a $C^\infty$ hypersurface and $W\cap D$ lies on one side of it. Then the following hold.
\begin{enumerate}
\item[(i)] There is $u \in \mathcal V_q(D)$, smooth on $\overline D\cap W$ and supported in $W$, such that
\[
E_0 u \notin H^{1/2,-b}(\R^{2n}) \qquad (0 \le b \le 1/2).
\]
\item[(ii)] If $D$ is pseudoconvex and $0\le b\le 1/2$, there is no $\lambda\in L^\infty(D;\R)$ with $H^{[q]}_\lambda\ge0$ in distributions on $D$ and $H^{[q]}_\lambda \ge c\,\delta_D^{-1}\ell(\delta_D)^{-2b}\operatorname{Id}$ in distributions on $W\cap\{\delta_D<t_0\}$ for some $c,t_0>0$.
\end{enumerate}
\end{proposition}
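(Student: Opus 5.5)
Part (i) comes from an explicit form that is smooth up to a flat piece of boundary but does not vanish there. Its zero extension then has a jump across a hypersurface, and such a jump lies in $H^{1/2,-b}$ only when $b>1/2$. Part (ii) is deduced from (i): Theorem~\ref{thm:zero-extension} turns such a weight into a zero-extension estimate, and (i) shows that this estimate fails.

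\textbf{Construction for (i).} After a rigid motion, take $p=0$. Choose a small ball $B\subset W$ around $p$, and let $\chi\in C_c^\infty(B)$ satisfy $\chi(p)\ne0$. Since $q<n$, we can use a $(0,q)$-form whose normal component vanishes on the boundary, so that the $\ddbar$-Neumann boundary condition $\ddbar\rho\lrcorner u=0$ holds. Concretely, let $\rho$ be a local defining function. Pick a smooth $(0,q)$-form $\omega$ on $B$ that is pointwise orthogonal to $\ddbar\rho$, for instance built from $q$ smooth $(0,1)$-forms spanning part of the complex tangential directions, with $\omega(p)\ne0$; this is possible because the complex tangent space has dimension $n-1\ge q$. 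Put $u=\chi\omega$ restricted to $D$. Then $u$ is smooth up to $\partial D\cap W$ and satisfies $\ddbar\rho\lrcorner u=0$ on the boundary. The standard integration-by-parts characterization of $\dom\ddbar^*$ for forms smooth up to a smooth boundary piece, with support away from the rest of $\partial D$, gives $u\in\dom\ddbar^*$. Trivially $u\in\dom\ddbar$, so $u\in\mathcal V_q(D)$.

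\textbf{Failure of the norm.} Some coefficient $f$ of $E_0u$ equals $g\,\mathbf 1_{D}$, where $g$ is smooth and $g(p)\ne0$. Flatten the boundary by a smooth diffeomorphism. Membership in $H^{1/2,-b}$ is invariant under diffeomorphisms and under multiplication by smooth cutoffs, which follows from Lemma~\ref{lem:gagliardo-log} since the kernel characterization is local. It then suffices to show that $\psi(x')\phi(x_d)H(x_d)$, with $H$ the Heaviside function, is not in $H^{1/2,-b}$ for $b\le1/2$. By the Fourier transform this reduces to one variable: $\widehat{\phi H}(\xi)\sim c/\xi$ for large $|\xi|$. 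Then
\[
\int|\xi|(\log(e+|\xi|))^{-2b}|\xi|^{-2}\,d\xi=\infty \quad\text{for } 2b\le1.
\]
Alternatively, one can use the Gagliardo form directly: the cross pairs across the hyperplane give $\int_0^1 t^{-1}\ell(t)^{-2b}\,dt$, which diverges exactly when $2b\le1$. I would take this kernel route, since it matches the cross-term computation in Lemma~\ref{lem:harmonic} and avoids the change of variables in Fourier space. The key fact is that the cross contribution near a flat boundary is bounded below by $\int\int_{x_d>0>y_d}\,|x-y|^{-d-1}\ell(|x-y|)^{-2b}$, which behaves like $\int_0 t^{-1}\ell(t)^{-2b}\,dt$.

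\textbf{Part (ii).} Suppose such a $\lambda$ exists. A localized version of Theorem~\ref{thm:zero-extension} is needed, because the Hessian bound holds only on $W\cap\{\delta_D<t_0\}$ and the theorem takes $s=1/2<1$. The weighted estimate $\int W(\delta_*)|u|^2\le CQ_D(u)$ follows from Theorem~\ref{thm:matrix} applied with the field $\chi w\,\mathrm{Id}$, where $\chi$ is a cutoff localizing to $W$, for forms supported in a smaller set $W'\Subset W$. The harmonic splitting of Section~\ref{sec:proof-zero-extension} then has to be localized. Here I would avoid the splitting altogether and instead use the condition directly on the form $u$ from (i). The weighted integral $\int\delta^{-1}\ell(\delta)^{-2b}|u|^2$ behaves like $\int_0 t^{-1}\ell(t)^{-2b}\,dt=\infty$ for $b\le1/2$, since $|u|$ does not vanish on the boundary near $p$. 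But it must be at most $CQ_D(u)<\infty$. This contradiction gives (ii) without any Sobolev theory, and it is the route I would commit to. The main obstacle is then only the domain-membership check in the construction, i.e.\ verifying that a tangential form is in $\dom\ddbar^*$ when $\partial D$ is smooth only near $p$; the cutoff support handles this.
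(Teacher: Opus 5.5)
Your proposal is correct and follows essentially the paper's proof. For (i) you use the same wedge of tangential $(0,1)$-forms times a cutoff, and the same lower bound on the cross part (one point in $D$, one outside) of the logarithmic Gagliardo seminorm of Lemma~\ref{lem:gagliardo-log}, which reduces to $\int_0 t^{-1}\ell(t)^{-2b}\,dt=\infty$ for $2b\le1$; the paper gets this bound from the exterior cone directly rather than by flattening. For (ii) you apply Theorem~\ref{thm:matrix} with a cutoff lower bound to that same form, contradicting $Q_D(u)<\infty$, exactly as the paper does, and the paper settles $u\in\dom\ddbar^*_{q-1}$ as you indicate: it cuts off the test form near the support of $u$ and uses Friedrichs' lemma on the smooth boundary piece.
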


\begin{proof}
We first construct the form in (i). We choose a smaller patch with a smooth defining function and a smooth local frame of complex tangential $(0,1)$-forms. We let $u$ be the wedge of $q$ independent frame elements, extended smoothly into a collar and multiplied by a smooth cutoff supported in $W$ and equal to one near $p$. The resulting form $u$ has nonzero boundary value, it satisfies the normal-contraction condition for $\dom \ddbar^*$, and it vanishes near all other boundary points. Since the form is smooth up to the boundary, $u\in\dom\ddbar_q$. We next verify the adjoint domain condition. If $\varphi\in\dom\ddbar_{q-1}$ and $\chi\in C^\infty_c(W)$ equals one near $\operatorname{supp}u$, then $(u,\ddbar\varphi)=(u,\ddbar(\chi\varphi))$, and $\chi\varphi$ is supported where $\partial D$ is smooth. Hence Friedrichs' lemma reduces the adjoint identity to smooth $\varphi$, and local integration by parts shows that $u\in\dom\ddbar^*_{q-1}$. Thus $u$ belongs to $\mathcal V_q(D)$.

We now show that the zero extension of this form is not in the half-order space. Because the form has nonzero boundary value, one coefficient $f$ is bounded away from zero in a smaller collar. We bound the cross contribution from below, i.e., the part of the double integral in which one point lies in the domain and the other point lies outside it. Because the boundary is smooth near the support, the exterior cone at a nearby boundary point contains, at distance comparable to $\delta_D(x)$ from $x \in D$, a set of volume at least $c\delta_D(x)^{2n}$. Therefore, the cross contribution in Lemma~\ref{lem:gagliardo-log}, for $s = 1/2$, is bounded below by
\[
c \int |f(x)|^2 \delta_D(x)^{-1} \ell(\delta_D(x))^{-2b}\, dx.
\]
In smooth collar coordinates the normal integral of this expression contains
\[
\int_0^{\eps} \frac{dt}{t[\log(e/t)]^{2b}} = \infty \qquad (2b \le 1).
\]
Hence, $E_0u\notin H^{1/2,-b}(\R^{2n})$. This should be compared with the failure of zero extension at $s=1/2$ on bounded $C^\infty$ domains, see \cite[Remark~2.5]{HM17}. This proves (i).

We now prove (ii). Suppose, to get a contradiction, that a weight $\lambda$ as in (ii) existed. The matrix estimate localizes to the collar in the same way as in the proof of Corollary~\ref{cor:collars}, namely by applying Theorem~\ref{thm:matrix} with a cutoff lower bound. Applying this localized estimate to the same form $u$, we see that the divergent weighted integral above would be bounded by the finite energy $Q_D(u)$, which is a contradiction.
\end{proof}

\begin{remark}
We point out what the proposition says for the ball. Taking $D=\B^n$, we see that a zero-extension estimate $\norm{E_0u}^2_{H^{1/2,-b}(\R^{2n})}\le CQ_D(u)$ with $b\le1/2$ fails on the ball in every degree $q<n$. This should be compared with Theorem~\ref{thm:directional}, where $b>1/2$.
\end{remark}

\section{Spectral bounds from zero extension}\label{sec:spectral}

In this section, we derive spectral consequences of a zero-extension estimate with a logarithmic loss, such as the one in Theorem~\ref{thm:zero-extension}. Such an estimate forces the $\ddbar$-Neumann operator $N_q$ to be compact, and it gives a decay rate for the eigenvalues and membership in a Schatten class. The argument is a Fourier truncation on a torus that contains the domain, and it uses the multiplier of Lemma~\ref{lem:gagliardo-log}. We begin with the eigenvalue bounds.

\begin{theorem}[Eigenvalue and Schatten bounds]\label{thm:spectral}
Let $D\Subset\C^n$ be pseudoconvex, $1\le q\le n$, $0<s<1$, and $b\ge0$. Suppose that
\[
\|E_0u\|^2_{H^{s,-b}(\R^{2n})}\le CQ_D(u),\qquad u\in\mathcal{V}_q(D).
\]
Then $N_q$ is compact, and its positive eigenvalues, in decreasing order, satisfy the upper bounds
\begin{equation}\label{eq:spectral-1}
\mu_j(N_q)\le C'j^{-s/n}[\log(e+j)]^{2b},
\end{equation}
where $C'$ depends only on $C$, $n$, $s$, $b$ and $\diam D$. Equivalently, the eigenvalues of the $\ddbar$-Neumann Laplacian, in increasing order, satisfy $\nu_j(\Box_q)\ge cj^{s/n}[\log(e+j)]^{-2b}$. Moreover, $N_q\in\mathfrak S_p$ for every $p>n/s$.
\end{theorem}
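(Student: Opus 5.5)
The approach is a min--max argument that compares $\Box_q$ with a Fourier multiplier on a torus. First, compactness and the eigenvalue bound both reduce to counting. Since $N_q=\Box_q^{-1}$ is bounded and positive (Lemma~\ref{lem:uniform-L2}), the min--max principle gives
\[
\mu_j(N_q)^{-1}=\nu_j(\Box_q)=\sup_{\dim L\le j-1}\ \inf\{Q_D(u):u\in\mathcal V_q(D)\cap L^\perp,\ \|u\|_2=1\}.
\]
So it suffices to exhibit, for each $j$, a subspace $L$ of dimension at most $j-1$ such that $Q_D(u)\ge c\,j^{s/n}[\log(e+j)]^{-2b}\|u\|_2^2$ for all $u\perp L$. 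Letting $j\to\infty$ then shows that these lower bounds tend to infinity, and this gives compactness.

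To build $L$, place $D$ inside a cube of side $2\diam D$ and identify the cube with a torus $\mathbb T^{2n}$. Since $E_0u$ is supported in the interior of the cube, Fourier series on the torus can be compared with the Fourier transform. Fix a frequency threshold $\Lambda\ge1$. Let $L_\Lambda$ be the span of the restrictions to $D$ of $e^{ik\cdot x}\,d\bar z_J$, where $k$ ranges over lattice frequencies with $|k|\le\Lambda$ and $J$ over increasing multi-indices. Then $\dim L_\Lambda\le C_n\Lambda^{2n}$.

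If $u\perp L_\Lambda$, then all Fourier coefficients of $E_0u$ at frequencies $|k|\le\Lambda$ vanish. Parseval's identity on the torus then gives $\|u\|_2^2=\sum_{|k|>\Lambda}|c_k|^2$. On these frequencies the multiplier satisfies $m(k)=(1+|k|^2)^s[\log(e+|k|)]^{-2b}\gtrsim\Lambda^{2s}[\log(e+\Lambda)]^{-2b}$, because $m$ is eventually increasing. Hence
\[
\Lambda^{2s}[\log(e+\Lambda)]^{-2b}\|u\|_2^2\lesssim\sum_k m(k)|c_k|^2\lesssim\|E_0u\|_{H^{s,-b}(\R^{2n})}^2\le C\,Q_D(u).
\]

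The main obstacle is the middle comparison, between the periodic norm and the norm on $\R^{2n}$ for functions supported in the interior of the cube. I would handle it with the Gagliardo characterization of Lemma~\ref{lem:gagliardo-log}. The kernel $\kappa_{s,b}$ vanishes for $|y|\ge1$, so after scaling the torus to side larger than $\diam D+2$, the difference quotients of the periodization coincide with those of $E_0u$. The torus analogue of the same multiplier computation then shows that the periodic double integral equals $\sum_k A(k)|c_k|^2$ with $1+A(k)\asymp m(k)$.

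Finally, choose $\Lambda$ with $C_n\Lambda^{2n}\approx j-1$, so that $\Lambda\approx j^{1/(2n)}$. Then $\Lambda^{2s}\approx j^{s/n}$ and $\log(e+\Lambda)\asymp\log(e+j)$, and this gives \eqref{eq:spectral-1}. All constants depend only on $C$, $n$, $s$, $b$ and $\diam D$. For the Schatten statement, we have $\sum_j\mu_j^p\lesssim\sum_j j^{-ps/n}[\log(e+j)]^{2bp}<\infty$ exactly when $p>n/s$. Since $N_q$ is positive, its singular values are its eigenvalues, so $N_q\in\mathfrak S_p$ for every $p>n/s$.
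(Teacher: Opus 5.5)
Your argument is correct and is essentially the paper's proof. It uses the same periodization on a torus whose faces stay more than unit distance from $D$, the same multiplier comparison at lattice points via the logarithmic Gagliardo kernel, and the same low-frequency cutoff at $\Lambda\approx j^{1/(2n)}$.

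The only difference is packaging. The paper bounds the approximation numbers of the inclusion $\iota:(\mathcal V_q(D),Q_D^{1/2})\to L^2_{(0,q)}(D)$ by the finite-rank maps $u\mapsto(P_RE_0u)|_D$, whose range is exactly your $L_\Lambda$. It then uses $N_q=\iota\iota^*$, which is the dual form of your Courant--Fischer argument.
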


\begin{proof}[Proof of Theorem~\ref{thm:spectral}]
We give $V=\mathcal{V}_q(D)$ the Hilbert norm $Q_D^{1/2}$. By Lemma~\ref{lem:uniform-L2}, this norm is complete and equivalent to the form-graph norm. Let $\iota:V\longrightarrow\mathcal{H}_q(D)$ be the inclusion. We shall approximate the inclusion by operators of finite rank, and to this end we pass to a torus. We first translate $D$ so that it contains the origin. This changes none of the norms involved. We then choose a cube $Q=(-L,L)^{2n}$, with $L$ depending only on $\diam D$, whose boundary has distance more than three from $D$, and we periodize $E_0u$ on the torus obtained by identifying opposite faces of the cube.

The periodic $L^2$ norm equals $\|u\|_2$. For translations of length less than one, the squared translation difference of the periodized form over $Q$ equals the corresponding integral on $\R^{2n}$, because the support of the extended form and its unit neighborhood are separated from the faces of the cube. We now expand in Fourier series and repeat the proof of Lemma~\ref{lem:gagliardo-log}, with the same multiplier $A(\xi)$ evaluated at the lattice points. This gives
\begin{equation}\label{eq:spectral-2}
\sum_{\nu\in\mathbb{Z}^{2n}}\frac{(1+|\nu|^2)^s}{[\log(e+|\nu|)]^{2b}}|\widehat{(E_0u)_{\mathrm{per}}}(\nu)|^2\le CQ_D(u),
\end{equation}
where the form coefficients are summed and the frequency scaling $\pi/L$ is absorbed into the constants.

Let $P_R$ be the projection that retains the modes with $|\nu|\le R$. Its rank on forms is at most $C_{n,q}R^{2n}$. The multiplier is increasing for large frequencies, and therefore
\[
\|E_0u-P_RE_0u\|^2_{L^2(Q)}\le CR^{-2s}[\log(e+R)]^{2b}Q_D(u).
\]
Restricting $P_RE_0u$ to $D$, we obtain approximations of $\iota$ of finite rank whose errors tend to zero. We choose $R$ comparable to $k^{1/(2n)}$, so that the rank is less than $k$. Then
\begin{equation}\label{eq:spectral-3}
s_k(\iota)\le Ck^{-s/(2n)}[\log(e+k)]^b.
\end{equation}
Here we use the minimum-error characterization of the singular values of an operator between Hilbert spaces.

We next note that $N_q=\iota\iota^*$. Indeed, for $f\in\mathcal{H}_q(D)$, the vector $\iota^*f\in V$ is characterized by the identity $Q_D(\iota^*f,v)=(f,\iota v)$ for all $v\in V$, and its image in $\mathcal{H}_q(D)$ is exactly the weak solution that defines $N_qf$. Hence the nonzero eigenvalues of $N_q$ are the squares $s_k(\iota)^2$. Squaring \eqref{eq:spectral-3} gives \eqref{eq:spectral-1}, and the statement for $\Box_q$ follows by taking reciprocals. Finally,
\[
\sum_k\mu_k(N_q)^p\le C\sum_kk^{-ps/n}[\log(e+k)]^{2bp}<\infty\qquad(p>n/s).
\]
This completes the proof.
\end{proof}

The singular values of the canonical solution operator are controlled by the eigenvalues of the $\ddbar$-Neumann operator. The following corollary records this consequence, and it also records the top-degree case, in which only boundedness and pseudoconvexity are assumed.

\begin{corollary}[Canonical solutions and top degree]\label{cor:canonical-schatten}
\begin{enumerate}
\item[(i)] Under the hypotheses of Theorem~\ref{thm:spectral}, the canonical solution operator $S_q$ satisfies
\[
s_k(S_q)\le Ck^{-s/(2n)}[\log(e+k)]^b.
\]
Hence $S_q\in\mathfrak S_p$ for every $p>2n/s$.
\item[(ii)] If $D\subset\C^n$ is bounded and pseudoconvex, then $\mu_k(N_n)\le Ck^{-1/n}$ and $N_n\in\mathfrak S_p$ for every $p>n$.
\end{enumerate}
\end{corollary}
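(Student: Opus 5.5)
For (i), I plan to factor the canonical solution operator through the inclusion $\iota$ used in the proof of Theorem~\ref{thm:spectral}. For $g\in\ker T_q$ (with $S_q$ vanishing on $(\ker T_q)^\perp$), the canonical solution $S_qg$ is the minimum-norm solution of $T_{q-1}a=g$. The standard identity is $S_q=T^*_{q-1}N_q$ on $\ker T_q$, that is, $S_qg=\vartheta N_qg$. To get singular values, I will use $S_q^*S_q=P N_q P$ restricted to closed forms, where $P$ is the projection onto $\ker T_q$. The reason is that for closed $g$ one has $\|S_qg\|^2=\|T^*_{q-1}N_qg\|^2\le Q_D(N_qg)=(N_qg,g)$. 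Hence $\|S_qg\|_2^2\le(N_qg,g)$ for all $g$, after extending by zero off $\ker T_q$. By the minimax principle this gives $s_k(S_q)^2=\mu_k(S_q^*S_q)\le\mu_k(N_q)$. The cleaner way to state it is $S_q^*S_q\le N_q$ as quadratic forms, or equivalently $s_k(S_q)\le s_k(N_q^{1/2})=s_k(\iota)$.

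Combined with \eqref{eq:spectral-3} or \eqref{eq:spectral-1}, this gives $s_k(S_q)\le Ck^{-s/(2n)}[\log(e+k)]^b$. Summability of $s_k(S_q)^p$ for $p>2n/s$ then follows by comparison with $\sum k^{-ps/(2n)}[\log(e+k)]^{bp}$. The only point needing care is the inequality $\|S_qg\|^2\le (N_qg,g)$. I would verify it from $N_qg\in\dom\Box_q$ and $g=\Box_qN_qg$. For closed $g$, $T_qN_qg=0$, since $N_q$ commutes with $T_q$ and $T_qg=0$, so $Q_D(N_qg)=\|T^*_{q-1}N_qg\|^2=(\Box_qN_qg,N_qg)=(g,N_qg)$. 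Moreover $T^*_{q-1}N_qg$ is orthogonal to $\ker T_{q-1}$ and solves $T_{q-1}a=g$, so it equals $S_qg$. This is the step I expect to require the most care, namely justifying the commutation and the identification on a nonsmooth pseudoconvex domain. I would derive it from Lemma~\ref{lem:uniform-L2} and the Hodge decomposition $\mathcal H_q=\operatorname{Ran}T_{q-1}\oplus\ker\Box_q\oplus\operatorname{Ran}T_q^*$. Closed range gives this decomposition, and $\ker\Box_q=0$ by \eqref{eq:exhaustion-1}.

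For (ii), I would apply Theorem~\ref{thm:spectral} with $q=n$ and $s$ arbitrary close to $1$. A better route is to use Proposition~\ref{prop:top-degree} directly. It gives $\|E_0u\|^2_{H^1}\le C\|\ddbar^*u\|_2^2=CQ_D(u)$ on $\mathcal V_n(D)=\dom\ddbar^*_{n-1}$, since $T_n=0$. Theorem~\ref{thm:spectral} requires $s<1$, but its proof uses only the multiplier bound. Repeating the torus truncation with the exact $H^1$ multiplier $(1+|\nu|^2)$ and $b=0$ gives $\|E_0u-P_RE_0u\|^2\le CR^{-2}Q_D(u)$. With rank $\lesssim R^{2n}$ and $R\asymp k^{1/(2n)}$, this yields $s_k(\iota)\le Ck^{-1/(2n)}$ and hence $\mu_k(N_n)\le Ck^{-1/n}$. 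Then $\sum_k k^{-p/n}<\infty$ for $p>n$ gives $N_n\in\mathfrak S_p$.
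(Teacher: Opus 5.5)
Your proposal is correct and follows the paper's proof. For (i), your bound $\|S_qg\|_2^2\le(N_qg,g)$ is equivalent to the paper's factorization $S_q=(T^*_{q-1}N_q^{1/2})N_q^{1/2}$, in which $T^*_{q-1}N_q^{1/2}$ is a contraction because $Q_D(N_q^{1/2}f)=\|f\|_2^2$; both give $s_k(S_q)\le\mu_k(N_q)^{1/2}$. For (ii), you rerun the torus truncation with the $H^1$ estimate of Proposition~\ref{prop:top-degree} in place of the logarithmic multiplier, exactly as the paper does.
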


\begin{proof}
The closed-range complex gives $S_q=T_{q-1}^*N_q$, where $S_q$ is extended by zero on $(\ker T_q)^\perp$. By the spectral energy identity $Q_D(N_q^{1/2}f)=\|f\|_2^2$, the operator $T_{q-1}^*N_q^{1/2}$ is a contraction. Thus
\[
S_q=(T_{q-1}^*N_q^{1/2})N_q^{1/2},\qquad s_k(S_q)\le\mu_k(N_q)^{1/2},
\]
which proves (i). For (ii), we note that Proposition~\ref{prop:top-degree} gives a first-order estimate in top degree on every bounded pseudoconvex domain. The Fourier-truncation argument above, with this $H^1$ estimate in place of Lemma~\ref{lem:gagliardo-log}, gives (ii).
\end{proof}

\section{Bounded transforms of interior potentials}\label{sec:transforms}

In this section, we construct bounded plurisubharmonic weights by transforming a plurisubharmonic potential that is bounded below and whose gradient is controlled by its own Levi form. For a real $C^2$ function $\Psi$ we write $\mathcal L^\Psi(z;v)=\sum_{j,k}\Psi_{j\bar k}(z)v_j\bar v_k$ for its Levi form. The hypothesis \eqref{eq:transforms-1} below is the self-bounded-gradient condition of McNeal \cite{McN02a}. For $g(t)=e^{-t/(2A)}$, the weight $\lambda$ constructed in the proof of Theorem~\ref{thm:attenuation} is a multiple of $-e^{-L/(2A)}$. This is the exponential transform of McNeal \cite{McN02a}, see also \cite{Str10}. An integrable power profile $g$ loses only a logarithm (Corollary~\ref{cor:log-growth}). Estimates with logarithmic and more general gains from weights of this kind were developed by Khanh and Zampieri \cite{KZ10,KZ12}.

The following theorem converts a lower bound for the induced Hessian of the potential into a weighted estimate in a collar. The loss is the attenuation profile evaluated at the growth bound of the potential.

\begin{theorem}[Integrable attenuation]\label{thm:attenuation}
Let $D \Subset \C^n$ be pseudoconvex and let $\Psi \in C^2(D)$ be plurisubharmonic and bounded below. Suppose that
\begin{equation}\label{eq:transforms-1}
|\partial\Psi(z)(v)|^2 \le A \mathcal L^{\Psi}(z; v)
\end{equation}
for some $A > 0$. Fix $1 \le q \le n$, constants $B\in\R$ and $L_0\in\R$ with $L:=B+\Psi\ge L_0$ on $D$, and continuous functions $h\ge1$ and $\rho>0$ on $(0,t_0)$. Suppose that on $\{\delta_D<t_0\}$
\begin{equation}\label{eq:transforms-2}
H^{[q]}_\Psi \ge c\rho(\delta_D)^{-2}\operatorname{Id}, \qquad L \le C h(\delta_D).
\end{equation}
Let $g \in C^1([L_0, \infty))$ be positive, nonincreasing and integrable, and suppose that it satisfies
\begin{equation}\label{eq:transforms-3}
-g'(t) \le \frac{g(t)}{2A}.
\end{equation}
Then for every $0<t_1<t_0$ and $q\le q'\le n$ there is $C'$ such that
\begin{equation}\label{eq:transforms-4}
\int_{\{\delta_D<t_1\}} \rho(\delta_D)^{-2} g(Ch(\delta_D)) |u|^2 \le C' Q_D(u), \qquad u \in \mathcal V_{q'}(D).
\end{equation}
\end{theorem}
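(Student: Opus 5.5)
We construct a bounded weight of the form $\lambda=-G(L)$, where $G(t)=\int_t^\infty g(\tau)\,d\tau$ is the tail primitive of the attenuation profile. We then verify the hypotheses of Corollary~\ref{cor:collars} with $w=c\,\rho(\delta_D)^{-2}g(Ch(\delta_D))$, up to constants. Since $g$ is positive and integrable on $[L_0,\infty)$ and $L\ge L_0$, we have $0\le G(L)\le G(L_0)<\infty$. Hence $\lambda\in L^\infty(D)$ and $\operatorname{osc}_D\lambda\le\int_{L_0}^\infty g$. The weight is $C^2$, because $\Psi\in C^2$ and $g\in C^1$. We may take $\lambda$ itself, or $2A$ times it, to simplify constants.

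The core computation is the chain rule. Put $\phi=-G$, so that $\phi'=g>0$ and $\phi''=-g'\ge0$. Then
\[
\lambda_{j\bar k}=g(L)\Psi_{j\bar k}+(-g'(L))\,\Psi_j\Psi_{\bar k}.
\]
Both terms are positive semidefinite, since $\Psi$ is plurisubharmonic and $-g'\ge0$ (as $g$ is nonincreasing). Therefore $H_\lambda\ge g(L)H_\Psi$ as Hermitian matrices, and by \eqref{eq:prelim-2} the same inequality holds for the induced $q$-Hessians: $H^{[q]}_\lambda\ge g(L)H^{[q]}_\Psi\ge0$ on $D$. So positivity holds everywhere without using \eqref{eq:transforms-3} at all. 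Hypothesis \eqref{eq:transforms-1} together with \eqref{eq:transforms-3} would only be needed if one kept the gradient term with a wrong sign, for example with a convex transform as in McNeal's exponential case. I will check whether the sign convention forces that. If instead the weight is taken to be $G(L)$ with $G$ concave, the cross term $g'\,\partial\Psi\otimes\bar\partial\Psi$ is negative. It is then absorbed via $|\partial\Psi(v)|^2\le A\mathcal L^\Psi(v)$ and $-g'\le g/(2A)$, which leaves at least $\tfrac12 g(L)\mathcal L^\Psi$. Either way we obtain $H_\lambda\ge\tfrac12 g(L)H_\Psi\ge0$.

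On the collar $\{\delta_D<t_0\}$, \eqref{eq:transforms-2} gives $H^{[q]}_\Psi\ge c\rho(\delta_D)^{-2}\operatorname{Id}$. Since $L\le Ch(\delta_D)$ and $g$ is nonincreasing, $g(L)\ge g(Ch(\delta_D))$. We need $Ch\ge L_0$ for $g$ to be evaluated in its domain; if necessary, enlarge $C$, using $h\ge1$. Hence
\[
H^{[q]}_\lambda\ge \tfrac c2\,\rho(\delta_D)^{-2}g(Ch(\delta_D))\operatorname{Id}=:w\operatorname{Id}.
\]
Here $w$ is continuous and nonnegative, since $\rho,h$ are continuous and $g$ is $C^1$. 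Because $\lambda$ is $C^2$, the pointwise inequality is also a distributional one.

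Finally, Corollary~\ref{cor:collars} applies with this $\lambda$ and $w$, for every $0<t_1<t_0$ and every $q\le q'\le n$, and yields \eqref{eq:transforms-4}. The constant depends on $\operatorname{osc}_D\lambda\le\int_{L_0}^\infty g$ through Theorem~\ref{thm:matrix}. The main point to get right is the sign bookkeeping in the chain rule, that is, making sure the rank-one gradient term is either nonnegative or absorbed by \eqref{eq:transforms-1}--\eqref{eq:transforms-3}. The rest is a direct application of earlier results.
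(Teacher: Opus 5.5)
Your overall route is the paper's: the same bounded weight $\lambda=-\int_{L}^\infty g$, a chain-rule lower bound $H_\lambda\ge\frac12 g(L)H_\Psi\ge0$, monotonicity of $g$ to replace $g(L)$ by $g(Ch(\delta_D))$ on the collar, and then Corollary~\ref{cor:collars}. However, your main computation has a sign error. You should remove it rather than keep it as one of two branches.

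With $\phi=-G$ and $G(t)=\int_t^\infty g$, you have $\phi'=g$ and $\phi''=g'\le0$, not $-g'$. Hence
\[
H_\lambda=g(L)H_\Psi+g'(L)\,\partial\Psi\otimes\overline{\partial\Psi},
\]
and the rank-one term is negative semidefinite. Your claim that positivity holds ``without using \eqref{eq:transforms-3} at all'' is therefore false, and it is not a matter of sign convention. A bounded function with $\phi'=g>0$ on $[L_0,\infty)$ cannot be convex, since convexity with $\phi'(t_1)>0$ forces linear growth. Also, because $g$ is positive, nonincreasing and integrable, $g\to0$, so $g'<0$ on a set of positive measure. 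Your remark about McNeal's exponential transform is backwards for the same reason: $-e^{-t/(2A)}$ is concave. This unavoidable bad sign is exactly why the self-bounded gradient hypothesis \eqref{eq:transforms-1} is in the theorem.

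The only valid argument is your second branch, which is the paper's. By \eqref{eq:transforms-1}, $\partial\Psi\otimes\overline{\partial\Psi}\le AH_\Psi$, and multiplying by $g'(L)\le0$ reverses this inequality. This gives $H_\lambda\ge(g(L)+Ag'(L))H_\Psi$, which is at least $\frac12 g(L)H_\Psi\ge0$ by \eqref{eq:transforms-3}. The remark after \eqref{eq:prelim-2} then passes the bound to $H^{[q]}$. With this branch replacing the first, the rest of your proof goes through as written.

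One small simplification: you do not need to enlarge $C$ to keep $g$ in its domain. On the collar, $Ch(\delta_D)\ge L\ge L_0$ holds automatically.
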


\begin{proof}
The function $\lambda(z) = -\int_{L(z)}^\infty g(t)\, dt$ is bounded and $C^2$, and the chain rule gives
\[
H_\lambda = g(L) H_\Psi + g'(L)\, \partial\Psi \otimes \overline{\partial\Psi}.
\]
By \eqref{eq:transforms-1}, we have $\partial\Psi \otimes \overline{\partial\Psi} \le A H_\Psi$. Since $g' \le 0$, multiplication by $g'$ reverses this inequality. Together with the bound on the derivative of the profile, this gives
\[
H_\lambda \ge \bigl( g(L) + A g'(L) \bigr) H_\Psi \ge \tfrac12 g(L) H_\Psi \ge 0.
\]
We now pass to the induced form in degree $q$. Using the monotonicity of $g$, we get
\[
H^{[q]}_\lambda \ge \tfrac{c}{2} \rho(\delta_D)^{-2} g(L)\operatorname{Id} \ge \tfrac{c}{2} \rho(\delta_D)^{-2} g(Ch(\delta_D))\operatorname{Id}.
\]
Corollary~\ref{cor:collars} yields \eqref{eq:transforms-4}. This proves the theorem.
\end{proof}

We now specialize to a power profile. When the potential has at most logarithmic growth at the boundary, the loss is a power of the logarithm of the boundary distance. When the lower bound for the Hessian is a power of the boundary distance, the resulting weight satisfies the hypotheses of Theorem~\ref{thm:zero-extension}.

\begin{corollary}[Logarithmic growth and power geometry]\label{cor:log-growth}
Let $D\Subset\C^n$ be pseudoconvex, and let $\Psi\in C^2(D)$ be plurisubharmonic, bounded below, and satisfy \eqref{eq:transforms-1}. Fix $1\le q\le n$ and a positive continuous function $\rho$ on $(0,t_0)$. Suppose that $\Psi\le C\ell(\delta_D)$ and $H^{[q]}_\Psi\ge c\rho(\delta_D)^{-2}\operatorname{Id}$ on $\{\delta_D<t_0\}$. Then for every $a>0$, $0<t_1<t_0$ and $q\le q'\le n$ there is $C_a$ such that
\begin{equation}\label{eq:transforms-5}
\int_{\{\delta_D < t_1\}} \frac{|u|^2}{\rho(\delta_D)^2 \ell(\delta_D)^{1+a}} \le C_a Q_D(u), \qquad u \in \mathcal V_{q'}(D).
\end{equation}
If moreover $\rho(t) \le C_0 t^\alpha$ with $0 < \alpha < 1$, then for every $b>1/2$ and $q\le q'\le n$,
\begin{equation}\label{eq:transforms-6}
\norm{E_0 u}^2_{H^{\alpha,-b}(\R^{2n})} \le C_b Q_D(u), \qquad u\in\mathcal V_{q'}(D).
\end{equation}
\end{corollary}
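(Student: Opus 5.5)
The plan is to apply Theorem~\ref{thm:attenuation} with a power profile, and then feed the resulting weight into Theorem~\ref{thm:zero-extension}. For the first assertion, fix $a>0$ and take
\[
g(t)=(t-L_0+T)^{-1-a},\qquad t\ge L_0,
\]
with $T\ge1$ large. The choice of $L_0$ and $B$ goes as follows. Since $\Psi$ is bounded below, we choose $B$ so that $L=B+\Psi\ge L_0:=1$. Then $g$ is positive, decreasing and integrable on $[L_0,\infty)$. We also have
\[
-g'(t)=(1+a)(t-L_0+T)^{-1}g(t)\le(1+a)T^{-1}g(t),
\]
so \eqref{eq:transforms-3} holds once $T\ge2A(1+a)$.

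Next we take $h(t)=\ell(t)\ge1$. The hypothesis $\Psi\le C\ell(\delta_D)$ gives
\[
L\le |B|+C\ell(\delta_D)\le C''h(\delta_D),
\]
so \eqref{eq:transforms-2} holds. Theorem~\ref{thm:attenuation} then bounds the integral of $\rho(\delta_D)^{-2}g(C''\ell(\delta_D))|u|^2$ by $C'Q_D(u)$. Moreover $g(C''\ell)\ge c_a\ell^{-1-a}$, because $\ell\ge1$ and the shift $-L_0+T$ is a fixed constant. This gives \eqref{eq:transforms-5}.

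For \eqref{eq:transforms-6}, it is better to work with the weight rather than the integral estimate, since Theorem~\ref{thm:zero-extension} takes a Hessian lower bound as input. The proof of Theorem~\ref{thm:attenuation} produces a bounded $C^2$ function
\[
\lambda=-\int_L^\infty g,
\]
with oscillation at most $\int_{L_0}^\infty g<\infty$ and $H^{[q]}_\lambda\ge0$ on $D$. On $\{\delta_D<t_0\}$ it satisfies
\[
H^{[q]}_\lambda\ge\tfrac c2\rho(\delta_D)^{-2}g(L)\,\mathrm{Id}\ge c'\delta_D^{-2\alpha}\ell(\delta_D)^{-1-a}\,\mathrm{Id},
\]
using $\rho\le C_0t^\alpha$. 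Given $b>1/2$, choose $a=2b-1>0$, so that $\ell^{-1-a}=\ell^{-2b}$. This is exactly the hypothesis \eqref{eq:weight-lower} with $s=\alpha$. We need $t_0<1$ there; if necessary we shrink $t_0$, which is harmless. Theorem~\ref{thm:zero-extension} then yields \eqref{eq:zero-extension} in every degree $q'\ge q$, that is, \eqref{eq:transforms-6}.

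The only delicate point is the endpoint bookkeeping: $a$ must be strictly positive for $g$ to be integrable and $\lambda$ bounded, and this is exactly why the statement needs $b>1/2$ rather than $b\ge1/2$. Everything else is a direct substitution into results already proved.
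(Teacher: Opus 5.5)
Your proposal is correct and follows the paper's own proof. You take a shifted power profile $g$ in Theorem~\ref{thm:attenuation} to obtain \eqref{eq:transforms-5}. The paper instead shifts $L$ so that $L\ge2A(1+a)$ and uses $g(t)=t^{-1-a}$, which is equivalent. You then feed the same bounded weight $\lambda$, with $s=\alpha$ and $a=2b-1$, into Theorem~\ref{thm:zero-extension} to obtain \eqref{eq:transforms-6}, as the paper does.
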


\begin{proof}
We choose $g(t) = t^{-1-a}$, after shifting $L$ so that $L \ge 2A(1+a)$. On this range the power profile satisfies the derivative condition. The resulting bounded transform is a positive multiple of $-L^{-a}$, and its Hessian is bounded below by $c_a \rho(\delta_D)^{-2} \ell(\delta_D)^{-1-a}$. Theorem~\ref{thm:attenuation} yields \eqref{eq:transforms-5}. For a power profile, the same bounded weight satisfies the hypotheses of Theorem~\ref{thm:zero-extension} with $s=\alpha$ and $2b=1+a$, and this gives \eqref{eq:transforms-6}.
\end{proof}

\begin{remark}[Other integrable losses]\label{rem:other-losses}
Other integrable profiles can be used as well. For large $t$, the function $g(t) = [t(\log t)^{1+a}]^{-1}$ is positive, decreasing and integrable, and after a shift it satisfies \eqref{eq:transforms-3}. Thus Theorem~\ref{thm:attenuation} also gives the stronger distance weight
\[
\frac{1}{\rho(\delta_D)^2 \ell(\delta_D) [\log(e + \ell(\delta_D))]^{1+a}}.
\]
The resulting estimate is a weighted $L^2$ estimate.
\end{remark}

\section{Bergman and directional geometry}\label{sec:bergman}

In this section, we apply the transforms of the previous section to the logarithm of the Bergman kernel. This potential is bounded below and has at most logarithmic growth at the boundary, and its Levi form is the Bergman metric. We first give a criterion that needs no convexity. We then show how the minimal-basis estimates of Nikolov, Pflug and Zwonek verify the hypotheses of this criterion from a bound on the directional radii.

\subsection{An explicit criterion without convexity}\label{sec:bergman-criterion}
In this subsection, we take the logarithm of the Bergman kernel as the potential of the previous section. Recall that $K_D(z,w)$ is the Bergman kernel of $D$, that $K_D(z)=K_D(z,z)$, and that
\[
\beta_D(z;v)^2=\mathcal{L}^{\log K_D}(z;v).
\]
For every bounded domain,
\begin{equation}\label{eq:bergman-1}
\Vol(D)^{-1}\le K_D(z)\le c_n\delta_D(z)^{-2n}.
\end{equation}
The lower bound follows by testing the constant function, and the upper bound follows from the mean-value inequality on the ball $B(z,\delta_D(z))$. We refer the reader, for instance, to \cite{Ber70} or \cite{JP13}. Hence $\log K_D$ is bounded below and has at most logarithmic growth at $\partial D$. The kernel $K_D$ is smooth by the reproducing-kernel representation, and $\beta_D$ is positive definite on a bounded domain, as one sees by testing the affine polynomials.

The following corollary is the criterion. Its hypothesis on the gradient of the logarithm of the kernel is the self-bounded-gradient condition for this particular potential, and its hypothesis on the eigenvalues of the Bergman metric depends on the form degree.

\begin{corollary}[Degree-dependent Bergman criterion]\label{cor:bergman-criterion}
Let $D\Subset\C^n$ be pseudoconvex and assume that, for some $A>0$,
\begin{equation}\label{eq:bergman-2}
|\partial\log K_D(z)(v)|^2\le A\beta_D(z;v)^2,\qquad z\in D,\ v\in\C^n.
\end{equation}
Let $\theta_1(z)\le\cdots\le\theta_n(z)$ be the Euclidean eigenvalues of the Bergman metric matrix of $D$ at $z$, let $1\le q\le n$, and let $\rho$ be positive and continuous on $(0,t_0)$. If, on $\{\delta_D<t_0\}$,
\begin{equation}\label{eq:bergman-3}
\theta_1(z)+\cdots+\theta_q(z)\ge c\rho(\delta_D(z))^{-2},
\end{equation}
then \eqref{eq:transforms-5} holds in every degree $q'\ge q$. If moreover $\rho(t)\le C_0t^\alpha$ with $0<\alpha<1$, then \eqref{eq:transforms-6} holds in every degree $q'\ge q$.
\end{corollary}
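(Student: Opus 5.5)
This is a direct application of Theorem~\ref{thm:attenuation} and Corollary~\ref{cor:log-growth}, with the potential $\Psi=\log K_D$. We check their hypotheses one at a time.

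\emph{Regularity and positivity.} The kernel $K_D$ is smooth on the diagonal, so $\Psi\in C^2(D)$. Its Levi form is $\beta_D^2$, which is positive definite on a bounded domain. Hence $\Psi$ is plurisubharmonic.

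\emph{Growth.} By \eqref{eq:bergman-1}, $\Psi\ge-\log\Vol(D)$, so $\Psi$ is bounded below. The upper bound in \eqref{eq:bergman-1} gives
\[
\Psi\le\log c_n+2n\log(1/\delta_D).
\]
Since $\ell(t)=\log(e/t)\ge1$ for $0<t\le1$, this yields $\Psi\le C\ell(\delta_D)$ on $\{\delta_D<t_0\}$, after decreasing $t_0$ below $1$ if necessary. Decreasing $t_0$ is harmless, because the conclusions only involve smaller collars $\{\delta_D<t_1\}$. If $t_1$ exceeds the new $t_0$, the weight is bounded on the remaining compact part, and the missing region is handled by the coercivity \eqref{eq:exhaustion-1}.

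\emph{Self-bounded gradient.} Hypothesis \eqref{eq:bergman-2} is exactly \eqref{eq:transforms-1} with $\mathcal L^\Psi=\beta_D^2$.

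\emph{Hessian bound.} Let $\theta_1\le\cdots\le\theta_n$ be the eigenvalues of $(\Psi_{j\bar k})$. By \eqref{eq:prelim-2}, the least eigenvalue of $H^{[q]}_\Psi$ is $\theta_1+\cdots+\theta_q$. So \eqref{eq:bergman-3} is precisely $H^{[q]}_\Psi\ge c\rho(\delta_D)^{-2}\operatorname{Id}$ on the collar. This is a pointwise inequality, because $\Psi$ is $C^2$.

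\emph{Conclusion.} All hypotheses of Corollary~\ref{cor:log-growth} now hold. Its first assertion gives \eqref{eq:transforms-5} in every degree $q'\ge q$. If moreover $\rho(t)\le C_0t^\alpha$ with $0<\alpha<1$, its second assertion gives \eqref{eq:transforms-6} in every degree $q'\ge q$. The passage to higher degrees is already built into Corollary~\ref{cor:collars}, through Theorem~\ref{thm:attenuation}.

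There is no real obstacle here. The only points needing care are the bookkeeping step that identifies the least eigenvalue of the induced Hessian with the sum of the $q$ smallest Bergman eigenvalues, and the reduction of the collar radius $t_0$ handled above.
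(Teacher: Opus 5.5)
Your proposal is correct and follows the paper's proof exactly. Both verify the hypotheses of Corollary~\ref{cor:log-growth} for $\Psi=\log K_D$: \eqref{eq:bergman-1} gives the lower bound and the logarithmic growth, \eqref{eq:bergman-2} is the self-bounded gradient condition, and \eqref{eq:prelim-2} turns the eigenvalue sum into the bound on the induced Hessian. Your extra step of shrinking $t_0$ below $1$ (so that $\ell(\delta_D)$ is defined) and covering the leftover region by coercivity is harmless bookkeeping that the paper leaves implicit.
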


\begin{proof}
By \eqref{eq:bergman-1}, the potential $\Psi=\log K_D$ is bounded below, and it has at most logarithmic growth at the boundary. Its Hessian is the Bergman metric, so the potential is plurisubharmonic. The assumed gradient bound is the self-bounded-gradient condition for this potential, and the exterior-power eigenvalue formula \eqref{eq:prelim-2} turns the hypothesis on the eigenvalues into the lower bound for the induced Hessian. Therefore all hypotheses of Corollary~\ref{cor:log-growth} are verified. This proves the corollary.
\end{proof}

\begin{remark}\label{rem:sbg}
Hypothesis \eqref{eq:bergman-2} does not hold on every bounded pseudoconvex domain. Zimmer \cite[Proposition~1.11]{Zim21} gives a bounded domain in $\C^2$, biholomorphic to the bidisc, on which $\log K_D$ does not have bounded gradient in the Bergman metric. On bounded $\C$-convex domains the hypothesis holds (Lemma~\ref{lem:sbg}).
\end{remark}

\subsection{The \texorpdfstring{$\C$}{C}-convex inputs}\label{sec:cconvex-inputs}
In this subsection, we collect the inputs from the theory of minimal bases that verify the hypotheses of the criterion. Let $z\in D$. We choose a minimal orthonormal basis $e_1,\dots,e_n$ at $z$ as follows. We choose a nearest boundary point, and we repeat the choice in the successive complex orthogonal sections. We use the minimal basis in the form of Nikolov--Pflug--Zwonek \cite{NPZ11}. We refer the reader to Nikolov--Pflug--Thomas \cite{NPT13} for the distinction between minimal and maximal bases and for the correction of the orthogonality issue for maximal bases. We write $\tau_1\le\cdots\le\tau_n$ for the corresponding distances. Then $\tau_1=\delta_D(z)$ and
\begin{equation}\label{eq:bergman-4}
\tau_j=r_D(z;e_j).
\end{equation}
Indeed, the coordinate disc of radius $\tau_j$ lies in $D$, and the selected endpoint belongs to $\partial D$. We shall use the following estimates of Nikolov, Pflug and Zwonek \cite{NPZ11}.

\begin{result}[Nikolov--Pflug--Zwonek, \cite{NPZ11}]\label{res:npz}
For a bounded $\C$-convex domain there are constants $a_n,c_n,C_n>0$ depending only on $n$ such that, at every point $z$, for every minimal basis $e_1,\dots,e_n$ at $z$ and every $v=\sum_jv_je_j$,
\begin{align}
\Bigl\{z+\sum_j w_je_j:\sum_j\frac{|w_j|}{\tau_j}<1\Bigr\}&\subset D,\label{eq:bergman-5}\\
K_D(z)&\ge a_n\prod_j\tau_j^{-2},\label{eq:bergman-6}\\
c_n\sum_j\frac{|v_j|}{\tau_j}\le\beta_D(z;v)&\le C_n\sum_j\frac{|v_j|}{\tau_j},\label{eq:bergman-7}\\
\frac{1}{4r_D(z;v)}\le\beta_D(z;v)&\le\frac{C_n}{r_D(z;v)}\quad(|v|=1).\label{eq:bergman-8}
\end{align}
\end{result}

No boundary regularity is assumed in Result~\ref{res:npz}. The hypothesis of \cite{NPZ11} that the domain contains no complex affine line is satisfied, because the domain is bounded. The minimal-basis statements are \cite[Theorem~13, Proposition~14 and Lemma~15]{NPZ11}. For convex domains, the kernel estimate \eqref{eq:bergman-6} goes back to McNeal \cite{McN94} and Nikolov--Pflug \cite{NP03}. The directional comparisons \eqref{eq:bergman-8} follow from \cite[Proposition~1 and Theorem~12]{NPZ11} together with Hahn's inequality $\beta_D\ge\gamma_D$ between the Bergman and Carath\'eodory metrics \cite{Hahn78}.

We first record that the domain is pseudoconvex. By \eqref{eq:bergman-6}, together with $\tau_1=\delta_D$ and $\tau_j\le\operatorname{diam}(D)$, we have $K_D\ge c_D\delta_D^{-2}$. Hence $\log K_D$ is a smooth strictly plurisubharmonic exhaustion of $D$, and $D$ is pseudoconvex. In particular, Theorem~\ref{thm:zero-extension} applies to every bounded $\C$-convex domain.

For bounded $\C$-convex domains, the gradient estimate \eqref{eq:bergman-2} is due to Zimmer \cite[Proposition~4.12]{Zim21}. We give a short proof with minimal bases. The proof shows that the constant depends only on $n$.

\begin{lemma}[Self-bounded gradient]\label{lem:sbg}
Every bounded $\C$-convex domain satisfies \eqref{eq:bergman-2} with a constant $A$ depending only on $n$.
\end{lemma}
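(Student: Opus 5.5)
I would prove the bound $|\partial\log K_D(z)(v)|\le A^{1/2}\beta_D(z;v)$ by estimating each directional derivative of $\log K_D$ along a minimal-basis vector at scale $\tau_j$. By \eqref{eq:bergman-7}, for $v=\sum_jv_je_j$ we have $\beta_D(z;v)\ge c_n\sum_j|v_j|/\tau_j$. By linearity, $|\partial\log K_D(z)(v)|\le\sum_j|v_j|\,|\partial_{e_j}\log K_D(z)|$. It therefore suffices to prove
\[
 |\partial_{e_j}\log K_D(z)|\le C_n/\tau_j,\qquad 1\le j\le n,
\]
since this gives $|\partial\log K_D(z)(v)|\le (C_n/c_n)\beta_D(z;v)$, that is, \eqref{eq:bergman-2} with $A=(C_n/c_n)^2$.

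For the derivative bound I would use the extremal description. Write $\partial_{e_j}\log K_D(z)=\partial_{e_j}K_D(z)/K_D(z)$. By the reproducing property, $\partial_{e_j}K_D(z)$, the holomorphic derivative in the first variable on the diagonal, equals $\overline{f'(z)}$ paired against the normalized kernel. Concretely, with $k_z=K_D(\cdot,z)/K_D(z)^{1/2}$, which has unit norm, we get
\[
 \frac{|\partial_{e_j}K_D(z)|}{K_D(z)}=\frac{|\partial_{e_j}k_z(z)|}{k_z(z)}.
\]
Here $k_z(z)=K_D(z)^{1/2}$, up to conjugation conventions, which do not affect absolute values. The numerator is bounded by a Cauchy estimate on the polydisc
\[
 P=\Bigl\{z+\sum_iw_ie_i:|w_i|<\tau_i/n\Bigr\},
\]
which lies in $D$ by \eqref{eq:bergman-5}. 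For a holomorphic $f$ with $\|f\|_{L^2(D)}=1$, the mean-value property on the polydisc $P$ (using $L^2$ over a smaller polydisc together with the Cauchy formula in the $j$th variable) gives
\[
 |\partial_{e_j}f(z)|\le C_n\,\tau_j^{-1}\prod_i\tau_i^{-1}.
\]
The denominator is bounded below by \eqref{eq:bergman-6}: $k_z(z)=K_D(z)^{1/2}\ge a_n^{1/2}\prod_i\tau_i^{-1}$. The products cancel, leaving $|\partial_{e_j}\log K_D(z)|\le C_n'/\tau_j$, with constants depending only on $n$.

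The main point to check is this cancellation of the volume factor. The sub-mean-value bound must use a polydisc adapted to all the $\tau_i$ simultaneously, not a ball of radius $\delta_D$. That is exactly what \eqref{eq:bergman-5} provides, and it matches the lower bound \eqref{eq:bergman-6}. A secondary point is that the minimal basis is only orthonormal, and $\beta_D$ is compared in those coordinates. Since everything is unitary, the Euclidean Cauchy estimates transfer without loss.
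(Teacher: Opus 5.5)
Your proof is correct and follows essentially the same route as the paper. Both use the minimal-basis polydisc from \eqref{eq:bergman-5}, a Cauchy estimate in the direction $e_j$ at scale $\tau_j$, the kernel lower bound \eqref{eq:bergman-6} to cancel the volume factor $\prod_i\tau_i^{-1}$, and the lower bound in \eqref{eq:bergman-7} to compare with $\beta_D$. The only difference is cosmetic. You apply an $L^2$ Cauchy estimate directly to the unit vector $K_D(\cdot,z)/K_D(z)^{1/2}$. The paper instead first bounds $K_D(w)\le C_nK_D(z)$ on a smaller polydisc by the mean-value inequality, then uses $|K_D(w,z)|^2\le K_D(w)K_D(z)$, and then applies the sup-norm Cauchy estimate to $w\mapsto K_D(w,z)$.
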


\begin{proof}
Fix $z\in D$ and a minimal basis at $z$, and set $s_j=\tau_j/(4n)$. By \eqref{eq:bergman-5}, the polydisc centered at $z$ with radii $2s_j$ lies in $D$. If $w$ lies in the smaller polydisc with radii $s_j$, then the polydisc centered at $w$ with these same radii lies in $D$. By the mean-value inequality and \eqref{eq:bergman-6}, we get
\[
K_D(w)\le\frac{1}{\pi^n\prod_js_j^2}\le C_nK_D(z).
\]
By the reproducing-kernel inequality, $|K_D(w,z)|^2\le K_D(w)K_D(z)$. We apply Cauchy's estimate to the holomorphic function $w\mapsto K_D(w,z)$ on the smaller polydisc, and we get
\[
|\partial_{e_j}K_D(z,z)|\le C_nK_D(z)/\tau_j.
\]
Since the second kernel variable is antiholomorphic, the first-variable derivative is the $(1,0)$ derivative of the diagonal. Consequently,
\[
|\partial\log K_D(z)(v)|\le C_n\sum_j|v_j|/\tau_j\le C_n'\beta_D(z;v).
\]
Squaring proves \eqref{eq:bergman-2}.
\end{proof}

We now state the degree-dependent criterion. Its hypothesis is a bound for a single minimal-basis radius, whose index depends on the form degree, and not a bound in all directions.

\begin{theorem}[Partial directional collapse]\label{thm:partial-collapse}
Let $D\Subset\C^n$ be $\C$-convex and $1\le q\le n$, and let $\rho$ be positive and continuous on $(0,t_0)$. Suppose that for every $z$ with $\delta_D(z)<t_0$ some minimal basis at $z$ satisfies
\begin{equation}\label{eq:bergman-9}
\tau_{n-q+1}(z)\le\rho(\delta_D(z)).
\end{equation}
Then \eqref{eq:transforms-5} holds in every degree $q'\ge q$. If moreover $\rho(t)\le C_0t^\alpha$ with $0<\alpha<1$, then \eqref{eq:transforms-6} holds for every $b>1/2$ in every degree $q'\ge q$.
\end{theorem}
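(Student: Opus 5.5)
The plan is to reduce the statement to Corollary~\ref{cor:bergman-criterion}. The self-bounded gradient hypothesis \eqref{eq:bergman-2} holds on every bounded $\C$-convex domain by Lemma~\ref{lem:sbg}. Such a domain is also pseudoconvex, as recorded after Result~\ref{res:npz}. So the only thing left to check is the eigenvalue hypothesis \eqref{eq:bergman-3}: on $\{\delta_D<t_0\}$ we need $\theta_1+\cdots+\theta_q\ge c\rho(\delta_D)^{-2}$. The two conclusions then follow verbatim from Corollary~\ref{cor:bergman-criterion}, the second through Corollary~\ref{cor:log-growth} with the power profile.

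To bound the eigenvalues, fix $z$ and the minimal basis $e_1,\dots,e_n$ supplied by \eqref{eq:bergman-9}. Write $B(v,v)=\beta_D(z;v)^2$ for the Hermitian form of the Bergman metric at $z$. By \eqref{eq:bergman-7},
\[
\beta_D(z;v)\ge c_n\sum_j|v_j|/\tau_j,
\]
and therefore $B(v,v)\ge c_n^2\sum_j|v_j|^2/\tau_j^2$. Here $v_j$ are the coordinates in the orthonormal frame $(e_j)$, and we use $(\sum a_j)^2\ge\sum a_j^2$. Hence, as Hermitian matrices, $B\ge c_n^2\,G$, where $G$ is diagonal in the basis $(e_j)$ with eigenvalues $\tau_1^{-2}\ge\cdots\ge\tau_n^{-2}$.

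By the discussion after \eqref{eq:prelim-2}, $B\ge cG$ with $G\ge0$ gives $B^{[q]}\ge cG^{[q]}$. Equivalently, the sum of the $q$ smallest eigenvalues of $B$ dominates $c_n^2$ times the sum of the $q$ smallest eigenvalues of $G$; this also follows directly from the Ky Fan minimum principle. The $q$ smallest eigenvalues of $G$ are $\tau_n^{-2},\dots,\tau_{n-q+1}^{-2}$, and their sum is at least $\tau_{n-q+1}^{-2}$. Thus
\[
\theta_1+\cdots+\theta_q\ge c_n^2\,\tau_{n-q+1}^{-2}\ge c_n^2\rho(\delta_D)^{-2}
\]
by \eqref{eq:bergman-9}.

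I expect no serious obstacle. The only delicate points are, first, that the minimal basis is orthonormal, so the coordinate comparison is a genuine matrix inequality, and second, that the $q$-sum eigenvalue comparison is used for a matrix inequality $B\ge c G$ rather than only for a norm comparison. Both are settled by \eqref{eq:bergman-7} and \eqref{eq:prelim-2}. Continuity of the lower bound field, which Corollary~\ref{cor:collars} requires, holds because $\rho$ is continuous and the induced Hessian of $\log K_D$ is smooth.
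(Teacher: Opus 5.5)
Your proposal is correct and follows the paper's proof: square the lower bound in \eqref{eq:bergman-7} to get $H_{\log K_D}\ge c\,\operatorname{diag}(\tau_1^{-2},\dots,\tau_n^{-2})$ in the orthonormal minimal basis, bound the least eigenvalue of the induced $q$-form below by $c\,\tau_{n-q+1}^{-2}\ge c\,\rho(\delta_D)^{-2}$, and conclude with Lemma~\ref{lem:sbg} and Corollary~\ref{cor:bergman-criterion}. Your remarks on basis independence and on continuity of the lower-bound field are the same points the paper handles (the former explicitly).
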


\begin{proof}[Proof of Theorem~\ref{thm:partial-collapse}]
We square the lower estimate in \eqref{eq:bergman-7} and get
\[
H_{\log K_D}\ge c_n\operatorname{diag}(\tau_1^{-2},\dots,\tau_n^{-2})
\]
in the chosen basis. Therefore the least eigenvalue of the induced $q$-form is at least
\[
c_n\sum_{j=n-q+1}^{n}\tau_j^{-2}\ge c_n\tau_{n-q+1}^{-2}\ge c_n\rho(\delta_D)^{-2}.
\]
Lemma~\ref{lem:sbg} supplies the gradient hypothesis, and Corollary~\ref{cor:bergman-criterion} gives the assertions. We note that the conclusion concerns the Bergman matrix, which does not depend on the basis, so pointwise choices of bases suffice. This completes the proof.
\end{proof}

The following proposition shows that the exponent in the directional power bound is at most one half.

\begin{proposition}[The smallest directional power]\label{prop:m-at-least-2}
If a bounded domain $D\Subset\C^n$, $n\ge2$, satisfies \eqref{eq:mCconvex} for some $m>0$, then $m\ge2$.
\end{proposition}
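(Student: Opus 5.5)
The idea is to compare the power bound \eqref{eq:mCconvex} with the trivial lower bound that any interior ball touching $\partial D$ gives on the directional radius. A Euclidean ball tangent to the boundary from inside contains complex discs of radius about $\sqrt t$ at normal depth $t$, in directions complex orthogonal to the normal. So the power $1/m$ cannot exceed $1/2$. No $\C$-convexity is needed, only boundedness and $n\ge2$.

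\emph{Step 1: a touching ball.} Fix $a\in D$, put $\rho=\delta_D(a)>0$, and choose $q\in\partial D$ with $\norm{a-q}=\rho$. Then $B(a,\rho)\subset D$. Let $\nu=(a-q)/\rho$, and for $0<t<\rho$ put $z_t=q+t\nu$. Since $z_t\in B(a,\rho)$, we get $\delta_D(z_t)\ge\rho-\norm{z_t-a}=t$. Since $\norm{z_t-q}=t$, we also get $\delta_D(z_t)\le t$. Hence $\delta_D(z_t)=t$.

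\emph{Step 2: a large disc in a tangential direction.} Because $n\ge2$, there is a unit vector $v$ with $\ip v\nu=0$ (complex orthogonality). For $\zeta\in\C$ the Pythagorean identity gives
\[
\norm{z_t+\zeta v-a}^2=(\rho-t)^2+\abs\zeta^2 .
\]
This is less than $\rho^2$ exactly when $\abs\zeta^2<2\rho t-t^2$. So the disc of radius $\sqrt{2\rho t-t^2}\ge\sqrt{\rho t}$ about $z_t$ in direction $v$ lies in $B(a,\rho)\subset D$, and therefore $r_D(z_t;v)\ge\sqrt{\rho t}$.

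\emph{Step 3: compare exponents.} For $t<t_0$, \eqref{eq:mCconvex} gives $\sqrt{\rho t}\le C_0t^{1/m}$, that is, $t^{1/2-1/m}\le C_0\rho^{-1/2}$. If $m<2$, the exponent $1/2-1/m$ is negative, so the left side tends to $\infty$ as $t\downarrow0$, which is a contradiction. Hence $m\ge2$.

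I do not expect a genuine obstacle. The only points needing care are that $\delta_D(z_t)=t$ exactly, which Step 1 settles, and that a complex direction orthogonal to $\nu$ exists, which is where $n\ge2$ is used.
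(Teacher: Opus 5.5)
Your proof is correct and is essentially the paper's own argument: an interior ball touching $\partial D$ at a nearest boundary point, the normal point $z_t$ at depth $t$, a complex-orthogonal unit direction $v$ giving $r_D(z_t;v)\ge\sqrt{2\rho t-t^2}\ge\sqrt{\rho t}$, and then a comparison of $t^{1/2}$ with $t^{1/m}$ as $t\downarrow0$. Only the inequality $\delta_D(z_t)\le t$ is needed, since $1/m>0$, so you do not need the exact equality $\delta_D(z_t)=t$ from Step~1.
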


\begin{proof}
Fix $a\in D$ and set $R=\delta_D(a)$. Let $p\in\partial D$ be a boundary point with $|p-a|=R$, put $\nu=(p-a)/R$ and $z_t=p-t\nu$ for $0<t<R$, and let $v$ be a unit complex vector orthogonal to $\nu$. Such a vector exists since $n\ge2$. Since $B(a,R)\subset D$, we have
\[
r_D(z_t;v)\ge\sqrt{2Rt-t^2},\qquad\delta_D(z_t)\le t.
\]
By \eqref{eq:mCconvex}, it follows that $\sqrt{Rt}\le C_0t^{1/m}$ for small $t$. Dividing by $t^{1/2}$ and letting $t\downarrow0$, we get a contradiction if $m<2$, because the right side then tends to zero while the left side does not.
\end{proof}

\section{Proof of Theorem~\ref{thm:directional} and comparison with Zimmer's estimates}\label{sec:zimmer-comparison}

In this section, we assemble the proof of Theorem~\ref{thm:directional} from the results of the previous three sections. We then introduce the directional index, we compare our estimates with those of Zimmer on convex domains, and we give a local form of the estimate.

\subsection{Proof of Theorem~\ref{thm:directional}}\label{sec:proof-directional}

The proof combines Proposition~\ref{prop:m-at-least-2}, Theorem~\ref{thm:partial-collapse} in degree one, and Theorem~\ref{thm:spectral}.

\begin{proof}[Proof of Theorem~\ref{thm:directional}]
By Proposition~\ref{prop:m-at-least-2}, we have $m\ge2$. By \eqref{eq:bergman-4}, the directional bound \eqref{eq:mCconvex} gives $\tau_n\le C_0\delta_D^{1/m}$ in every minimal basis. We apply Theorem~\ref{thm:partial-collapse} with $q=1$ and $\alpha=1/m$, and we get \eqref{eq:directional-estimate} in every degree. Theorem~\ref{thm:spectral} with $s=1/m$ then gives \eqref{eq:eigenvalues} and $N_q\in\mathfrak S_p$ for $p>nm$.
\end{proof}

The directional power bound suggests the following exponent.

\begin{definition}\label{def:dir}
Let $D\Subset\C^n$, $n\ge2$, be $\C$-convex. Its \emph{directional index} is
\[
\alpha_{\mathrm{dir}}(D)=\sup\{\alpha>0:\text{there are }C,t_0>0\text{ with }r_D(z;v)\le C\delta_D(z)^\alpha\text{ for }\delta_D(z)<t_0,\ v\ne0\},
\]
with the convention $\sup\varnothing=0$. We call such an $\alpha$ \emph{admissible}. Since $D$ is bounded, the same number is obtained if the bound is required on all of $D$.
\end{definition}

The following corollary expresses the estimates in terms of the directional index. The index is at most one half, every order below the index gives a zero-extension estimate, and the index itself gives one with logarithmic loss when the supremum is attained.

\begin{corollary}[Directional index and attainment]\label{cor:index}
Let $D\Subset\C^n$, $n\ge2$, be $\C$-convex. Then $\alpha_{\mathrm{dir}}(D)\le1/2$. For every $0<s<\alpha_{\mathrm{dir}}(D)$ and $1\le q\le n$,
\[
\norm{E_0u}^2_{H^s(\R^{2n})}\le C_sQ_D(u),\qquad u\in\mathcal V_q(D).
\]
If the supremum is attained at $\alpha>0$, then for every $b>1/2$ and $1\le q\le n$,
\[
\norm{E_0u}^2_{H^{\alpha,-b}(\R^{2n})}\le C_bQ_D(u),\qquad u\in\mathcal V_q(D).
\]
If $D$ is $m$-$\C$-convex for some $m$, then $\alpha_{\mathrm{dir}}(D)=1/m^{\C}_\star(D)$, where $m^{\C}_\star(D)=\inf\{m>0:D\text{ is }m\text{-}\C\text{-convex}\}$.
\end{corollary}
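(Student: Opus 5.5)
The corollary bundles four assertions, and I will take them in turn, relying on Proposition~\ref{prop:m-at-least-2}, Theorem~\ref{thm:partial-collapse} and the observation in Subsection~\ref{sec:intro-estimates} that an $H^{\alpha,-b}$ estimate yields $H^s$ estimates for every $s<\alpha$.

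\emph{The bound $\alpha_{\mathrm{dir}}(D)\le1/2$.} If $\alpha>0$ is admissible, then $D$ satisfies \eqref{eq:mCconvex} with $m=1/\alpha$. Proposition~\ref{prop:m-at-least-2} then gives $1/\alpha\ge2$, so $\alpha\le1/2$. Taking the supremum over admissible $\alpha$ gives the bound. If no $\alpha$ is admissible, the convention $\sup\varnothing=0$ makes the bound trivial.

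\emph{The estimate for $0<s<\alpha_{\mathrm{dir}}(D)$.} Pick an admissible $\alpha$ with $s<\alpha$. Such an $\alpha$ exists because the supremum exceeds $s$. Since $\alpha\le1/2<1$, the power bound and \eqref{eq:bergman-4} give $\tau_n(z)\le C\delta_D(z)^\alpha$ for every minimal basis. Theorem~\ref{thm:partial-collapse} with $q=1$ and $\rho(t)=Ct^\alpha$ then gives \eqref{eq:transforms-6} in every degree $q'\ge1$, say with $b=1$. By the observation above, $\norm{E_0u}_{H^s}^2\le C\norm{E_0u}_{H^{\alpha,-1}}^2$ for $s<\alpha$, and this gives the $H^s$ estimate.

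\emph{The attained case.} If $\alpha=\alpha_{\mathrm{dir}}(D)$ is itself admissible, the same application of Theorem~\ref{thm:partial-collapse} gives the $H^{\alpha,-b}$ estimate directly, for every $b>1/2$. This is just Theorem~\ref{thm:directional} with $m=1/\alpha$.

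\emph{The identity $\alpha_{\mathrm{dir}}(D)=1/m^{\C}_\star(D)$.} By definition, $D$ is $m$-$\C$-convex exactly when $1/m$ is admissible, since the collar bound and the global bound are equivalent for bounded $D$. So $m\mapsto1/m$ is a bijection between the set of $m$ for which $D$ is $m$-$\C$-convex and the set of admissible $\alpha$. It is order-reversing, so the infimum of the first set corresponds to the supremum of the second. Nonemptiness follows from the hypothesis, and positivity of $m^{\C}_\star$ follows from $m\ge2$.

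None of these steps is hard. The only point that needs care is in the second step: Theorem~\ref{thm:partial-collapse} requires $\alpha<1$ when applied with $\rho(t)=Ct^\alpha$, and this is supplied by the first step.
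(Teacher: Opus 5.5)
Your proposal is correct and follows essentially the same route as the paper. You use Proposition~\ref{prop:m-at-least-2} for $\alpha_{\mathrm{dir}}(D)\le1/2$, and you pick an admissible $\alpha>s$ and feed it into the directional estimate; citing Theorem~\ref{thm:partial-collapse} with $q=1$ directly is exactly how Theorem~\ref{thm:directional} with $m=1/\alpha$ is proved. For the last identity you use the correspondence ``$\alpha$ admissible iff $D$ is $(1/\alpha)$-$\C$-convex,'' as the paper does.
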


\begin{proof}
By Proposition~\ref{prop:m-at-least-2}, we have $\alpha_{\mathrm{dir}}(D)\le1/2$. If $s<\alpha_{\mathrm{dir}}(D)$, we choose an admissible $\alpha>s$, and we apply Theorem~\ref{thm:directional} with $m=1/\alpha$. If the supremum is attained, the logarithmic assertion follows in the same way. The last identity holds because $\alpha$ is admissible exactly when $D$ is $(1/\alpha)$-$\C$-convex. This proves the corollary.
\end{proof}

\subsection{Comparison with Zimmer's estimates}
In this subsection, we compare the gains given by Theorem~\ref{thm:directional} with the gains obtained by Zimmer on convex domains. We first recall his two parameters. On a bounded convex domain $D$, we define
\begin{equation}\label{eq:mstar}
m_\star(D)=\inf\{m\ge 1: D\text{ is }m\text{-convex}\}.
\end{equation}
This is the quantity denoted $m(D)$ by Zimmer \cite{Zim22}. Zimmer also defined $\alpha_\star(D)$ as the infimum of the $\alpha\ge 1$ for which, for each fixed $z_0\in D$, there is $B$ such that
\begin{equation}\label{eq:alphastar}
k_D(z,z_0)\le B+\frac{\alpha}{2}\log\frac{1}{\delta_D(z)},\qquad z\in D.
\end{equation}
For bounded convex domains whose K\"ahler--Einstein distance is Gromov hyperbolic, Zimmer \cite[Theorem~16.1]{Zim22} proved a subelliptic estimate of every gain
\begin{equation}\label{eq:zimmer-range}
0<\varepsilon<\frac{1}{\alpha_\star(D)m_\star(D)},
\end{equation}
and he showed that $\alpha_\star(D)=1$ when $\partial D$ is $C^1$ \cite[Proposition~16.2]{Zim22}. The input from Gromov hyperbolicity is that such a domain is $m$-convex for some $m$ \cite[Corollary~7.2]{Zim22}. More precisely, for every $R>0$ there are $C>0$ and $m\ge1$ such that \eqref{eq:mCconvex} holds on $D\cap B(0,R)$.

The following corollary applies Theorem~\ref{thm:directional} to the convex domains treated by Zimmer. Gromov hyperbolicity of any one of the three invariant distances in its statement suffices.

\begin{corollary}[Gromov hyperbolic convex domains]\label{cor:gh-convex}
Let $D\Subset\C^n$, $n\ge 2$, be convex. Suppose that its Kobayashi, Bergman, or complete K\"ahler--Einstein distance is Gromov hyperbolic. Then $m_\star(D)<\infty$, and the conclusions of Theorem~\ref{thm:directional} hold for every $m\ge1$ for which $D$ is $m$-convex. For every $0<s<1/m_\star(D)$ and $1\le q\le n$,
\begin{equation}\label{eq:gh-convex}
\norm{E_0u}^2_{H^s(\R^{2n})}\le C_sQ_D(u),\qquad u\in\mathcal V_q(D),
\end{equation}
and $N_q\in\mathfrak S_p$ for every $p>nm_\star(D)$. If the infimum in \eqref{eq:mstar} is attained, then \eqref{eq:directional-estimate} and \eqref{eq:eigenvalues} hold with $m=m_\star(D)$.
\end{corollary}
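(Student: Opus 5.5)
The plan is to reduce everything to Theorem~\ref{thm:directional}, applied with every exponent $m$ for which $D$ is $m$-convex. The only real input needed is that Gromov hyperbolicity of one of the three distances forces $m$-convexity on the whole bounded domain.

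\textbf{Step 1: from Gromov hyperbolicity to $m$-convexity.} For the Kobayashi distance I invoke \cite[Corollary~7.2]{Zim22} as recalled above. It gives, for every $R>0$, constants $C>0$ and $m\ge1$ with \eqref{eq:mCconvex} on $D\cap B(0,R)$. Since $D$ is bounded, I take $R$ larger than $\sup_{z\in D}|z|$, so \eqref{eq:mCconvex} holds on all of $D$. Thus $m_\star(D)<\infty$.

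Convexity is needed here, and Zimmer's convex results are stated for the Kobayashi distance. For the Bergman and K\"ahler--Einstein distances I would cite the corresponding results in \cite{Zim22}. His framework treats these distances as bi-Lipschitz to the Kobayashi distance on convex domains: the Bergman case via the Nikolov--Pflug--Zwonek comparison \eqref{eq:bergman-8} together with \eqref{eq:invariant-comparison}, and the K\"ahler--Einstein case via Wu--Yau type comparison. Since all of these are proper geodesic metrics, Gromov hyperbolicity transfers to $k_D$ by quasi-isometry invariance \cite[Theorem~III.H.1.9]{BH99}. This transfer is the main point to check. If the Bergman comparison is taken from \eqref{eq:bergman-8}, it is fully within the paper, because convex domains are $\C$-convex.

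\textbf{Step 2: apply Theorem~\ref{thm:directional}.} Every bounded convex domain is $\C$-convex, and for convex domains the $m$-convexity of Mercer is exactly \eqref{eq:mCconvex}. So Theorem~\ref{thm:directional} applies for each $m$ for which $D$ is $m$-convex, giving \eqref{eq:directional-estimate} and \eqref{eq:eigenvalues}.

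\textbf{Step 3: the sharp range.} For $0<s<1/m_\star(D)$, I choose an admissible $m$ with $m_\star(D)\le m<1/s$. The logarithmic estimate at order $1/m>s$ dominates the $H^s$ norm of $E_0u$, as observed after \eqref{eq:Hsb}. Similarly, for $p>nm_\star(D)$ I choose an admissible $m$ with $nm<p$ to get $N_q\in\mathfrak S_p$. Here I use that $m$-convexity is monotone in $m$: if it holds for $m$, it holds for every $m'>m$, because $\delta_D$ is bounded.

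\textbf{Step 4: the attained case.} If the infimum in \eqref{eq:mstar} is attained, I apply Step 2 with $m=m_\star(D)$.
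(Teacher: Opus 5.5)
Your proposal is correct and follows the paper's proof. The paper reduces to the Kobayashi distance by bi-Lipschitz equivalence, obtains global $m$-convexity from \cite[Corollary~7.2]{Zim22} because $D$ is bounded, applies Theorem~\ref{thm:directional}, and uses the definition of the infimum for the ranges $s<1/m_\star(D)$ and $p>nm_\star(D)$. The one difference is the source of the distance comparison: the paper cites Frankel's theorem \cite{Fra91} (see \cite[Remark~1.2]{Zim22}) for all three distances at once, and that, rather than a Wu--Yau argument, is the precise reference for the K\"ahler--Einstein case; your Bergman comparison via \eqref{eq:bergman-8} and \eqref{eq:invariant-comparison} is a valid argument within the paper.
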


\begin{proof}[Proof of Corollary~\ref{cor:gh-convex}]
On a bounded convex domain, the Kobayashi, the Bergman and the complete K\"ahler--Einstein distances are bi-Lipschitz equivalent by a theorem of Frankel \cite{Fra91}, see also \cite[Remark~1.2]{Zim22}. Hence Gromov hyperbolicity of any one of them gives Gromov hyperbolicity of the Kobayashi distance. By \cite[Corollary~7.2]{Zim22}, for every $R>0$ there are $C>0$ and $m\ge1$ such that \eqref{eq:mCconvex} holds on $D\cap B(0,R)$. Since $D$ is bounded, $D$ itself is such a piece, and so $D$ is $m$-convex for some $m$. Therefore Theorem~\ref{thm:directional} applies. If $s<1/m_\star(D)$, the definition of the infimum supplies an $m<1/s$ for which $D$ is $m$-convex, and the estimate of gain $s$ follows. For $p>nm_\star(D)$, we choose such an $m<p/n$. If the infimum is attained, Theorem~\ref{thm:directional} applies directly with $m=m_\star(D)$. This completes the proof.
\end{proof}

Thus Corollary~\ref{cor:gh-convex} gives every gain $s<1/m_\star(D)$, without the factor $\alpha_\star(D)$ in \eqref{eq:zimmer-range}. The two ranges agree when $\alpha_\star(D)=1$, in particular for $C^1$ boundaries. The improvement comes from the weight and not from Theorem~\ref{thm:zero-extension}. Indeed, bounded convex domains are Lipschitz, and on them the same gains follow from the weight of Section~\ref{sec:transforms} and Straube's theorem \cite[Theorem~1]{Str97}, as we explain in Remark~\ref{rem:straube-route}. What Theorem~\ref{thm:zero-extension} adds is the logarithmic endpoint and the absence of boundary assumptions. We also note that $m$-convexity does not imply Gromov hyperbolicity \cite[Example~7.3]{Zim22}. The following lens is an instance.

\begin{remark}[A lens]\label{rem:lens}
Let $n\ge2$, let $e_1,\ldots,e_n$ be the standard coordinate vectors, and let $D_{\mathrm{lens}}=\bigcap_{j=1}^nB(e_j,1)$. This is the special case of Zimmer's example \cite[Example~7.3]{Zim22} in which the domains $\Omega_j$ there are the balls $B(e_j,1)$. The domain $D_{\mathrm{lens}}$ contains the points $(t,\ldots,t)$ with $0<t<2/n$, and it is bounded and convex, hence Lipschitz. For a ball $B(c,1)$, a point $z$ in it and a unit vector $v$, a direct computation gives $r_{B(c,1)}(z;v)\le\sqrt{1-|z-c|^2}\le\sqrt{2\delta_{B(c,1)}(z)}$. Since $r_{D_{\mathrm{lens}}}(z;v)=\min_jr_{B(e_j,1)}(z;v)$ and $\delta_{D_{\mathrm{lens}}}(z)=\min_j\delta_{B(e_j,1)}(z)$, the lens is $2$-convex, and Theorem~\ref{thm:directional} applies to it with $m=2$. A subelliptic estimate on $D_{\mathrm{lens}}$ also follows from \cite[Theorem~1.3]{Zim22}, which treats finite intersections of Gromov hyperbolic convex domains. On the other hand, the dilations $tD_{\mathrm{lens}}$, $t>0$, increase to $\HH^n$, which is biholomorphic to the polydisc, and the standard rescaling argument \cite{GS18,NTT16,Zim16} shows that $k_{D_{\mathrm{lens}}}$ is not Gromov hyperbolic. Thus the class of domains to which Theorem~\ref{thm:directional} applies strictly contains the class of Gromov hyperbolic convex domains of Corollary~\ref{cor:gh-convex}.
\end{remark}

\begin{remark}[An alternative proof in the Lipschitz case]\label{rem:straube-route}
We describe the alternative route. On a bounded Lipschitz $m$-$\C$-convex domain, Lemma~\ref{lem:sbg}, the metric lower bound \eqref{eq:bergman-8} and the chain-rule construction in Theorem~\ref{thm:attenuation} yield a bounded plurisubharmonic $\lambda$ with
\[
H_\lambda\ge c\delta_D^{-2/m}\ell(\delta_D)^{-1-a}\operatorname{Id}\ge c_s\delta_D^{-2s}\operatorname{Id}\qquad(0<s<1/m).
\]
The second inequality holds because $t^{-2(1/m-s)}\ell(t)^{-1-a}\to\infty$ as $t\downarrow0$. Since $s<1/2$, Straube's local Lipschitz theorem \cite[Theorem~1]{Str97}, a finite covering of the boundary and interior elliptic estimates give the global restriction estimate of order $s$. Boundedness of zero extension on Lipschitz domains for $0<s<1/2$ (Subsection~\ref{sec:intro-estimates}) then gives the zero-extension estimate. This route gives neither the logarithmic endpoint nor estimates on domains without boundary regularity.
\end{remark}

\subsection{A local form}
In this subsection, we show that the weight of Theorem~\ref{thm:attenuation} also gives a local statement. The directional power bound is assumed only near one boundary point. The conclusion is a weighted estimate near that point, together with a local subelliptic gain when the boundary is Lipschitz there.

\begin{corollary}[Local formulation]\label{cor:local}
Let $D\Subset\C^n$, $n\ge2$, be $\C$-convex, let $p\in\partial D$, and let $U$ be a neighborhood of $p$. Suppose that there are $m_p\ge2$ and $C_p,t_0>0$ such that $r_D(z;v)\le C_p\delta_D(z)^{1/m_p}$ for all $z\in U\cap D$ with $\delta_D(z)<t_0$ and all $v\ne0$.
\begin{enumerate}
\item[(i)] For every $a>0$ there are a neighborhood $U'\subset U$ of $p$ and $C>0$ such that
\[
\int_{U'\cap D}\delta_D^{-2/m_p}\ell(\delta_D)^{-1-a}|u|^2\le CQ_D(u),\qquad u\in\mathcal V_{q'}(D),\quad 1\le q'\le n.
\]
\item[(ii)] If $\partial D$ is Lipschitz near $p$, then every $0<s<1/m_p$ is a local subelliptic gain near $p$ in the sense of \cite[Theorem~1]{Str97}, that is, there are a neighborhood $V$ of $p$ and $C_s>0$ such that $\norm{u}^2_{H^s(D)}\le C_sQ_D(u)$ for every $u\in\mathcal V_1(D)$ supported in $V\cap\overline D$.
\end{enumerate}
\end{corollary}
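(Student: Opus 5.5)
For (i), we will localize the construction in the proof of Theorem~\ref{thm:partial-collapse}, with Corollary~\ref{cor:collars} replaced by a version whose lower bound is cut off near $p$. The potential is $\Psi=\log K_D$. By Lemma~\ref{lem:sbg} it satisfies \eqref{eq:bergman-2} with $A$ depending only on $n$. By \eqref{eq:bergman-1} it is bounded below and $\Psi\le C\ell(\delta_D)$. We take the profile $g(t)=t^{-1-a}$ and shift $L=B+\Psi$ so that $L\ge2A(1+a)$. The bounded weight $\lambda=-\int_L^\infty g$ then satisfies $H_\lambda\ge\frac12g(L)H_\Psi\ge0$ on all of $D$, exactly as in the proof of Theorem~\ref{thm:attenuation}. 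Now fix $z\in U\cap D$ with $\delta_D(z)<t_0$. By \eqref{eq:bergman-8}, we have $H_\Psi\ge\frac1{16}r_D(z;v)^{-2}$ in every unit direction $v$. The local hypothesis therefore gives $H_\Psi\ge c\,\delta_D^{-2/m_p}\mathrm{Id}$ there, and so
\[
H_\lambda\ge c\,\delta_D^{-2/m_p}\ell(\delta_D)^{-1-a}\mathrm{Id}\quad\text{on } U\cap\{\delta_D<t_0\}.
\]

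Next we choose a continuous cutoff $\chi\in[0,1]$ that equals one on $U'\cap\{\delta_D<t_1\}$ and is supported in $U\cap\{\delta_D<t_0\}$. Here $U'\Subset U$ is a neighborhood of $p$ and $0<t_1<t_0$. The field $w=\chi\,\delta_D^{-2/m_p}\ell(\delta_D)^{-1-a}$, extended by zero, is continuous and nonnegative on $D$, and $H_\lambda\ge w\,\mathrm{Id}$ holds everywhere. Theorem~\ref{thm:matrix} in degree one bounds $\int_D w|u|^2$ by $CQ_D(u)$. The passage from degree one to higher degrees $q'$ uses \eqref{eq:prelim-2}: the minimum eigenvalue of $H^{[q']}_\lambda$ is at least $q'$ times the minimum eigenvalue of $H_\lambda$. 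This gives the estimate for every $1\le q'\le n$.

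On $U'\cap\{\delta_D\ge t_1\}$ the weight $\delta_D^{-2/m_p}\ell(\delta_D)^{-1-a}$ is bounded. There \eqref{eq:exhaustion-1} controls $\int|u|^2$ by $Q_D(u)$, since $D$ is bounded and pseudoconvex (pseudoconvexity follows from the observation after Result~\ref{res:npz}). Adding the two pieces proves (i).

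For (ii), we follow the route of Remark~\ref{rem:straube-route} in local form. Fix $s<1/m_p$. Since $m_p\ge2$, we have $s<1/2$. Because $t^{-2(1/m_p-s)}\ell(t)^{-1-a}\to\infty$ as $t\downarrow0$, the bounded plurisubharmonic weight $\lambda$ satisfies $H_\lambda\ge c_s\delta_D^{-2s}\mathrm{Id}$ on a neighborhood $V_0\cap D$ of $p$, after shrinking $t_0$. On a smaller neighborhood the boundary is Lipschitz. Straube's local theorem \cite[Theorem~1]{Str97} only requires such a bounded plurisubharmonic function near the point, with Hessian at least $c\,\delta^{-2s}$, on a Lipschitz piece of boundary, and it then yields a local subelliptic estimate of gain $s$ for forms supported in a neighborhood $V$ of $p$.

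The main obstacle is that Straube's theorem is stated for a weight defined on the local domain, whereas ours is a global weight on $D$. We will restrict $\lambda$ to $V_0\cap D$ and apply Straube's argument there. The exact form of his hypotheses has to be checked, since our $\lambda$ is $C^2$ and bounded but its Hessian bound only holds near $p$. If a direct citation is unclear, we can instead combine (i), in the form $\int\delta_D^{-2s}|u|^2\le CQ_D(u)$ for $u$ supported in $V$, with Straube's reduction from this weighted $L^2$ bound to the $H^s$ norm on Lipschitz pieces for $s<1/2$.
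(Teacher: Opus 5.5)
Your proposal is correct and follows the paper's proof. Both use the bounded transform $\lambda$ of $\log K_D$ with $g(t)=t^{-1-a}$, the lower bound on $U\cap D$ from \eqref{eq:bergman-8} and \eqref{eq:bergman-1}, a cutoff lower bound fed into Theorem~\ref{thm:matrix} together with the eigenvalue inequality for the higher degrees, and then Straube's theorem for (ii).

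The differences are only cosmetic. The paper shrinks $U$ so that $\delta_D<t_0$ on $U\cap D$, where you treat $\{\delta_D\ge t_1\}$ by \eqref{eq:exhaustion-1}. The paper also applies \cite[Theorem~1]{Str97} directly to the restriction of $\lambda$. Your worry about Straube's hypotheses is therefore unnecessary, and your fallback route is not needed.
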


\begin{proof}
Shrinking $U$, we may assume that $\delta_D<t_0$ on $U\cap D$. Let $\lambda$ be the bounded transform of the proof of Theorem~\ref{thm:attenuation} with $\Psi=\log K_D$ and $g(t)=t^{-1-a}$, as in Corollary~\ref{cor:log-growth}. Then $H_\lambda\ge\frac12g(L)H_{\log K_D}\ge0$ on $D$. We now bound the Hessian from below on the given neighborhood. By \eqref{eq:bergman-8}, we have $\beta_D(z;v)^2\ge(16\,r_D(z;v)^2)^{-1}\ge c\,\delta_D(z)^{-2/m_p}$ for $z\in U\cap D$ and $|v|=1$, and by \eqref{eq:bergman-1} we have $g(L)\ge c\,\ell(\delta_D)^{-1-a}$. Hence $H_\lambda\ge c\,\delta_D^{-2/m_p}\ell(\delta_D)^{-1-a}\operatorname{Id}$ on $U\cap D$. Let $\chi$ be a continuous cutoff with values in $[0,1]$, supported in $U$ and equal to one on $U'$. Since $H_\lambda\ge0$ everywhere, the field $A=\chi c\,\delta_D^{-2/m_p}\ell(\delta_D)^{-1-a}\operatorname{Id}$ is a valid global lower bound, and Theorem~\ref{thm:matrix} gives (i) in degree one. The higher degrees follow by the eigenvalue inequality of Section~\ref{sec:complex}. For (ii), we have $\delta_D^{-2/m_p}\ell(\delta_D)^{-1-a}\ge c_s\delta_D^{-2s}$ for $s<1/m_p$, as in Remark~\ref{rem:straube-route}, and Straube's theorem \cite[Theorem~1]{Str97} applies near $p$. This completes the proof.
\end{proof}

\part{The three conditions on smooth \texorpdfstring{$\C$}{C}-convex domains}\label{part:three}

\section{Subellipticity, directional radii and the proof of Theorem~\ref{thm:main}}\label{sec:necessity}
In this section, we shall prove that a subelliptic estimate forces a power bound for the directional radii, and we shall complete the proof of Theorem~\ref{thm:main}. The argument applies the canonical solution operator to normalized Bergman kernels. Before we begin, we record the two tools on which the argument rests, namely the regularity theorem of Kohn and Nirenberg and the fractional Hardy inequality.

The following result is due to Kohn and Nirenberg.

\begin{result}[Kohn--Nirenberg regularity]\label{res:smooth-regularity}
Let $D\Subset\C^n$ be pseudoconvex with $C^\infty$ boundary. Suppose that a subelliptic estimate of gain $\varepsilon>0$ holds in degree $q$. Then, for every $a\ge0$,
\[
 N_q:H^a_{(0,q)}(D)\longrightarrow H^{a+2\varepsilon}_{(0,q)}(D),
\]
and both $\ddbar^*N_q$ and $\ddbar N_q$ map $H^a_{(0,q)}(D)$ boundedly into the corresponding spaces of order $a+\varepsilon$. In particular, on closed $(0,1)$-forms,
\begin{equation}\label{eq:canonical-gain}
 S_1=\ddbar^*N_1:L^2_{(0,1)}(D)\cap\ker\ddbar\longrightarrow H^\varepsilon(D)
\end{equation}
is bounded whenever the estimate holds in degree one.
\end{result}
The result above is the pseudolocal regularity theorem of Kohn and Nirenberg \cite{KN65}, which is proved by elliptic regularization. The global form stated above follows from the pseudolocal one by a partition of unity, see \cite[Theorem~8]{BS99}. We also note that on $C^\infty$ domains the estimate on $\mathcal V_q(D)$ is equivalent, by density, to the same estimate on forms in $\dom\ddbar^*$ that are smooth up to the boundary.

The second tool is a fractional Hardy inequality on bounded Lipschitz domains. We state it in the form in which we shall use it.

\begin{lemma}[Fractional Hardy estimate]\label{lem:fractional-hardy}
Let $D\subset\R^d$ be a bounded Lipschitz domain and $0<s<1/2$. There is a constant $C_s$, depending on $s$ and $D$, such that
\begin{equation}\label{eq:fractional-hardy}
 \int_D\delta_D(x)^{-2s}|h(x)|^2\,dx\le C_s\|h\|_{H^s(D)}^2,\qquad h\in H^s(D).
\end{equation}
\end{lemma}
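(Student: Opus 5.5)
The plan is to argue by density and to localize with a partition of unity, which reduces \eqref{eq:fractional-hardy} to a flat model estimate in each Lipschitz graph chart. Since $s<1/2$, $C_c^\infty(D)$ is dense in $H^s(D)$; this is standard for Lipschitz domains, for instance \cite[Theorem~3.40]{McL00}. Both sides of \eqref{eq:fractional-hardy} behave well under limits: the left side is handled by Fatou's lemma along an a.e.\ convergent subsequence. So it suffices to prove the estimate for $h\in C_c^\infty(D)$. Next cover $\overline D$ by finitely many Lipschitz graph charts together with one interior open set, and choose a subordinate smooth partition of unity $\{\chi_i\}$. Multiplication by a smooth cutoff is bounded on $H^s$, so we reduce to $\chi_ih$. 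On the interior piece $\delta_D$ is bounded below, and the estimate is trivial there. In a boundary chart, $D$ is the region $\{x_d>\varphi(x')\}$ with $\varphi$ Lipschitz. By the comparison \eqref{eq:height}, the distance $\delta_D(x)$ is comparable to the height $x_d-\varphi(x')$ on the support.

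The key step is to flatten the chart with the bi-Lipschitz map $\Phi(x',x_d)=(x',x_d-\varphi(x'))$. This map sends the chart region onto a piece of the half-space $\{y_d>0\}$ and sends the height to $y_d$. Bi-Lipschitz changes of variables preserve $H^s$ for $0<s<1$. To see this, use the Gagliardo characterization (Lemma~\ref{lem:gagliardo-log} with $b=0$, together with its domain version for the restriction norm). The Jacobian is bounded above and below, and $|\Phi(x)-\Phi(y)|\asymp|x-y|$. Thus it suffices to prove the half-space Hardy inequality
\[
\int_{\R^d_+}y_d^{-2s}|g(y)|^2\,dy\le C\|g\|_{H^s(\R^d_+)}^2,\qquad g\in C_c^\infty(\R^d_+).
\]

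For the half-space inequality, extend $g$ by zero to $\R^d$. Since $g$ has compact support in the open half-space, $E_0g$ lies in $H^s(\R^d)$. Apply the one-dimensional Hardy inequality for functions on $\R$ supported in $(0,\infty)$,
\[
\int_0^\infty t^{-2s}|f(t)|^2\,dt\le C\|f\|_{\dot H^s(\R)}^2\qquad(s<1/2),
\]
on each normal line, and then integrate in $y'$ using the Fourier characterization. The one-dimensional inequality is the classical fractional Hardy inequality. It can be proved by bounding the cross term in the Gagliardo seminorm from below: for $t>0$,
\[
\int_{-\infty}^0|t-\tau|^{-1-2s}\,d\tau\asymp t^{-2s}.
\]
This gives $\int t^{-2s}|f|^2\le C[f]^2_{H^s(\R)}$ directly, which is the same cross-term computation as in the proof of Lemma~\ref{lem:harmonic}.

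The main obstacle is the passage from $\|E_0g\|_{H^s(\R^d)}$ back to the restriction norm $\|g\|_{H^s(\R^d_+)}$. This is exactly the boundedness of zero extension for $s<1/2$ on Lipschitz domains. I would quote it from \cite[Theorems~3.33 and~3.40]{McL00} and \cite[Lemma~2.1]{HM17}, as recalled in Subsection~\ref{sec:intro-estimates}. If I had to prove it, I would split the Gagliardo seminorm of $E_0g$ into the inner double integral, which is bounded by the restriction norm, and the cross term, which equals $\int y_d^{-2s}|g|^2$ up to constants. This makes the argument circular. The way around it is the standard one: first prove the Hardy inequality via a reflection or mean-value argument, in which the half-space inner Gagliardo seminorm already controls $\int y_d^{-2s}|g|^2$ for $s<1/2$ by comparing $g(y)$ with its averages over the cubes of a Whitney-type layer decomposition. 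That comparison is the step where the restriction $s<1/2$ genuinely enters.
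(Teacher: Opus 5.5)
Your proposal is correct and rests on the same two ingredients as the paper's proof. The first is the density of $C_c^\infty(D)$ together with the boundedness of zero extension $H^s(D)\to H^s(\R^d)$ for $s<1/2$, quoted from \cite{McL00} and \cite{HM17}; the second is a lower bound for the cross term of the Gagliardo seminorm of $E_0h$. The paper skips your partition of unity, flattening and one-dimensional Hardy step: the exterior-cone property of $D$ gives $\int_{\R^d\setminus D}|x-y|^{-d-2s}\,dy\ge c_s\delta_D(x)^{-2s}$ directly in a boundary collar, so the cross-term argument runs on $D$ itself without passing to the half-space.
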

This Hardy inequality is classical, and we refer the reader to Grisvard \cite{Gri85}. We recall the short proof.
\begin{proof}
We first note that, for this range of $s$, zero extension is bounded from the restriction space $H^s(D)$ to $H^s(\R^d)$. To see this, observe that $C_c^\infty(D)$ is dense in $H^s(D)$ for $s\le1/2$ \cite[Chapter~3]{McL00}. On the closure of $C_c^\infty(D)$, zero extension is bounded by \cite[Theorem~3.33]{McL00} and \cite[Lemma~2.1]{HM17}. Next, we bound the weight by an integral over the exterior of the domain. The exterior-cone property of a bounded Lipschitz domain gives
\[
 \int_{\R^d\setminus D}|x-y|^{-d-2s}\,dy\ge c_s\delta_D(x)^{-2s}
\]
for $x$ in a boundary collar. Therefore, in that collar, the left side of \eqref{eq:fractional-hardy} is bounded by the part of the Gagliardo seminorm of $E_0h$ in which $x\in D$ and $y\notin D$. In the remaining compact interior the weight is bounded. The Gagliardo characterization of $H^s$ \cite{DNPV12,Gag57} and the zero-extension bound now give the inequality of the lemma. This proves the lemma.
\end{proof}

We are now ready to present the principal result of this section. It shows that a subelliptic estimate in degree one forces a power bound for the directional radii, with any exponent below the gain and below one half, and hence a bound for the D'Angelo one-type.

\begin{theorem}[Directional necessity from canonical solutions]\label{thm:analytic-necessity}
Let $D\Subset\C^n$, $n\ge2$, be $\C$-convex with $C^\infty$ boundary. Suppose that a subelliptic estimate of gain $\varepsilon>0$ holds on $(0,1)$-forms. For every $0<s<\min\{\varepsilon,1/2\}$ there is $C_s$ such that
\begin{equation}\label{eq:necessary-radius}
 r_D(z;v)\le C_s\delta_D(z)^s,\qquad z\in D,\quad v\ne0.
\end{equation}
Consequently, $\Delta_1(\partial D,p)\le1/s$ for every $p\in\partial D$. In particular, $\partial D$ has finite D'Angelo one-type.
\end{theorem}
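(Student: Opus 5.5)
The plan is to test the subelliptic estimate, through the canonical solution operator, on forms built from normalized Bergman kernels, and to read off a bound on the longest minimal-basis axis $\tau_n(z)$.

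\textbf{Setup.} Fix $z\in D$ near $\partial D$ and a minimal basis $e_1,\dots,e_n$ at $z$ with radii $\tau_1=\delta_D(z)\le\cdots\le\tau_n$. Let $\zeta$ be the unitary affine coordinates centered at $z$ along this basis. Set $f_z(w)=K_D(w,z)/K_D(z)^{1/2}$, so that $\norm{f_z}_2=1$. The form $g=f_z\,d\bar\zeta_n$ is $\ddbar$-closed and has $\norm g_2=1$.

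\textbf{Upper bound.} Put $u=S_1g$. By Result~\ref{res:smooth-regularity}, $\norm u_{H^\varepsilon(D)}\le C$ uniformly in $z$. Since $s<\varepsilon$, this gives $\norm u_{H^s(D)}\le C$. A $C^\infty$ domain is Lipschitz and $s<1/2$, so Lemma~\ref{lem:fractional-hardy} gives $\int_D\delta_D^{-2s}|u|^2\le C$.

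Let $P$ be the polydisc in the $\zeta$ coordinates with radii $c_n\tau_j$. By \eqref{eq:bergman-5}, $P\subset D$. Let $p$ be the nearest boundary point chosen for $e_1$, and $H$ the complex support at $p$. The vectors $e_2,\dots,e_n$ are orthogonal to $e_1$, which is the normal direction, so they are parallel to $H$. Hence every $w\in P$ satisfies
\[
\delta_D(w)\le\dist(w,H)\le\delta_D(z)+c_n\tau_1\le2\delta_D(z).
\]
Therefore
\[
\int_P|u|^2\le C\delta_D(z)^{2s}.
\]

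\textbf{Lower bound.} Since $\ddbar u=f_z\,d\bar\zeta_n$, the function $h=u-\bar\zeta_nf_z$ is holomorphic on $D$. Expand $f_z=\sum_\beta a_\beta\zeta^\beta$ on $P$. For multi-indices with $\beta_n=0$, the functions $\bar\zeta_n\zeta^\beta$ are $L^2(P)$-orthogonal to every holomorphic monomial, to each other, and to $\bar\zeta_n\zeta^\gamma$ for every $\gamma$ with $\gamma_n\ge1$; all of this follows from integrating in polar coordinates in each variable. Orthogonal projection onto the span of these functions, keeping only the term $\beta=0$, gives
\[
\int_P|u|^2\ge|f_z(z)|^2\int_P|\zeta_n|^2\,dV=K_D(z)\,c\,\tau_n^2\prod_j\tau_j^2.
\]
By \eqref{eq:bergman-6}, $K_D(z)\prod_j\tau_j^2\ge a_n$, so the right side is at least $c\tau_n^2$.

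\textbf{Conclusion.} Comparing the two bounds gives $\tau_n\le C\delta_D(z)^s$ near the boundary. For a unit vector $v$, \eqref{eq:bergman-8} and the lower bound in \eqref{eq:bergman-7} give
\[
\frac{C_n}{r_D(z;v)}\ge\beta_D(z;v)\ge c_n\sum_j\frac{|v_j|}{\tau_j}\ge\frac{c_n}{\tau_n}.
\]
Hence $r_D(z;v)\le C\tau_n$. Away from the boundary we use $r_D\le\diam D$. Together these give \eqref{eq:necessary-radius}, and Lemma~\ref{lem:full-curve-type} then gives $\Delta_1\le1/s$.

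The main obstacle is the lower bound, specifically the orthogonality bookkeeping. All cross terms between $h$ and $\bar\zeta_nf_z$ must be removed by projecting onto a subspace that is genuinely orthogonal to $A^2(P)$, which is why only the $\beta_n=0$ part of $f_z$ is kept. A second point to check is that the Kohn--Nirenberg bound on $S_1$ is uniform over the family of unit-norm data $g$ as $z$ varies, which it is since $S_1$ is a bounded operator.
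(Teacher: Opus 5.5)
Your proof is correct and follows the paper's argument. It uses the same datum $f_z\,d\bar w_n$ fed to $S_1$, the same Kohn--Nirenberg plus fractional-Hardy upper bound on the minimal-basis polydisc (kept within distance $\lesssim\delta_D(z)$ of the boundary by the complex tangent hyperplane at the nearest boundary point), and the same passage to $r_D(z;v)\lesssim\tau_n$ via Result~\ref{res:npz}, followed by Lemma~\ref{lem:full-curve-type}. The only difference is in the lower bound: the paper pairs the pulled-back solution with $\partial\chi/\partial\bar\zeta_n$ for a radial product cutoff $\chi$ and uses the mean-value property, whereas you subtract the explicit solution $\bar\zeta_n f_z$ and project onto functions orthogonal to $A^2(P)$. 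This is a slightly more explicit form of the same duality, and it gives the same bound $\int_P|u|^2\gtrsim K_D(z)\,\tau_n^2\prod_j\tau_j^2$.
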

\begin{proof}
Fix $s$ in the stated range. We first set up a polydisc adapted to a minimal basis at the given point. At $z\in D$ choose a minimal basis $e_1,\ldots,e_n$, with radii $\tau_1\le\cdots\le\tau_n$, as in Result~\ref{res:npz}. We choose $e_1$ to point from $z$ to a nearest boundary point $p$, so that $p=z+\delta_D(z)e_1$ and $\tau_1=\delta_D(z)$. Fix $c\in(0,1/(2n))$. The affine map
\[
 \Phi_z(\zeta)=z+c\sum_{j=1}^n\tau_j\zeta_je_j
\]
sends $\D^n$ into $D$. We write $P_z=\Phi_z(\D^n)$ for the image polydisc, and we put
\[
 J_z=|\det\Phi_z'|^2=c^{2n}\prod_{j=1}^n\tau_j^2.
\]
The kernel estimate in Result~\ref{res:npz} gives
\begin{equation}\label{eq:kernel-polydisc-mass}
 J_zK_D(z,z)\ge c_1>0.
\end{equation}
All constants here are independent of $z$ and of the choice of minimal basis.

Next, we show that the polydisc lies in a collar whose width is comparable to the boundary distance at its center. The complex tangent hyperplane $H=p+e_1^{\perp_\C}$ is disjoint from $D$. Hence, for $w=\Phi_z(\zeta)\in P_z$, we have
\begin{equation}\label{eq:polydisc-collar}
 \delta_D(w)\le\dist(w,H)
 =\delta_D(z)|1-c\zeta_1|\le(1+c)\delta_D(z).
\end{equation}
We now apply the canonical solution operator to a datum built from the normalized Bergman kernel. Let
\[
 f_z(w)=\frac{K_D(w,z)}{\sqrt{K_D(z,z)}}.
\]
Then $f_z$ is holomorphic, $\|f_z\|_2=1$, and $f_z(z)=\sqrt{K_D(z,z)}$. In the unitary coordinates given by the basis, let $\eta_n$ be the constant $(0,1)$-covector $d\bar w_n$. The form $f_z\eta_n$ is closed and has norm one. Set
\[
 h_z=S_1(f_z\eta_n).
\]
Since the chosen exponent is smaller than the gain, \eqref{eq:canonical-gain} gives $\|h_z\|_{H^s(D)}\le C_s$ uniformly in $z$. Moreover, the equation satisfied by the canonical solution, pulled back by the affine map, reads
\[
 \frac{\partial(h_z\circ\Phi_z)}{\partial\bar\zeta_n}
 =c\tau_n(f_z\circ\Phi_z)
\]
in distributions on $\D^n$.

We now bound the mass of the solution on the polydisc from below. To this end, choose a fixed nonnegative $\chi\in C_c^\infty(\D^n)$ which is a product of radial functions and has integral one. Since the pullback of the kernel is holomorphic on the polydisc, the mean-value property, applied in each variable, gives
\[
 \int_{\D^n}(f_z\circ\Phi_z)\chi=f_z(z).
\]
To express the left side through the solution, we use the equation for the pullback of the solution. Distributional integration by parts and the Cauchy--Schwarz inequality yield
\[
 c\tau_n\sqrt{K_D(z,z)}
 \le \|h_z\circ\Phi_z\|_{L^2(\D^n)}
       \|\partial\chi/\partial\bar\zeta_n\|_{L^2(\D^n)}.
\]
After squaring and multiplying by $J_z$, \eqref{eq:kernel-polydisc-mass} gives
\begin{equation}\label{eq:solution-mass-lower}
 \int_{P_z}|h_z|^2\ge c_2\tau_n^2.
\end{equation}
On the other hand, by \eqref{eq:polydisc-collar}, the boundary distance on the polydisc is at most a constant times the boundary distance at its center. Together with Lemma~\ref{lem:fractional-hardy} and the uniform bound on the Sobolev norm of the solution, this gives
\[
 \int_{P_z}|h_z|^2
 \le C\delta_D(z)^{2s}\int_D\delta_D(w)^{-2s}|h_z(w)|^2\,dV(w)
 \le C_s\delta_D(z)^{2s}.
\]
Combining the two inequalities above, we get $\tau_n\le C_s\delta_D(z)^s$. It remains to pass from the largest radius of the minimal basis to the directional radius. By the metric comparisons in Result~\ref{res:npz},
\[
 \frac{1}{r_D(z;v)}\asymp\frac{\beta_D(z;v)}{|v|}
 \ge\frac{c_3}{\tau_n},\qquad v\ne0.
\]
This proves \eqref{eq:necessary-radius}. Finally, Lemma~\ref{lem:full-curve-type}, applied with $\alpha=s$, gives $\Delta_1(\partial D,p)\le1/s$, and this bound includes singular curve germs.
\end{proof}

\begin{remark}
There is another route to Theorem~\ref{thm:analytic-necessity}. A subelliptic estimate of gain $\varepsilon$ is known to give, for every $\eta>0$, the lower bound $\beta_D(z;v)\gtrsim\delta_D(z)^{-\varepsilon+\eta}|v|$ for the Bergman metric. This is a theorem of McNeal \cite{McN92a}, and a statement of this kind is also given by Khanh--Zampieri \cite[Theorem~1.4]{KZ12}. Together with \eqref{eq:bergman-8}, such a lower bound gives another proof of Theorem~\ref{thm:analytic-necessity}. We do not use this route. The proof given above uses only the regularity theorem of Kohn and Nirenberg and the fractional Hardy inequality. We also note that the bound on the type can be made explicit. Letting $s\uparrow\min\{\varepsilon,1/2\}$ in Theorem~\ref{thm:analytic-necessity}, we obtain the bound $\Delta_1(\partial D,p)\le\max\{2,1/\varepsilon\}$ on $\C$-convex domains.
\end{remark}

We next record the classical upper bound for the gain of a subelliptic estimate in degree one. We shall need it when we identify the critical exponents.

\begin{lemma}[The upper half-order bound]\label{lem:upper-half-order}
Let $D\Subset\C^n$, $n\ge2$, be a domain with $C^\infty$ boundary. A subelliptic estimate on $(0,1)$-forms cannot have gain greater than $1/2$.
\end{lemma}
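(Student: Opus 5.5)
The plan is to argue by contradiction, testing a subelliptic estimate of gain $s>1/2$ on a single explicit form that is smooth up to the boundary. Since a gain $s$ implies every smaller gain, it suffices to reach a contradiction for a fixed $s\in(1/2,1)$. I would use the construction in the proof of Proposition~\ref{prop:half-order}(i). Near a boundary point $p$, take a smooth local frame of complex tangential $(0,1)$-forms and pick one element $\omega$ of it. Set $u=\chi\,\omega$, where $\chi$ is a cutoff equal to one near $p$. Then $u$ is smooth on $\overline D$ and satisfies the normal-contraction condition. Hence $u\in\dom\ddbar\cap\dom\ddbar^*$, and both $Q_D(u)$ and $\|u\|_2^2$ are finite. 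Pure tangentiality is possible because $n\ge2$. If the estimate held, it would give $u\in H^s(D)$. This is automatic for a smooth form, so one test form yields no contradiction. A scaling family is therefore needed.

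The family I would use is $u_\varepsilon=\chi_\varepsilon(x)\,\omega$. Here $\chi_\varepsilon$ is a bump localized in the tangential directions at scale $\varepsilon$, with profile $\chi_\varepsilon(x)=\phi(x'/\varepsilon)\,\psi(r/\varepsilon)$, and $r=-\rho$ is the normal coordinate. The normal profile $\psi$ equals one at $r=0$, so $u_\varepsilon$ has nonzero boundary values. Each $u_\varepsilon$ stays tangential, so $u_\varepsilon\in\mathcal V_1(D)$. For this family:
\begin{itemize}
\item $Q_D(u_\varepsilon)+\|u_\varepsilon\|_2^2\lesssim \varepsilon^{2n-2}$, since there are derivatives of size $\varepsilon^{-1}$ on a set of volume $\varepsilon^{2n}$.
\item $\|u_\varepsilon\|_{H^s(D)}^2\gtrsim \varepsilon^{2n-2s}$, by scaling the Sobolev norm.
\end{itemize}
With these bounds the inequality holds for $s\le1$ and gives nothing. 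Isotropic scaling is thus too crude, and the bump must be anisotropic so that $\ddbar$ and $\ddbar^*$ only see the bad directions weakly.

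The standard fix, and the one I would pursue, is Kohn's choice. Take $u_\varepsilon=e^{i\langle x',\xi\rangle/\varepsilon}\,\phi(x')\,\psi(r/\sqrt\varepsilon)\,\omega$ near $p$. It oscillates at frequency $1/\varepsilon$ in a real tangential direction that is not in the complex tangent space, namely the $T$-direction $\Imm$ of the normal. It is localized at depth $\sqrt\varepsilon$ in $r$. The coefficients of $\ddbar u_\varepsilon$ and $\ddbar^* u_\varepsilon$ involve only $\bar L_j$ derivatives, that is, complex tangential vector fields applied to the coefficient, together with $\bar\partial_r$ combined with $T$. The combination $\partial_r+iT$ annihilates an exponential profile. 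So I would take the normal profile to be $e^{-r/\varepsilon}$-type, matched to the oscillation: $u_\varepsilon=\phi(x')\,e^{(i t-r)/\varepsilon}\,\omega$, where $t$ is the $T$-coordinate. This is essentially a boundary-localized holomorphic-in-$(t+ir)$ wave. With this choice the main derivatives cancel in $\ddbar$ and $\ddbar^*$, and only the $O(1)$ derivatives of $\phi$ and the smooth coefficients remain. The estimates become:
\begin{itemize}
\item $Q_D(u_\varepsilon)\lesssim\int e^{-2r/\varepsilon}\,dr\sim \varepsilon$.
\item $\|u_\varepsilon\|_2^2\sim\varepsilon$.
\item $\|u_\varepsilon\|_{H^s}^2\gtrsim \varepsilon^{-2s}\cdot\varepsilon$, from the tangential frequency $1/\varepsilon$.
\end{itemize}
The subelliptic inequality would then read $\varepsilon^{1-2s}\lesssim\varepsilon$. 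This only excludes $s>0$, which is wrong, so I must be double-counting. Kohn's actual computation uses the Levi form: the commutator $[L,\bar L]$ produces $T$, and the $\bar L$-derivative of the wave picks up $\varepsilon^{-1}\times(\text{Levi}\cdot \text{tangential position})$. This forces tangential localization at scale $\sqrt\varepsilon$, with $\phi(x'/\sqrt\varepsilon)$. Redoing the count with this localization, $Q\sim\varepsilon^{n}$ (volume $\varepsilon\cdot\varepsilon^{(2n-2)/2}$ times $O(1)$ squared derivatives) and $\|u\|_{H^s}^2\sim\varepsilon^{-2s}\varepsilon^{n}$. The inequality $\varepsilon^{n-2s}\lesssim\varepsilon^{n}\cdot\varepsilon^{-1}$ then forces $s\le1/2$.

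The main obstacle is this bookkeeping. One has to verify that the $\bar L_j$ derivatives at scale $\sqrt\varepsilon$ cost only $\varepsilon^{-1/2}$, giving $Q\lesssim\varepsilon^{-1}\|u\|^2$, while the $T$-oscillation gives the $H^s$ norm $\varepsilon^{-2s}\|u\|^2$. I would first check this in the model half-space $\{\Imm w>|z'|^2\}$ and then transfer to $D$ by a local smooth coordinate change that is tangent to the model at $p$. The errors are of lower order at these scales.
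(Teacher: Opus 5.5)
Your final construction is Kohn's test form, which is also the paper's. The two estimates you aim at, $Q_D(u_\varepsilon)+\norm{u_\varepsilon}_2^2\lesssim\varepsilon^{-1}\norm{u_\varepsilon}_2^2$ and $\norm{u_\varepsilon}_{H^s(D)}^2\gtrsim\varepsilon^{-2s}\norm{u_\varepsilon}_2^2$, are exactly \eqref{eq:oscillating-energy} and the Sobolev lower bound in the paper's proof. The routes differ only in how the concentration at tangential scale $\sqrt\varepsilon$ is produced.

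The paper works at a point where $D$ lies inside a tangent ball, so that $D=\{x_n+F(y)<0\}$ with $c\abs{y}^2\le F(y)\le C\abs{y}^2$. It uses the holomorphic factor $e^{z_n/t}$, whose modulus $e^{x_n/t}=e^{\rho/t}e^{-F(y)/t}$ gives a Gaussian of width $\sqrt t$ for free. Then $\ddbar$ falls only on the cutoffs and on $\beta$. The single large term, $\beta/t$ in $\ddbar^*u_t$, is controlled by $\int\abs{y}^2e^{-2F/t}\le Ct\int e^{-2F/t}$.

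In the same coordinates your wave is $e^{(z_n+F(y))/\varepsilon}$, and its modulus $e^{\rho/\varepsilon}$ depends only on the normal variable. You must therefore impose the localization with an explicit cutoff $\phi(y/\sqrt\varepsilon)$. You pay for its derivatives and for the non-holomorphic term $\varepsilon^{-1}\ddbar F=O(\abs{y}/\varepsilon)$, both $O(\varepsilon^{-1/2})$ on the support. In exchange, your version works at every boundary point and uses no sign of $F$ or of the Levi form. The paper needs the convex point to make $e^{-F/t}$ a genuine Gaussian.

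When you write it out, correct three points of the bookkeeping.
\begin{enumerate}
\item The unlocalized attempt does not fail by double counting. $\ddbar$ of the wave is $\varepsilon^{-1}\ddbar F$ times the wave, which has size $\varepsilon^{-1}$ where $\abs{y}\sim1$. So $Q_D(u_\varepsilon)$ is of order $\varepsilon^{-1}$, not $\varepsilon$.
\item In your corrected count, the parenthetical ``$O(1)$ squared derivatives'' must read $O(\varepsilon^{-1})$. With $O(1)$ you would again rule out every gain $s>0$. The inequality you actually write, and your last paragraph, already use $\varepsilon^{-1}$.
\item Localize all $2n-1$ tangential variables at scale $\sqrt\varepsilon$, the $T$-variable included. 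The model $\{\Imm w>\abs{z'}^2\}$ hides this because its $F$ does not depend on $\Ree w$. For a sphere of radius $R$, however, $\partial F/\partial y_n\approx y_n/R$, which leaves an error of size $\varepsilon^{-1}$ where $\abs{y_n}\sim1$.
\end{enumerate}
The extra localization costs nothing in the lower bound, since the frequency $\varepsilon^{-1}$ dominates the spread $\varepsilon^{-1/2}$. That bound must be for the restriction norm, and it follows from the tangential Fourier transform in flattened coordinates, as in the paper.

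It is simpler to compute directly with $\rho=x_n+F(y)$, $\omega=d\bar z_1-\beta\,d\bar z_n$, $\beta=\rho_{z_1}/\rho_{z_n}$ and $\abs\beta\lesssim\abs y$ than to transfer from the model. At a point that is not strongly pseudoconvex, the discrepancies with the model are of the same size $O(\varepsilon^{-1/2})$ as the model terms, not of lower order.
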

\begin{proof}
We shall test the estimate on oscillating forms concentrated at a boundary point at which the domain lies inside a tangent ball. To find such a point, choose a closed Euclidean ball containing $\overline D$ and tangent to $\partial D$ at a point where the distance from its center is maximal on $\overline D$. Translate and rotate this point to the origin, with outward normal in the positive real $z_n$ direction. In a fixed neighborhood write
\[
 D=\{x_n+F(y)<0\},\qquad
 y=(z',y_n)\in\R^{2n-1},\qquad r=x_n+F(y),
\]
where $F(0)=0$, $dF(0)=0$. Since the boundary lies inside the tangent ball and $F$ is smooth, after shrinking the neighborhood there are positive constants $c,C$ such that
\begin{equation}\label{eq:outer-ball-graph}
 c|y|^2\le F(y)\le C|y|^2,\qquad |dF(y)|\le C|y|.
\end{equation}
We now define the test forms. Set $\rho=x_n+F(y)$ and $\beta(y)=\rho_{z_1}/\rho_{z_n}$. The denominator is nonzero in the neighborhood, so $\beta$ is smooth, and $|\beta(y)|\le C|y|$. Choose real cutoffs $\chi(y)$ and $\chi_0(r)$ supported in the coordinate neighborhood, equal to one near $y=0$ and $r=0$, respectively. We choose the normal cutoff so that it vanishes for $r\le-a$ for some fixed $a>0$. For $t>0$ small, put
\[
 u_t=\chi(y)\chi_0(r)e^{z_n/t}
       \bigl(d\bar z_1-\beta(y)d\bar z_n\bigr).
\]
We extend this form by zero away from the coordinate patch. The result is a smooth form on $\overline D$. The coefficients are chosen so that $\sum\rho_{z_j}(u_t)_j=0$ on the boundary, and therefore the form belongs to the domain of $\ddbar^*$.

We first compute the mass and the energy of the test form. The change of variables $(y,r)\mapsto(z',r-F(y)+iy_n)$ has real Jacobian one. The Gaussian bounds \eqref{eq:outer-ball-graph} give
\begin{equation}\label{eq:oscillating-mass}
 \|u_t\|_2^2\asymp t^{n+1/2},\qquad
 \int |y|^2\chi(y)^2e^{-2F(y)/t}\,dy
 \le Ct\int\chi(y)^2e^{-2F(y)/t}\,dy.
\end{equation}
Derivatives of the cutoffs contribute bounded multiples of this mass, since the terms supported away from $y=r=0$ are exponentially smaller. The exponential is holomorphic, so it contributes no derivative to $\ddbar u_t$. In $\ddbar^*u_t$, the only derivative of the exponential is multiplied by $\beta/t$. The second estimate in \eqref{eq:oscillating-mass}, together with the boundedness of the derivatives of $\beta$, therefore gives
\begin{equation}\label{eq:oscillating-energy}
 Q_D(u_t)+\|u_t\|_2^2\le Ct^{-1}\|u_t\|_2^2.
\end{equation}

We next bound the Sobolev norm of the test form from below. The lower bound comes from the tangential frequency of the first coefficient, and we obtain it for the restriction norm, without zero extension across the boundary. In the flattened variables that coefficient is
\[
 v_t(y,r)=\chi(y)\chi_0(r)
       e^{(r-F(y)+iy_n)/t},\qquad r<0.
\]
Writing $D_{y_n}=-i\partial_{y_n}$, \eqref{eq:oscillating-mass} implies
\[
 \|(D_{y_n}-t^{-1})v_t\|_2^2\le Ct^{-1}\|v_t\|_2^2.
\]
Taking the partial Fourier transform in $y$, we see that the fraction of the $L^2$ mass of this coefficient outside the band
$|\xi_{y_n}-t^{-1}|\le(2t)^{-1}$ is at most $Ct$. On this frequency band $|\xi_{y_n}|\ge(2t)^{-1}$. Consequently, for every fixed $0<\sigma<1$,
\[
 \int_{-a}^0\|v_t(\cdot,r)\|_{H^\sigma(\R^{2n-1})}^2\,dr
 \ge c_\sigma t^{-2\sigma}\|v_t\|_2^2.
\]
The tangential norm on the left is bounded by a constant times the restriction norm of $u_t$ in $H^\sigma(D)$. To see this, localize any $H^\sigma$ extension of $u_t$, use the fixed smooth flattening map, and apply the Fourier inequality $(1+|\xi_y|^2)^\sigma\le(1+|\xi_y|^2+|\xi_r|^2)^\sigma$. Then take the infimum over all extensions. These coordinate and extension operations are bounded in the indicated Sobolev spaces \cite[Chapter~3]{McL00}. Since $\|v_t\|_2\asymp\|u_t\|_2$, we obtain
\[
 \|u_t\|_{H^\sigma(D)}^2\ge c_\sigma t^{-2\sigma}\|u_t\|_2^2.
\]
Suppose, to get a contradiction, that a gain greater than $1/2$ holds. Then the estimate holds for some $\sigma\in(1/2,1)$. The last inequality and \eqref{eq:oscillating-energy} would then imply $t^{-2\sigma}\le Ct^{-1}$ for arbitrarily small $t$, which is impossible. This contradiction proves the lemma.
\end{proof}

\begin{remark}
The bound in Lemma~\ref{lem:upper-half-order} is classical. On pseudoconvex domains it also follows from Catlin's necessary condition \cite{Cat83}, since $\Delta_1(\partial D,p)\ge2$. The gain $1/2$ is attained on strongly pseudoconvex domains \cite{Koh63}. The test forms used above are those of Kohn's classical argument, for which we refer the reader to \cite{FK72}. We include the short proof so that the converse direction of Theorem~\ref{thm:main} and Corollary~\ref{cor:critical} do not depend on \cite{Cat83}.
\end{remark}

We now give a first application of the directional power geometry. It concerns Lipschitz domains with Gromov hyperbolic Kobayashi distance, where the exponent comes from Theorem~\ref{thm:lipschitz}.

\begin{corollary}[Lipschitz Gromov hyperbolic domains]\label{cor:lipschitz-subelliptic}
Under the hypotheses of Theorem~\ref{thm:lipschitz}, suppose that $n\ge2$, and let $\alpha\in(0,1/2)$ be the exponent given there. Then for every $1\le q\le n$ and $b>1/2$ there is $C_b$ such that
\[
 \|E_0u\|_{H^{\alpha,-b}(\R^{2n})}^2\le C_bQ_\Omega(u),\qquad u\in\mathcal V_q(\Omega).
\]
Moreover, $N_q\in\mathfrak S_p$ for every $p>n/\alpha$.
\end{corollary}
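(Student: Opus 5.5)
This corollary follows by combining results already proved, with no new estimate needed. Theorem~\ref{thm:lipschitz} gives constants $C>0$ and $\alpha\in(0,1/2)$ with $r_\Omega(z;v)\le C\delta_\Omega(z)^\alpha$ for all $z\in\Omega$ and $v\ne0$. This is exactly the $m$-$\C$-convexity condition \eqref{eq:mCconvex} with the real parameter $m=1/\alpha$; the global bound in particular gives the bound in any collar. Since $\alpha<1/2$, we have $m>2$. This is consistent with Proposition~\ref{prop:m-at-least-2}, and we do not need integrality of $m$ anywhere.

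Next, I apply Theorem~\ref{thm:directional} with $D=\Omega$ and $m=1/\alpha$. Its hypotheses are only that $D\Subset\C^n$ with $n\ge2$ is $m$-$\C$-convex. No boundary regularity is required, and pseudoconvexity comes for free from \eqref{eq:bergman-6}, as noted after Result~\ref{res:npz}. Inside the proof of that theorem, the route is \eqref{eq:bergman-4} giving $\tau_n\le C_0\delta_\Omega^{\alpha}$, then Theorem~\ref{thm:partial-collapse} with $q=1$, and finally Corollary~\ref{cor:log-growth} with exponent $\alpha\in(0,1)$. The output is \eqref{eq:directional-estimate} with $H^{1/m,-b}=H^{\alpha,-b}$ for every $b>1/2$ and every degree $1\le q\le n$, which is the displayed estimate of the corollary.

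For the Schatten assertion, I feed this zero-extension estimate with $s=\alpha$ into Theorem~\ref{thm:spectral}. That gives $\mu_j(N_q)\le Cj^{-\alpha/n}[\log(e+j)]^{2b}$, and hence $N_q\in\mathfrak S_p$ whenever $p>n/\alpha=nm$. This matches the last clause of Theorem~\ref{thm:directional}.

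The only point needing care is checking that nothing in the chain uses an integer $m$ or an assumption beyond $m$-$\C$-convexity. In particular, Lemma~\ref{lem:sbg} and Result~\ref{res:npz} must hold on arbitrary bounded $\C$-convex domains, which the paper states they do. So I expect no genuine obstacle.
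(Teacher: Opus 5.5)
Your proposal is correct and follows the paper's proof exactly: Theorem~\ref{thm:lipschitz} supplies the power bound with the real parameter $m=1/\alpha$, and Theorem~\ref{thm:directional} (whose Schatten clause is the same application of Theorem~\ref{thm:spectral} with $s=\alpha$) then gives both the zero-extension estimate and $N_q\in\mathfrak S_p$ for $p>n/\alpha$. Your extra check that nothing in the chain needs integer $m$ or boundary regularity is accurate.
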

\begin{proof}
By Theorem~\ref{thm:lipschitz}, the directional radii satisfy the power bound with the real parameter $m=1/\alpha$. Hence the hypothesis of Theorem~\ref{thm:directional} is satisfied, and that theorem gives both the estimate and the Schatten class membership.
\end{proof}

The next corollary gives the quantitative gains on smoothly bounded domains of finite type. It also shows that the maximal D'Angelo one-type agrees with the maximal order of contact of complex affine lines.

\begin{corollary}[Quantitative gains at finite type]\label{cor:finite-gains}
Let $\Omega\Subset\C^n$, $n\ge2$, be a $\C$-convex domain with $C^\infty$ boundary and finite D'Angelo one-type. Then
\[
 M=\max_{p\in\partial\Omega}\Delta_1(\partial\Omega,p)
   =M_1(\Omega)<\infty.
\]
For every $1\le q\le n$, $b>1/2$, and $0<s<1/M$,
\begin{equation}\label{eq:subelliptic}
 \|E_0u\|_{H^{1/M,-b}(\R^{2n})}^2\le C_bQ_\Omega(u),\qquad
 \|u\|_{H^s(\Omega)}^2\le C_sQ_\Omega(u),\quad u\in\mathcal V_q(\Omega).
\end{equation}
Moreover, $N_q\in\mathfrak S_p$ for every $p>nM$.
\end{corollary}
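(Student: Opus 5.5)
The plan is to identify $M$ with $M_1(\Omega)$ first, and then feed the resulting power bound into Theorem~\ref{thm:directional}.

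\emph{Step 1: $M_1(\Omega)$ is finite.} Finite type gives finite contact for every complex affine line, since a line is a holomorphic curve germ. Order of contact at least $N$ is a closed condition in the base point and the direction. By compactness of $\partial\Omega$ and of the unit sphere, $M_1(\Omega)<\infty$; this is the argument already used in Proposition~\ref{prop:geometric-half}.

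\emph{Step 2: a global power bound with exponent $1/M_1$.} $\C$-convexity gives the support condition \eqref{eq:global-support}. Proposition~\ref{prop:two-depth} with $k=1$, with the larger depth fixed at $r_0$, then gives $r_\Omega(p_r;v)\le C r^{1/M_1}$ for all $p\in\partial\Omega$ and all directions. Every $z$ near the boundary is $p_r$ with $p$ a nearest boundary point and $r=\delta_\Omega(z)$. Hence $r_\Omega(z;v)\le C\delta_\Omega(z)^{1/M_1}$ in a collar, and therefore on all of $\Omega$. In other words, $\Omega$ is $M_1$-$\C$-convex.

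\emph{Step 3: $M=M_1$.} Lemma~\ref{lem:full-curve-type} with $\alpha=1/M_1$ gives $\Delta_1(\partial\Omega,p)\le M_1$ at every $p$, so $M\le M_1$. Conversely, take a pair $(p,v)$ of maximal line contact order $M_1$, which exists since the orders are integers bounded by $M_1$. The line $\zeta\mapsto p+\zeta v$ is a smooth germ of multiplicity one, so $\Delta_1(\partial\Omega,p)\ge M_1$. Thus $M=M_1<\infty$, and the maximum defining $M$ is attained.

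\emph{Step 4: the estimates.} Theorem~\ref{thm:directional} with $m=M$ applies to the $M$-$\C$-convex domain $\Omega$. It gives the logarithmic zero-extension estimate for every $b>1/2$, and $N_q\in\mathfrak S_p$ for $p>nM$. For $0<s<1/M$, the restriction estimate follows from $\|u\|_{H^s(\Omega)}\le\|E_0u\|_{H^s}\le C\|E_0u\|_{H^{1/M,-b}}$. This is the observation made after \eqref{eq:Hsb}.

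The main obstacle is the collar bookkeeping in Step 2. One must make sure that every near-boundary point is $p_r$ along the inward normal at its nearest boundary point with $r\le r_0$, which holds after shrinking below the tubular radius. One must also check that the radius there is exactly $R_\Omega(p_r;\C v)$ with no orthogonality constraint on $v$, which is true since Proposition~\ref{prop:two-depth} holds for all $V\in\Gr_\C(1,n)$. The remaining steps are direct invocations of earlier results.
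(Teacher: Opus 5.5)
Your proposal is correct and follows the paper's proof essentially step for step. The paper also gets finiteness of $M_1(\Omega)$ by compactness, the bound $r_\Omega\le C\delta_\Omega^{1/M_1}$ from Proposition~\ref{prop:two-depth} with the larger depth fixed, $M=M_1$ from Lemma~\ref{lem:full-curve-type} plus a line of maximal contact, and then all estimates from Theorem~\ref{thm:directional} with $m=M$. Your extra bookkeeping is accurate and only makes explicit what the paper leaves implicit: the nearest-point representation $z=p_{\delta_\Omega(z)}$, and the passage to the restriction estimate via the observation after \eqref{eq:Hsb}.
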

\begin{proof}
We first show that the maximal order of contact of complex affine lines is finite. Finite D'Angelo one-type implies finite order of contact of every complex affine line with the boundary. Since the boundary and the set of unit line directions are compact, the maximal order of contact $M_1(\Omega)$ is finite. We next derive the power bound for the directional radii. Proposition~\ref{prop:two-depth}, with $k=1$ and the larger depth fixed, yields
\[
 r_\Omega(z;v)\le C\delta_\Omega(z)^{1/M_1(\Omega)}
\]
in the collar. Since the directional radii are bounded, the same inequality holds on the compact interior after enlarging the constant. We now compare the two maxima. Lemma~\ref{lem:full-curve-type} gives $\Delta_1(\partial\Omega,p)\le M_1(\Omega)$ for every $p$. On the other hand, an affine line attaining the maximal contact gives the reverse inequality for the global maximum. Thus $M=M_1(\Omega)$. We emphasize that only this equality of the global maxima is used here. For the pointwise equality, we refer the reader to the references before Lemma~\ref{lem:full-curve-type}. Finally, Theorem~\ref{thm:directional} gives all the stated estimates.
\end{proof}

We are now ready to complete the proof of Theorem~\ref{thm:main}.

\begin{proof}[Proof of Theorem~\ref{thm:main}]
The equivalence of (i) and (ii) is the content of Proposition~\ref{prop:geometric-half}. Suppose that (ii) holds. Then Corollary~\ref{cor:finite-gains} gives a subelliptic estimate of every gain $0<s<1/M$ in every degree $1\le q\le n$. In particular, (iii) holds, and so does the last assertion of the theorem. Suppose now that (iii) holds. Then Theorem~\ref{thm:analytic-necessity} gives (ii).
\end{proof}

\begin{remark}[Condition $R$ and smooth boundary extension]\label{rem:condition-R}
Corollary~\ref{cor:finite-gains} has two classical consequences. Let $D\Subset\C^n$, $n\ge2$, be a $\C$-convex domain with $C^\infty$ boundary and finite D'Angelo one-type, and let $P_D$ be the Bergman projection. By Result~\ref{res:smooth-regularity} and Kohn's formula $P_Df=f-\ddbar^*N_1\ddbar f$ \cite{Koh63}, the domain $D$ satisfies Condition~$R$, that is, $P_D(C^\infty(\overline D))\subset C^\infty(\overline D)$, see \cite{BL80}. By the theorem of Bell and Ligocka \cite{BL80}, every biholomorphism between two such domains extends to a $C^\infty$ diffeomorphism of the closures. On every smoothly bounded pseudoconvex domain of finite type both conclusions follow in the same way from Catlin's theorem \cite{Cat87}, see \cite{BS99}. Here Corollary~\ref{cor:finite-gains} replaces Catlin's theorem.
\end{remark}

\subsection{Critical exponents}
In this subsection, we show that three exponents attached to a smoothly bounded domain of finite type coincide. The first is the directional index $\alpha_{\mathrm{dir}}$ of Definition~\ref{def:dir}. The other two are the normal exponent and the subelliptic exponent, which we now define. Put
\begin{align*}
 \lambda_{\rm nor}(\Omega)&=\sup\{\lambda>0:\text{for some }C>0,\ H_s^{(1)}(p;v)\le Ce^{-\lambda(t-s)}H_t^{(1)}(p;v)\\[-2pt]
 &\hspace{42mm}\text{for all }p,v\text{ and }0\le s\le t\},\\
 \varepsilon_{\rm sub}(\Omega)&=\sup\{\varepsilon>0:\text{a restriction-Sobolev subelliptic estimate}\\[-2pt]
 &\hspace{42mm}\text{of gain }\varepsilon\text{ holds on }(0,1)\text{-forms}\}.
\end{align*}
The normal exponent does not depend on the initial collar depth, provided that this depth is sufficiently small. Changing the depth changes only the constants.

We now present the equality of the three exponents.

\begin{corollary}[Equality of the three critical exponents]\label{cor:critical}
Under the hypotheses of Corollary~\ref{cor:finite-gains},
\begin{equation}\label{eq:critical-exponents}
 \alpha_{\mathrm{dir}}(\Omega)=\lambda_{\rm nor}(\Omega)
 =\varepsilon_{\rm sub}(\Omega)=\frac1M.
\end{equation}
Moreover, the suprema defining $\alpha_{\mathrm{dir}}(\Omega)$ and $\lambda_{\rm nor}(\Omega)$ are attained.
\end{corollary}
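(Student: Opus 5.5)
We prove each of the three equalities by a lower bound and an upper bound, all of which come from earlier results. By Corollary~\ref{cor:finite-gains} we have $M=M_1(\Omega)<\infty$, so all the exponents can be stated in terms of $M_1(\Omega)$.

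\emph{The directional index.} For the lower bound, Corollary~\ref{cor:finite-gains} and its proof give $r_\Omega\le C\delta_\Omega^{1/M}$ on all of $\Omega$. Hence $\alpha=1/M$ is admissible, the supremum is attained, and $\alpha_{\mathrm{dir}}\ge1/M$. For the upper bound, suppose $\alpha>1/M$ were admissible. Then Lemma~\ref{lem:full-curve-type} would give $\Delta_1\le1/\alpha<M$ at every boundary point, which contradicts the definition of $M$ as the maximal type. An alternative is to take a line of maximal contact $(p,v)$ and use \eqref{eq:maximal-plane-radius} with $k=1$. This gives $r_\Omega(p_r;v)\asymp r^{1/M}$ with $\delta_\Omega(p_r)=r$, which also excludes any exponent above $1/M$.

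\emph{The normal exponent.} Theorem~\ref{thm:planes} with $k=1$ and $M_1(\Omega)=M$ shows that condition (ii) holds with $\lambda=1/M$ and fails for every $\lambda>1/M$. This gives $\lambda_{\rm nor}=1/M$ directly, and it shows that the supremum is attained.

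\emph{The subelliptic exponent.} Corollary~\ref{cor:finite-gains} gives restriction-Sobolev estimates of every gain $s<1/M$ in degree one, so $\varepsilon_{\rm sub}\ge1/M$. For the reverse inequality, suppose an estimate of gain $\varepsilon>1/M$ holds. By Lemma~\ref{lem:upper-half-order} we have $\varepsilon\le1/2$, and $M\ge2$ by Proposition~\ref{prop:m-at-least-2} (or by the type bound in Theorem~\ref{thm:curve-type}), so $1/M<\varepsilon\le1/2$. Choose $s$ with $1/M<s<\varepsilon=\min\{\varepsilon,1/2\}$. Theorem~\ref{thm:analytic-necessity} then gives $r_\Omega\le C_s\delta_\Omega^s$, so $s$ is admissible and $\alpha_{\mathrm{dir}}\ge s>1/M$. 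This contradicts the first step.

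The main point to check is this last step. Theorem~\ref{thm:analytic-necessity} only gives exponents below $\min\{\varepsilon,1/2\}$, so the half-order cap in Lemma~\ref{lem:upper-half-order} is needed to make sure the chosen $s$ can exceed $1/M$. When $M=2$, the interval $(1/M,\min\{\varepsilon,1/2\})$ is empty, and the upper bound $\varepsilon_{\rm sub}\le1/2=1/M$ comes directly from Lemma~\ref{lem:upper-half-order}. Apart from this case split, the argument only assembles the cited results.
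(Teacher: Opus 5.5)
Your proof is correct and follows the paper's argument: Theorem~\ref{thm:planes} for the normal exponent, the power bound from Corollary~\ref{cor:finite-gains} plus optimality for the directional index, and Theorem~\ref{thm:analytic-necessity} together with Lemma~\ref{lem:upper-half-order} for the subelliptic exponent. The differences are cosmetic. You exclude directional exponents above $1/M$ through Lemma~\ref{lem:full-curve-type}, whereas the paper uses the radius at a line of maximal contact, which you also give as an alternative. You also apply the half-order bound first, so that the cases $M=2$ and $M>2$ are handled by a single chain of inequalities.
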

\begin{proof}
We first note that $M\ge2$, since complex tangent lines have contact order at least two. We treat the normal exponent first. Theorem~\ref{thm:planes} gives the normal exponent, and it shows that this exponent is optimal. We next treat the directional exponent. The two-depth estimate of the same theorem gives the power bound with the directional exponent $1/M$, so this exponent is admissible. At a line of maximal contact, \eqref{eq:maximal-plane-radius} gives a radius comparable to $r^{1/M}$, and this excludes every larger directional exponent.

It remains to treat the subelliptic exponent. Corollary~\ref{cor:finite-gains} gives every gain below $1/M$. Suppose, to get a contradiction, that a gain $\varepsilon>1/M$ holds in degree one. We consider two cases. If $M>2$, choose $s$ with $1/M<s<\min\{\varepsilon,1/2\}$. Then Theorem~\ref{thm:analytic-necessity} gives a power bound for the directional radii with the exponent just chosen, and this contradicts the radius estimate at a line of maximal contact. If $M=2$, then the gain exceeds one half, and Lemma~\ref{lem:upper-half-order} gives a contradiction. Thus no gain greater than $1/M$ is possible, and the proof is complete.
\end{proof}

\begin{remark}
We emphasize that the equality $\varepsilon_{\rm sub}(\Omega)=1/M$ concerns the supremum of the gains. It does not say that the supremum itself is a gain. The cases in which a subelliptic estimate of gain exactly $1/M$ is known are recalled in Subsection~\ref{sec:critical-intro}.
\end{remark}

\subsection{Other invariant distances}
In this subsection, we note that the Kobayashi distance in Theorem~\ref{thm:main} may be replaced by two other invariant distances. Write $d_{\gamma,\Omega}$ for the integrated infinitesimal Carath\'eodory distance and $d_{B,\Omega}$ for the Bergman distance. On bounded $\C$-convex domains, \cite[Proposition~1 and Theorem~12]{NPZ11}, together with Hahn's inequality $\beta_\Omega\ge\gamma_\Omega$ \cite{Hahn78}, give
\[
 \gamma_\Omega\asymp\kappa_\Omega\asymp\beta_\Omega.
\]
Hence the integrated distances of these three metrics are bi-Lipschitz equivalent. Both $d_{\gamma,\Omega}$ and $d_{B,\Omega}$ are length metrics. They are complete, because they are bi-Lipschitz to $k_\Omega$, and they are locally compact. Hence they are proper and geodesic by the Hopf--Rinow--Cohn-Vossen theorem \cite[Proposition~I.3.7]{BH99}. Consequently, Gromov hyperbolicity of $k_\Omega$ in Theorem~\ref{thm:main} can be replaced by Gromov hyperbolicity of $d_{\gamma,\Omega}$ or of $d_{B,\Omega}$.

\end{document}